\definecolor{myblue}{rgb}{0,0,0.6}
\definecolor{myred}{rgb}{0.8,0,0}
\numberwithin{equation}{section}
\DeclareMathAlphabet{\mathscr}{OT1}{pzc}{m}{it} 
\font\tenms=msbm10
\font\sevenms=msbm7
\font\fivems=msbm5
\newtheorem{Thm}{Theorem}
\newtheorem{Thmapp}{Theorem}[chapter]
\newtheorem{Def}{Definition}[section]
\newtheorem{Prop}[Def]{Proposition}
\newtheorem{Cor}[Def]{Corollary}
\newtheorem{Lem}[Def]{Lemma}
\newtheorem{Rmk}[Def]{Remark}
\newtheorem{Notation}[Def]{Notation}
\def\bB {\mathbb{B}}
 \def\N {\mathbb{N}}
\def\R {\mathbb{R}}
\def\D {\mathbb{D}}
\def\T {\mathbb{T}}
\def\Z {\mathbb{Z}}
\def\cA {\mathcal{A}}
\def\cB {\mathcal{B}}
\def\cC {\mathcal{C}}
\def\cD {\mathcal{D}}
\def\cH {\mathcal{H}}
\def\cI {\mathcal{I}}
\def\cJ {\mathcal{J}}
\def\cG {\mathcal{G}}
\def\cL {\mathcal{L}}
\def\cF {\mathcal{F}}
\def\cM {\mathcal{M}}
\def\cN {\mathcal{N}}
\def\cP {\mathcal{P}}
\def\cZ {\mathcal{Z}}
\def\cR {\mathcal{R}}
\def\cT {\mathcal{T}}
\newcommand{\bbA}{{\ensuremath{\mathbb A}} }
\newcommand{\bbB}{{\ensuremath{\mathbb B}} }
\newcommand{\bbC}{{\ensuremath{\mathbb C}} }
\newcommand{\bbD}{{\ensuremath{\mathbb D}} }
\newcommand{\bbE}{{\ensuremath{\mathbb E}} }
\newcommand{\bbN}{{\ensuremath{\mathbb N}} }
\newcommand{\bbP}{{\ensuremath{\mathbb P}} }
\newcommand{\bbR}{{\ensuremath{\mathbb R}} }
\newcommand{\bbT}{{\ensuremath{\mathbb T}} }
\newcommand{\bbV}{{\ensuremath{\mathbb V}} }
\newcommand{\bbZ}{{\ensuremath{\mathbb Z}} }
\newcommand{\gga}{\gamma}            % \gg already exists...
\newcommand{\gd}{\delta}
\newcommand{\gep}{\varepsilon}       % \ge already exists...
\newcommand{\gp}{\varphi}
\newcommand{\gr}{\rho}
\newcommand{\gz}{\zeta}
\newcommand{\gP}{\Phi}
\newcommand{\gk}{\kappa}
\newcommand{\go}{\omega}
\newcommand{\gl}{\lambda}
\newcommand{\gL}{\Lambda}
\newcommand{\gs}{\sigma}
\newcommand{\gS}{\Sigma}
\def\b {{\beta}}
\def\eps {{\varepsilon}}
\def\e {{\varepsilon}}
\def\indc {{\bf 1}}
\def\la {\langle}
\def\ra {\rangle}
\def \La {\bigg\langle}
\def \Ra {\bigg\rangle}
\def \lA {\big\langle \! \! \big\langle}
\def \rA {\big\rangle \! \! \big\rangle}
\def \LA {\bigg\langle \! \! \! \! \! \; \bigg\langle}
\def \RA {\bigg\rangle \! \! \! \! \! \; \bigg\rangle}
\def\cum{{f^\eps_{n,[0,t]} (H^{\otimes n})}}
\def\lcum{{f_{n,[0,t]} (H^{\otimes n})}}
\def\cumt{{f^{\eps, m_0}_{n,[0,t]} (H^{\otimes n} )}}
\def\tcumt{{\tilde f^{\eps, m_0}_{n,[0,t]} (H^{\otimes n} )}}
\def\tcum{{\tilde f^{\eps}_{n,[0,t]} (H^{\otimes n} )}}
\def\lcumt{{\tilde f^{m_0}_{n,[0,t]} (H^{\otimes n} )}}
\def\d {{\partial}}
\newcommand{\ba}{\begin{aligned}}
\newcommand{\ea}{\end{aligned}}
\newcommand{\be}{\begin{equation}}
\newcommand{\ee}{\end{equation}}
\newcommand{\mb}{  \Delta  \hskip-.2cm  \Delta }
\newcommand{\parttext}[1]{\def\@parttext{#1}}
\def\@endpart{\vskip 0pt plus 0.5fil
              \begin{formatparttext}
                \@parttext % on imprime le texte sp\'ecifique à une partie
                \gdef\@parttext{}% on vide le texte sp\'ecifique à une partie
             \end{formatparttext}
              \vskip 0pt plus 0.5fil
              \newpage
              \if@twoside
               \if@openright
                \null
                \thispagestyle{empty}%
                \newpage
               \fi
              \fi
              \if@tempswa
                \twocolumn
              \fi}
\newenvironment{formatparttext}{\begin{center}\itshape\small}{\end{center}}
\begin{document}

\author[T. Bodineau]{Thierry Bodineau}
\address[T. Bodineau]%
{CMAP, CNRS, Ecole Polytechnique, I.P. Paris
\\
Route de Saclay, 91128 Palaiseau Cedex, FRANCE}
\email{thierry.bodineau@polytechnique.edu}

\author[I. Gallagher]{Isabelle Gallagher}
\address[I. Gallagher]%
{DMA, \'Ecole normale sup\'erieure, CNRS, PSL Research University\\
45 rue d'Ulm, 75005 Paris, FRANCE \\
and Universit\'e de Paris}
\email{gallagher@math.ens.fr}

\author[L. Saint-Raymond]{Laure Saint-Raymond}
\address[L. Saint-Raymond]
{UMPA UMR 5669 du CNRS, ENS de Lyon,Universit\'e  de Lyon\\
46  all\'ee d'Italie, 69007 Lyon,
FRANCE
}
\email{Laure.Saint-Raymond@ens-lyon.fr}

\author[S. Simonella]{Sergio Simonella}
\address[S. Simonella]
{UMPA UMR 5669 du CNRS, ENS de Lyon, Universit\'e de Lyon\\
46  all\'ee d'Italie, 69007 Lyon,
FRANCE
}
\email{sergio.simonella@ens-lyon.fr}

\title{Statistical dynamics of  a hard sphere gas:  \\
fluctuating Boltzmann equation and large deviations}

 \begin{abstract}
 We present a mathematical theory of dynamical fluctuations for the hard sphere gas in the Boltzmann-Grad limit.
 We prove that: (1) fluctuations of the empirical measure from the solution of the Boltzmann equation, scaled with the square root of the average number of particles, converge to a Gaussian process driven by the fluctuating Boltzmann equation, as predicted in~\cite{S81}; (2)  large deviations are exponentially small in the average number of particles and are characterized, under regularity assumptions, by a large deviation functional as previously obtained in~\cite{Rez2} for dynamics with stochastic collisions. The results are valid away from thermal equilibrium, but only for short times. Our strategy is based on uniform a priori bounds on the cumulant generating function, characterizing the fine structure of the small correlations. 
   \end{abstract}

\maketitle
 
%\frontmatter
%%%%%%%%%%%%%%%%%%%%%%%%%%%%%%%%%%%%%%%%%%%%%%%%%%%%%%

%\include{pref}
\tableofcontents
\mainmatter

%%%%%%%%%%%%%%%%%%%%%%%%%%%%%%%%%%%%%%%%%%%%%%%%%%%%%%%

%%%% Introduction & Definition of dynamical cumulants

\newpage
\chapter{Introduction}
\label{chap-intro}

This paper is devoted to a detailed analysis of the dynamical  correlations arising, at low density, in a
deterministic particle system obeying Newton's laws. In this chapter we   start  by defining our model precisely, 
and recalling the fundamental result of Lanford on the short-time derivation  of the Boltzmann equation, as a law of large numbers. 
After that, we   state our main results, Theorem \ref{thmTCL} and Theorem \ref{thmLD} below,
regarding    small fluctuations and large deviations of the empirical measure, respectively. 
Finally, the last part of this introduction describes the essential features of the proofs,   the organization of the paper, and presents some open problems. 

\section{The hard-sphere model with random initial data}
\setcounter{equation}{0}

 We consider a system of~$N\geq 0$ spheres of diameter~$\eps>0$ in the~$d$-dimensional torus~$ \T^{dN}$ with~$d \geq 2$. 
 The positions~$ ({\bf x}^{\e}_1,\dots,{\bf x}^{\e}_N) \in  \T^{dN}$ and velocities~$ ({\bf v}^{\e}_1,\dots,{\bf v}^{\e}_N) \in  \R^{dN}$ of the particles  satisfy Newton's laws 
 \begin{equation}
\label{hard-spheres1}
{d{\bf x}^{\e}_i\over dt} =  {\bf v}^{\e}_i \,,\quad {d{\bf v}^{\e}_i\over dt} =0 \quad \hbox{ as long as \ } |{\bf x}^{\e}_i(t)-{\bf x}^{\e}_j(t)|>\eps 
\quad \hbox{for \ } 1 \leq i \neq j \leq N
\, ,
\end{equation}
with specular reflection at collisions  %for two indexes~$i$ and~$j$ satisfying~$1 \leq i \neq j \leq N$, we have
\begin{equation}
\label{defZ'nij}
\begin{aligned}
\left. \begin{aligned}
 \left({\bf v}^{\e}_i\right)'& := {\bf v}^{\e}_i - \frac1{\eps^2} ({\bf v}^{\e}_i-{\bf v}^{\e}_j)\cdot ({\bf x}^{\e}_i-{\bf x}^{\e}_j) \, ({\bf x}^{\e}_i-{\bf x}^{\e}_j)   \\
\left({\bf v}^{\e}_j\right)'& := {\bf v}^{\e}_j + \frac1{\eps^2} ({\bf v}^{\e}_i-{\bf v}^{\e}_j)\cdot ({\bf x}^{\e}_i-{\bf x}^{\e}_j) \, ({\bf x}^{\e}_i-{\bf x}^{\e}_j)  
\end{aligned}\right\} 
\quad  \hbox{ if } |{\bf x}^{\e}_i(t)-{\bf x}^{\e}_j(t)|=\eps\,.
\end{aligned}
\end{equation}
Observe that these boundary conditions do not cover all possible situations, as for instance triple collisions  are excluded. Nevertheless the hard-sphere flow generated by \eqref{hard-spheres1}-\eqref{defZ'nij}
(free transport of $N$ spheres of diameter $\eps$, plus instantaneous reflection
 $$\big({\bf v}^{\e}_i,{\bf v}^{\e}_j\big) \to \Big(\big({\bf v}^{\e}_i\big)',\big({\bf v}^{\e}_j\big)' \Big)
 $$ at contact) is well defined on a full measure subset of ${\mathcal D}^{\eps}_{N} $ (see~\cite{Ale75}, or~\cite{GSRT} for instance) where~${\mathcal D}^{\eps}_{N} $ is the
canonical
phase space
$$\label{D-def}
{\mathcal D}^{\eps}_{N} := \big\{Z_N \in \D^N \, : \,  \forall i \neq j \, ,\quad |x_i - x_j| > \eps \big\} \, .
$$
We have denoted~$Z_N:= (X_N,V_N) \in ( \T ^d\times\R^d)^{N }$ the positions and velocities in the extended space~$\D^N:=( \T ^d\times\R^d)^{N }$\label{defDN}  with~$X_N := (x_1,\dots,x_N) \in \T^{dN}$ and~$V_N := (v_1,\dots,v_N) \in \R^{d N}$. We set~$Z_N =(z_1,\dots,z_N)$ with $z_i = (x_i,v_i)$.

The probability density  $W^{\eps}_N$\label{defWepsN} of finding $N$ hard spheres of diameter $\eps$ at configuration $Z_N$ at time $t$ is   governed by the Liouville equation\index{Liouville equation} in the~$2dN$-dimensional phase space
\begin{equation}
\label{Liouville}
	\d_t W^{\eps}_N +V_N \cdot \nabla_{X_N} W^{\eps}_N =0  \,\,\,\,\,\,\,\,\, \hbox{on } \,\,\,{\mathcal D}^{\eps}_{N}\, ,
\end{equation}
	with specular reflection on the boundary. If we denote\label{DepspmNij}
$$
	\begin{aligned}\d {\mathcal D}^{\eps \pm}_{N}(i,j):= \Big \{Z_N \in\D^N \, : \,  &|x_i-x_j| = \eps \, , \quad \pm (v_i-v_j) \cdot (x_i- x_j) >0 \\
& \mbox{and}  \quad\forall (k,\ell) \in  [1,N]^2\setminus \{i,j\}  , \,\,\, k \neq \ell \, ,\,\,\,     |x_k-x_\ell| > \eps\Big\} \, ,
\end{aligned}
$$
	then
\begin{equation}
\label{tracecondition}
	\forall Z_N\in  \d {\mathcal D}^{\eps +}_{N}(i,j)\, , i\neq j\, , \quad W^{\eps}_N (t,Z_N ) :=  W^{\eps}_N (t,Z_N^{'i,j}) \, ,
\end{equation}
where $Z^{'i,j}_N$\label{scattZnij} differs from $Z_N$ only by~$\left(v_i,v_j\right) \to \left(v'_i,v'_j\right)$, given by \eqref{defZ'nij}.

The canonical formalism consists in fixing the number~$N$ of particles, and in studying the  probability density~$W^{\eps}_N$ of particles in the state~$Z_N$ at time~$t$, as well as its marginals.
The main drawback of this formalism is that fixing the number of particles creates spurious correlations (see e.g.\,\cite{EC81,PS17}). 
We are  rather going to define   a particular class of distributions on the grand canonical phase space
$$
{\mathcal D}^\eps := \bigcup_{N\geq 0} {\mathcal D}^{\eps}_{N} \, ,
$$
where the  number of particles is not fixed but given by a modified Poisson law
(actually ${\mathcal D}^{\eps}_{N} = \emptyset$ for large~$N$).
For notational convenience, we   work with functions extended to zero over $\D^N \setminus \overline{{\mathcal D}^{\eps}_{N}}$.
 Given a probability distribution $f^0 :  \D\to \R $ satisfying
\begin{equation}
\label{lipschitz}
 |f^0(x,v)|  + | \nabla_x f^0(x,v)|\ \leq  C_0\,\exp \Big(- \frac{\beta_0} 2 |v|^2 \Big) 
\, , \quad C_0 \geq 1  \, , \, \, \beta_0>0 \,, 
 \end{equation}
 the initial probability density is defined on the configurations  $(N,Z_N) \in \D^\N$ as 
\begin{equation}
\label{eq: initial measure}
\frac{1}{N!}W^{\eps 0}_{N}(Z_N) 
:= \frac{1}{\cZ^ \eps} \,\frac{\mu_\eps^N}{N!} \, \prod_{i=1}^N f^0 (z_i) \,
\indc_{{\mathcal D}^{\eps}_{N}}(Z_{N})
 %\quad\mbox{on\ }{\mathcal D}^{\eps}_{N}
\end{equation} 
where $\mu_\e > 0$ and the normalization constant $\cZ^\eps$ is given by 
$$\label{def-Zeps}
\cZ^\eps :=  1 + \sum_{N\geq 1}\frac{\mu_\eps^N}{N!}  
\int_{\D^N} dZ_N \prod_{i=1}^N f^0 (z_i) \, \indc_{{\mathcal D}^{\eps}_{N}}(Z_{N})\; .
$$
Here and below, $\indc_A$ will be the indicator function of the set $A$. We will also use
the symbol $\indc_{``{\rm *}"}$ for the indicator function of the set defined by condition $``{\rm *}"$.

Note that in the chosen  probability measure, particles are ``exchangeable", in the sense that $W^{\eps 0}_{N}$ is invariant by permutation
of the particle labels in its argument. Moreover, the choice~(\ref{eq: initial measure}) for the initial data is the one   guaranteeing the ``maximal factorization'', in the sense that particles would be~i.i.d.\,were it not for the indicator
function (`hard-sphere exclusion').

Our fundamental random variable is the time-zero configuration,
consisting of the initial positions and velocities of all the particles of the gas.
We will denote~$\cN$ the total number of particles (as a random variable)
and~${\mathbf Z}^{\eps0}_{\cN} = \left(  \mathbf z^{\eps0}_i \right)_{i=1,\dots,\cN}$ the  initial particle
configuration. The particle dynamics 
\begin{equation}
\label{def: trajectory}
t \mapsto {\mathbf Z}^\eps_{\cN}(t) = \left( \mathbf z^\eps_i(t)\right)_{i = 1,\dots, \cN}
\end{equation}
is then given by the hard-sphere flow solving~\eqref{hard-spheres1}-\eqref{defZ'nij}
with random initial data ${\mathbf Z}^{\eps0}_{\cN}$ (well defined with probability 1).
The probability of an event~$X$ with respect to the measure  \eqref{eq: initial measure} will be denoted~${\mathbb P}_\eps(X)$\label{def-Peps}, and the corresponding expectation symbol will be denoted~${\mathbb E}_\eps$\label{def-Eeps}. Notice that particles are identified by their label, running from $1$ to $\cN$. We shall mostly deal with expectations of observables of type ${\mathbb E}_\eps\big(\sum_{i=1}^\cN \dots \big)$. Unless differently specified, we always imply that ${\mathbb E}_\eps\big(\sum_i \dots \big) = {\mathbb E}_\eps\big(\sum_{i=1}^\cN \dots \big)$.

The average total number of particles $\cN$ is fixed in such a way that 
\begin{equation}
\label{eq: Boltzmann Grad}
\lim_{\gep \to 0} {\mathbb E}_\eps  \left(  \cN  \right)  \eps^{  d-1 } =  1 \;.
\end{equation}
The limit \eqref{eq: Boltzmann Grad} ensures that the   {\it Boltzmann-Grad scaling} holds, i.e.\,that  the inverse mean free path is of order $1$ \cite{Gr49}.
Thus from now on we will set $$\mu_\eps = \eps^{- (d-1)}\;.$$

Let us define the rescaled initial $n$-particle correlation function\index{Correlation function!rescaled} \label{grandcanonical-initdata}
%The idea is to count how many groups of $j$ particles fall, in average, in a given configuration $Z_j = (z_1,\dots,z_j)$:
%%
%\begin{align*}
%F_j (Z_j):=\Big\langle \sum_{k_i\neq   k_\ell} \delta_{\zeta_{k_1}}(z_1)\dots 
%\delta_{\zeta_{k_j}}(z_j)\Big\rangle
%\end{align*}
%%
%where we are labelling the particles and indicating their (random) configuration by $\zeta_1,\dots,\zeta_N$, and
%the brackets denote the average with respect to the grand canonical state.
%The $j$-particle initial  correlation function  
%can be rewritten
%
\begin{align*}
F_n^{\eps 0} (Z_n)
:= \mu_\eps^{-n} \,
\sum_{p=0}^{\infty} \,\frac{1}{p!}\, \int_{ \D^p} dz_{n+1}\dots dz_{n+p} \,
%\indc_{{\mathcal D}^{\eps}_{n+p}}(Z_{n+p})\,
W_{n+p}^{\eps 0} (Z_{n+p}) 
%\quad\mbox{on\ }{\mathcal D}^{\eps}_{n}\,.
\;.
\end{align*}
%The rescaled $n$-particle correlation function is  equivalent to the marginal of order $n$ in the canonical setting. 
%In particular 
We say that the initial measure admits correlation functions when the series in the right-hand side is convergent, which is the case with our choice~(\ref{eq: initial measure}) of initial data, together with the series in the inverse formula
$$W_n^{\eps 0} (Z_n) = \mu_\eps^{n} \,
\sum_{p=0}^{\infty} \,\frac{(-\mu_\e)^p}{p!}\, \int_{ \D^p} dz_{n+1}\dots dz_{n+p} \,
F_{n+p}^{\eps 0} (Z_{n+p}) \, .
$$
In this case, the set of functions $\left(F_n^{\eps 0}\right)_{n \geq 1}$ describes all the properties of the system.

For any  test function $h_n: \D^n\rightarrow \bbR$, 
the following holds~:
\begin{equation}
\label{eq: marginal time 0}
\begin{aligned}
\bbE_\eps \Big( \sum_{\substack{i_1, \dots, i_n \\ i_j \neq i_k, j \neq k}} h_n \big( {\bf z}_{i_1}^{\e 0}, \dots ,  {\bf z}_{i_n}^{\e 0} \big) \Big)
& = \bbE_\eps \Big( \gd_{\cN \geq n} \frac{\cN!}{(\cN - n)!}  h_n \big( {\bf z}_{1}^{\e 0}, \dots ,  {\bf z}_{n}^{\e 0} \big) \Big)\\
& = \sum_{p = n}^\infty \int_{ \D^p} dZ_p \,\frac{W^{\eps 0}_{p}(Z_p)}{p!}\, \frac{p!}{(p-n)!}\,  h_n \big( Z_n \big)   \\
& =  \mu_\eps^{n} \int_{ \D^n} dZ_n \, F_n^{\eps 0} (Z_n )\, h_n( Z_n)   \;.
\end{aligned}
\end{equation}
%where the expectation is with respect to the measure \eqref{eq: initial measure} and we used the exchangeability 
%of the particles in the first equality.
%
Starting from the initial distribution $W_N^{\eps 0}$,
the density $W^{\eps}_N(t)$ evolves on ${\mathcal D}^{\eps}_{N}$ according to the Liouville equation~\eqref{Liouville} with specular boundary reflection   \eqref{tracecondition}.
At time $t \geq 0$, the (rescaled)~$n$-particle correlation function\index{Correlation function!$n$-particle} is defined 
%on ${\mathcal D}^{\eps}_{n}$ 
as 
\begin{align}
\label{eq: densities at t}
F^{\eps}_n (t,Z_n)
:= \mu_\eps^{-n} \,\sum_{p=0}^{\infty} \,\frac{1}{p!} \,\int_{ \D^p} dz_{n+1}\dots dz_{n+p} \,
%\indc_{{\mathcal D}^{\eps}_{n+p}}(Z_{n+p})\,
W_{n+p}^\eps (t, Z_{n+p})
\end{align}
and, as in \eqref{eq: marginal time 0}, we get 
\begin{equation}\label{eq: marginal time t}\begin{aligned}
\bbE_\eps \Big( \sum_{\substack{i_1, \dots, i_n \\ i_j \neq i_k, j \neq k}} h_n \big( {\mathbf z}^\eps_{i_1}(t), \dots ,  {\mathbf z}^\eps_{i_n}(t) \big) \Big)
=
\mu_\eps^{n} \int_{\D^n} dZ_n\, F^{\eps}_n (t, Z_n) \,h_n \big( Z_n \big)\;,
\end{aligned}
\end{equation}
where we used the notation \eqref{def: trajectory}. Notice that $F_n^\eps (t, Z_n) = 0$
for $Z_n \in \D^n \setminus \overline{{\mathcal D}^{\eps}_{n}}$.  

\section{Lanford's theorem : a law of large numbers}
\label{subsec: Generalized OU}
\setcounter{equation}{0}

In the Boltzmann-Grad limit $\mu_\eps \to \infty$, the average behavior is governed by the Boltzmann equation~:
%\begin{equation}
%%\label{boltzmann-eq}
%\left\{ \begin{aligned}
%& \d_t f +v \cdot \nabla _x f = \! \displaystyle\int_{\R^d}\int_{{\mathbb S}^{d-1}}   \! \Big(  f(t,x,w') f(t,x,v') - f(t,x,w) f(t,x,v)\Big) 
%\big ((v-w)\cdot \omega\big)_+ \, d\omega \,dw \,  ,\\
%&  f(0,x,v) = f^0(x,v)
% \end{aligned}
%  \right.\end{equation}
  \begin{equation}
\label{boltzmann-eq}
\left\{ \begin{aligned}
& \d_t f +v \cdot \nabla _x f = \! \displaystyle\int_{\D}\int_{{\mathbb S}^{d-1}}   \! \Big(  f(t,y,w') f(t,x,v') - f(t,y,w) f(t,x,v)\Big) 
 d\mu_{(x,v)}  (  (y,w),  \omega)\,,\\
&  f(0,x,v) = f^0(x,v)
 \end{aligned}
  \right.\end{equation}
where, for any $(x,v) \in \D$,
\begin{equation} \label{defdmuzi}
  d\mu_{(x,v)}  (  (y,w),  \omega):  = \delta_{y-x} \big ( (   w - v) \cdot \omega\big)_+ d\omega\,  
dy\, d w  
\end{equation}
and   where the precollisional velocities $(v',w')$ are defined by the scattering law
\begin{equation}\label{scattlaw}
 v' := v- \big( (v-w) \cdot \omega\big)\,  \omega \,  ,\qquad
 w' :=w+\big((v-w) \cdot \omega\big)\,  \omega  \, .
\end{equation}
More precisely, the convergence is described by Lanford's theorem~\cite{La75} (in the canonical setting --- for the grand-canonical setting see~\cite{Ki75}, where the case of smooth compactly supported potentials is also addressed),  which we state here in the case of the initial measure \eqref{eq: initial measure}.
\begin{Thm}[Lanford~\cite{La75}]
\label{thm: Lanford}
Consider a system of hard spheres initially distributed according to  the grand canonical measure {\rm(\ref{eq: initial measure})} with~$f^0 $ satisfying the estimate~{\rm(\ref{lipschitz})}. 
Then,  in the Boltzmann-Grad limit $\mu_\eps \to \infty$, the rescaled one-particle density~$F^\eps_1(t)$ converges uniformly on compact sets to  the solution~$f(t)$ of the Boltzmann equation {\rm(\ref{boltzmann-eq})}  on a time interval~$[0,T_0]$ (which depends only on~$f^0$ through $C_0,\beta_0$).
Furthermore for each~$n$, the rescaled $n$-particle correlation function $F^\eps_n(t)$ converges almost everywhere
in $\D^n$
to~$f^{\otimes n}(t)$ on the same time interval. 
\end{Thm}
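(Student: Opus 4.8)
The plan is to reproduce Lanford's argument \cite{La75}, in the streamlined form of \cite{GSRT} adapted to the grand-canonical data \eqref{eq: initial measure}. First I would derive the BBGKY hierarchy for $(F^\eps_n)_{n \geq 1}$: differentiating \eqref{eq: densities at t} in time, inserting the Liouville equation \eqref{Liouville}, and integrating by parts in the variable $z_{n+1}$ --- using the reflection condition \eqref{tracecondition} to treat the contact set --- produces a closed system $\d_t F^\eps_n + V_n \cdot \nabla_{X_n} F^\eps_n = \cC^\eps_{n,n+1} F^\eps_{n+1}$, where the operator $\cC^\eps_{n,n+1}$ integrates the $(n+1)$-st particle over a sphere of radius $\eps$ around one of $x_1,\dots,x_n$ and splits into a gain term and a loss term carrying the scattering \eqref{defZ'nij}. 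Sending $\eps \to 0$ formally yields the Boltzmann hierarchy with kernels \eqref{scattlaw}, and a direct substitution using \eqref{boltzmann-eq} shows that $f_n := f^{\otimes n}(t)$ solves it with data $(f^0)^{\otimes n}$.

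Second, I would put both hierarchies in Duhamel (mild) form and iterate them, obtaining for $F^\eps_n(t)$ and for $f^{\otimes n}(t)$ series expansions indexed by collision trees with $k \geq 0$ branchings; each term integrates the initial correlation function (respectively $(f^0)^{\otimes(n+k)}$) along a backward pseudo-trajectory built from free flight punctuated by instantaneous scatterings --- at mutual distance $\eps$ in the $\eps$-problem, at coinciding points in the limit. The essential a priori input is a bound on the initial correlation functions that is uniform in $\eps$: from \eqref{lipschitz} and \eqref{eq: initial measure} one gets $|F^{\eps 0}_n(Z_n)| \leq C^n \exp\big(-\frac{\beta_0}{4}\sum_{i \leq n}|v_i|^2\big)$ for $\eps$ small. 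Together with elementary estimates on the collision operators against Maxwellian weights, this bounds the $k$-th term of either series by $(Ct)^k$ in a weighted sup norm; hence both series converge absolutely and uniformly on a time interval $[0,T_0]$, with $T_0 = T_0(C_0,\beta_0)$. In particular the Boltzmann hierarchy is well posed on $[0,T_0]$ in this class, so by uniqueness any limit of $F^\eps_n$ must coincide with $f^{\otimes n}$; it remains only to establish convergence.

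Third --- and this is the heart of the matter --- I would show that each fixed term ($n$ and $k$ frozen) of the BBGKY series converges as $\eps \to 0$ to the corresponding term of the Boltzmann series. One compares the $\eps$-pseudo-trajectory with its limiting counterpart: away from a \emph{bad set} of collision parameters the two stay uniformly close, and since $F^{\eps 0}_{n+k} \to (f^0)^{\otimes(n+k)}$ almost everywhere with a uniform Gaussian domination, dominated convergence gives the limit. The main obstacle is the geometric control of the bad set, made of (i) parameters producing a \emph{recollision}, i.e.\ a pair of particles meeting a second time merely because of the nonzero diameter, and (ii) near-grazing or near-overlapping configurations. After removing a small neighbourhood of grazing angles and truncating velocities to a large ball, one must prove the recollision set has measure $O(\eps^\alpha)$ for some $\alpha > 0$, uniformly over the finitely many trees at play; this follows from estimates on the intersections of collision cylinders along the backward flow. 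Granting this, term-by-term convergence combined with the uniform-in-$\eps$ control of the series tails yields $F^\eps_n(t) \to f^{\otimes n}(t)$ almost everywhere on $\D^n$ for all $t \in [0,T_0]$. Finally, for $n = 1$, since $f(t)$ is continuous --- indeed the Lipschitz bound \eqref{lipschitz} propagates --- with Gaussian tails, the a.e.\ convergence together with the uniform bounds upgrades to uniform convergence on compact subsets of $\D$, completing the proof.
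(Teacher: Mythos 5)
This is Lanford's theorem, which the paper cites from \cite{La75,Ki75,GSRT} rather than reproving; your sketch follows exactly that standard route (BBGKY hierarchy, Duhamel tree expansion, uniform a priori bounds giving a convergence time $T_0(C_0,\beta_0)$, and term-by-term convergence controlled by the geometry of recollisions), which is also precisely the machinery the paper itself builds in Chapters 3 and 10. The one step whose stated justification does not hold is the last: almost-everywhere convergence plus uniform bounds plus continuity of the limit does not by itself upgrade to uniform convergence of $F^\eps_1$ on compacts --- what actually yields it is that the measure of the bad set of collision parameters (recollisions, grazing angles, large velocities) is small uniformly over the root $z_1^*$ ranging in a compact set, so each term of the series converges locally uniformly in $z_1^*$; this is available from the cylinder estimates you already invoke, so the gap is one of phrasing rather than substance.
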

We refer to \cite{IP89,S2,CIP94,CGP97} for detailed proofs. The topic continues to be studied and developed, see \cite{MT12, GSRT, denlinger, PS17,GG18,GG21, PS21} for more recent contributions.

%To simplify the previous theorem, the initial particle correlations are chosen to be only due to the non-overlapping condition on the hard spheres, which vanishes asymptotically as $\eps $ goes to~$ 0$. Thus, at time zero,  the particles  are chosen initially ``almost independently" with the same distribution $f^0$. 
 Let us define the empirical measure
\begin{equation}
\label{eq:empmeasnontested} 
\pi^\eps_t :=  \frac{1}{\mu_\eps} \sum_{i=1}^{\cN} \delta_{{\bf z}^{\eps}_i(t)}\,,
\end{equation}
where~$\delta_{{\bf z}^{\eps}_i(t)}$ denotes the Dirac mass at point~${{\bf z}^{\eps}_i(t)}$.
Tested on a (one-particle) function $h : \D \to \R$, it reads\label{empirical} 
\begin{equation}
\label{eq:empmeas} 
\pi^\eps_t(h) =  \frac{1}{\mu_\eps} \sum_{i=1}^{\cN} h\left( {\bf z}^{\eps}_i(t)\right)\,.
\end{equation}
By definition, $F_1^{\eps}$ describes the average behavior of (exchangeable) particles~:
\begin{equation}
\label{Eepsh1} 
\bbE_\eps\big(\pi^\eps_t(h)\big) =\int_{\D} F^{\eps}_1(t,z)\, h(z)\, dz\,.
\end{equation}
The propagation of chaos derived in Theorem~\ref{thm: Lanford} implies in particular  that  the empirical measure concentrates on the solution of Boltzmann equation: let us prove the following law of large numbers, which is an easy corollary to  Theorem~\ref{thm: Lanford}.
\begin{Cor}\label{LLNcorollary}
Under the assumptions of Theorem~{\rm\ref{thm: Lanford}},  for all~$\delta > 0$ 
and smooth $h : \D \to \R$,
$$\bbP_\eps \left( \Big|\pi^\eps_t(h) -  \int_{\D}  f (t,z)h(z) dz\Big| > \delta \right) \xrightarrow[\mu_\eps \to \infty]{} 0\;.
$$
\end{Cor}
\begin{proof}
Computing the variance for any test function $h$, we get that
\begin{equation}\begin{aligned}
\label{Eepsh2} 
& \bbE_\eps \Big( \big(\pi^\eps_t(h)- \int F^\eps_1(t,z)\, h(z)\, dz \big)^2 \Big)\\
%& \qquad 
%=  \bbE_\eps \left( \left( \frac{1}{\mu_\eps} \sum_{i=1}^{\cN} \Psi^\eps\left( z_i(t)\right) \right)^2 \right) 
%-   \left(\int F_1(t,z)\, \Psi^\eps(z)\, dz \right)^2 \\
&   
=   \bbE_\eps \Big( \frac{1}{\mu_\eps^2}  \sum_{i=1}^{\cN} h^2 \big( {\bf z}^{\eps}_i(t)\big) 
+ \frac{1}{\mu_\eps^2}  \sum_{i \not = j}  h \big( {\bf z}^{\eps}_i(t)\big) h \big( {\bf z}^{\eps}_j(t)\big)
\Big) -   \Big(\int F^\eps_1(t,z)\, h (z)\, dz \Big)^2 \\
&  
=  \frac{1}{\mu_\eps}  \int F^\eps_1(t,z)\, h^2(z)\, d z  
+ \int F^\eps_2(t,Z_2)\, h(z_1) h(z_2)\, d Z_2 -   \Big(\int F^\eps_1(t,z)\, h(z)\, dz \Big)^2
\xrightarrow[\mu_\eps \to \infty]{} 0\, ,
\end{aligned}
\end{equation}where the convergence to 0 follows from the fact that $F^\eps_2$ converges to $f^{\otimes 2}$ and $F^\eps_1$ to $f$
almost everywhere.\end{proof}

\begin{Rmk} \label{rem:Tetoile}
The   restriction to the time interval~$[0,T_0]$ in the statement of Theorem~{\rm{\ref{thm: Lanford}}}  originates from a Cauchy-Kovalevskaya argument 
 in a scale of  Banach spaces.
 A (non optimal) estimate of~$T_0$ in terms of~$C_0$ and~$\beta_0 $ is provided in Theorem~{\rm\ref{cumulant-thm1*}} of the present paper, of the form~$T_0\sim \,C_0^{-1}\beta_0^{(d+1)/2}$  (notice  that in this estimate the inverse temperature is given by~$\beta_0$, while the physical density is~$C_0/\beta_0^{\frac{d}{2}}$).    Remark that the  Cauchy-Kovalevskaya argument provides  the
 same dependence  in terms of~$C_0 $ and~$\beta_0$ for the wellposedness time of the Boltzmann equation: see Appendix~{\rm\ref{boltz-Cauchy}}.  \end{Rmk}

\section{The fluctuating Boltzmann equation}
\setcounter{equation}{0}

Describing the fluctuations around the Boltzmann equation  is a way to 
capture part of the  information which has been lost in the limit $\mu_\e \to \infty$.

As in the classical central limit theorem, we expect these fluctuations to be of order $1/\sqrt{\mu_\eps}$, which is the typical size 
%(in some suitable norms to be understood) 
of the remaining correlations.
We therefore define  the fluctuation field $\gz^\gep$ as follows: for any test function $h: \D\to\R$ (recall~(\ref{Eepsh1}))
\begin{equation}
\label{eq: fluctuation field}
\gz^\gep_t \big(  h  \big) :=  { \sqrt{\mu_\eps }} 
\left( \pi^\eps_t(h) -  \int \, F^\eps_1(t,z) \,  h \big(  z \big)\, dz   \right) \, .
\end{equation}

Initially the empirical measure starts close to the density profile $f^0$ and~$\gz^\gep_0$ converges in law towards a Gaussian white noise~$\gz_0$ with covariance 
\begin{equation}
\label{eq: fluctuation field at time zero}
\bbE \big( \gz_0(h_1)\, \gz_0(h_2) \big)
= \int h_1(z) \,h_2(z)\, f^0(z) \,dz\, .
\end{equation}
%and
%the fluctuation field measures how the initial randomness of the particle system evolves along the Newtonian dynamics.
%
This follows from a computation similar to 
 \eqref{Eepsh2} because, with our choice of initial data given in~\eqref{eq: initial measure}, $\mu_\eps\left( F_2^\eps\left(0\right) - \left(F_1^\eps\right)^{\otimes 2}\left(0\right)\right)$ vanishes as $\mu_\eps \to \infty$ (the Gaussian character requires an estimate of higher order cumulants, which is made precise in Proposition \ref{prop:ID} below). Note that, for more general initial states, a smoothly correlated part may appear in the covariance \cite{S83,PS17}.

\medskip
In this paper we   prove that in the limit  $\mu_\eps \to \infty$, starting from ``almost independent" hard spheres, $\zeta^\eps_t$ converges to a  Gaussian process, solving formally
\begin{equation}
\label{eq: OU}
d \gz_t   = \cL_t \,\gz_t\, dt + d\eta_t\,,
\end{equation}
where $\cL_t $ is the   {\it linearized Boltzmann operator} around the solution $f(t)$ of the Boltzmann equation~\eqref{boltzmann-eq} 
\begin{equation}
\label{eq: LBO}
\begin{aligned}
&\cL_t \,h(z) := - v \cdot \nabla_x h(z)+\int_{\D}\int_{{\mathbb S}^{d-1}}  \, d\mu_{z}  ( z_1,  \omega) \\
&   \qquad  \times \big( f (t,x_1,v_1') h(x,v') + f (t,x,v') h(x_1,v_1') 
   - f (t,z)  h(z_1) -  f (t,z_1)  h(z) \big) \, .
\end{aligned}
\end{equation}
The noise $d \eta_t(z)$ is Gaussian,  with zero mean and covariance 
\begin{equation}
\label{eq: final result covariance'}
\begin{aligned}
& \bbE \left( \int  dt_1\, dz_1   h_1 (z_1)  \eta_{t_1} (z_1)   \int dt_2 \, dz_2 \, h_2 (z_2) \eta_{t_2} (z_2) \right)
 \\ &\qquad \qquad= \frac{1}{2} \int  dt \, d\mu(z_1, z_2, \omega) 
 f (t, z_1)\, f (t, z_2) \Delta h_1 \, \Delta h_2
\end{aligned}
\end{equation}
denoting
\begin{equation}
\label{eq: measure mu}
d\mu (z_1, z_2, \omega): = \delta_{x_1 - x_2}  \,  \big( (v_1 - v_2) \cdot \omega \big)_+ d \omega\, d v_1\, d v_2 dx_1 
\end{equation}
and defining  for any $h$
\begin{equation}
\label{Delta-def}
  \Delta h (z_1, z_2, \omega) := h(z_1') +  h(z_2') -  h(z_1) -  h(z_2)\,, 
  \end{equation}
where~$z_i':=(x_i,v_i')$ with notation~(\ref{scattlaw}) for the velocities obtained after scattering.
We postpone the precise definition of a weak solution to~\eqref{eq: OU} to Section \ref{subsec: Functional setting}.

Our   result is   the following.  
\begin{Thm} 
\label{thmTCL}
Consider a system of hard spheres initially distributed according to  the grand canonical measure {\rm(\ref{eq: initial measure})} where $f^0$ is a function satisfying~{\rm(\ref{lipschitz})}. 
Then, there exists $T >0$ (depending   on $f^0$  as~$T \sim C_0^{-1}\beta_0^{\frac{d+1}2}$) such that, 
in the Boltzmann-Grad limit~$\mu_\eps \to \infty$, the fluctuation field $\left(\zeta^\eps_t\right)_{t \geq 0}$ converges in law to a  Gaussian process, uniquely determined by its covariance, which solves~\eqref{eq: OU} in a weak sense on the time interval $[0,T ]$. \end{Thm}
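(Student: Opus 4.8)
The plan is to prove convergence of the fluctuation field $\zeta^\eps_t$ by identifying the limiting law through its moments (or equivalently through a characteristic-functional / Laplace transform computation), using the uniform bounds on the cumulant generating function announced in the abstract as the main analytic input. Concretely, for a test function $h$ one considers the exponential observable $\exp\!\big(\zeta^\eps_t(h)\big)$ and, more generally, space-time test functions $h(t,z)$ paired with the field over a finite collection of times; one expands $\bbE_\eps\exp(\zeta^\eps_t(h))$ in terms of the rescaled correlation functions $F^\eps_n(t)$, or better, in terms of the \emph{truncated} correlation functions (cumulants) $f^\eps_{n,[0,t]}(h^{\otimes n})$ appearing in the displayed macros of the preamble. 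The scaling $\zeta^\eps_t = \sqrt{\mu_\eps}\,(\pi^\eps_t - \bbE\pi^\eps_t)$ means that the $n$-th cumulant of $\zeta^\eps_t(h)$ equals $\mu_\eps^{n/2}$ times the $n$-th truncated correlation tested against $h^{\otimes n}$; so the heart of the matter is to show that this truncated correlation is of size $\mu_\eps^{-(n-1)}$ with an explicit limit only for $n=2$, while $n\geq 3$ cumulants vanish. This is exactly a quantitative form of propagation of chaos refining Lanford's theorem, and it forces the limit to be Gaussian.

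First I would set up the combinatorial/graphical representation: write $F^\eps_n(t)$ via the series of iterated collision (BBGKY / Duhamel) operators acting on the initial data $F^\eps_{n+p}(0) = \prod f^0(z_i)\,(1+O(\eps))$, then pass to the cumulant (connected) functions by inclusion–exclusion, and organize the terms along the ``collision trees'' / ``forests'' that appear in Lanford-type proofs. The key step is to establish a uniform-in-$\eps$ bound of the form $\big|\mu_\eps^{\,n-1} f^\eps_{n,[0,t]}(h^{\otimes n})\big| \le C^n n!\, t^{?}$ (an a priori estimate in a Cauchy–Kovalevskaya–type Banach space as in Remark~\ref{rem:Tetoile}) valid on $[0,T^\star]$, together with a limit: $\mu_\eps f^\eps_{2,[0,t]} \to$ the explicit Gaussian covariance built from $\cL_t$ and the noise $\eta_t$, and $\mu_\eps^{\,n-1} f^\eps_{n,[0,t]} \to 0$ for $n\ge 3$. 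Identification of the $n=2$ limit requires recognizing that the dominant contribution comes from a single recollision/branching event and reproduces the Duhamel series for the covariance equation obtained by linearizing Boltzmann and adding the jump noise measure $d\mu$; the loss term and gain term in $\cL_t$ and the $\Delta h$-structure of the covariance $\eqref{eq: final result covariance'}$ emerge from the pre/post-collisional velocity substitution. Second, to go from convergence of finite-dimensional exponential moments to convergence \emph{in law of the process} on $[0,T^\star]$, I would first prove tightness in an appropriate space of càdlàg (or continuous, after the limit) distribution-valued paths — using a Kolmogorov-type moment criterion on increments $\zeta^\eps_t(h)-\zeta^\eps_s(h)$, again controlled by the same cumulant bounds applied to two-time correlation functions — and then show that any limit point is the unique weak solution of $\eqref{eq: OU}$, invoking the uniqueness of that SPDE (to be established in the section on the covariance of the Ornstein–Uhlenbeck process, cf.\ the forward reference to Section~\ref{subsec: Covariance of OU}). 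The martingale-problem characterization is natural here: test against $h$, use the evolution equation for $F^\eps_1$ (the first BBGKY equation) plus the control of $F^\eps_2 - (F^\eps_1)^{\otimes 2}$ to show that the limiting $\zeta_t(h) - \int_0^t \zeta_s(\cL_s h)\,ds$ is a Gaussian martingale with bracket given by the $\eqref{eq: final result covariance'}$ covariance.

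The main obstacle, and the reason for the short-time restriction $T^\star$, is the \textbf{uniform a priori bound on the cumulant generating function}: one must control, uniformly in $\eps$ and summed over the number $n$ of correlated particles and the number $p$ of ``background'' integration variables, an alternating sum of collision-tree integrals whose individual terms grow factorially. The cancellations that make the truncated correlations small (the factor $\mu_\eps^{-(n-1)} = \eps^{(d-1)(n-1)}$ gain) are delicate: they come from the fact that building a \emph{connected} correlation among $n$ particles requires at least $n-1$ ``clustering'' collisions, each carrying a smallness $\eps^{d-1}$ from the collision cross-section against the $\mu_\eps=\eps^{-(d-1)}$ prefactor in the Duhamel operator; but controlling the geometry of recollisions and the pathological velocity configurations (where relative velocities are small or collision trees are nearly degenerate) requires either the usual surgery on ``bad'' velocity/impact-parameter sets or, more robustly, an exponential-moment estimate that survives the factorial proliferation of tree terms. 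Equivalently, one needs the series defining the cumulant generating function to converge in a weighted $\ell^\infty$ norm of the type in Remark~\ref{rem:Tetoile}, which by a fixed-point/Cauchy–Kovalevskaya argument yields a life-span $T^\star>0$ independent of $\eps$ but — with current techniques — not global in time. Everything else (tightness, identification of the martingale problem, uniqueness of the limiting Gaussian process) is comparatively soft once these cumulant bounds are in hand.
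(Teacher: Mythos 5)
Your overall strategy is the one the paper follows: expand $\log\bbE_\eps\exp\big({\bf i}\lA\zeta^\eps,H\rA\big)$ in the rescaled cumulants $f^\eps_{n,[0,t]}$, use the uniform bound $|f^\eps_{n,[0,t]}(H^{\otimes n})|\le C^n n!\,(t+\eps)^{n-1}\|H\|_\infty^n$ so that after the $\mu_\eps^{-n/2}$ scaling of the test function only the terms $n=1,2$ survive, identify the limiting covariance through $f_1$ and $f_2$ and the equations they satisfy, and combine finite-dimensional convergence with tightness. One correction of substance: the rescaled cumulants $f^\eps_{n}$ for $n\ge3$ do \emph{not} tend to zero --- they converge to nontrivial limits $f_n$ (Theorem~\ref{cumulant-thm2}); Gaussianity of the fluctuation field comes purely from the prefactor $\mu_\eps^{1-n/2}$ multiplying a quantity that is only uniformly \emph{bounded}. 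Your uniform bound is the correct input, but the claim ``$\mu_\eps^{n-1}f^\eps_{n,[0,t]}\to0$ for $n\ge3$'' is false and, fortunately, not needed.

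Two of your steps have genuine gaps. First, tightness: a plain Kolmogorov-type moment criterion on increments does not close, because the available fourth-moment bound is $\bbE_\eps\big((\zeta^\eps_t(h)-\zeta^\eps_s(h))^4\big)\le C\big(|t-s|^2+\mu_\eps^{-1}|t-s|\big)$, and the second term --- unavoidable, since the path has $O(\mu_\eps^{-1/2})$ jumps at each collision and $O(\mu_\eps)$ collisions per unit time --- destroys summability below the time scale $\mu_\eps^{-1}$. The paper must prove a modified Garsia--Rodemich--Rumsey inequality with a time cutoff (Proposition~\ref{prop: Modified Garsia, Rodemich, Rumsey inequality}) at scale $\alpha_\eps=\mu_\eps^{-7/3}$, and then control the sub-$\alpha_\eps$ fluctuations by a separate probabilistic argument showing that with high probability no two collisions occur in any window of length $\alpha_\eps$, so that on such windows the increment reduces to free transport. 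Second, identification of the limit: the martingale-problem route you propose is precisely the one the paper explains cannot be set up out of equilibrium (the generator $\cL_t$ is non-autonomous and non-self-adjoint, $\mathcal{U}^*(t,s)$ is not a contraction on a natural Hilbert space, and the fluctuation--dissipation cancellation that defines the noise at equilibrium fails). Instead the paper \emph{defines} a weak solution of \eqref{eq: OU} as a Gaussian law whose covariance solves the closed system \eqref{covariance-dyn}, proves uniqueness for that system, and checks that the limiting covariance $\cC=f_1+f_2$ obtained from the cumulant equations satisfies it; since the limit is Gaussian, hence determined by its covariance, this sidesteps the martingale problem entirely. Your plan needs this (or an equivalent) substitute to be complete.
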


The convergence towards the limiting process \eqref{eq: OU} was conjectured  by Spohn in \cite{S83} and
the non-equilibrium covariance of the process at two different times was computed in \cite{S81}, see also \cite{S2}.
The noise emerges after averaging the deterministic microscopic dynamics.
It is white in time and space, but correlated in  
velocities so that momentum and energy are conserved.

At equilibrium the convergence of a discrete-velocity version of the same process was derived rigorously
by  Rezakhanlou in \cite{Rez}, starting from a dynamics with stochastic collisions (see also \cite{vK74,KL76,Tanaka82,Uchiyama83,Uchiyama88,meleard} for fluctuations and space-homogeneous models).

The physical aspects of the fluctuations for the rarefied gas have been thoroughly investigated in \cite{EC81,S81,S83}.  We also refer to \cite{BGSRSshort}, where we gave an outline of our results and strategy. Here we would like to recall only a few important features.

1) {\it The noise in \eqref{eq: OU} originates from dynamical correlations.}

\noindent
It is a very general fact that, when the macroscopic equation is dissipative, the dynamical equation for the fluctuations contains a term of noise. In the case under study, dynamical correlations correspond for example to two given particles having interacted directly or indirectly  backward in time on $[0,t]$ --- a precise, albeit technical definition will be given later on in terms of a suitable class of pseudo-dynamics (Definition \ref{def: external recollision} below). These correlations have a negligible contribution to the limit $\pi^\e_t \to f(t)$ (see  Corollary~\ref{LLNcorollary}).
The proof of Theorem \ref{thmTCL} provides a further insight on the relation between collisions and noise. Following \cite{S81}, we   represent the dynamics in terms of a special class of trajectories, for which one can classify precisely the dynamical correlations responsible for the term~$d\eta_t$; see Section~\ref{section1.5} for further explanations. For the moment we just remind the reader that there is no a priori contradiction between the dynamics being deterministic, and  the appearance of noise from collisions in the singular limit. Indeed when~$\eps$ goes to zero, the deflection angles are no longer deterministic (as in the probabilistic interpretation of the Boltzmann equation). The randomness, which is entirely coded on the initial data of the hard sphere system, is transferred to the dynamics in the limit.

2) {\it Equilibrium fluctuations can be deduced by the fluctuation-dissipation theorem.}

\noindent
As a particular case, we obtain the result at thermal equilibrium $f^0= M$, where $M$ is a Maxwellian. The stochastic process~\eqref{eq: OU} boils down to a generalized Ornstein-Uhlenbeck process. The
noise term compensates the dissipation induced by  the linearized Boltzmann operator, and the covariance of the noise~\eqref{eq: final result covariance'} can be predicted heuristically by using the invariant measure.
More precisely at equilibrium, one has the equation $d \gz_t   = \cL_{\rm eq} \,\gz_t\, dt + d\eta_t$ where $\cL_{\rm eq}$ is the linearized Boltzmann operator around $M$. To determine the structure of the Gaussian noise, one can formally express the time-independent quantity $\bbE \big( \gz_t(h_1)\, \gz_t(h_2) \big) = \int h_1 \,h_2\, M \,dz\,$ in terms of the initial fluctuations $\gz_0$, and of~$d\eta$. Using that $\cL_{\rm eq}$ is contracting, the limit $t \to \infty$ cancels the dependence on $\gz_0$ and provides formula~\eqref{eq: final result covariance'}, with $f=M$, for the covariance of the noise; see~\cite{S2} for details, and also Remark~\ref{rmk equiliibrium} page~\pageref{rmk equiliibrium}.

3) {\it Away from equilibrium, the fluctuating equations keep the same structure.}

\noindent
The most direct way   to guess \eqref{eq: OU}-\eqref{eq: final result covariance'} is starting from the equilibrium prediction (previous point) and {\it assuming} that $M= M(v)$ can be substituted with $f=f(t,x,v)$.
This heuristics is known as ``extended local equilibrium'' assumption, in the context of fluctuating hydrodynamics; we refer again to~\cite{S2} for details. The hypothesis is based on the remark that the noise 
in the fluctuating equation \eqref{eq: OU} should be white in space and time ($\delta-$correlated in $t$ and $x$) and therefore it should be determined completely by the local properties of the gas. If locally the system is at equilibrium, then the non equilibrium equation \eqref{eq: OU} should be simply the one obtained from the equilibrium equation by adjusting the local parameters.
This procedure turns out to give the right result also for our gas at low density, even if $f=f(t,x,v)$ is not locally Maxwellian. The reason is that a form of local equilibrium is still true, in terms of ideal gases; namely, around a little cube of volume $\mu_\eps^{-1}$ centered in $x$ at time~$t$, the hard sphere distribution converges, as $\mu_\eps\to \infty$, to a uniform Poisson measure with constant density $\int f(t,x,v) dv$ and independent velocities distributed according to $f(t,x,v)/\int f(t,x,v) dv$ (see Corollary~4.7 in~\cite{S2}).

4) {\it Away from equilibrium, fluctuations exhibit long range correlations.}

The covariance of the fluctuation field %(at equal times and) 
at different points $x_1, x_2$ is not zero when~$|x_1-x_2|$ is of order one (and decays slowly with $|x_1-x_2|$). 
At variance with \eqref{eq: fluctuation field at time zero} which is $\delta-$correlated, at positive times a smooth dynamical contribution  to the covariance emerges, which is non zero on macroscopic distances. This feature is typical of non equilibrium fluctuations as discussed in \cite{EC81}. 
%and it can be observed experimentally by means of light scattering on stationary states. 
In the hard sphere gas at low density, this dynamical contribution originates again from dynamical correlations. The proof of Theorem \ref{thmTCL} will provide an explicit formula describing this effect, showing that  the long range contribution to the covariance formula can be expressed in terms of dynamics involving correlations (see \cite{S81}, and Proposition \ref{Prop: covariance OU} page~\pageref{Prop: covariance OU}).

\begin{Rmk}
Note that a fluctuation theorem in the spirit of Theorem {\rm\ref{thmTCL}} was proved first in the context of a mean-field limit of Hamiltonian particle systems, interacting by means of smooth, weak and long-range forces \cite{BH77} (see also \cite{HL73,GV79} for early results on quantum mechanical models). However, this situation is deeply different from ours. The macroscopic limit is governed by the Vlasov equation, which is a reversible equation with no entropy production. Correspondingly, there is no dynamical noise in the fluctuating equation: the fluctuations evolve deterministically
according to the linearized Vlasov equation.
\end{Rmk}

\section{Large deviations}
\setcounter{equation}{0}

While typical fluctuations are of order $O(\mu_\eps^{-1/2})$, they may sometimes happen to be large, leading to a dynamics which is different from the Boltzmann equation.
A classical problem is  to evaluate the probability of such an atypical event, namely that the empirical measure 
remains close to a probability density $\gp \neq f$   during a time interval $[0,t]$.
The following explicit formula for the large deviation functional on $[0,t]$ was obtained by Rezakhanlou~\cite{Rez2} in the case of a one-dimensional stochastic dynamics mimicking the hard-sphere dynamics, and then 
conjectured for the deterministic hard-sphere dynamics in~\cite{RezLNM,bouchet}:
\begin{align}
\label{eq: fonc GD sup p}
\widehat \cF(t,\gp) 
& :=  \widehat \cF (0,\gp_0) + \sup_p \left\{ \int_0^{t} ds 
\, \left[\int_{\bbT^d} dx  \int_{\bbR^d} dv \; p(s,x,v)  \, D_s\gp(s,x,v) - \cH \big( \gp(s) ,p(s) \big) \right]
\right\},
\end{align}
where the supremum is taken over bounded measurable functions $p$,
%(cf. Equation (1.14) in \cite{Rez2}), 
and the Hamiltonian is given by
\begin{equation}
\label{eq: Hamiltonian}
\cH(\gp,p) := \frac{1}{2} \int  d\mu (z_1, z_2, \omega) 
\gp (z_1) \gp (z_2) \big( \exp \big( \Delta p (z_1, z_2)\big) -1 \big) \, ,
\end{equation}
with $d\mu$ and $\Delta  p $ defined in (\ref{eq: measure mu})-(\ref{Delta-def}). 
We have denoted~$D_t$   the transport operator
\begin{equation}
\label{eq: derivative transport}
D_t \gp (t,z) := \partial_t \gp(t,z) + v \cdot \nabla_x \gp(t,z)\;,
\end{equation}
  and finally
\begin{equation}
\widehat \cF (0,\gp_0) := \int_{\D} dz \; \left( \gp_0 \log \left( \frac{\gp_0}{f^0} \right)
-  \gp_0 +f^0 \right)
\end{equation}
with~$\gp_0 = \gp|_{t=0}$,  is the large deviation  rate for the empirical measure at time zero.

The functional~$ \widehat\cF (0)$ can be obtained by a standard procedure, modifying the measure \eqref{eq: initial measure}
in such a way to make the (atypical) profile~$\varphi_0$ typical\footnote{{In \cite{Scola}, at equilibrium, a derivation of large deviations by means of cluster expansion methods is discussed for a larger range of densities.}}. Similarly, to obtain the collisional term $\cH$ in $ \widehat \cF(t,\varphi)$, one would like to understand
the mechanism leading to an atypical path $\varphi = \varphi(s)$ at positive times. 
A serious difficulty then arises, due to the deterministic dynamics. 
%Note that collisions on scale $\e$ translate into an intricate geometry of the initial data.
Ideally, one should conceive a way of tilting the initial measure in order to observe a given
trajectory.  Whether such an efficient bias exists, we do not know. 
We shall proceed in a different way and 
 deduce the large deviations from the cumulant generating function 
\begin{equation}
\label{eq:Letehfirst}
\Lambda ^\eps_t (e^h) := \frac1{\mu_\eps} \log \bbE_\eps \Big( \exp \big( \mu_\eps \, \pi^\eps_t(h) \big) \Big)
\end{equation}
in the spirit of the G\"{a}rtner-Ellis Theorem
which is classical in the large deviation theory \cite{dembozeitouni}.
In this approach, the main difficulty is the explicit characterization of the cumulant generating function
which requires to control the dynamics at {\it all} scales in $\e$.
For our purpose, we will actually need to sample the empirical measure on the whole interval $[0,t]$ and not only at time $t$, which will be implemented by a more general functional (see Eq.\,\eqref{Ieps-def dynamics} below).

%Define the class of admissible conjugate functions  {\color{blue} \tt on a un peu chang\'e les notations dans le chapitre 7, il faudra retirer cette def}
%\begin{equation}
%\label{eq: set hat Br}
%\widehat \bB_r := \left\{ p \in C^1\left([0,T]\times \D\right)  \, \Big| \quad  \|p\|_{W^{1,\infty} ([0,T]\times \D)} < r \right\}
%\end{equation}
%for some $r>0$ and a time $T$ which will be tuned later.
We will be able to evaluate the asymptotic probability of observing any trajectory $\varphi$ satisfying
$D_t\varphi = \frac{\partial\cH}{\partial p}$, namely the biased Boltzmann equation
\begin{equation}
\label{biased-Boltz}
\begin{aligned}
 D_t\varphi = \int_{\D}\int_{{\mathbb S}^{d-1}}& \Big( \varphi  (t,y,w') \varphi (t,x,v') e^{-\Delta p (t,x,v,y,w,\omega)}    \\
  &- \varphi  (t,y,w) \varphi (t,x,v) e^{\Delta p (t,x,v,y,w,\omega)}   \Big)  \, d\mu_{(x,v)}  (  (y,w),  \omega) 
   \end{aligned}
\end{equation}
for some Lipschitz~$p$, and with initial data
\begin{equation}
\label{biased-data}
 \varphi(0,x,v)  = f^0 (x,v) \,e^{p(0,x,v)} \,.
\end{equation}
It is known indeed (see \cite{Rez2}) that 
\eqref{biased-Boltz} allows to code a large class of macroscopic profiles which can be attained in a  large deviation regime. The perturbed equation~\eqref{biased-Boltz} describes a collision process with biased transition rate.

  It can be proved easily (see Chapter \ref{LDP-chap} and  Appendix \ref{CauchyKol}) that \eqref{biased-Boltz}, in mild form, has a unique solution in the class of continuous functions with Gaussian decay in $v$. Such solutions will be called strong solutions.

Consider~$\cM (\bbD)$   the set of positive measures on $\D$\label{def-calM} with finite mass (metrized with the topology of weak convergence).
Define the  set of trajectories in $[0,t]$ taking values in $\cM (\bbD)$  as the Skorokhod space~$D([0,t], \cM(\bbD))$\label{def-Skorokhod}    and denote by $d_{[0,t]}$ the corresponding distance (see \cite{Billingsley} page 121).
The large deviation theorem states as follows -- a more complete version is proved in Chapter~\ref{LDP-chap}  (see Theorems~\ref{thm: F = ha F} and \ref{thmLDbis}).

\begin{Thm} 
\label{thmLD}
Consider a system of hard spheres initially distributed according to  the grand canonical measure {\rm(\ref{eq: initial measure})} where $f^0$ satisfies~{\rm(\ref{lipschitz})}. 
For any $r>0$, there exists a time $T >0$ (depending only on $C_0,\beta_0,r$) such that the following holds.
Define
$$
\begin{aligned}
\cR_{r,T} := \Big\{ \varphi :  [0,T] \times \bbD \mapsto \bbR^+ \, : \,  \varphi \hbox{ is the strong solution of   \eqref{biased-Boltz}-\eqref{biased-data} on $[0,T]$ for some  }  p \\   \hbox{ such that } \, \,  \|p\|_{W^{1,\infty}([0,T] \times {\mathbb D})} \leq r  \Big\} \,.
\end{aligned}$$
For any $\varphi \in  \cR_{r,T}$, in the Boltzmann-Grad limit $\mu_\eps \to \infty$, the empirical measure satisfies the 
large deviation estimates
$$
\begin{aligned}
 \lim_{\delta  \to 0} \limsup_{ \mu_\eps \to \infty}  
 {1\over \mu_\eps} \log\bbP_\eps [ d_{[0,T]} (\pi^\eps, \varphi) \leq \delta]   = -\widehat \cF(T,\gp) \, ,\\
  \lim_{\delta  \to 0} \liminf_{ \mu_\eps \to \infty}  {1\over \mu_\eps} \log\bbP_\eps [ d_{[0,T]} (\pi^\eps, \varphi) \leq \delta] = -\widehat \cF(T,\gp) \, .
\end{aligned}
$$
\end{Thm}

A companion program for large deviations (including gradient flows) has been developed for spatially homogeneous models and stochastic particle systems, in the spirit of Kac's approach for the justification of kinetic theory \cite{Leonard95,Heydecker21,BBBO21,BBBC21,BBBC22}.
For (regular) homogeneous observables $\gp$, the functional $\widehat \cF$ coincides with the functional obtained for the Kac model (see also \cite{Rez2} for the additional spatial dependence). 

Thus a feature of Theorem \ref{thmLD} is that the large deviation behaviour of the mechanical dynamics is also ruled by the large deviation functional of the stochastic process.
It is generally accepted that there is good similarity between deterministic systems displaying some chaoticity and random stochastic processes, an idea that has been used several times in mathematical physics. Our context is rather simple, because of the property of molecular chaos which underlies the kinetic theory of gases. Traditionally, the rigorous justification of this theory is based on two approaches, the programs of Grad \cite{Gr58} and Kac \cite{Kac56}, corresponding respectively to the deterministic and the random case which are both effective with some limitations. It is therefore natural to ask to what extent the ``equivalence'' of dynamical system and stochastic process can be pushed. Our result proves such equivalence up to dynamical events of exponentially small probability.

\bigskip

For an extensive formal discussion on large deviations in the Boltzmann gas, as well as for some physical motivations, we refer to \cite{bouchet} (see also \cite{BdSGJ-LL15} for diffusive systems).
As argued in the following section, fluctuations and large deviations are a systematic way to probe the physical system on finer and finer scales, characterizing all the correlations. In particular, they complement the rigorous explanation of the transition to irreversibility, by showing that stochastic reversibility is recovered if one retains all the information discarded in Lanford's analysis. Finally, we mention that the large deviations add a formal geometric structure to the limit, of gradient-flow type as discussed in \cite{bouchet} (Section 5.4), which might motivate further investigations. 

\section{Strategy of the proofs}\label{section1.5}
\setcounter{equation}{0}

In this section we provide an overview of the paper and describe, informally, the core of our argument leading to Theorems \ref{thmTCL} and \ref{thmLD}. 
 
We should start by recalling the basic features of the proof of Theorem \ref{thm: Lanford}. 
For a deterministic dynamics of interacting particles, so far there has been only one way to access the law of large numbers rigorously. The strategy is based on the `hierarchy of moments' corresponding to the family of correlation functions~$\left(F^{\eps}_n \right)_{n \geq 1}$, Eq.\,\eqref{eq: densities at t}.  The main role of $F^{\eps}_n $ is to project the measure on finite groups of particles (groups of cardinality~$n$), out of the total $\cN$. The term `hierarchy' refers to the set of linear BBGKY equations satisfied by this collection of functions (which will be written in Section \ref{sec:SCE}), where the equation for~$F^{\eps}_n$ has a  source term   depending on $F^{\eps}_{n+1}$. This hierarchy is completely equivalent to the Liouville equation~\eqref{Liouville} for the family $\left(W_N^\eps\right)_{N \geq 0}$, as it contains exactly the same amount of information. However as $\cN \sim \mu_\e$ in the Boltzmann-Grad limit \eqref{eq: Boltzmann Grad}, one should make sense of a Liouville density depending on infinitely many variables, and the BBGKY hierarchy  becomes the natural convenient way to grasp the relevant information. Lanford succeeded to show that the explicit solution~$F^\eps_n(t)$ of the BBGKY hierarchy, obtained by iteration of the Duhamel formula, converges to a product~$f^{\otimes n}(t)$ (propagation of chaos), where $f$ is the solution of the Boltzmann equation~(\ref{boltzmann-eq}).  

This result based on the hierarchy of moments has two important limitations. The first one is the   restriction on its time of validity, which comes from too many terms in the iteration: we are indeed unable to take advantage of   cancellations between gain and loss terms. The second one is a drastic loss of information. We shall not give here a precise notion of `information'. We limit ourselves to stressing that $\left(F^{\eps}_n \right)_{n \geq 1}$ is suited to the description of typical events. In the limit, everything is encoded in $f $, no matter how large~$n$. Moreover, the Boltzmann equation produces some entropy along the dynamics: at least formally, $f$ satisfies
$$\d_t \big(-  \int f\log f \,dv \big)+ \nabla_x \cdot \big( -\int f\log f \,v \,dv\big)   \geq 0\,,
$$
which is in contrast with the time-reversible hard-sphere dynamics.   Our main purpose  here is to overcome this second limitation (for short times) and to perform the Boltzmann-Grad limit in such a way as to keep most of the information lost in Theorem \ref{thm: Lanford}. In particular, the limiting functional~\eqref{eq: fonc GD sup p} coincides with the large deviations functional of a genuine reversible Markov process, in agreement with the microscopic reversibility~\cite{bouchet}. We face a significant difficulty: on the one hand, we know that  {\it averaging} is important in order to go from Newton's equations to Boltzmann's equation; on the other hand, we  want to keep track of some of the  microscopic structure. 

To this end, we need to go beyond  the BBGKY hierarchy and turn to  a more powerful representation of the dynamics.
We shall replace the family $\left(F^{\eps}_n \right)_{n \geq 1}$ (or $\left(W_N^\eps\right)_{N \geq 0}$) with a third, equivalent, family of functions $\left(f^{\eps}_n \right)_{n \geq 1}$, called (rescaled)  {\it cumulants}\footnote{Cumulant type expansions within the framework of kinetic theory appear in \cite{BGSR2,PS17,LMN16,GG18,GG22}.}. Their role is to    grasp information on the dynamics on finer and finer scales. Loosely speaking, $f^{\eps}_n (t)$ will collect events where~$n$ particles are ``completely connected'' by a chain of interactions. We shall say that the $n$ particles form a  {\it cluster}. Since a collision between two given particles is typically of order~$t/\mu_\eps$, a ``complete connection'' would account for events of probability  of order~$(t/\mu_\eps)^{n-1}$. We therefore end up with a hierarchy of rare events, which we need to control at all orders  to obtain Theorem~\ref{thmLD}. At variance with $\left(F^{\eps}_n \right)_{n \geq 1}$, even  {\it after} the limit $\mu_\eps \to \infty$ is taken, the rescaled cumulant $f^{\eps}_n$ cannot be trivially obtained from the cumulant $f^{\eps}_{n-1}$. Each step entails extra information, and events of increasing complexity, and decreasing probability.

The cumulants, which are a standard probabilistic tool, will be  investigated here in the dynamical, non-equilibrium context.
Their precise definition and basic properties are discussed in Chapter \ref{cluster-chap}. 

The introduction of cumulants will not entitle us to avoid the BBGKY hierarchy entirely. Unfortunately, the equations for $\left(f^{\eps}_n \right)_{n \geq 1}$ are difficult to handle. But the moment-to-cumulant relation~$ \left(F^{\eps}_n\right)_{n \geq 1} \to  \left(f^{\eps}_n \right)_{n \geq 1}$ is a bijection and, in order to construct $ f^{\eps}_n(t)$, we can still resort to the same solution representation of  \cite{La75} for the correlation functions $\left(F^{\eps}_n (t)\right)_{n \geq 1}$.
 This formula is an expansion over  {\it collision trees}, meaning that it has a geometrical representation as a sum over binary tree graphs, with vertices accounting for collisions. 
The formula will be presented in Chapter \ref{tree-chap} (and generalized from the finite-dimensional case to the case of functionals over trajectories, which is needed to deal with space-time processes). For the moment, let us give an idea of the structure of this tree expansion. The Duhamel iterated solution for $F^{\eps}_n(t)$ has a peculiar characteristic flow: $n$ hard spheres (of diameter~$\eps$) at time $t$ flow backwards, and collide (among themselves or) with a certain number of external particles, which are added at random times and at random collision configurations. The following picture (Figure \ref{fig:exampleintro1}) is an example of such flow (say, $n=3$).
 \begin{figure}[h] 
\centering
\includegraphics[width=3in]{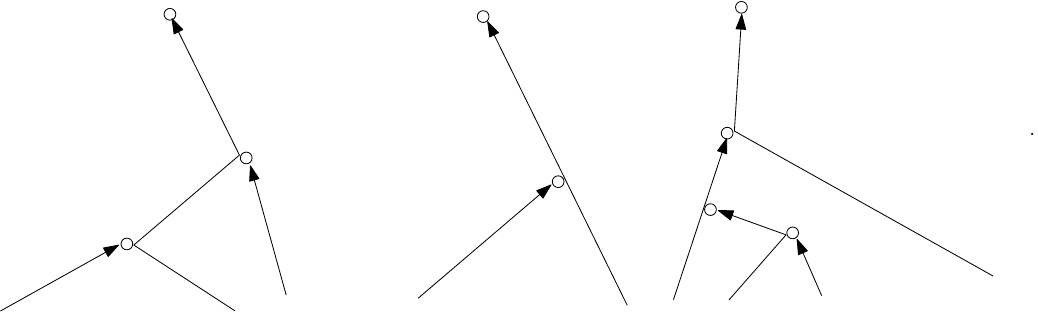} 
\caption{}
\label{fig:exampleintro1}
\end{figure}

\noindent
The net effect resembles a binary tree graph. The real graph is  just a way to record which pairs of particles collided, and in which order.

It is important to notice that different subtrees are unlikely to interact: since the hard spheres are small and the trajectories involve finitely many particles, two subtrees will encounter each other with small probability. This is a rather pragmatic point of view on the propagation of chaos, and the reason why $F^{\eps}_n (t)$ is close to a tensor product (if it is so at time zero) in the classical Lanford argument. Observe that, in this simple argument, we are giving a notion of dynamical  {\it correlation} which is purely geometrical. Actually we will use this idea over and over. Two particles %, say 1 and 2, 
are correlated if their generated subtrees are  {\it connected}, as represented for instance in the following picture (Figure \ref{fig:exampleintro2}).
 \begin{figure}[h] 
\centering
\includegraphics[width=1.5in]{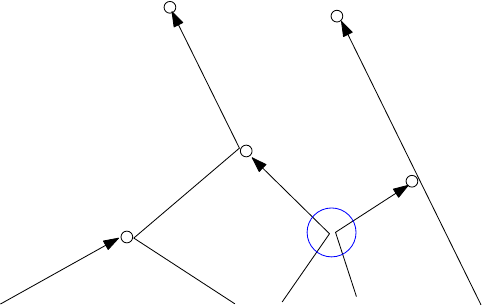} 
\caption{}
\label{fig:exampleintro2}
\end{figure}

\noindent 
The event in Figure \ref{fig:exampleintro2} has `size' $t / \mu_\eps$ (the volume of a tube of diameter $\eps$ and length $t$). In Chapter \ref{cumulant-chap}, we will give a precise definition  of correlation (connection) based on geometrical constraints. %(``recollisions'' and ``overlaps'', corresponding to trees that meet and interact, or cross without interacting, respectively).
It will be the elementary brick to characterize $f^{\eps}_n (t)$ explicitly in terms of the initial data. The formula for~$f^{\eps}_n(t)$ 
(Section \ref{sec:dyncum}) will be supported on characteristic flows with $n$ particles connected, through their generated subtrees (hence of expected size $(t / \mu_\eps)^{n-1}$). In other words, while $F^\eps_n$ projects the measure on  {\it arbitrary} groups of particles of size $n$, the improvement of $f^\eps_n$ consists in restricting to  {\it completely connected} clusters of the same size.

With this naive picture in mind, let us briefly comment again on information, and irreversibility. One nice feature of the geometric analysis of dynamical correlations is that it reflects the transition from a time-reversible to a time-irreversible model. In \cite{BGSRS18}
%,  collision trees are not symmetric in time (they go backwards), and there exist 
we identified, and quantified, the microscopic singular sets where $F^\eps_n$ does  {\it not} converge. These sets are not invariant by time-reversal (they have a direction always pointing to the past, and not to the future). Looking at $F^\eps_n(t)$, we lose track of what happens in these small sets. This implies, in particular, that Theorem \ref{thm: Lanford} cannot be used to come back from time $t>0$ to the initial state at time zero. 
%Events as  ``two collision trees meet each other'' do not contribute to the limit. 
%On the other hand, ``collision trees encountering each other'' are exactly the events used to build up 
The cumulants describe what happens on all the small singular sets,
%. It is therefore not surprising that optimal bounds on the cumulants 
therefore providing the information missing {\color{black} to  recover the reversibility.

At the end of Chapter \ref{cumulant-chap}, we   give a uniform estimate  on these cumulants (Theorem \ref{cumulant-thm1}), which is  the main advance of this paper. 
This $L^1$-bound is  sharp in $\eps$ and $n$ ($n$-factorial bound), roughly stating that the unscaled cumulant decays as $(t / \mu_\eps)^{n-1} n^{n-2}.$ This estimate is intuitively simple. We have given a geometric notion of correlation as a {\it link} between two collision trees.
Based on this notion, we can draw a random graph telling us which particles are correlated and which particles are not (each collision tree being one vertex of the graph). Since the cumulant describes $n$ completely correlated particles, there will be at least $n-1$ edges, each one of small `volume' $t / \mu_\eps$. Of course there may be more than $n-1$ connections (if the random graph has cycles), but these are hopefully unlikely as they produce extra smallness in $\eps$. If we ignore all of them, we are left with minimally connected graphs, whose total number is $n^{n-2}$ by Cayley's formula. Thanks to the good dependence in $n$ of these uniform bounds, we can actually  {\it sum up} all the family of cumulants into an analytic series, referred to as `cumulant generating function' (coinciding with formula \eqref{eq:Letehfirst}).

The second central result of this paper,  stated in Chapter \ref{HJ-chapter} (Theorem \ref{cumulant-thm2}), is the characterization of the rescaled cumulants in the Boltzmann-Grad limit, with minimally connected graphs.
Using this minimality property, we derive a Hamilton-Jacobi equation for the limiting cumulant generating function,
% Wellposedness and uniqueness for this equation can be achieved by abstract methods, based on analyticity.
%All the information of the microscopic mechanical model is actually encoded in this Hamilton-Jacobi equation, 
which is our ultimate point of arrival (allowing us, in particular, to characterize the covariance of the fluctuation field  and the large deviation functional).

 The rest of the paper is devoted to the proofs of our main results.
 
 Chapter \ref{TCL-chap} proves Theorem \ref{thmTCL}. Here, the uniform bounds of Theorem \ref{cumulant-thm1} are considerably better than what is required, and the proof amounts  to looking at a characteristic function living on larger scales. 
Indeed a simple expansion shows that the characteristic function of the fluctuation field is determined, at leading order, by $ f_1^\e$, $(\mu_\e^{1-\frac{n}{2}} f_n^\e)_{n \geq 2}$ so that only the first two cumulants contribute to the limit. This proves the Gaussian character of the process (implying in particular the Wick Theorem for the moments of the limiting field).
 The more technical part of the proof concerns the tightness of the process for which we adapt a Garsia-Rodemich-Rumsey's inequality on the modulus of continuity, to the case of a discontinuous process.
 
In Chapter \ref{LDP-chap} we prove Theorem \ref{thmLD}, and actually even a slightly more general statement. Our purpose is to show that the cumulant generating function  obtained in Chapter~\ref{HJ-chapter} is dual, through the Legendre transform, to a large deviation rate function. Restricting to the class $\cR_{r,T}$ of observables,  this rate functional can be identified with the one predicted in the literature, based on the analogy with stochastic dynamics.

Finally, Chapters \ref{estimate-chap} and \ref{convergence-chap} are devoted to the proof of Theorems \ref{cumulant-thm1} and  \ref{cumulant-thm2}, respectively. We encounter here a combinatorial issue. The number of terms in the formula for $f^{\eps}_n (t)$ grows, at first sight, badly with~$n$, and cancellations need to be exploited to obtain a factorial growth. At this point, cluster expansion methods \cite{Ru69} (summarized in Chapter \ref{cluster-chap}), applied to the collision trees, enter the game. The decay~$(t / \mu_\eps)^{n-1}$ follows instead from a geometric analysis on hard-sphere trajectories with $n-1$ connecting constraints, in the spirit of previous work \cite{BGSR2,BGSRS18,PS17}.

 Many different types of PDEs appear in this text, which are all  solved, locally in time, by an application of an abstract Cauchy-Kovalevskaya theorem in the spirit of Nishida~\cite{nishida2}. The statement of the theorem, as well as various applications, are provided in the Appendix.

\section{Remarks, and open problems}\label{chapter:CoRe}
\setcounter{equation}{0}

%We have developed a mathematical theory of fluctuations for a hard sphere system in the limit of Grad \cite{Gr49}, starting from a nonequilibrium state and thoroughly quantifying the statistical correlations developed over a short kinetic time.

We conclude with a few remarks on our results.

\begin{itemize}
\item To simplify our proof, we assumed that the initial datum is a quasi-product measure, with the minimal amount of correlations (only the mutual exclusion between hard spheres is taken into account). This assumption is useful to isolate the dynamical part of the problem in the clearest way. More general initial states could be dealt with along the same lines (see \cite{S83,PS17}). However the cumulant expansions would contain more terms, describing the deterministic (linearized) transport of initial correlations. 
\item Similarly, fixing only the average number of particles (instead of the exact number of particles) allows to avoid spurious correlations. We therefore work in a grand canonical setting, as is customary in  statistical physics when dealing with fluctuations. Notice that fixing $\cN = N$ produces a long range term of order $1/N$ in the covariance of the fluctuation field. Note also that the cluster expansion method, which is crucial in our analysis, is developed (with few exceptions, see~\cite{PT} for instance) in a grand canonical framework \cite{PU09}.
\item Our results could be established in the whole space $\R^d$, or in a parallelepiped box with periodic or reflecting boundary conditions. Different domains might be also covered, at the expense of complications in the geometrical estimates of dynamical correlations (see~\cite{EGM11,Dolmaire,LeBihan21} for instance).
\item We do not deal with the original BBGKY hierarchy of equations, which was written for smooth potentials, but always restrict to the hard-sphere system. It is plausible that our results could be extended to smooth, compactly supported potentials as considered in \cite{GSRT,PSS17} (see~\cite{Ayi} for a fast decaying case), but the proof would be considerably more involved. 
\item  {At thermal equilibrium,  we expect Theorem \ref{thmTCL} to be true globally in time: see \cite{BGSR2} for a first step in this direction{\footnote{After submission of this work, this program was completed in references \cite{BGSRScov,BGSRStcl,BGSRSsurv}.}}}.
%Our results prove that, far from equilibrium, the structure of the noise is obtained by simply replacing a Maxwellian distribution with the velocity distribution $f(t,x,\cdot)$. This is coherent with the physical picture of a dilute gas at local equilibrium. In fact on the small scale the system is described by a Poisson measure with density modulated by the solution to the Boltzmann equation \cite{S2}.
\end{itemize}

\bigskip
\bigskip
\bigskip

{\bf Acknowledgements.} We are very grateful to H. Spohn and M. Pulvirenti for many enlightening discussions on the subjects treated in this text. We thank also F. Bouchet, F. Rezakhanlou,  G. Basile, D. Benedetto, L. Bertini for sharing their insights on large deviations and A. Debussche, A.  de Bouard, J. Vovelle for their explanations on SPDEs. Finally, we thank the anonymous reviewers for their remarks and suggestions, which have led to a substantial improvement of our manuscript.

 This work was partially supported by the ANR-15-CE40-0020-01 grant LSD. IG and LSR acknowledge the support of a grant from the Simons Foundation MPS No651463-Wave Turbulence.

%
%As a concluding remark, two assumptions are made (which would be probably unnecessary) to simplify our proof. First of all, the initial datum is a quasi-Poisson measure, with the minimal amount of correlations (only the mutual exclusion between hard spheres is taken into account). This assumption is useful to isolate clearly the dynamical part of the problem. For more general initial states, the cumulant expansions would contain more terms, describing the deterministic (linearized) transport of initial correlations. Secondly, we do not deal with the original BBGKY hierarchy of equations, which was written for smooth potentials, but always restrict to the hard-sphere system. Our results could well be extended to smooth, short-range potentials as considered in \cite{GSRT,PSS17}, but the proof would be considerably more involved. 
%

% OTHER POSSIBLE COMMENTS

% perchÃÂ grancanonico

% Poisson locale : da equilibrio a nonequilibrio

% confronto con metodo Rezakhanlou

% "modified Boltzmann equation"

% cita Basile-Benedetto-Bertini ? gradient flows

% link entropia. 

% irreversibilitÂ / bilancio dettagliato.

% tempi lunghi

\part{Dynamical cumulants}

\chapter{Combinatorics on connected clusters}
\label{cluster-chap} 
\setcounter{equation}{0}

 This preliminary chapter consists in presenting a few  notions (well-known in statistical mechanics)
 that will be essential in our analysis: the content of this chapter is classical, but proofs are given for completeness and   to prepare the less familiar reader to some of the combinatorial notions and techniques used in this article. We present in particular cumulants, and their link with exponential moments as well as with cluster expansions. We conclude the chapter with   some combinatorial identities that will be useful throughout this work.

\section{Generating functionals and cumulants} 
\setcounter{equation}{0}

Let $h:\D \to \R$ be a bounded continuous function.
%in the spaces
%\begin{equation}
%\label{eq: space X}
%{\mathcal X}_{\alpha,\beta}
%:= 
%\Big\{ h \in C^0 ({\mathbb D}) 
%\quad  \Big| \qquad   \|h (x,v)  e^{ \alpha  +\frac{\beta } 2 |v|^2} \|_{L^\infty({\mathbb D})}  \leq 1 
%\Big\}\;,\quad \alpha,\beta \in \R\;.
%\end{equation}
%
%For a collection of such test functions $\left( h_i \right)_{i=1}^n$, 
We shall use the functional notation
%\begin{equation}   \label{spaceduality}
%\left\langle F^\eps_{n}(t), h^{\otimes n} \right\rangle = \int_{\D^n} dZ_n\,F^\eps_{n}(t, Z_n)h(z_1)\dots h(z_n)\;,
%\end{equation}
\begin{equation}   \label{spaceduality}
F^\eps_{n,t}\left(h^{\otimes n}\right) = \int_{\D^n} dZ_n\,F^\eps_{n}(t, Z_n)h(z_1)\dots h(z_n)\;,
\end{equation}
(see formula \eqref{FnH-expectation} below for a generalization) and  $$\cP^s_n = \mbox{  set of partitions   of $\{1,\dots,n\}$ into $s$ parts}\;, \label{partition-def}$$
with $$\sigma \in \cP^s_n \Longrightarrow \sigma = \{\sigma_1,\dots,\sigma_s\}\;,\quad |\sigma_i | = \kappa_i\;,
\quad\sum_{i=1}^s \kappa_i =n\;.$$
%where $F^\eps_{n}$ are extended to zero on $\D^n \setminus {\mathcal D}_{n}^\eps$.
%
%The parameters $\alpha,\beta$ will eventually be chosen negative, but large enough (depending on $\alpha,\beta_0$).

The moment generating functional of the empirical measure~(\ref{eq:empmeas}), namely
$\bbE_\eps \Big( \exp \big( \pi^\eps_t(h)  \big )   \Big)$
is related to the rescaled correlation functions~\eqref{eq: densities at t} by the following remark. We recall that
\begin{equation}
\label{eq:empmeas2}
\bbE_\eps \Big(  \exp \big( \pi^\eps_t(h)  \big )  \Big) = \bbE_\eps\left[  \exp \big( \frac{1}{\mu_\eps} \sum_{i=1}^{\cN} h\big( {\bf z}^{\eps}_i(t)\big)   \big )\right]  \, .
\end{equation}
\begin{Prop}
\label{prop: exponential moments}
We have that
\begin{equation}
\label{eqprop: exponential moments}
%\forall t \in [0,T_\alpha]\, , \quad   
\bbE_\eps \Big( \exp \big( \pi^\eps_t(h)  \big )   \Big) =
1+\sum_{n=1} ^\infty {\mu_\eps^n\over n!} \,F^\eps_{n,t}\left(\left(e^{h / \mu_\eps} - 1\right)^{\otimes n}\right)
\end{equation}
if the series is absolutely convergent.
\end{Prop}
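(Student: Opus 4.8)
The plan is to reduce \eqref{eqprop: exponential moments} to the elementary algebraic identity $\prod_i(1+a_i)=\sum_S\prod_{i\in S}a_i$, applied to the finitely many (random) factors attached to the particles of a single configuration, and then to take the expectation $\bbE_\eps$; the only delicate point is the interchange of $\bbE_\eps$ with the infinite sum over $n$, and this is exactly where the absolute convergence hypothesis is used. First I would expand the exponential as a product over particles: writing $g:=e^{h/\mu_\eps}$, one has $\exp(\pi^\eps_t(h))=\prod_{i=1}^\cN g({\bf z}^\eps_i(t))=\prod_{i=1}^\cN\big(1+(g-1)({\bf z}^\eps_i(t))\big)$, and expanding over subsets of $\{1,\dots,\cN\}$ grouped by cardinality $n$ gives
\begin{equation}
\label{eqprop-exp-pointwise}
\exp\big(\pi^\eps_t(h)\big)=\sum_{n=0}^{\cN}\frac{1}{n!}\sum_{\substack{i_1,\dots,i_n\\ i_j\neq i_k,\ j\neq k}}(g-1)^{\otimes n}\big({\bf z}^\eps_{i_1}(t),\dots,{\bf z}^\eps_{i_n}(t)\big)\,,
\end{equation}
where the $n=0$ term equals $1$ and, for $n\geq1$, the summand is the symmetric test function $h_n:=(g-1)^{\otimes n}$ evaluated along the selected particles. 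Taking $\bbE_\eps$ of \eqref{eqprop-exp-pointwise}, exchanging with the sum over $n$ (justified below), and applying the marginal identity \eqref{eq: marginal time t} with $h_n=(g-1)^{\otimes n}$, each term becomes $\tfrac{\mu_\eps^n}{n!}\langle F^\eps_n(t),(e^{h/\mu_\eps}-1)^{\otimes n}\rangle$, and summing over $n\geq0$ yields precisely the right-hand side of \eqref{eqprop: exponential moments}.

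The step I expect to be the only real obstacle is the interchange of $\bbE_\eps$ with the series, because the number of summands $\cN$ in \eqref{eqprop-exp-pointwise} is an unbounded random variable, so one cannot simply move the (finite but random) sum outside the expectation. To handle this I would run the same expansion with $g-1$ replaced by $|g-1|=|e^{h/\mu_\eps}-1|\geq0$: all terms are then nonnegative, so Tonelli's theorem together with \eqref{eq: marginal time t} gives, with no a priori convergence issue,
$$
\bbE_\eps\Big(\prod_{i=1}^\cN\big(1+|e^{h/\mu_\eps}-1|({\bf z}^\eps_i(t))\big)\Big)=\sum_{n=0}^\infty\frac{\mu_\eps^n}{n!}\big\langle F^\eps_n(t),\,|e^{h/\mu_\eps}-1|^{\otimes n}\big\rangle\,.
$$
Since the correlation functions are nonnegative, $F^\eps_n(t)\geq0$, and since $|e^x-1|\leq e^{|x|}-1$ for all real $x$, the right-hand side is bounded by $\sum_n\tfrac{\mu_\eps^n}{n!}\langle F^\eps_n(t),(e^{|h|/\mu_\eps}-1)^{\otimes n}\rangle$, which is finite under the absolute convergence hypothesis (naturally read, since $F^\eps_n\geq0$, as convergence of the series of nonnegative terms $\sum_n\tfrac{\mu_\eps^n}{n!}\langle F^\eps_n(t),|e^{h/\mu_\eps}-1|^{\otimes n}\rangle$; in particular this holds whenever the series associated with $|h|$ in place of $h$ converges absolutely).

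With this in hand, the truncated partial sums in $n$ of \eqref{eqprop-exp-pointwise} are dominated, uniformly in the truncation, by an $\bbE_\eps$-integrable random variable, so dominated convergence legitimates both the termwise expectation and the rearrangement carried out in the first paragraph, and also shows that the resulting series in \eqref{eqprop: exponential moments} converges absolutely. This completes the argument; everything beyond the combinatorial identity \eqref{eqprop-exp-pointwise} and the already-established identity \eqref{eq: marginal time t} is a routine application of Fubini--Tonelli and dominated convergence.
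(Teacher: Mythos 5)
Your proof is correct, but it reaches the identity by a different decomposition than the paper's. You factorize $\exp(\pi^\eps_t(h))=\prod_{i=1}^{\cN}\big(1+(e^{h/\mu_\eps}-1)({\bf z}^\eps_i(t))\big)$ and expand the product over subsets of particles, so the test function $e^{h/\mu_\eps}-1$ appears immediately and the only remaining step is the marginal identity \eqref{eq: marginal time t}. The paper instead Taylor-expands $\exp$ as $\sum_k \frac{1}{k!}(\pi^\eps_t(h))^k$, applies a multinomial expansion of $(\sum_i h)^k$ organized by partitions $\sigma\in\cP^n_k$, uses \eqref{eq: marginal time t} at fixed $(k,n)$, and only then recovers $\prod_i(e^{h(z_i)/\mu_\eps}-1)$ by resumming over $k$ for fixed $n$ via the identity $\sum_{k\geq n}\frac{\mu_\eps^{-k}}{k!}\sum_{\sigma\in\cP^n_k}\prod_i h(z_i)^{\kappa_i}=\frac{1}{n!}\prod_i(e^{h(z_i)/\mu_\eps}-1)$. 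Your route is the standard point-process computation of a Laplace functional in terms of factorial moment measures; it avoids the partition combinatorics entirely and, unlike the paper, makes the interchange of $\bbE_\eps$ with the infinite sum explicit via the Tonelli/domination argument with $|e^{h/\mu_\eps}-1|$ (which is indeed the natural reading of the ``absolutely convergent'' hypothesis, given $F^\eps_n\geq 0$). The paper's route has the minor virtue of exhibiting directly how the moments $\bbE_\eps\big((\pi^\eps_t(h))^k\big)$ are encoded in the correlation functions, which is thematically closer to the cumulant manipulations that follow, but as a proof of this proposition your version is the more economical one.
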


\begin{proof}
Starting from \eqref{eq:empmeas2}, one has  
$$
\begin{aligned}
\sum_{k\geq 1}  {1\over k!}\,  \bbE_\eps \Big(   \Big( \pi^\eps_t(h)  \Big)^k \Big) & =
\sum_{k\geq 1} {1\over k!} \, \sum_{n=1}^k\sum_{\sigma \in \cP^n_k } \mu_\eps^{-k}\, 
\bbE_\eps \Big( \sum_{\substack{i_1, \dots, i_n \\ i_j \neq i_\ell, j \neq \ell}} h \left( {\mathbf z}^\eps_{i_1}(t)\right)^{\kappa_{1}}
\dots h \left( {\mathbf z}^\eps_{i_n}(t)\right)^{\kappa_{n}}\Big)\\
& = \sum_{k\geq 1} {1\over k!} \,
\sum_{n=1}^k\sum_{\sigma \in \cP^n_k } \mu_\eps^{-k}\, 
\mu_\eps^{n} \int_{\D^n} dZ_n\, F^{\eps}_n (t, Z_n) h(z_1)^{\kappa_1}\dots h(z_n)^{\kappa_n }
\end{aligned}
 $$
 where in the last equality we used \eqref{eq: marginal time t}. 
On the other hand for fixed $n$
$$
\begin{aligned}
 \sum_{k\geq n} {\mu_\eps^{-k}\over k!} \sum_{\sigma \in \cP^n_k } \prod _{i = 1} ^n h(z_i)^{\kappa_i} 
 & = \sum_{k\geq n} {\mu_\eps^{-k}\over k!\,n!} \sum_{\substack{ \kappa _1\cdots \kappa_n \geq 1\\ \sum\kappa_i=k } }  \binom{k}{\kappa_1}\binom{k-\kappa_1}{\kappa_2}\cdots \binom{k - \kappa_1- \cdots - \kappa_{n-2}}{\kappa_{n-1}}
 \prod _{i = 1}^n  h(z_i)^{\kappa_i} \\
 &= {1\over n!} \prod_{i= 1}^n \sum_{\kappa_i \geq 1} {h(z_i)^{\kappa_i}\over \mu_\eps^{\kappa_i}\kappa_i !}  = {1\over n!} \prod_{i=1}^n  \left( e^{h(z_i)/\mu_\eps } - 1\right) \;.
 \end{aligned}
 $$
Therefore
 $$
 \bbE_\eps \left( \exp \Big( \pi^\eps_t(h)   \Big)   \right) = 1 + \sum_{n \geq 1}
 \mu_\eps^{n} \int_{\D^n} dZ_n\, F^{\eps}_n (t, Z_n) 
 {1\over n!} \prod_{i=1}^n  \left( e^{h(z_i)/\mu_\eps } - 1\right)\;,
 $$
 which proves the proposition.
 \end{proof}

The moment generating functional  is just a compact representation
of the information coded in the family $\left(F^{\eps}_n (t)\right)_{n \geq 1}$. 
After the Boltzmann-Grad limit~$\mu_\eps \to\infty$, the right-hand side of~(\ref{eqprop: exponential moments}) reduces to~$\displaystyle\sum_{n=0} ^\infty {1\over n!}  \Big( \int f (t)h\Big)^n = \exp \Big( \int f (t)h\Big)$, i.e.\,to the solution of the Boltzmann equation.

As discussed in the introduction, our purpose is to keep a much larger amount of information. To this end, we study the cumulant generating functional which is, by Cram\'{e}r's theorem, an obvious candidate to reach atypical profiles \cite{Varadhan}. Namely, we pass to the logarithm and rescale as follows:
\begin{equation}\label{def-Lambdaepst}
\gL^\eps_{t} (e^h) 
:= \frac{1}{\mu_\eps}\log \bbE_\eps \left( \exp \Big(   \mu_\eps\,\pi^\eps_t(h)  \Big)   \right) = \frac{1}{\mu_\eps}\log \bbE_\eps \Big( \exp \big(   \sum_{i=1}^{\cN} h\big( {\bf z}^{\eps}_i(t) \big)   \big)\Big)  \;.
\end{equation}

The first task is to look for a proposition analogous to the previous one. In doing so, the following definition emerges naturally, where we use the notation: 
   \begin{equation}
   \label{shorthand}
  G_{\sigma_j}    := G_{|\sigma_j|} (Z_{\sigma_j}) \, , \quad  G_{\sigma}
:= \prod_{j=1}^{|\sigma| } G_{\sigma_j}
\end{equation}
for~$\sigma = \{\sigma_1,\dots,\sigma_s\} \in \cP^s_n$.
  
 \begin{Def}[Cumulants]\label{def:cumulants} 
 Let~$(G_n)_{n\geq 1}$  be a family of distributions  of~$n$ variables invariant by permutation of the labels of the variables.  The rescaled {\rm cumulants}\index{Cumulant} associated with~$(G_n)_{n\geq 1}$ form the family~$(g_n)_{n\geq 1}$   defined, for all~$n \geq 1$,  by
\begin{equation} \label{eq:definizionediretta}
g_n=\mu_\eps^{n-1} \sum_{s= 1}^n \sum _{\sigma \in \cP^s_n } (-1) ^{s-1}  (s-1) ! \, G_{\sigma}
 \,.
 \end{equation}
 \end{Def}
The scaling factor $\mu_\eps^{n-1}$ (although unnecessary in this chapter) is introduced for later convenience, and will ensure that the cumulants are of order $1$ in $\varepsilon$.
 
We then have
 the following result, which is well-known in the theory of point processes (see~\cite{daley}). 
 %We stress the fact that in that statement, $ f^\eps_{n}$ are rescaled cumulants.
\begin{Prop}
\label{prop: exponential cumulants}
Let $  (  f^\eps_{n} )_{n \geq 1}$ be the family of rescaled cumulants associated with $\left(F^\eps_{n}\right)_{n \geq 1}$. We have  
\begin{equation*}
\gL^\eps_{t} (e^h)  =
\sum_{n=1} ^\infty {1\over n!} f^\eps_{n,t}\left(\left(e^h - 1\right)^{\otimes n}\right)\;,
\end{equation*}
if the series is absolutely convergent.
 \end{Prop}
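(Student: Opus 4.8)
\textbf{Proof plan for Proposition~\ref{prop: exponential cumulants}.}

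The plan is to combine the explicit expansion of the moment generating functional from Proposition~\ref{prop: exponential moments} with the classical moment-to-cumulant inversion formula of Definition~\ref{def:cumulants}, which is an identity of formal power series. First I would rewrite the conclusion of Proposition~\ref{prop: exponential moments} by introducing the test function $g := e^h - 1$, so that $e^{h/\mu_\eps} - 1 = (1+g)^{1/\mu_\eps} - 1$ is not quite what we want; instead it is cleaner to work directly at the level of the generating functionals. The key observation is that $\gL^\eps_t(e^h) = \frac{1}{\mu_\eps}\log \bbE_\eps\big(\exp(\mu_\eps \pi^\eps_t(h))\big)$, so applying Proposition~\ref{prop: exponential moments} with $h$ replaced by $\mu_\eps h$ gives
$$
\bbE_\eps\Big(\exp\big(\mu_\eps \pi^\eps_t(h)\big)\Big) = 1 + \sum_{n=1}^\infty \frac{\mu_\eps^n}{n!}\big\langle F^\eps_n(t), (e^h-1)^{\otimes n}\big\rangle\;,
$$
so that, writing $\varphi := e^h - 1$, the quantity $\mu_\eps \gL^\eps_t(e^h)$ equals $\log\big(1 + \sum_{n\geq 1} \frac{\mu_\eps^n}{n!}\langle F^\eps_n(t),\varphi^{\otimes n}\rangle\big)$.

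Next I would expand the logarithm via $\log(1+u) = \sum_{s\geq 1} \frac{(-1)^{s-1}}{s} u^s$ with $u = \sum_{n\geq 1} \frac{\mu_\eps^n}{n!}\langle F^\eps_n(t),\varphi^{\otimes n}\rangle$, and collect the coefficient of $\frac{1}{n!}\langle\,\cdot\,,\varphi^{\otimes n}\rangle$ (using that $F^\eps_n(t)$ is symmetric, so that a product $\prod_j \langle F^\eps_{\kappa_j}(t),\varphi^{\otimes \kappa_j}\rangle$ regrouped over the $n = \sum_j \kappa_j$ variables contributes $\langle F^\eps_\sigma(t),\varphi^{\otimes n}\rangle$ in the notation~\eqref{shorthand}). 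The standard bookkeeping: the term $u^s$ produces, for a given ordered tuple $(\kappa_1,\dots,\kappa_s)$ with $\sum \kappa_j = n$, a factor $\frac{\mu_\eps^n}{\prod_j \kappa_j!}$; summing over ordered tuples and comparing with unordered set partitions $\sigma \in \cP^s_n$ introduces the multinomial factor $\frac{n!}{\prod_j \kappa_j!}$ versus the $\frac{1}{s!}$ accounting for the ordering of the parts, and one checks that
$$
\frac{(-1)^{s-1}}{s}\cdot \frac{1}{s!}\cdot (\text{number of orderings}) = \frac{(-1)^{s-1}(s-1)!}{n!}\cdot(\text{per partition})\;,
$$
so that the coefficient of $\langle\,\cdot\,,\varphi^{\otimes n}\rangle$ in $\mu_\eps\gL^\eps_t(e^h)$ is precisely $\frac{\mu_\eps^n}{n!}\sum_{s=1}^n \sum_{\sigma\in\cP^s_n}(-1)^{s-1}(s-1)!\,F^\eps_\sigma(t)$, which by Definition~\ref{def:cumulants} is $\frac{\mu_\eps^n}{n!}$ times the $n$-th cumulant of $(F^\eps_n(t))_{n\geq 1}$, i.e.\ $\frac{\mu_\eps^n}{n!}\cdot\mu_\eps^{-(n-1)}f^\eps_n(t) = \frac{\mu_\eps}{n!}f^\eps_n(t)$. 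Dividing by $\mu_\eps$ gives the claimed identity. Alternatively, and perhaps more transparently, one invokes directly the classical exponential formula relating a generating function and the generating function of its cumulants (the point-process version recalled from~\cite{daley}): if $M(\varphi) = 1 + \sum_n \frac{1}{n!}\langle M_n,\varphi^{\otimes n}\rangle$ then $\log M(\varphi) = \sum_n \frac{1}{n!}\langle m_n,\varphi^{\otimes n}\rangle$ with $(m_n)$ the cumulants of $(M_n)$; applying this with $M_n = \mu_\eps^n F^\eps_n(t)$ and noting that the cumulants of $(\mu_\eps^n F^\eps_n)_n$ are $(\mu_\eps^n \cdot \mu_\eps^{-(n-1)} f^\eps_n)_n = (\mu_\eps f^\eps_n)_n$ yields the result after dividing by $\mu_\eps$.

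The main obstacle is not conceptual but one of care in the combinatorial identity and in the convergence hypothesis: I must verify that the formal rearrangement of the double series (expanding $\log(1+u)$ and then re-expanding $u^s$ as a sum over compositions) is legitimate, which is exactly where the stated assumption ``if the series is absolutely convergent'' is used — absolute convergence of $\sum_n \frac{1}{n!}\langle f^\eps_n(t),(e^h-1)^{\otimes n}\rangle$ together with the absolute convergence in Proposition~\ref{prop: exponential moments} lets one apply Fubini/Mertens to justify the reindexing. I would also take care that the argument is symmetric in the roles of the two sides: the moment-to-cumulant map of Definition~\ref{def:cumulants} is invertible (Möbius inversion on the partition lattice), so the identity can equally be read as expressing $\langle F^\eps_n(t),\cdot\rangle$ in terms of the $\langle f^\eps_m(t),\cdot\rangle$, $m\leq n$, and it suffices to check the formal power series identity $\log$ of the moment series $=$ cumulant series once, using the combinatorial lemma above.
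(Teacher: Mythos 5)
Your proposal is correct and follows essentially the same route as the paper: apply Proposition~\ref{prop: exponential moments} with $h$ in place of $h/\mu_\eps$, expand the logarithm, regroup the sum over ordered compositions into a sum over partitions using $\sharp\,\cP_n^s(\kappa_1,\dots,\kappa_s)=\frac{1}{s!}\frac{n!}{\kappa_1!\cdots\kappa_s!}$, and identify the resulting coefficient $(-1)^{s-1}(s-1)!$ with Definition~\ref{def:cumulants}. The remarks on symmetry of $F^\eps_n$ (so that the pairing against $\varphi^{\otimes n}$ depends only on the part sizes) and on absolute convergence justifying the reindexing match the paper's implicit use of Fubini.
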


\begin{proof}
%A straightforward series expansion gives
%\begin{equation}
%\label{log-exp-series}
%\begin{aligned}
%\frac{1}{\mu_\eps} \log \bbE_\eps  \left(  \exp \Big(  \mu_\eps\,\pi^\eps_t(h)  \Big)  \right) 
%&= \frac{1}{\mu_\eps} \sum_{n= 1}^\infty {(-1)^{n-1} \over n}  
%\left(  \bbE_\eps \left( \sum_{k= 1}^\infty 
%{1\over k!}  \Big(  \mu_\eps\,\pi^\eps_t(h)\Big)^k  
%\right) \right)^n \\
%&= \frac{1}{\mu_\eps} \sum_{n= 1}^\infty {(-1)^{n-1} \over n}   \prod_{\ell = 1}^n \sum_{k_\ell} {1\over k_\ell!}  \bbE_\eps \left(  \Big(  \mu_\eps\,\pi^\eps_t(h) \Big)^{k_\ell}  \right).
%\end{aligned}
%\end{equation}
Applying Proposition~\ref{prop: exponential moments} to~$h$ in place of~$h/\mu_\eps$, expanding the logarithm in a series and using Definition \ref{def:cumulants},  we get
 $$
 \begin{aligned}
\frac{1}{\mu_\eps} \log \bbE_\eps \left( \exp \Big(   \mu_\eps\,\pi^\eps_t(h) \Big) \right) 
&= \frac{1}{\mu_\eps} \sum_{n= 1}^\infty {(-1)^{n-1} \over n}   \prod_{\ell = 1}^n\left[ \sum_{p_\ell} {\mu_\eps^{p_\ell} \over p_\ell!}     F^\eps _{p_\ell,t}\left((e^{ h} - 1) ^{\otimes p_\ell} \right)\right]  \\
& =  \frac{1}{\mu_\eps} \sum_{n= 1}^\infty {(-1)^{n-1} \over n}  \sum_{ p_1, \dots, p_n} {\mu_\eps^{p_1 + \dots + p_n} \over p_1! \dots p_n!}   \prod_{\ell = 1}^n    F^\eps _{p_\ell,t}\left((e^{ h} - 1) ^{\otimes p_\ell} \right) \\
& =  \sum_{p = 1}^\infty \frac{\mu_\eps^{p-1}}{p!} \sum_{ n = 1}^p \sum_{\sigma \in \cP^n_p} (-1)^{n-1} (n-1)!    \prod _{\ell = 1} ^n  F^\eps _{p_\ell,t}\left((e^{ h} - 1) ^{\otimes p_\ell} \right)\\
&=  \sum_{p = 1}^\infty {1\over p!} f^\eps_{p,t}\left(\left(e^h - 1\right)^{\otimes p}\right)\;.
\end{aligned}
$$
In the third equality, we used that the number of partitions of $\{1, \dots , p\}$ into $n$ sets with cardinals $p_1, \dots, p_n$ is given by 
\begin{equation}
\label{eq: decomposition en k sets}
\left|\cP_p^n(p_1, \dots, p_n)\right| = \frac{1}{n!} \; 
\binom{p}{p_1} \binom{p-p_1}{p_2}\cdots \binom{p- p_1- \cdots - p_{n-1}}{p_n} 
= \frac{1}{n!} \;  \frac{ p !}{p_1 ! \;  \cdots \;  p_n !} \,,
\end{equation}
where the factor $n!$ arises to take into account the fact that the sets of the partition are not ordered. 
This proves the result.
\end{proof}
 Note that cumulants measure departure from chaos in the sense that    they vanish identically at order~$n\geq 2$   in the case of i.i.d.\,random variables.

\section{Inversion formula for cumulants}\label{sct:inversion}
\setcounter{equation}{0}

In this section we prove that the  cumulants~$(g_n)$ associated with a family $(G_n)$  in the sense of Definition~\ref{def:cumulants},  encode  all the correlations,  meaning that    $G_n$ can be reconstructed from $(g_k)_{k\leq n}$ for all~$n \geq 1$. More precisely,   the following inversion formula holds.
\begin{Prop}
\label{prop: inversion cumulants general}
Let~$(G_n)_{n\geq 1}$ be a family of distributions and~$(g_n)_{n\geq 1}$ its cumulants
in the sense of Definition~{\rm\ref{def:cumulants}}. 
Then the map from~$(G_n)_{n\geq 1}$ to its cumulants~$(g_n)_{n\geq 1}$ is a bijection and, for each~$n\geq 1$, the   distribution~$G_n$ can be recovered from the cumulants~$(g_k)_{ k\leq n} $ by the inversion formula
\begin{align}
\label{eq: cumulants inverse general}
\forall n \geq 1 \, , \qquad 
G_n = \sum_{s =1}^n   \sum_{\gs \in \cP_n^s}  
   \;    \mu_\eps^{-(n-s)} g_{\gs} \, .
\end{align}
Equations \eqref{eq: cumulants inverse general} and \eqref{eq:definizionediretta} are equivalent definitions of~$(g_n)_{n\geq 1}$.
 \end{Prop}
\begin{proof}
Let us check that
\begin{equation*}
G_n  = \mu_\eps^{-(n-1)}g_n  +  \sum_{s =2}^n   \mu_\eps^{-(n-s)}\sum_{\gs \in \cP_n^s} g_{\gs} \,.
\end{equation*}
Replacing the cumulants~$g_{\gs_j}$ by their definition, we get
$$\bbA_n  := \sum_{s =2}^n   \sum_{\gs \in \cP_n^s}  
\mu_\eps^{-(n-s)} g_{\gs}  = 
\sum_{s =2}^n   \sum_{\gs \in \cP_n^s}   \prod_{j =1   }^s\Big( \sum_{k_j =1}^{|\gs_j|} 
\sum_{ \gk_j   \in \cP_{ \gs_j}^{k_j}} (-1)^ {k_j-1} ( k_j-1)! 
\;  G_{\gk_{j}} \Big) \,.
$$

Using the Fubini Theorem, we can index the sum by the partitions 
with $r :=\displaystyle  \sum_{j=1}^s k_j$ sets and obtain
\begin{align*}
\label{eq: bbAn}
\bbA_n  = 
\sum_{r = 2}^n 
\sum_{\gr \in \cP_n^r}  G_{\gr} \Big(
\sum_{s =2}^r    
\sum_{\omega \in \cP_r^s}   (-1)^{r-s }\prod_{i=1}^s(|\omega_i|-1)!
\Big)\, .
 \end{align*}
Note that the partition $\gs$ in the definition of $\bbA_n$ can be  recovered as
$$
\forall i \leq s\, , \qquad \gs_i = \bigcup_{j \in \go_i} \rho_j\,.
$$
Using the combinatorial identity 
$$
\sum_{k = 1}^n   \sum_{\gs \in \cP_n^k} (-1)^k \prod_{i =1}^k (|\gs_i| -1)! = 0
$$
(see Lemma~\ref{lem: combinatorial identities} below %in Section \ref{combinatorics} below 
for a proof), we find that
$$\sum_{s =2}^r    
\sum_{\omega \in \cP_r^s} (-1)^{r-s }\prod_{i=1}^s(|\omega_i|-1)! =- (-1)^{r-1}(r-1)!\,,$$
hence it follows that 
 \begin{align*}
\bbA_n & =- \sum_{r = 2}^n 
\sum_{\gr \in \cP_n^r}     G_{\gr}  (-1)^{r-1}(r-1)! = -\mu_\eps^{-(n-1)}g_n + G_n \, ,
\end{align*}
where the last equality follows from the definition of $g_n$.
%Finally, the fact that~$(G_n)_{n \geq 1} \longmapsto (g_n)_{n \geq 1} $ is a bijection  can be verified by induction on $n$
%(using the inversion formula).
Similarly, \eqref{eq: cumulants inverse general} $\Rightarrow$ \eqref{eq:definizionediretta} can be verified by induction on $n$.
This completes the proof of Proposition~\ref{prop: inversion cumulants general}.
\end{proof}

\section{Clusters and the tree inequality}\label{sct:support}
\setcounter{equation}{0}

We now  prove that the cumulant of order $n$ is supported on  clusters (connected groups) of cardinality~$n$. We shall consider an abstract situation based on a  ``disconnection" condition, the definition of which may change according to the context.
\begin{Def}\label{def:cbr}
A {\rm connection} is a commutative binary relation $\sim$ on a set $V$:
$$x \sim y\;,\quad x,y \in V\;.$$
The (commutative) complementary relation, called {\rm disconnection}, is denoted $\not\sim$, that is $x \not \sim y$ if and only if $x\sim y$ is false.
%\;,\quad x \neq y\;.$$
\end{Def}
%We consider a family of distributions~$(\gP_n)_{n\geq 1}$ depending on a set of~$n$ variables~$\eta_1, \dots, \eta_n$, and the non overlapping condition between two   variables~$\eta_i$ and~$\eta_j$ is denoted by
%$$
%\eta_i \not \sim \eta_j\, .
%$$
Consider the indicator function that   $n$ elements~$\{ \eta_1, \dots, \eta_n \}$  are disconnected
$$
\gP_n \big( \eta_1, \dots, \eta_n \big) := \prod_{1 \leq i \not = j \leq n } \indc_{ \eta_i \not \sim \eta_j } \,.
$$
For $n=1$, we set  $\gP_1 \big( \eta_1) \equiv 1$.

The following proposition   shows that the cumulant of order~$n$ of~$\gP_n$ is supported on clusters of length~$n$, meaning configurations~$(\eta_1, \dots, \eta_n)$ in which all elements
are linked by a  chain  of connected elements.
%We thus have an exact correspondence between the geometric picture of clusters, and the analytical decomposition in cumulants.  
Before stating the proposition let us recall some classical terminology on graphs. This definition, as well as Proposition~\ref{prop: tree inequality} and its proof, are taken from~\cite{gibbspp}.
\begin{Def}\label{def:graphs}
Let~$V$ be a set of vertices\index{Graph!vertex} and~$E\subset\big\{\{v,w\},\;v,w \in V\, ,\; v\neq w\big\}$ a set of edges\index{Graph!edge}.  The pair~$G = (V,E)$ is called a {\rm graph} (undirected, no self-edge, 
no multiple edge)\index{Graph}. 
Given a graph~$G$ we denote by~$E(G)$ the set of all edges in~$G$. The graph is said  {\rm connected}\index{Graph!connected} if for all~$v,w \in V$, $v\neq w$, there exist~$v_0 = v,v_1,v_2,\dots,v_n=w$ such that~$\{v_{i-1},v_i\}\in E$ for all~$i = 1,\dots,n$.

We denote by~$\mathcal C_V$  the set of  connected graphs with~$V$ as vertices, and by~$\mathcal C_n$  the set of  connected graphs with~$n$  vertices when~$V = \{1,\dots,n\}$. A  {\rm minimally connected}\index{Graph!minimally connected}, or {\rm tree graph}, is a connected graph with~$n-1$ edges. We denote by~$\mathcal T_V$  the set of  minimally connected graphs with~$V$ as vertices, and by~$\mathcal T_n$  the set of minimally connected graphs with~$n$  vertices when~$V = \{1,\dots,n\}$. 

Finally, the union of two graphs $G_1=(V_1,E_1)$ and $G_2=(V_2,E_2)$ is $G_1 \cup G_2 = (V_1\cup V_2,E_1\cup E_2)$.
\end{Def}
The following result was originally derived by Penrose~\cite{Pe67}.
 \begin{Prop}
\label{prop: tree inequality}\index{Tree inequality}
The (unrescaled) cumulant of~$\Phi_n$ defined as in Definition~{\rm\ref{def:cumulants}} is equal to 
\begin{equation}
\label{graph-representation}
\gp_n  \big( \eta_1, \dots, \eta_n \big) =
\sum_{G\in \cC_n} \prod_{ \{i,j\}  \in E (G)} (-\indc_{ \eta_i \sim \eta_j}) \,.
\end{equation}

Furthermore, one has the following ``tree inequality"
\begin{equation}
\label{tree-ineq}
|\gp_n  \big( \eta_1, \dots, \eta_n \big)| \leq 
\sum_{T\in \cT_n} \prod_{ \{i,j\}  \in E(T)} \indc_{ \eta_i \sim \eta_j}   \,.
\end{equation}
\end{Prop}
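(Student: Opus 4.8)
The plan is to prove the graph-sum formula \eqref{graph-representation} first, then deduce the tree inequality \eqref{tree-ineq} by a combinatorial cancellation argument (Penrose's partition scheme). For the first part, I would start from the observation that $\gP_{\sigma_j}(Z_{\sigma_j}) = \prod_{\{i,j\}\subset\sigma_j}\indc_{\eta_i\not\sim\eta_j}$, so each factor in the product over a partition block can be written as a product over pairs inside that block, using $\indc_{\eta_i\not\sim\eta_j} = 1 - \indc_{\eta_i\sim\eta_j}$. The key algebraic identity is that for any partition $\sigma=\{\sigma_1,\dots,\sigma_s\}$ of $\{1,\dots,n\}$,
$$\gP_\sigma = \prod_{j=1}^s\ \prod_{\{k,\ell\}\subset\sigma_j}(1-\indc_{\eta_k\sim\eta_\ell}) = \prod_{\{k,\ell\}\ \text{intra-block}}(1-\indc_{\eta_k\sim\eta_\ell})\,,$$
and then expand each $(1-\indc)$ factor, so that $\gP_\sigma = \sum_{G}\ \prod_{\{k,\ell\}\in E(G)}(-\indc_{\eta_k\sim\eta_\ell})$ where $G$ ranges over all graphs on $\{1,\dots,n\}$ whose every edge stays inside a single block of $\sigma$. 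Substituting this into Definition~\ref{def:cumulants}, and exchanging the order of summation to sum over graphs $G$ first, one gets
$$\gp_n = \sum_{G}\ \Big(\prod_{\{k,\ell\}\in E(G)}(-\indc_{\eta_k\sim\eta_\ell})\Big)\ \sum_{\substack{s\geq 1,\ \sigma\in\cP_n^s\\ G\ \text{subordinate to}\ \sigma}}(-1)^{s-1}(s-1)!\,.$$
The inner sum depends only on the partition of $\{1,\dots,n\}$ into connected components of $G$: if $G$ has $c$ connected components, then $\sigma$ must be a coarsening of that component partition, i.e.\ a partition of a $c$-element set. Thus the inner sum equals $\sum_{k=1}^c\sum_{\omega\in\cP_c^k}(-1)^{k-1}(k-1)!$, which by the combinatorial identity already cited in Section~\ref{sct:inversion} (the vanishing of $\sum_k\sum_{\sigma\in\cP_m^k}(-1)^k\prod_i(|\sigma_i|-1)!$, specialized appropriately) is $0$ unless $c=1$ and equals $1$ when $c=1$. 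Hence only connected graphs $G$ on all $n$ vertices survive, giving \eqref{graph-representation}.

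For the tree inequality, the strategy is Penrose's: construct an explicit map $G\mapsto T(G)$ from $\cC_n$ to $\cT_n$ such that the preimage of each tree $T$ consists exactly of those connected graphs $G$ with $T\subseteq G\subseteq N(T)$ for a suitable "closure" $N(T)$, and such that within each such interval the signs $\prod_{\{i,j\}\in E(G)}(-\indc_{\eta_i\sim\eta_j})$ telescope so that their sum is bounded in absolute value by $\prod_{\{i,j\}\in E(T)}\indc_{\eta_i\sim\eta_j}$. Concretely, fix the root $1$, and to each connected graph $G$ associate its breadth-first (or depth-first) spanning tree $T(G)$ with respect to a fixed vertex ordering; define $N(T)$ to be the graph obtained by adding to $T$ all edges $\{i,j\}$ that are "compatible" with $T$ in the BFS sense (e.g.\ $i,j$ at the same depth, or at adjacent depths with the appropriate ancestor relation). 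One checks that $G\in T^{-1}(T)$ iff $E(T)\subseteq E(G)\subseteq E(N(T))$, so that
$$\sum_{G:\,T(G)=T}\ \prod_{\{i,j\}\in E(G)}(-\indc_{\eta_i\sim\eta_j}) = \prod_{\{i,j\}\in E(T)}(-\indc_{\eta_i\sim\eta_j})\ \prod_{\{i,j\}\in E(N(T))\setminus E(T)}(1-\indc_{\eta_i\sim\eta_j})\,,$$
whose absolute value is at most $\prod_{\{i,j\}\in E(T)}\indc_{\eta_i\sim\eta_j}$ since each extra factor $(1-\indc)$ lies in $\{0,1\}$. Summing over $T\in\cT_n$ and using the triangle inequality on \eqref{graph-representation} then yields \eqref{tree-ineq}.

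I expect the main obstacle to be the careful definition of the Penrose map $G\mapsto T(G)$ and the verification that its fibers are exactly the intervals $[T, N(T)]$ in the edge-inclusion lattice — this is the point where one must pin down a precise spanning-tree algorithm and a matching notion of "compatible extra edge," and check the two inclusions in both directions. The identification of the connected-component partition in the first part, and the reduction of the inner sum to the known combinatorial identity, is routine once the summation order is exchanged; the sign-telescoping in the second part is then immediate. Since the excerpt explicitly attributes this statement and proof to \cite{gibbspp} (following \cite{Pe67}), I would present the argument at the level of detail above and refer to those sources for the verification of the fiber structure of the Penrose map, rather than reproving that lemma from scratch.
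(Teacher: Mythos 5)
Your proposal is correct and, for the tree inequality, follows essentially the same route as the paper: a Penrose partition scheme $G\mapsto T(G)$ built from a rooted (generation-by-generation) spanning tree, the identification of each fiber with an edge-inclusion interval $[T,R(T)]$, the resulting telescoping into $\prod_{E(T)}(-\indc_{\eta_i\sim\eta_j})\prod_{E'(T)}(1-\indc_{\eta_i\sim\eta_j})$, and the bound $(1-\indc)\in[0,1]$; deferring the verification of the fiber structure to \cite{gibbspp} and \cite{Pe67} is consistent with the paper, which itself spells out the scheme but attributes it to those sources. The only genuine divergence is in the derivation of \eqref{graph-representation}: the paper expands $\gP_n$ over all graphs, regroups by connected components to exhibit $\gP_n=\sum_\sigma\prod_k\big(\sum_{G_k\in\cC_{\sigma_k}}\cdots\big)$, and then \emph{identifies} the connected-graph sums with the cumulants via the uniqueness in the inversion formula \eqref{eq: cumulants inverse general}, whereas you compute forward from Definition~\ref{def:cumulants}, exchange the sums over partitions and graphs, and kill the disconnected graphs with the identity \eqref{eq: combinatorial identities 1}. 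Both are valid; your version is self-contained at the cost of redoing the M\"obius-type cancellation that the inversion formula already encapsulates, and your reduction of the inner sum to partitions of the $c$ connected components (giving $1$ for $c=1$ and $0$ otherwise) is correct.
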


\begin{proof}
The first step is to check  the representation formula (\ref{graph-representation}) for the cumulant~$\gp_n$. The starting point is the definition of $\gP_n$
$$
\gP_n \big( \eta_1, \dots, \eta_n \big)  = \prod_{1 \leq i \not = j \leq n } (1- \indc_{ \eta_i  \sim \eta_j } )
 = \sum_{ G }  \prod_{ \{i,j\}  \in E(G)} (-\indc_{ \eta_i \sim \eta_j}) \,,
$$
where the sum over~$G$ runs over all graphs with~$n$ vertices.
We then decompose these graphs into connected components and obtain that 
$$\gP_n \big( \eta_1, \dots, \eta_n \big)  = \sum_{s= 1}^n  \sum_{ \sigma \in \cP^s_n}  \prod _{k = 1}^s \left( \sum_{ G_k   \in \cC_{\sigma_k}}  \prod_{ \{i,j\}  \in E(G_k)} (-\indc_{ \eta_i \sim \eta_j}) \right) \,.$$
By the uniqueness of the cumulant decomposition as given in Proposition \ref{prop: inversion cumulants general} (without the rescaling), we therefore find~(\ref{graph-representation}).
%deduce that 
%$$\gp_n  \big( \eta_1, \dots, \eta_n \big) =
%\sum_{G\in \cC_n} \prod_{ \{i,j\}  \in E(G)} (-\indc_{ \eta_i \sim \eta_j}) \,.$$

The second step is  to compare connected graphs and trees. This is achieved by defining a tree partition scheme, i.e.\,a map~$\pi : \cC_n \to \cT_n$ such that for any~$T \in \cT_n$, there is a graph $R(T) \in \cC_n$ satisfying
$$ \pi^{-1} (\{T\}) =\big \{ G\in \cC_n \,:\, E(T) \subset E(G) \subset E(R(T))\big\}\,.$$
Penrose's partition scheme \label{Penrose} is obtained in the following way. Given a graph $G$, we define its image $T$ iteratively starting from the root $1$
\begin{itemize}
\item the first generation of $T$ consists of all $i$ such that $\{1, i\} \in G$;  these vertices are accepted and labeled in increasing order $t_{1,1},\dots, t_{1,r_1}$;
\item the $\ell$-th generation consists of all $i$ which are not already in the tree,  and such that~$\{t_{\ell-1,j}, i\} $  belongs to~$ E(G)$ for some $j\in \{1,\dots, r_{\ell-1}\} $;   these vertices are labeled
 in increasing order of~$j = 1, \dots, r_{\ell-1}$, then increasing order of $i$.
 \end{itemize}
The procedure ends with a  unique tree $T\in \cT_n$. In order to characterize $R(T)$, we now investigate which edges of $G$ have been discarded. Denote by $d(i)$ the graph distance of  the vertex $i$ to the root (which is just its generation). Let  $\{i,j\} \in E(G) \setminus E(T)$ and assume without loss of generality that $d(i) \leq d(j)$. By construction $d(j)  \leq  d(i) +1$. Furthermore, if $d(j) = d(i) +1$, the parent~$i'$ of $j$ in the tree is such that $i'<i$. 
Therefore $E(G) \setminus E(T)$ is a subset  of the set~$E'(T)$ consisting of edges within a generation ($d(i) = d(j)$), and of edges towards a younger uncle ($d(j) = d(i) +1$ and~$i'<i$). Conversely, we can check that any graph satisfying  $E(T) \subset G \subset E(T) \cup E'(T)$ belongs to $\pi^{-1} (\{T\})$. We therefore define $R(T)$ as the graph with edges $E(T) \cup E'(T)$.

The last step is to exploit the non trivial cancellations between graphs associated with the same tree. There holds, with the above notation,
$$
\begin{aligned}
\sum_{G\in \cC_n} \prod_{ \{i,j\}  \in E(G)} (-\indc_{ \eta_i \sim \eta_j})&= \sum_{T\in \cT_n} \sum_{ G \in \pi^{-1} (T)} \prod_{ \{i,j\}  \in E(G)} (-\indc_{ \eta_i \sim \eta_j})\\
&=  \sum_{T\in \cT_n}\left(  \prod _{ \{i,j\}  \in E(T)} (-\indc_{ \eta_i \sim \eta_j}) \right) \left( \sum_{ E' \subset E'(T)} \prod _{ \{i,j\}  \in E'} (-\indc_{ \eta_i \sim \eta_j})\right)  \\
& = \sum_{T\in \cT_n}\left(  \prod _{ \{i,j\}  \in E(T)} (-\indc_{ \eta_i \sim \eta_j})\right) \left(  \prod_ { \{i,j\} \in E'(T)} (1 - \indc_{ \eta_i \sim \eta_j})\right)\,.
\end{aligned}
$$
The conclusion follows from the fact that $(1 - \indc_{ \eta_i \sim \eta_j} )\in [0,1]$.
The proposition is proved.
\end{proof}

\section{Number of minimally connected graphs}\label{Cayley+}
\setcounter{equation}{0}
The following classical result will be used in Chapter~\ref{estimate-chap}.
\begin{Lem}
\label{lem: Cayley+} The cardinality of the set of minimally connected graphs on $n$ vertices with degrees (number of edges per vertex) of the vertices $1,\dots,n$ fixed respectively at the values 
$d_1,\dots, d_n$ is 
\begin{equation}
\begin{aligned}
\Big|\Big\{ T \in \cT_n \;\;: \;\; d_1(T)=d_1,\dots,d_n(T) = d_n\Big\}\Big| =  \frac{(n-2)!}{\prod_{i=1}^{n} (d_i-1)!}\,\cdotp
\end{aligned}
\end{equation}
\end{Lem}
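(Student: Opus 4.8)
The statement to prove is the refined Cayley-type formula counting labeled trees on $n$ vertices with prescribed degree sequence $(d_1,\dots,d_n)$, namely that this count equals $\frac{(n-2)!}{\prod_{i=1}^n (d_i-1)!}$. Note that implicitly one needs $\sum_i d_i = 2(n-1)$ and each $d_i \geq 1$ for the count to be nonzero; the formula with the convention that factorials of negative integers are $+\infty$ (so the fraction is $0$) handles the degenerate cases automatically, but I would just treat the generic case. The plan is to give the classical proof via the Pr\"ufer correspondence.

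The proof goes by induction on $n$, or more elegantly by a direct bijective/multinomial argument using the Pr\"ufer code. First I would recall the Pr\"ufer bijection: to a labeled tree $T$ on vertex set $\{1,\dots,n\}$ (with $n \geq 2$) one associates a sequence $(a_1,\dots,a_{n-2}) \in \{1,\dots,n\}^{n-2}$ by iteratively removing the leaf of smallest label and recording its unique neighbor, repeating $n-2$ times. This map is a bijection $\cT_n \to \{1,\dots,n\}^{n-2}$. The key feature I need is that the number of times a vertex $i$ appears in the Pr\"ufer sequence of $T$ equals $d_i(T) - 1$. Granting this, the trees with degree sequence $(d_1,\dots,d_n)$ are in bijection with the sequences in $\{1,\dots,n\}^{n-2}$ in which each symbol $i$ occurs exactly $d_i - 1$ times, and the number of such sequences is the multinomial coefficient $\binom{n-2}{d_1-1,\,\dots,\,d_n-1} = \frac{(n-2)!}{\prod_{i=1}^n (d_i-1)!}$, which is exactly the claimed formula.

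So the steps, in order, are: (1) state the Pr\"ufer encoding algorithm and verify it is well-defined (a tree with $\geq 2$ vertices has a leaf, and removing a leaf keeps it a tree); (2) verify the degree/multiplicity property — by induction on $n$, tracking that when a leaf $v$ of smallest label is removed, its neighbor $w$ gets recorded once and its degree drops by one, while a vertex that is never recorded must have been removed as a leaf at some stage, i.e. had degree $1$; (3) construct the inverse map to confirm bijectivity (from a sequence, repeatedly identify the smallest label not appearing in the remaining sequence nor already used, join it to the first remaining entry, delete that entry; finally join the two leftover labels) — or alternatively just cite that the Pr\"ufer correspondence is a bijection, since the paper is comfortable citing classical facts; (4) conclude by the multinomial count of sequences with prescribed letter multiplicities. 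I expect the main (only) real obstacle to be step (2): carefully carrying out the induction so that the bookkeeping on which vertex is recorded, and the claim that appearing $k$ times corresponds to degree $k+1$, is airtight — in particular handling the last two vertices that are never recorded and the vertex recorded in the final step. Everything else is routine. An alternative to the whole Pr\"ufer route, if one prefers a self-contained induction, is to fix a leaf: any vertex $i$ with $d_i = 1$ is a leaf attached to some vertex $j$; summing over the $n-1$ choices of $j$ and applying the inductive formula on $n-1$ vertices with degree sequence obtained by deleting $i$ and decrementing $d_j$, one gets a Vandermonde-type sum $\sum_{j \neq i, d_j \geq 2} \frac{(n-3)!}{(d_j - 2)! \prod_{k \neq i, j}(d_k-1)!}$, which telescopes to $\frac{(n-2)!}{\prod_{k\neq i}(d_k-1)!}$ using $\sum_{j}(d_j - 1) = n - 2$; I would present the Pr\"ufer argument as the main line since it is cleaner, and mention the inductive alternative only in passing.
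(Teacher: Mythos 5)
Your proposal is correct, but your main line of argument (the Pr\"ufer correspondence) is not the one the paper uses: the paper proves the lemma by exactly the ``self-contained induction'' you relegate to a passing remark at the end. Namely, it picks a vertex of degree one (WLOG vertex $n+1$), sums over the $n$ possible neighbours $j$ the inductively known count $\frac{(n-2)!}{(d_j-2)!\prod_{i\neq j}(d_i-1)!}$ for trees on $n$ vertices with $d_j$ decremented, and collapses the sum via $\sum_{j}(d_j-1)=n-1$ --- precisely the telescoping identity you describe. The trade-off between the two routes is the one you would expect: the Pr\"ufer encoding gives a clean bijective explanation (trees with degree sequence $(d_i)$ correspond to words in which letter $i$ occurs $d_i-1$ times, hence a multinomial coefficient, and Cayley's formula $n^{n-2}$ falls out immediately), but it requires you to set up and verify the encoding, its inverse, and the degree/multiplicity property --- your step (2), which you correctly identify as the only delicate bookkeeping. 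The paper's induction needs none of that machinery: it is three lines long, uses only the handshake identity $\sum_i d_i = 2(n-1)$, and derives Cayley's formula as a corollary by summing the lemma over admissible degree sequences (the reverse of the logical order in the Pr\"ufer approach). Both arguments are complete and standard; if you carry out step (2) carefully (in particular the two vertices never recorded, which are exactly those of degree one at the end of the peeling process), your proof is airtight.
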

Before proving the lemma, let us notice   that it implies Cayley's formula $|\cT_n| = n^{n-2}$. Indeed the graph is minimal, so there are exactly $n-1$ edges hence (each edge has two vertices) the sum of the degrees
has to be equal to $2n-2$. Thus
$$
|\cT_n| = \sum_{ \substack{d_1,\dots,d_n \\ 1 \leq d_i \leq n-1 \\ \sum_{i}d_i = 2(n-1)} }
 %\substack{ \\ÃÂ1 \leq d_i \leq n-1 \\ÃÂ\sum_{i}d_i = 2(n-1)}}
 \frac{(n-2)!}{\prod_{i=1}^{n} (d_i-1)!}
=  
%\sum_{ \substack{d_1,\dots,d_n \\ÃÂ1 \leq d_i \leq n-2 \\ÃÂ\sum_{i}d_i = n-2}}
\sum_{ \substack{d_1,\dots,d_n \\ 0 \leq d_i \leq n-2 \\ \sum_{i}d_i = n-2} }
\frac{(n-2)!}{\prod_{i=1}^{n} d_i!} = 
\left( \sum_{i=1}^n\, 1
%\underbrace{1+\dots + 1}_{n}
\right)^{n-2}\;.
$$
\begin{proof}
The lemma can be proved by induction. For $n=2$ the result is trivial, so we suppose to have proved it for the set $
\cT_n^{d_1,\dots,d_n}:=\{ T \in \cT_n \;\; | \;\; d_1(T)=d_1,\dots,d_n(T) = d_n\}$,
for arbitrary $d_1,\dots,d_n$, and consider the set $\cT_{n+1}^{d_1,\dots,d_{n+1}}$. Since there is always at least one vertex of degree 1, we can assume without loss of generality that $d_{n+1} = 1$. Notice that, if the vertex $n+1$ is linked to the vertex $j$, then necessarily $d_j \geq 2$. We therefore compute the number of minimally connected graphs
on $n$ vertices with degrees $d_1,\dots,d_{j-1},d_j-1,d_{j+1},\dots,d_n$, and sum then over $j$ (all the ways to attach the vertex $n+1$ of degree 1). This leads to
$$
|\cT_{n+1}^{d_1,\dots,d_{n+1}} | = \sum_{j=1}^n \frac{(n-2)!}{(d_j-2)!\prod_{i \neq j} (d_i-1)!}\,,
$$
hence
$$
|\cT_{n+1}^{d_1,\dots,d_{n+1}} | =\frac{(n-2)!}{\prod_{i=1}^{n+1} (d_i-1)!}  \sum_{j=1}^{n+1}(d_j-1) =  \frac{(n-1)!}{\prod_{i=1}^{n} (d_i-1)!}
$$
having used again $\sum_{j=1}^{n+1}d_j = 2(n+1-1)$.
\end{proof}

%%%%%%%%%%%%%%%%%%%%%%%%%%%%%%%%%%%%%%%%%%%%%%%%%%%%%%%%%
 \section{Combinatorial identities}\label{combinatorics}
 \setcounter{equation}{0}

  The following combinatorial identities have been used  in the previous sections.
\begin{Lem}
\label{lem: combinatorial identities}
For $n \geq 2$ there holds
\begin{align}
& \sum_{k = 1}^n   \sum_{\gs \in \cP_n^k} (-1)^k (k-1)! = 0 \,,\label{eq: combinatorial identities 1}\\
& \sum_{k = 1}^n   \sum_{\gs \in \cP_n^k} (-1)^k \prod_{i =1}^k (|\gs_i| -1)! = 0 \, .\label{eq: combinatorial identities 2} 
\end{align}
%and for $m\geq n\geq 1$
%\begin{equation}
%\label{eq: combinatorial identities 4}
%\sum_{k=0}^{n} (-1)^{m+n-k}  (m+n-k-1)! C^k_{n} {m!  \over (m-k)!}= 0 \, .
%\end{equation}
\end{Lem}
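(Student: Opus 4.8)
\textbf{Proof plan for Lemma \ref{lem: combinatorial identities}.}

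The plan is to prove both identities from a single generating-function argument. Introduce the exponential generating function of the sequence $a_m := (m-1)!$ for $m\geq 1$, namely $A(x) := \sum_{m\geq 1} (m-1)!\, x^m/m! = \sum_{m\geq 1} x^m/m = -\log(1-x)$. The quantity appearing in \eqref{eq: combinatorial identities 2}, with an extra sign $(-1)^k$ weighting each block, is precisely the coefficient-extraction of a composition of generating functions: summing over partitions of $\{1,\dots,n\}$ into $k$ blocks with a multiplicative weight $\prod_i (|\sigma_i|-1)!$ on the block sizes and a global factor $(-1)^k$ gives, by the exponential formula (see e.g.\ the compositional formula for EGFs), $n!\,[x^n]\, \frac{1}{k!}\big(-A(x)\big)^k$. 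Summing over $k\geq 1$ yields $n!\,[x^n]\big(e^{-A(x)}-1\big) = n!\,[x^n]\big(e^{\log(1-x)}-1\big)=n!\,[x^n](1-x-1) = n!\,[x^n](-x)$, which is $0$ for all $n\geq 2$ and $-1$ for $n=1$. This is exactly \eqref{eq: combinatorial identities 2}.

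For \eqref{eq: combinatorial identities 1}, the weight on each block is the constant $1$ rather than $(|\sigma_i|-1)!$, but there is still the global factor $(k-1)!$ (not $1/k!$). The cleanest route is to first do the inner sum over partitions into exactly $k$ unordered blocks, which for the constant weight $1$ per block is the Stirling number $S(n,k)$, so that the left side of \eqref{eq: combinatorial identities 1} is $\sum_{k=1}^n (-1)^k (k-1)!\, S(n,k)$. Then use the classical identity $\sum_{k} (-1)^k (k-1)!\, S(n,k) = 0$ for $n\geq 2$, which itself follows from the EGF computation $\sum_{k\geq 1} (k-1)!\,(-1)^k \frac{(e^x-1)^k}{k!}$ applied to $S(n,k) = n![x^n](e^x-1)^k/k!$: indeed $\sum_{k\geq 1}(-1)^k (e^x-1)^k/k = -\log\big(1+(e^x-1)\big) = -\log(e^x) = -x$, whose coefficient of $x^n$ vanishes for $n\geq 2$. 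Alternatively, one may derive \eqref{eq: combinatorial identities 1} from \eqref{eq: combinatorial identities 2} by a direct recursion: isolating the block containing the element $n$ and summing over its possible sizes gives a convolution recurrence that both identities satisfy with the same initial data, so they agree. Either way the two statements are essentially the same fact viewed through different weightings.

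I would actually present the unified version: for any choice of block weights $w_m$ with EGF $W(x) = \sum_{m\geq 1} w_m x^m/m!$, one has $\sum_{k=1}^n (-1)^k\!\!\sum_{\sigma\in\cP_n^k} \prod_i w_{|\sigma_i|}\, = n!\,[x^n]\big(e^{-W(x)}-1\big)$ when the per-block weight carries no extra factor, while pulling out an additional $(k-1)!$ amounts to replacing $\frac{1}{k!}(-W)^k$ by $\frac{1}{k}(-W)^k$ and hence summing to $-\log(1+W(x))$ rather than $e^{-W(x)}-1$. Applying this with $W(x) = -\log(1-x)$ (the case $w_m=(m-1)!$, giving \eqref{eq: combinatorial identities 2}) yields coefficient of $x^n$ in $-x$, and with $W(x)=e^x-1$ (the case $w_m\equiv 1$, giving \eqref{eq: combinatorial identities 1}) likewise yields coefficient of $x^n$ in $-x$; both vanish for $n\geq 2$.

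The main obstacle is simply stating the exponential (compositional) formula for exponential generating functions carefully enough to justify that summing a multiplicative block weight over set partitions into $k$ parts produces $n!\,[x^n]\tfrac{1}{k!}W(x)^k$, and keeping track of the distinction between the $\tfrac{1}{k!}$ normalization (unordered blocks) and the extra $(k-1)!$ appearing in the statement — i.e.\ that the combination $(-1)^k(k-1)!$ turns the sum $\sum_k \tfrac{1}{k!}W^k = e^W-1$ into $\sum_k \tfrac{1}{k}(-W)^k = -\log(1+W)$. Once that bookkeeping is in place, both identities are immediate from $-\log(1-x)$ and $e^x-1$ being formal inverses of $1-e^{-x}$ and $\log(1+x)$ in the appropriate sense, and there are no convergence issues since everything is an identity of formal power series (equivalently, a finite identity for each fixed $n$).
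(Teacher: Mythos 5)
Your proof is correct and follows essentially the same route as the paper's: both identities come from reading off the coefficient of $x^n$ in the formal power series identities $\log(e^x)=x$ and $\exp(\log(1\pm x))=1\pm x$, combined with the multinomial count $\sharp\cP_n^k(\ell_1,\dots,\ell_k)=\frac{n!}{k!\,\ell_1!\cdots\ell_k!}$. Your packaging via Stirling numbers and a unified block-weight EGF is a slightly more systematic presentation of the same computation, and the bookkeeping you flag (the factor $(-1)^k(k-1)!$ converting $\sum_k W^k/k!$ into $\sum_k(-W)^k/k$) is handled correctly.
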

\begin{proof}
From the Taylor series of $x \mapsto \log \big( \exp (x) \big)$, we deduce that 
$$
\forall n \geq 2, \qquad
\sum_{k = 1}^n  \sum_{\ell_1 + \dots + \ell_k = n}
\frac{(-1)^k}{k}  \; \frac{1}{ \ell_1 !  \dots  \ell_k !} = 0 \,.
$$
%The number of partitions of $\{1, \dots , n\}$ into $k$ sets with cardinals $\ell_1, \dots, \ell_k$ is given by 
%\begin{equation}
%\label{eq: decomposition en k sets}
%\sharp  \cP_n^k(\ell_1, \dots, \ell_k) = \frac{1}{k!} \; 
%\binom{n}{\ell_1} \binom{n-\ell_1}{\ell_2}\cdots \binom{n- \ell_1- \cdots - \ell_{k-1}}{\ell_k} 
%= \frac{1}{k!} \;  \frac{ n !}{\ell_1 ! \;  \dots \;  \ell_k !} \,,
%\end{equation}
%where the factor $k!$ arises to take into account the fact that the sets of the partition are not ordered. 
Combining \eqref{eq: decomposition en k sets} and the previous identity, we get 
\begin{align*}
0 =
\sum_{k = 1}^n  \sum_{\ell_1 + \dots + \ell_k = n}
\frac{(-1)^k}{k}  \; \frac{1}{ \ell_1 !  \dots  \ell_k !}
&= 
\sum_{k = 1}^n  \frac{(-1)^k}{k}  \sum_{\ell_1 + \dots + \ell_k = n} {k!\over n!}  \sharp  \cP_n^k(\ell_1, \dots, \ell_k)\\
= {1\over n!}
\sum_{k = 1}^n (-1)^k (k-1)!    \sharp  \cP_n^k
\end{align*}
and this completes the first identity \eqref{eq: combinatorial identities 1}.

\bigskip
 From the Taylor series of $x \mapsto  \exp \big( \log(1+x) \big)$, we deduce that 
\begin{equation*}
\label{eq: combinatorial bis}
\forall n \geq 2, \qquad 
\sum_{k = 1}^n \frac{1}{k!} \sum_{\ell_1 + \dots + \ell_k = n}
\frac{(-1)^k}{\ell_1  \dots  \ell_k} = 0 \,.
\end{equation*}
Combining \eqref{eq: decomposition en k sets} and the previous identity, we get 
\begin{align*}
0 =
\sum_{k = 1}^n \frac{1}{k!} \sum_{\ell_1 + \dots + \ell_k = n}
\frac{(-1)^k}{\ell_1  \dots  \ell_k} 
= \frac{1}{n!} 
\sum_{k = 1}^n   \sum_{\gs \in \cP_n^k} (-1)^k \prod_{i =1}^k (|\gs_i| -1)!
\end{align*}
and this completes the second identity \eqref{eq: combinatorial identities 2}.

%\bigskip
% From the Taylor series of $x \mapsto  (1+ (e^x - 1))^{-1} = e^{-x}$, we deduce that 
%\begin{equation}
%\forall n \geq 2, \qquad 
%\sum_{k = 1}^n (-1)^k  \sum_{\ell_1 + \dots + \ell_k = n} {k!\over n!}  \sharp  \cP_n^k(\ell_1, \dots, \ell_k)  = {(-1)^n \over n!}  \,\cdotp
%\end{equation}
%Combining \eqref{eq: decomposition en k sets} and the previous identity, we get 
%\begin{align*}
%(-1)^n 
%=
%\sum_{k = 1}^n   \sum_{\gs \in \cP_n^k} (-1)^k k!
%\end{align*}
%and this completes the proof of the  last identity \eqref{eq: combinatorial identities 3}.
%
%
%\bigskip
% From the Taylor series of $x \mapsto  (1-x)^{-1} $, we deduce that 
%$$
%\begin{aligned}
% (1-x)^{-m} &= (\sum_{k=0}^\infty x^k) ^m = \sum_{m=0}^\infty x^n \sum_{k_1+k_2 +\dots k_m = n} 1\\
% & = \sum_{n=0} ^\infty x^n C^n_{n+m-1}
% \end{aligned}
% $$
% Taking the product with $(1-x)^m$, we get
% $$ \left( \sum_{n=0} ^\infty x^n C^n_{n+m-1}\right) \left( \sum_{s=0}^m C^s_m (-1)^s x^s \right) = 1$$
% from which we deduce that for $n \geq 1$,
% $$\sum_{k=0 } ^n  (-1)^{k} C^{k}_m C^{n-k} _{n-k+m-1} = 0 \,.$$
% Multiplying this identity by $(m-1)! / n!$, we obtain exactly \eqref{eq: combinatorial identities 3}.
  The lemma is proved.
\end{proof}

%
%\bigskip
%Later on, we will also need to estimate the convolution of two sequences with factorial growth.
%
%\begin{Lem}\label{convolution}
%Define 
%$
%S_n : =\displaystyle \sum_{p=0}^n  { p!} { (n-p)!}$.
%Then
%$$S_n = n! \Big( 2 + O( \frac 1 n)\Big).$$
%\end{Lem}
%
%\begin{proof}
%By definition, $S_n$ is the product of 
% $n!$ by the sum of  the inverse combinatorial factors~$1/ \left(\begin{array}{c}n \\ p\end{array}\right)$. 
% 
% This  sum is at least 2 (because of the two extreme terms) and at most (because~$\left(\begin{array}{c}n \\ 2\end{array}\right)\leq \left(\begin{array}{c}n  \\p\end{array}\right)=\left(\begin{array}{c}n  \\n-p\end{array}\right)$ when~$p\geq 2$)
% $$ 1 +1 + \frac 1n+ \frac1 n + \sum_{p=2}^{n-2} \frac 2{n(n-1)} = 2 +\frac 2 n +\frac{2( n-3)}{ n(n-1) } \leq 2 + \frac 4 n$$
%which proves the result.
%\end{proof}

%%%%%%%%%%%%%%%%%%%%%%%%%%%%%%%%%%%%%%%%%%%%%%%%%%%%%%%
\chapter{Tree expansions of the hard-sphere dynamics}
\label{tree-chap} 
\setcounter{equation}{0}

Here and in the next chapter, we explain how the combinatorial  methods presented in the previous chapter can be applied to study the dynamical correlations of hard spheres. The first steps in this direction are to define a suitable family describing the correlations of order $n$, and then to obtain a  graphical representation of this family  which will be helpful to identify the clustering structure.

\section{Space correlation functions}\label{sec:SCE}
\setcounter{equation}{0}

 For the sake of simplicity, we   start by describing correlations in phase space.
Recall that the $n$-particle correlation function $F^\eps_n \equiv F^\eps_n(t, Z_n)$ defined by 
(\ref{eq: densities at t}) counts how many groups of $n$ particles are, in average, in a given configuration~$Z_n$ at time $t$: see Eq.\,\eqref{eq: marginal time t}.
%ecalling that~$({\bf z}_{i_1} (t),\dots, {\bf z}_{i_n}(t))$
%represents the trajectory of~$n$ hard-spheres, labeled~$i_1$ to~$i_n$, among the~$  {\mathcal N} $ particles at time~$t$,     we have that for any test function~$h_n: {\mathbb D}^n\to \R$
%\begin{equation}\begin{aligned}
%\bbE_\eps \Big( \sum_{\substack{i_1, \dots, i_n \\ i_j \neq i_k, j \neq k}} h_n \big( {\mathbf z}_{i_1}(t), \dots ,  {\mathbf z}_{i_n}(t) \big) \Big)
%=
%\mu_\eps^{n} \int_{\D^n} dZ_n\, F^{\eps}_n (t, Z_n) \,h_n \big( Z_n \big)\,,
%\end{aligned}
%\end{equation}
%where we recall that~$\bbE_\eps$ denotes the expectation with respect to the measure~(\ref{eq: initial measure})
%on the initial configurations.
%In particular, for any~$p\in \N$ and any family of continuous test functions $(h_j)_{1\leq j \leq p}$ on~$ {\mathbb D}$, we can compute the moment associated with the random fields $ \pi^\eps_t  (h_j):= \sum_{\ell=1} ^\cN h_j ({\mathbf z}_{\ell} (t) )$~: 
%\begin{equation}\label{eq: moment}\begin{aligned}
%\bbE_\eps \Big( \prod_{ j = 1}^p   \pi^\eps_t (h_j)\Big)
%= \sum_{n= 1}^p \sum_{\sigma \in \cP^n_p } 
%\mu_\eps^{n} \int_{\D^n} dZ_n\, F^{\eps}_n (t, Z_n) \prod_{j = 1} ^n  \prod_{i \in \sigma_j} h _{i} (z_j) \,.
%\end{aligned}
%\end{equation}

Let us now discuss the time evolution of the correlation functions: by integration of the Liouville equation (\ref{Liouville}), we get that the family $(F_n^\eps)_{n\geq 1}$ satisfies the so-called BBGKY hierarchy\index{BBGKY hierarchy} (going back to \cite{Ce72})~:
\begin{equation}
\label{BBGKYGC}
 \partial_t F^\eps_n + V_n \cdot \nabla_{X_n} F^\eps_n = C^\eps_{n,n+1} F^\eps_{n+1} \quad \mbox{in} \quad {\mathcal D}^\eps_{n  }
\end{equation}
 with specular boundary reflection 
\begin{equation}\label{specFn}
 \forall Z_{n} \in \d {\mathcal D}^{\eps +}_{n}(i,j)\, , \quad F^\eps _n (t, Z_n) :=  F^\eps _n (t, Z_{n}^{'i,j}) \, ,
\end{equation}
where $Z^{'i,j}_N$ differs from $Z_N$ only by~(\ref{defZ'nij}).
The collision operator in the right-hand side of (\ref{BBGKYGC}) comes from the boundary terms in Green's formula (using the reflection condition to rewrite the gain part in terms of pre-collisional velocities):
 $$
C^\eps_{n,n+1}  F^\eps_{n+1}:=  \sum_{i=1}^nC_{n,n+1}^{i,\eps} F^\eps_{n+1}
 $$
 with 
\begin{equation}\label{defcollisionintegral}
\begin{aligned}
(C_{n,n+1}^{i,\eps} F^\eps_{n+1}) (Z_n) := \int F^\eps_{n+1} (Z_n^{\langle i \rangle},x_i,v'_i,x_i+\e \omega,w') \big( (
w- v_i
)\cdot \omega \big)_+ \, d \omega dw\\
-   \int  F^\eps_{n+1} (Z_{n},x_i+\e \omega,w ) \big( (
w - v_i
)\cdot \omega \big)_- \, d \omega dw \, ,
\end{aligned}
\end{equation}
where~$(v'_i,w')$ is recovered from~$(v_i,w)$ through the scattering laws~(\ref{defZ'nij}), and with the notation
\begin{equation}
\label{def langle i rangle}
Z_n^{\langle i \rangle} := (z_1,\dots,z_{i-1},z_{i+1},\dots,z_n )\,.
\end{equation}

Note that the collision operator is defined as a trace, and thus  some regularity on~$F^\eps_{n+1}$ is required to make sense of this operator. The classical way of dealing with this issue (see for instance~\cite{GSRT,S})  is to consider the integrated form of the equation, obtained by Duhamel's formula
$$ F^\eps _n(t) =  S^\eps_n(t) F_n^{\eps 0 }+\int_0^t  S^\eps_n(t-t_{1} ) C^\eps_{n,n+1}F^\eps _{n+1} (t_{1}) dt_{1} \,,$$
denoting by~$S^\eps_n$\label{Sn-def}  the group associated with free transport in $\cD^\eps_n$ with specular reflection on the boundary~$ \d \cD^\eps_n$.

	\bigskip	
Iterating Duhamel's formula, we can express the solution as a sum of operators acting on the initial data~:
\begin{align}
\label{eq: duhamel1}
F^\eps _n  (t) =\sum_{m\geq0}    Q^\eps_{n,n+m}(t) F_{n+m}^{\eps 0} \, ,
\end{align} 
where we have defined for $t>0$
\begin{equation}  \begin{aligned}\label{Q-def}
Q^\eps_{n,n+m}(t) F_{n+m}^{\eps 0 }  := \int_0^t \int_0^{t_{1}}\dots  \int_0^{t_{m-1}}  S^\eps_n(t-t_{ 1}) C^\eps_{n,n+1}  S^\eps_{n+1}(t_{1}-t_{2}) C^\eps_{n+1,n+2}   \\
\dots  S^\eps_{n+m}(t_{m})    F_{n+m}^{\eps 0} \: dt_{m} \dots d t_{1} 
\end{aligned}\end{equation}
and~$Q^\eps_{n,n}(t)F^{\eps0}_{n} := S^\eps _n(t)F_{n}^{\e 0}$,  $Q^\eps_{n,n+m}(0)F^{\eps 0} _{n+m} := \gd_{m,0}F^{\eps0}_{n+m}$.

  \section{Geometrical representation with collision trees} \label{sec:grct}
\setcounter{equation}{0}

The usual way to study the Duhamel series \eqref{eq: duhamel1} is to  introduce 
 ``pseudo-dynamics" describing the action of the operator $Q^\eps_{n, n+m}$. In the following,   particles will be denoted by two different types of labels: either integers~$i$ or labels~$i*$ (this difference   will correspond to the fact that particles labeled with an integer~$i$ will be added to the pseudo-dynamics  through the Duhamel formula as time goes backwards, while those labeled by~$i*$  are already present at time~$t$). The configuration of the particle labeled~$i*$ will be denoted indifferently~$z_i^*=(x_i^*,v_i^*)$ or~$z_{i*}=(x_{i*},v_{i*})$.

\begin{Def}[Collision trees]
\label{trees-def}
 Given $n \geq 1\,, m\geq 0$, an (ordered)  {\rm collision tree}\index{Collision!tree}  $a \in \cA_{n,   m} $\label{simple-tree} is   a family $(a_i) _{  1\leq i \leq   m}$ with $a_i \in \{1,\dots, i-1\}\cup \{1*,\dots,n*\}$.
\end{Def}
Note that~~$|\cA_{n,  m}| = n(n+1) \dots (n+m-1)$.

\medskip

Given a collision tree $a \in \cA_{n,   m}$,  we define  pseudo-dynamics starting from a configuration~$Z_n^*= (x_i^*, v_i^*) _{1\leq i\leq n}$
in the $n$-particle phase space  at time $t$  as follows.

\begin{Def}[Pseudo-trajectory] 
\label{def: pseudo-trajectory}
Given~$Z_n^*  \in \cD^\eps_n$,~$m\in \N$ and  $a \in \cA_{n,   m} $, we consider a collection of   times, angles and velocities~$(T_m, \Omega_{  m}, V_{  m}) := (t_i, \omega_i, v_i)_{  1\leq i\leq   m}$ satisfying the constraint
$$
  0 \leq  t_{  m} < \cdots <  t_{  1} \leq t = t_0 \, .
$$ 
We   define recursively    {\rm pseudo-trajectories}\index{Pseudo-trajectory} as follows:
\begin{itemize}
\item in between the  collision times~$t_i$ and~$t_{i+1}$   the particles follow the~$(n+i)$-particle (backward) hard-sphere flow;
\item at time~$t_i^+$,  particle   $i$ is adjoined to particle $a_i$ at position~$x_{a_i} + \eps \omega_i$  and 
with velocity~$v_i$, provided it remains at a distance larger than~$\eps$  from all the other particles. %and  impact parameter~$\omega_i$. 
If~$(v_i - v_{  a_i} (t_i^+)) \cdot \omega_i >0$, velocities at time $t_i^-$ are given by the scattering laws
\begin{equation}
\label{V-def}
\begin{aligned}
v_{  a_i}(t^-_i) &:= v_{  a_i}(t_i^+) - \left((v_{  a_i}(t_i^+)-v_i) \cdot \omega_i\right) \, \omega_i  \, ,\\
v_i(t^-_i) &:= v_i+ \left((v_{  a_i}(t_i^+)-v_i) \cdot \omega_i\right) \,  \omega_i \, .
\end{aligned}
\end{equation}
\end{itemize}
We denote  by~$\Psi^\eps_{n,m}  = \Psi^\eps_{n,m}(t)$ (we shall   sometimes omit to emphasize the number of created particles and   denote  it simply  by~$\Psi^\eps_{n} $) the  so constructed pseudo-trajectory, and
   by $Z_{n,m}(\tau ) =\big (Z_{n}^*(\tau ) ,Z_m(\tau)\big)$ the coordinates of the particles in the pseudo-trajectory  at time~$\tau \leq t_m$.   It depends   on   the parameters~$a, Z_n^*, T_m, \Omega_{  m}, V_{     m}$, and $t  $. 
   We also define~$\cG_{ m}^{\e}(a, Z_n ^*) $  to be the set of parameters~$(T_m, \Omega_{   m}, V_{   m}) $ such that the pseudo-trajectory   exists up to time~$0$, meaning in particular that on adjunction of a new particle, its distance to the others remains larger than~$\e$.  For~$m=0$, there is no adjoined particle and the pseudo-trajectory~$\Psi^\eps_{n,0} (\tau)= Z_{n,0}(\emptyset,Z_n^*,\tau )$ for~$\tau \in 
(0,t)$ is   the $n$-particle (backward) hard-sphere flow.

For a given time $t >0$, the sample path pseudo-trajectory of the $n$ ($*-$labeled) particles  is denoted by~${  Z}_n^* ([0,t]) $\label{def-Zn0t}.
\end{Def}

\begin{Rmk} \label{rm:realvsvirtual}
We stress the difference in notation: ``$z_i(\tau)$''   in the above definition denotes the configuration of particle $i$ in the
pseudo-trajectory while the real, $\cN$-particle hard-sphere flow  is denoted~${\mathbf Z}^\eps_{\cN}(\tau) $ as in  {\rm(\ref{def: trajectory})}: particle~$i$ has configuration~${\mathbf z}^\eps_{i}(\tau)$ in the hard-sphere flow.
\end{Rmk}

With these notations, the representation formula \eqref{eq: duhamel1} for the  $n$-particle   correlation function can be rewritten as 
\begin{equation}\label{BBGKY}
F^\eps _n (t, Z_n^*) =
 \sum_{m \geq 0} \, \sum_{a \in \cA_{n,   m} }\, \int_{\cG_{  m}^{\e}(a, Z_n^* )}    dT_m  d\Omega_{    m}  dV_{   m}    \, \Big( \prod_{i=  1}^{  m}  \big( v_i -v_{  a_i} (t_i)\big) \cdot \omega_i \Big)  \,
F_{n+m}^{\eps 0} \big (\Psi^{\eps0}_{n,m}\big) \,,
\end{equation}
where
$$
 dT_m := dt_1\dots dt_m \,  {\mathbf 1}_{0 \leq t_m \leq \dots \leq t_1 \leq t}\, ,
$$
  we have denoted by~$(F_{n}^{\eps 0})_{n\geq 1}$ the initial rescaled correlation function, and~$\Psi^{\eps0}_{n,m}$ is the configuration at time 0 associated with the pseudo-trajectory~$\Psi^\eps_{n,m}$.
Note that the variables $\omega_i$ are integrated over spheres and the scalar products take positive and negative values (corresponding to the positive and negative parts of the collision operators). Equivalently, we can introduce decorated trees $\left(a,s_1,\dots,s_m\right)$ with signs $s_i = \pm $ specifying the collision hemispheres: denoting by $ \cA_{n,   m}^{\pm}$ the set of all such trees, we can write Eq.\,\eqref{BBGKY} as
\begin{equation}\label{BBGKYwithsigns}
F^\eps _n (t, Z_n^*) =
 \sum_{m \geq 0} \, \sum_{a \in \cA^\pm_{n,   m} }\,\int_{\cG_{  m}^{\e}(a, Z_n^* )}    dT_m  d\Omega_{    m}  dV_{   m}    \, \Big( \prod_{i=  1}^{  m} s_i  \left(\big( v_i -v_{  a_i} (t_i)\big) \cdot \omega_i \right)_+\Big)  \,
F_{n+m}^{\eps 0} \big (\Psi^{\eps0}_{n,m}\big) \,,
\end{equation}
where the pseudo-trajectory is defined as before, with the scattering \eqref{V-def} applied in the case $s_i = +$
and the creation at position $x_i + s_i \e \omega_i$.

 \begin{figure}[h] 
\centering
\includegraphics[width=6in]{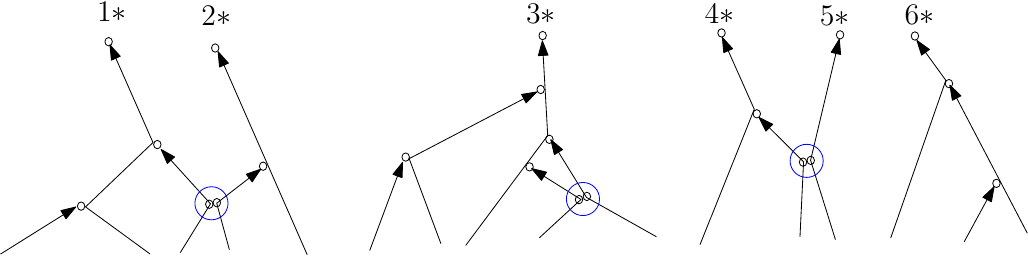} 
\caption{An example of pseudo-trajectory with $n=6$, $m = 10$. In this symbolic picture, time is thought of as flowing upwards (at the top we have a configuration $Z_6^*$, at the bottom $\Psi^{\eps0}_{6,10}$). The little circles represent hard spheres of diameter $\eps$. Notice that several collisions are possible between the adjunction times $T_m$.
These collisions are highlighted by blue circles. For simplicity, the hard spheres have been drawn only at their first time of existence (going backwards), and at collisions between adjunction times. }
\label{fig:ex-pt}
\end{figure}

\section{Averaging over trajectories} \label{subsec:GRCT}
\setcounter{equation}{0}

{\color{black}
To describe dynamical correlations more precisely, we  are going to follow the particle trajectories.
As noted in Remark \ref{rm:realvsvirtual},
pseudo-trajectories provide a geometric representation of the iterated Duhamel series~\eqref{eq: duhamel1}, but they are not physical trajectories of the particle system.
Nevertheless,  the probability on the trajectories of $n$ particles can be derived from the Duhamel series,
as we are  going to explain now.

For a given time $t >0$, the sample path of $n$ particles  labeled~$i_1$ to~$i_n$,  among the~$  {\mathcal N} $  hard spheres, is denoted~$({\bf z}^\eps_{i_1} ([0,t] ),\dots, {\bf z}^\eps_{i_n}([0,t] ))$\label{def-Zn0teps}. In the case when~$i_j=j$ for all~$1 \leq j \leq n$ we denote that sample path by~${\mathbf Z}^\eps_n ([0,t]) $.
As ${\mathbf Z}^\eps_n$  has jumps in   velocity, it is   convenient to work 
 in the space $D_n([0,t])$\label{def-DnOt} of functions that are right-continuous with left limits in $ \D^n  $. 
 This space is endowed with the Skorokhod topology. In the case when~$n=1$ we denote it simply by~$D([0,t])$.

Let $H_n$ be a bounded measurable function on $D_n([0,t])$  (the assumption on boundedness will be relaxed later). We define 
\begin{equation}
\label{FnH}\begin{aligned}
 F^\eps _{n, [0,t]}  ( H_n) :=
& \int  dZ_n^* \sum_{m \geq 0}  \sum_{a \in \cA^\pm_{n,   m} }  \int_{\cG_{ m}^{\e}(a, Z_n^* )}  dT_m  d\Omega_{  m}  dV_{    m}  \\
&  \qquad \times H_n \big(Z_n^*([0,t]) \big)
\Big( \prod_{i=  1}^{  m}  s_i\left(\big( v_i -v_{  a_i} (t_i)\big) \cdot \omega_i \right)_+\Big)  
F_{n+m}^{\eps0} \big (\Psi^{\eps0}_{n,m}\big) \, .
\end{aligned}
\end{equation}
This  formula generalizes the representation introduced in Section \ref{sec:grct} in the sense that,
in the case when~$H_n (Z_n^*([0,t] )) = h_n(Z_n^*(t))$, we obtain 
$$ F^\eps _{n, [0,t]}  ( H_n) = \int  F^\eps _n (t,Z_n^*) h_n(Z_n^*) dZ_n^*\,.$$

More generally,  in analogy with \eqref{eq: marginal time t}, Eq.\,\eqref{FnH}
gives the average (under the initial probability measure) of the function $H_n $ as stated in the next proposition.
\begin{Prop}
\label{prop: identification proba Duhamel}
Let $H_n$ be a bounded measurable function on $D_n([0,t])$.
%  \xout{and with compact support}.  
Then 
\begin{equation}\label{FnH-expectation}\begin{aligned}
\bbE_\eps \Big( \sum_{\substack{i_1, \dots, i_n \\ i_j \neq i_k, j \neq k}} H_n \big( {\mathbf z}^\eps_{i_1}([0,t] ), \dots ,  {\mathbf z}^\eps_{i_n}([0,t] ) \big) \Big)
=
\mu_\eps^{n} F^{\eps}_{n, [0,t]} (H_n)  \,.
\end{aligned}
\end{equation}
\end{Prop}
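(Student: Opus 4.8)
The plan is to obtain \eqref{FnH-expectation} by starting from the right-hand side, using the Duhamel representation \eqref{FnH} together with \eqref{BBGKYwithsigns}, and recognizing that the resulting sum over pseudo-trajectories reconstructs exactly the expectation over the real hard-sphere dynamics. Concretely, I would first recall that, by the definition \eqref{FnH} and the representation \eqref{BBGKYwithsigns} of $F_n^\eps(t)$ in terms of collision trees, the quantity $\mu_\eps^n F^\eps_{n,[0,t]}(H_n)$ is a weighted integral over initial configurations $\Psi^{\eps 0}_{n,m}$ against the initial correlation functions $F^{\eps 0}_{n+m}$. Unwinding $F^{\eps 0}_{n+m}$ via its definition in terms of $W^{\eps 0}_{n+m+p}$ (the grand-canonical marginals), and using the change of variables along the hard-sphere flow that maps the pseudo-trajectory parameters $(T_m,\Omega_m,V_m)$ to a genuine $(n+m)$-particle backward flow (the Jacobian being exactly the product of cross-sections $\prod_i s_i((v_i-v_{a_i}(t_i))\cdot\omega_i)_+$), one identifies the left-hand side as a sum over the number $k$ of ``external'' collision partners encountered by the tagged $n$ particles during $[0,t]$.

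The key combinatorial step is then the same one already carried out for the static identity \eqref{eq: marginal time t} and Proposition \ref{prop: exponential moments}: the expectation $\bbE_\eps(\sum_{i_1,\dots,i_n \text{ distinct}} H_n(\cdots))$ equals $\sum_{p\ge n}\frac{1}{(p-n)!}\int dZ_p\, W^{\eps 0}_p(Z_p)/ \text{(appropriate factor)}$ composed with the hard-sphere flow; one splits the $p-n$ remaining particles according to whether their trajectories on $[0,t]$ do or do not interact with the distinguished cluster of $n$ particles. The ones that never interact factor out and sum up to $1$ by the very definition of the rescaled correlation function $F^{\eps 0}_{n+m}$ (this is precisely the ``$\sum_p \frac1{p!}\int$'' appearing in \eqref{eq: densities at t}, which collapses the untagged particles), while the ones that do interact are precisely the $m$ particles created in the collision tree, carried by the operators $Q^\eps_{n,n+m}$. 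Matching the weights $(v_i-v_{a_i}(t_i))\cdot\omega_i$ with the boundary terms of Green's formula \eqref{defcollisionintegral} completes the identification.

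I would organize the argument as: (1) expand the left-hand side using \eqref{eq: marginal time t}-style manipulations, writing it as a sum over $p$ of integrals of $W^{\eps0}_p$ pushed forward by the $p$-particle flow $S^\eps_p$; (2) for fixed initial data, partition the $p$ particles into the $n$ tagged ones plus those whose backward-in-time trajectory becomes dynamically connected to the tagged cluster within $[0,t]$; (3) integrate out the disconnected particles, which by definition of $F^{\eps0}$ reconstitutes $F^{\eps 0}_{n+m}$ evaluated at the ``created'' configuration; (4) recognize the remaining integral over collision times, impact parameters and incoming velocities of the connected particles as exactly $\cG^\eps_m(a,Z_n^*)$ with the cross-section weight, i.e.\ the Duhamel term in \eqref{FnH}; (5) sum over $m$ and over trees $a\in\cA^\pm_{n,m}$.

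The main obstacle is step (3)-(4): one must verify carefully that the bookkeeping of ``which untagged particles are connected to the cluster'' is consistent with the tree structure of the pseudo-dynamics, i.e.\ that a real backward trajectory of $n+m$ interacting hard spheres decomposes uniquely into a collision tree $a$ together with parameters in $\cG^\eps_m(a,Z_n^*)$, and that the change of variables from physical phase-space coordinates to $(T_m,\Omega_m,V_m)$ has the asserted Jacobian (with the correct treatment of the pre/post-collisional hemisphere signs $s_i$ and the exclusion constraint $|x_i-x_j|>\eps$). This is the standard but delicate ``BBGKY $\leftrightarrow$ pseudo-trajectory'' correspondence; since $H_n$ depends on the whole path $Z_n^*([0,t])$ rather than just on the endpoint, one must also check that $H_n$ is invariant under this reparametrization, which holds because the path of the tagged particles is intrinsically defined by the flow and is not affected by the labelling of collision partners. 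Boundedness and measurability of $H_n$ guarantee that all the series and integrals involved converge absolutely (for $\eps$ fixed the number of particles is a.s.\ finite and $\mathcal D^\eps_N=\emptyset$ for large $N$), so Fubini applies and the rearrangements are justified.
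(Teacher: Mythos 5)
Your proposal takes a genuinely different route from the paper, and that route has a gap at its central step. The paper's proof never interprets pseudo-trajectories as real trajectories: it first takes $H_n$ to be a discrete sampling $\prod_{i=1}^p h_n^{(i)}\big(\mathbf{Z}^\eps_n(\theta_i)\big)$, introduces biased correlation functions $\tilde F^\eps_n$ that satisfy the BBGKY hierarchy on each slice $[\theta_{i+1},\theta_i]$, iterates the Duhamel expansion slice by slice (so that only the already-established fixed-time identity \eqref{eq: marginal time t} is used), and then extends to general bounded measurable $H_n$ by approximation, using that the Borel $\sigma$-field of the Skorokhod topology is generated by the cylinder sets $\pi_{\theta_1,\dots,\theta_p}^{-1}A$.

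The gap in your plan sits in steps (3)--(4). First, integrating out the background particles whose backward trajectories never meet the tagged cluster does not ``sum up to $1$'' and does not reconstitute $F^{\eps 0}_{n+m}$: in the definition \eqref{eq: densities at t} the extra particles are integrated over all of $\D^p$ with no dynamical constraint, whereas your decomposition leaves a non-interaction constraint on $[0,t]$ attached to each of them. That constraint does not vanish; its inclusion--exclusion expansion is precisely what produces the signed gain/loss structure of the Duhamel series, and dropping it changes the answer. Second, and relatedly, the claimed correspondence between real backward $(n+m)$-particle trajectories and pairs $(a,\,\text{parameters in }\cG^\eps_m(a,Z_n^*))$ with $a\in\cA^\pm_{n,m}$ cannot exist: the $s_i=-$ terms of \eqref{BBGKYwithsigns} adjoin a particle in a pre-collisional configuration that then separates without scattering, an event with no counterpart in the physical flow. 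Those loss terms come from the boundary flux in Green's formula, not from counting collisions, so a term-by-term identification with real collision histories reproduces only the $s_i=+$ half of the sum. Carrying your outline through would amount to re-deriving the iterated Duhamel representation by direct integration over trajectories --- a substantially harder task than the statement requires, and one the outline does not actually perform.
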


\begin{proof}
To establish (\ref{FnH-expectation}), we  first look at the case of a discrete sampling of  trajectories
$$
H_n ({\mathbf Z} ^\eps_n([0,t]) )  = \prod_{i= 1}^p h_n^{(i)} ({\mathbf Z} ^\eps_n(\theta_i) )
$$
for some decreasing sequence of times $\Theta = (\theta_i)_{1\leq i \leq p}$ in $[0,t]$,  and some family of bounded continuous functions~$\left(h_n^{(i)}\right)_{1\leq i \leq p}$ with $h_n^{(i)} : \bbD^n \to \bbR$.

% \begin{figure}[h] %  figure placement: here, top, bottom, or page
%\centering
%\includegraphics[width=5in]{sampling.pdf} 
%\caption{\small 
% A discrete sampling of correlations between trajectories. We represent the collision tree coding the structure
% of the pseudo-trajectory. Each line corresponds to one particle. The circled dots correspond to times at which a weight appears in the expansion \eqref{eq: duhamel tilde} {\color{blue} renumeroter $\theta, h$}.}
% \label{fig:sampling}
%\end{figure}

\medskip
\noindent
\underline{First step.} $ $ To take into account the discrete sampling $H_n$, we  proceed recursively and define for any $\tau \in [0,t]$ 
$$H_{n, \tau}  ({\mathbf Z} ^\eps_n([0,t]) )  :=\left(  \prod_{\theta_i \leq \tau }  h_n^{(i)} ({\mathbf Z} ^\eps_n(\theta_i) )\right) \left(  \prod_{\theta_j >\tau}  h_n^{(j)}({\mathbf Z}^\eps _n(\tau) )\right)  \,.$$
In particular, for $\tau \leq \theta_p \leq \dots \leq \theta_1$,
the function $H_{n,\tau}$ depends only on the density at time $\tau$ so that
$$ 
\bbE_\eps \Big( \sum_{\substack{i_1, \dots, i_n \\ i_j \neq i_k, j \neq k}} H_{n,\tau}  \big( {\mathbf z}^\eps_{i_1}([0,t] ), \dots ,  {\mathbf z}^\eps_{i_n}([0,t] ) \big) \Big)
=\mu_\eps^n \int F_n^\eps (\tau, Z_n^*)  \prod_{j= 1} ^p h_n^{(j)}(Z^*_n) dZ^*_n\,.$$
We then define the  biased distribution
$$
\tilde F_n ^\eps( \tau, Z_n^*)
:=F_n^\eps (\tau, Z_n^*)  \prod_{j= 1} ^p h_n^{(j)} (Z^*_n)\, \,  \hbox{ for }\, \,  \tau \in [0,\theta_p]
$$ 
and then extend this   biased correlation function $\tilde F_n^\eps (\tau, Z_n^*) $ on $[0,t]$ so that
$$ 
\bbE_\eps \Big( \sum_{\substack{i_1, \dots, i_n \\ i_j \neq i_k, j \neq k}} H_{n,\tau}  \big( {\mathbf z}^\eps_{i_1}([0,t] ), \dots ,  {\mathbf z}^\eps_{i_n}([0,t] ) \big) \Big)
=\mu_\eps^n \int \tilde F_n^\eps (\tau, Z_n^*)  dZ^*_n\,.$$

In order to characterize  $\tilde  F_n^\eps  (\tau)$, we
 have to iterate the Duhamel formula~(\ref{eq: duhamel1}) in time slices $[\theta_{i+1}, \theta_i]$
 as in the proof of Proposition~2.4 of~\cite{BGSR3} (see also \cite{BLLS80,BGSR2}). 
More precisely we start by 
 writing the Duhamel formula~(\ref{eq: duhamel1}) on~$[\theta_1,t]$, and bias the data at time~$\theta_1^-$ by~$h_n^{(1)}$. This gives, with the notation introduced in Definition~\ref{def: pseudo-trajectory} for the pseudo-trajectories~$Z_{n,m}(\tau)$,
  $$
 \begin{aligned}
  \tilde F^\eps_n(t,Z_n^*) &=  \sum_{ k_{1} \geq0}Q ^\eps _{n,n+ k_{1}} ( t - \theta_{1} ) \tilde F^\eps_{n+ k_{1}}\big(\theta_1^+,Z_{n,k_{1}}(\theta_1)\big) \\
 &=  \sum_{ k_{1} \geq0}Q ^\eps _{n,n+ k_{1}} (t - \theta_{1})h_n^{(1)}(Z_n^* (\theta_{1})) \tilde F^\eps_{n+ k_{1}}\big(\theta_1^-,Z_{n,k_{1}}(\theta_1)\big) \, .
 \end{aligned}$$ 
 Similarly
  $$
  \tilde F^\eps_{n+ k_{1}}\big(\theta_1^-,Z_{n,k_{1}}\big) =  \sum_{ k_{2} \geq0}Q ^\eps _{n+ k_{1},n+ k_{1}+k_2} (  \theta_{1}-  \theta_{2})h_n^{(2)}(Z_n^* (\theta_{2})) \tilde F^\eps_{n+ k_{1}+k_2}\big(\theta_2^-,Z_{n,k_{1}+k_2}(\theta_2)\big) \, .
 $$ 
   We  obtain by iteration   that
 \begin{equation}
 \label{eq: tilde f}
 \begin{aligned}
\tilde F_n ^\eps( t ) 
 &= \sum_{k_1 + \dots+ k_{p+1} \geq0} 
 Q ^\eps _{n,n+ k_1} (t - \theta_{1} ) 
h_n^{(1)}(Z_n^* (\theta_{1})) Q ^\eps _{n+ k_1,n+ k_1+k_2} (\theta_{1} - \theta_{2} )   \\
&\qquad \dots  h_n^{(p)}(Z_n^*(\theta_{p}))  Q ^\eps _{n+ k_1+\dots + k_{p} ,n+ k_1+ \dots + k_{p+1}} (\theta_p) F_{n+ k_1+ \dots + k_{p+1}}^{\eps 0}  
 \;,
\end{aligned}
 \end{equation}
  which leads to \eqref{FnH-expectation} for discrete samplings. 
  
  \medskip
\noindent
\underline{Second step.} $ $  More generally any function $H_n$ on $( \D^n )^p$ can be approximated in terms of products of functions on $\D^n$, thus \eqref{eq: tilde f} leads to 
\begin{align*}
\bbE_\eps \Big( \sum_{\substack{i_1, \dots, i_n \\ i_j \neq i_k, j \neq k}} H_{n}  \big( {\mathbf z}^\eps_{i_1}([0,t] ), \dots ,  {\mathbf z}^\eps_{i_n}([0,t] ) \big) \Big)
=\mu_\eps^n \sum_{k_1 + \dots+ k_{p+1} \geq0} 
 Q ^\eps _{n,n+ k_1} (t - \theta_{1} ) 
 Q ^\eps _{n+ k_1,n+ k_1+k_2} (\theta_{1} - \theta_{2} )   \\
  \dots   Q ^\eps _{n+ k_1+\dots + k_{p} ,n+ k_1+ \dots + k_{p+1}} (\theta_p) H_n ( Z^*_n(\theta_1), \dots, Z^*_n(\theta_p))  F_{n+ k_1+ \dots + k_{p+1}}^{\eps 0} 
\end{align*}
where the Duhamel series is weighted by the $n$-particle pseudo-trajectories at times $\theta_1, \dots , \theta_p $.

\medskip
\noindent
\underline{Third step.} $ $ 
For any $0 \leq \theta_p < \dots <\theta_1 <t $, we denote by~$\pi_{\theta_1, \dots ,\theta_p}$  
the projection from $D_n([0,t])$ to~$( \D^n )^p$
\begin{equation}
\label{eq: projection}
\pi_{\theta_1, \dots ,\theta_p} (Z_n([0,t])) = ( Z_n(\theta_1), \dots, Z_n(\theta_p)) \,.
\end{equation}
The $\sigma$-field of Borel sets for the Skorokhod topology can be generated by the sets of the form~$\pi_{\theta_1, \dots ,\theta_p}^{-1} A$ with $A$ a subset of $( \D^n )^p$ (see Theorem 12.5 in \cite{Billingsley}, page 134).
This completes the proof of Proposition~\ref{prop: identification proba Duhamel}.
  \end{proof}
}
\bigskip

To simplify notation, we are going to denote by $\Psi^\eps_n$ the pseudo-trajectory during the whole time interval~$[0,t]$, which  is  encoded by its starting points $ Z_n^* $ and the evolution parameters~$(a, T_{m}, \Omega_{m}, V_{m})$. 
  Similarly we use the compressed notation~$ \indc_{\cG^{\e}} $    for the constraint that the   parameters~$( T_{m}, \Omega_{m}, V_{m})$ should be in~$\cG_{ m}^{\e}(a, Z_n ^*) $ as in Definition~\ref{def: pseudo-trajectory}.
The parameters~$(a, T_{m}, \Omega_{m}, V_{m})$  are distributed according to the measure
\begin{equation}
\label{eq: measure nu}
d\mu ( \Psi^\eps_n) := \sum_{m} \sum_{a \in \cA^\pm_{n,m} } dT_{m}  d\Omega_{m}  dV_{m} \indc_{\cG^{\e}} (\Psi^\eps_n) \prod_{k=1}^{m}   \Big (s_k\left(\big( v_k -v_{a_k} (t_k)\big) \cdot \omega_k\right)_+\Big ) \, .
\end{equation}
%The product of the cross--section factors of the pseudo-trajectory $\Psi^\eps_n$ will be denoted by 
%\begin{equation}
%\label{eq: cross section}
%\cC \big( \Psi^\eps_n \big) := \prod_{k=1}^{m}    \big( v_k -v_{a_k} (t_k)\big) \cdot \omega_k  \; .
%\end{equation}
The weight coming from the function~$H_n$ 
will be denoted by
\begin{equation}
\label{eq: poids1}
\cH \big( \Psi^\eps_n \big) :=   H_n\big( Z_n^*([0,t])\big)  \,.
\end{equation}
%Finally $\indc_{\cG^{\e}} ( \Psi^\eps_n \big)$ will be the indicator function $\indc_{\cG_{ m}^{\e}(a, Z^*_n)}$.

Formula \eqref{FnH} can  be rewritten
\begin{equation}
\label{eq: notation compressee}
 F_{n, [0,t]} ^\eps(H_n )
 = \int dZ_n^* \int d\mu ( \Psi^\eps_n)  \; \cH \big( \Psi^{\eps}_n \big)
\; F^{\eps 0} \big (  \Psi_n^{\eps0}\big) \,  ,
\end{equation}
and $F^{\eps 0} \big (\Psi_n^{\eps0}\big)$ stands for the initial data evaluated on the configuration at time 0 of 
the pseudo-trajectory (containing $n+m$ particles).

The series expansion \eqref{eq: notation compressee} is absolutely convergent, uniformly in $\e$, for times smaller than some~$T_0>0$: this determines the time restriction in Theorem \ref{thm: Lanford} (see Remark \ref{rem:Tetoile}). 
%More precisely, $T_0$
%is defined by the following condition:
%\begin{equation}
%\forall t \in[0,T_0]\;,\qquad \sup_{n\geq 1} \left[\sup_{\D^n}\int d|\mu| ( \Psi^\eps_n)\,
%C_0^n\,  e^{-\frac{\beta_0}{4} |V^*_n|^2}\right]^{\frac{1}{n}}< 1\;,
%\end{equation}
%and can be bounded by~$c \sqrt{\beta_0}/C_0$, where~$c$ is a universal constant depending only on the dimension (see Chapter~\ref{estimate-chap} for details).}

%%%%%%%%%%%%%%%%%%%%%%%%%%%%%%%%%%%%%%%%%%%%%%%%%%%%%%%

\chapter{Cumulants for the hard-sphere dynamics}
\label{cumulant-chap} 
\setcounter{equation}{0}

To understand the structure of dynamical correlations, we are going to describe how the collision trees introduced in the previous chapter (which are the elementary dynamical objects) can be grouped into clusters. We shall identify three different types of correlations \textcolor{black}{(treated in Section \ref{sec:ER}, \ref{sec:O}, \ref{sec:IC} respectively).} Our starting point will be Formula~\eqref{eq: notation compressee}. We will also need the notation $\Psi^\eps_{n} = \Psi^\eps_{\{1,\dots,n\}}$, where a  pseudo-trajectory is  labeled by the ensemble of its roots.

\smallskip

Notice that the two collision trees in~$\Psi^\eps_{\{1,2\}}$ do   {\it not} scatter if and only if $\Psi^\eps_{\{1\}}$ and $\Psi^\eps_{\{2\}}$ keep a mutual distance larger than~$  \eps$. We shall then write the non-scattering condition as  the complement of an   {\it overlapping} condition, meaning that~$\Psi^\eps_{\{1\}}$ and $\Psi^\eps_{\{2\}}$ reach a mutual distance smaller than $\eps$ (without scattering with each other). \textcolor{black}{The scattering, disconnection and overlap situations are represented in Figure~\ref{rec-nonrec-ov} (recall also Figure \ref{fig:ex-pt}), together with some nomenclature which is made precise below.}

 \begin{figure}[h] 
\centering
\includegraphics[width=6in]{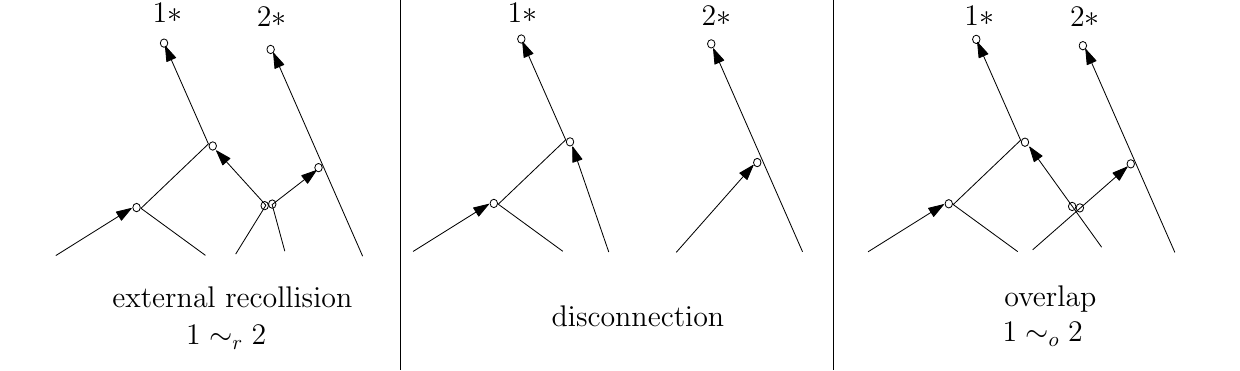} 
\caption{}
%{\color{red} \tt rajouter (independent trees) sous disconnection}
\label{rec-nonrec-ov}\end{figure}

\section{External recollisions}\label{sec:ER}
\setcounter{equation}{0}

A pseudo-trajectory $\Psi^\eps_n$ is made of $n$ collision trees starting  from the roots $Z_n^*$.
These elementary collision trees will be called {\it subtrees}, and will be indexed by the label of their root.
 The parameters $(a, T_{m}, \Omega_{m}, V_{m})$ associated with each collision tree
are independent, and can be separated into $n$ subsets.

The corresponding pseudo-trajectories $\Psi^\eps_{\{1\}}, \dots \Psi^\eps_{\{n\}}$ evolve independently until two particles belonging to different trees collide,
 in which case the corresponding two trees get correlated.  The next definition introduces the notion of  recollision and  distinguishes whether the recolliding particles are in the same tree or not.

\begin{Def}[External/internal recollisions]\index{Recollision!external}\index{Recollision!internal}
\label{def: external recollision}
A  {\it recollision} occurs when 
two pre-existing particles in a  pseudo-trajectory scatter.
A recollision between two particles  will be called an  {\it external recollision} if the two particles involved are in different subtrees (see Figure~{\rm\ref{rec-nonrec-ov}}).  A recollision  between two particles  will be called an  {\it internal recollision} if the two particles involved are  in the same subtree. 
\end{Def} 

Let us now decompose the integral~(\ref{eq: notation compressee}) depending on whether subtrees are correlated or not. 
Recall Definitions \ref{def:cbr} and \ref{def:graphs}.

\begin{Notation} \label{notationmb}
%Given  a partition~$\gl \in \cP^\ell_n$, 
We denote by 
$$\{ j \} \sim_r \{  j'\} $$ the condition: ``there exists an external recollision between particles in the subtrees indexed by~$j$ and~$j'$".
Given $\gl \subset \{1, \dots , n\}$, we denote by~$\mb_{\gl} $ the indicator function that 
any two elements of $\lambda$ are connected by a chain of external recollisions. In other words
%(using Definition \ref{def:graphs})
%for all~$i ,j$ in~$ \gl$, there exists a chain $i_1= i, i_2,\dots, i_{k_{ij}} = j$ of~$k_{ij}$  elements of $\gl$ such that there  is at least one  external recollision between particles in the subtrees indexed by $ {i_p}$ and ${i_{p+1}}$.
\begin{equation}\label{eq:defDM}
\mb_{\gl} = 1 \quad\iff\quad  \exists\, G \in \mathcal C_{\lambda}  \, , \quad 
\prod_{\{j,j'\}\in E(G)}\indc_{\{ j \} \sim_r \{ j' \} }= 1\;.
\end{equation}
%\prod _{i\neq j   }   \prod _{p = 1}^{k_{ij}-1} \Big(1- \indc_{\{{\gl_{i_p}} \not \sim {\gl_{i_{p+1}}} \}} \Big)  = 1\,.$$
Notice that $\mb_{\gl}$ depends only on $\Psi^\eps_{\gl}$. We set $\mb_{\gl}=1$ when $|\gl|=1$. We extend $\mb_{\gl}$ to zero outside~$\cG^{\e}(Z_{\gl}^*)$.
We therefore have the partition of unity
\begin{equation}
\indc_{\cG^{\e}}  \big (  \Psi^\eps_n\big) =  \sum_{\ell =1}^n \sum_{\gl \in \cP_n^\ell}  \left( \prod_{i=1}^\ell \mb_{\gl_i}\,\indc_{\cG^{\e}} \big (  \Psi^\eps_{\gl_i} \big) \right)
\gP_\ell \left( \gl_1, \dots, \gl_\ell \right)%\indc_{\cG^{\e}} \big (  \Psi^\eps_n\big)  \,,
\end{equation}
where $\gP_1 =1$, and $\gP_\ell$ for $\ell>1$ is the indicator function that the subtrees indexed by
 ${\gl_1}, \dots, {\gl_\ell}$ keep mutual distance   larger than~$\eps$. $\gP_\ell$ is defined on $\cup_i \cG^{\e}(Z_{\gl_i}^*)$.
\end{Notation}

Using the notation \eqref{eq: notation compressee}, we can partition the pseudo-trajectories in terms of the external recollisions
\begin{align*}
F_{n, [0,t] }^\eps(H^{\otimes n} )&=  \int dZ_n^*  \sum_{\ell =1}^n \sum_{\gl \in \cP_n^\ell}
 \int d\mu ( \Psi^\eps_n)   \cH \big( \Psi^\eps_n \big)
\;  \Big( \prod_{i=1}^\ell \mb_{\gl_i} \Big)
\; \gP_\ell \big( \gl_1, \dots, \gl_\ell \big)
 F^{\eps 0} \big (  \Psi^{\eps0}_n\big)  \,  .
\end{align*}

There is no external recollision between the subtrees indexed by $\gl_1, \dots, \gl_\ell$, so   the pseudo-trajectories are defined independently;  in particular,  assuming from now on that
$$
H_n = H^{\otimes n}
$$ 
with $H$ a measurable function on the space of trajectories $D([0,t])$,
the cross-sections,  the weights and the 
 constraint imposed by~$\cG^{\e}$ factorize
$$
\gP_\ell  \big( \gl_1, \dots, \gl_\ell \big)   \cH \big( \Psi^\eps_n \big)
d\mu \big (  \Psi^\eps_n\big) 
= \gP_\ell  \big( \gl_1, \dots, \gl_\ell \big)
\Big ( \prod_{i=1}^\ell \cH \big( \Psi^\eps_{\gl_i} \big)d \mu\big( \Psi^\eps_{\gl_i} \big)  \Big)  
$$
and we get 
\begin{equation}
\begin{aligned}
\label{eq: decomposition clusters 1}
F_{n, [0,t] }^\eps(H^{\otimes n} )=  \int dZ_n^* \sum_{\ell =1}^n \sum_{\gl \in \cP_n^\ell}
 \int  \;  \Big( \prod_{i=1}^\ell d\mu\big( \Psi^\eps_{\gl_i} \big)   \cH \big( \Psi^\eps_{\gl_i} \big)  \mb_{\gl_i} \Big)
\;  \gP_\ell \big( \gl_1, \dots, \gl_\ell \big) 
 F^{\eps 0} \big (  \Psi^{\eps0}_n\big)\,    .
 \end{aligned}
\end{equation}
The function $ \gP_\ell $ forbids any overlap between different subtrees $\lambda_i$ in \eqref{eq: decomposition clusters 1}. In particular, notice that $ \gP_\ell $ is equal to zero if $|x^*_i-x^*_j| < \e$ for some $i \neq j$ (compatibly with the definition of $F_{n, [0,t] }^\eps$)\,.

Although the subtrees  $\Psi^\eps_{\gl_1}, \dots, \Psi^\eps_{\gl_\ell}$ in the above formula have no external recollisions, they are  not yet fully  independent as their parameters are  constrained precisely by the fact that 
no external recollision should occur.
Thus we are going to decompose further the collision integral.

\section{Overlaps} \label{sec:O}
\setcounter{equation}{0}

In order to identify all possible correlations, we   now introduce a cumulant expansion of the constraint~$\Phi_\ell$ encoding the fact that no external recollision should occur between the different~$\lambda_i$.

\begin{Def}[Overlap]\index{Overlap}
\label{def:overlap}
An  {\it overlap} occurs between two subtrees if two pseudo-particles, one in each subtree, find themselves at a distance   less than~$\eps$ one from the other for some $\tau \in [0,t]$ (see Figure~{\rm\ref{rec-nonrec-ov}}).
\end{Def} 
\begin{Notation} \label{notationov}
%Given  a partition~$\gl \in \cP^\ell_n$, 
We denote by 
$${\gl_i} \sim_o {\gl_j}$$ the relation: ``there exists an overlap between two subtrees belonging to $\gl_i$ and $\gl_j$ respectively", and we denote ${\gl_i} \not\sim_o {\gl_j}$ the complementary relation. % (note that $\not\sim_o = \not\sim_r$).
Therefore
\begin{equation}
\gP_\ell \big( \gl_1, \dots, \gl_\ell \big) = \prod_{1 \leq i \not = j \leq \ell } \indc_{ {\gl_i} \not \sim_o {\gl_j} } \, .
\end{equation}
\end{Notation}

The inversion formula \eqref{eq: cumulants inverse general} (for unrescaled cumulants)
implies that 
$$\label{def-phirho}
\gP_\ell \big( \gl_1, \dots, \gl_\ell \big)
=  \sum_{r =1}^\ell   \sum_{\gr \in \cP_\ell^r}  
\;  \gp_{ \gr} \, , 
$$
denoting $$ \varphi_\rho := \prod_{j=1}^r \varphi_{\rho_j} \,.$$
The cumulants associated with   
the partition $\{\gl_1, \dots,\gl_\ell\}$ are defined for any subset~$ \gr _j$ of~$ \{1, \dots, \ell\}$
as
\begin{equation}\label{defphitilderho}
\gp_{ \gr_j} =
\sum_{u =1}^{| \gr_j|} \sum_{\go \in \cP_{\gr_j}^{u}} (-1)^{u-1} (u-1)! \,
 \gP_{\omega}   \, ,
\end{equation}
where $\go$ is a partition in~$u$ subparts of $ \gr_j$, and recalling the notation \label{notationPhiomega}
$$
 \gP_{\omega}   = \prod_{i=1}^u\gP_{\omega_i} \, , \quad \gP_{\omega_i}  =  \gP_{|\omega_i|} ( \lambda_k ; k \in \omega_i)\,.
$$  
Note that as stated in Proposition~\ref{prop: tree inequality},   the function
$\gp_{\gr}$ is supported on clusters formed by overlapping collision  trees, i.e.
\begin{equation}
\label{eq: phi rho tilde graphs}
\gp_{ \gr_j} =
\sum_{G\in \cC_{ \rho_j}} \prod_{\{ i_1,i_2\}  \in E (G)} (-\indc_{\gl_{i_1} \sim_o \gl_{i_2}}) \,.
\end{equation}

 For the time being let us return to~\eqref{eq: decomposition clusters 1}, which  can thus be further decomposed as 
\begin{equation}
\label{eq: decomposition clusters 2}
\begin{aligned}
 F_{n, [0,t] }^\eps(H^{\otimes n} ) \! =     \int    dZ_n^*\sum_{\ell =1}^n \sum_{\gl \in \cP_n^\ell}
\sum_{r =1}^\ell   \sum_{\gr \in \cP_\ell^r} 
 \int \Big( \prod_{i=1}^\ell d\mu\big( \Psi^\eps_{\gl_i} \big)   \cH \big( \Psi^\eps_{\gl_i} \big)  \mb_{\gl_i} \Big)
\;  \gp_{ \gr}   F^{\eps 0} \big (  \Psi^{\eps0}_n\big)\;.
\end{aligned}\end{equation}
By abuse of notation, the partition $\gr$  can be also interpreted as a partition of~$\{1,\dots, n\} $ 
\begin{equation}\label{relativecoarseness}
\forall j \leq |\rho|\, , \qquad 
\gr_j = \bigcup_{i \in \gr_j} \gl_i \, ,
\end{equation}
coarser than the partition $\gl$. 
 The relative coarseness~{\rm(\ref{relativecoarseness})} will be denoted by $$\gl \hookrightarrow \gr\, .$$

\section{Initial clusters} \label{sec:IC}
\setcounter{equation}{0}
In \eqref{eq: decomposition clusters 2}, the pseudo-trajectory is evaluated at time 0 on the initial distribution $F^{\eps 0} \big (  \Psi^{\eps0}_n\big)$.
Thus the pseudo-trajectories $\{ \Psi^\eps_{\gr_j} \}_{j \leq r}$ remain correlated by the initial data, so we are finally going to decompose the initial measure in terms of cumulants\index{Initial clusters}.

Given $\gr =\{ \gr_1, \dots, \gr_r \}$ a partition of $\{1,\dots, n\}$ into~$r$ subsets, we define the cumulants of the initial data associated with $\gr$ as follows.
For any subset $\tilde \gs$ of $\{1, \dots, r\}$, we set
\begin{equation}
\label{eq : cumulants time 0}
f^{\eps 0}_{\tilde \gs} := 
\sum_{u =1}^{|\tilde \gs|} \sum_{\go \in \cP_{\tilde \gs}^u} (-1)^{u-1} (u-1)!
\;  F^{\eps 0} _{\omega} \,  ,
\end{equation}
where $\go$ is a partition of $\tilde \gs$, and denoting  as previously
$$
 F^{\eps 0} _{\omega}  = \prod_{i=1}^u  F^{\eps 0} _{\omega_i} \, , \quad F^{\eps 0} _{\omega_i}  =F^{\eps 0}  (\Psi^{\eps0}_{\rho_j} ; j \in \omega_i)\,.
$$
 We recall that~$\Psi^{\eps0}_{\gr_j}$ represents
  the pseudo-trajectories rooted in~$Z_{\gr_j}^*$ computed at time~0. They  involve~$m_j$ new particles, so there are~$ |\rho_j| + m_j$ particles at play at time~0, with of course~$\sum_{j=1}^r ( |\rho_j| + m_j)= n+\sum_{j=1}^r m_j = n+m$.
 We  stress  that the cumulant decomposition depends
on~$\gr$ (in the same way as~(\ref{defphitilderho}) was depending on $\gl$).

Given~$\gr =\{ \gr_1, \dots, \gr_r \}$, the initial data can thus be decomposed as\label{initialcluster}
\begin{align*}
F^{\eps 0 }\big (  \Psi^{\eps0}_n\big)  =  \sum_{s =1}^r    \sum_{\gs \in \cP_r^s}  
  f^{\eps 0}_{\gs}\,,  \quad \hbox{with}\quad  f^{\eps 0}_{\gs} =\prod_{i=1} ^s f^{\eps 0}_{\sigma_i} \,  .
\end{align*}
By abuse of notation as above in~(\ref{relativecoarseness}), the partition $\gs$  can be also interpreted as a partition of~$\{1,\dots, n\}$ 
$$
\forall i \leq |\sigma| \,, \qquad 
\gs_i = \bigcup_{j \in \gs_i} \gr_j  \,  ,
$$
coarser than the partition $\gr$. Hence there holds~$\gr \hookrightarrow \gs$.

\bigskip

We finally get
$$    
F_{n, [0,t] }^\eps(H^{\otimes n} )=  \int dZ_n^* \sum_{\ell =1}^n  \sum_{\gl \in \cP_n^\ell}
\sum_{r =1}^\ell   \sum_{\gr \in \cP_\ell^r} \sum_{s =1}^r    \sum_{\gs \in \cP_r^s}   \int   \Big( \prod_{i=1}^\ell d\mu\big( \Psi^\eps_{\gl_i} \big)   \cH \big( \Psi^\eps_{\gl_i} \big)  \mb_{\gl_i} \Big)\; 
 \gp_{ \gr}   \; 
  f^{\eps 0}_{\gs}   \,  .
$$
The $n$ subtrees generated by   $Z_n^*$ have been decomposed into 
nested partitions~$\gl  \hookrightarrow \gr \hookrightarrow \gs$ (see~Figure~\ref{fig: clusters}).
\begin{figure}[h] %  figure placement: here, top, bottom, or page
\centering
\includegraphics[width=6in]{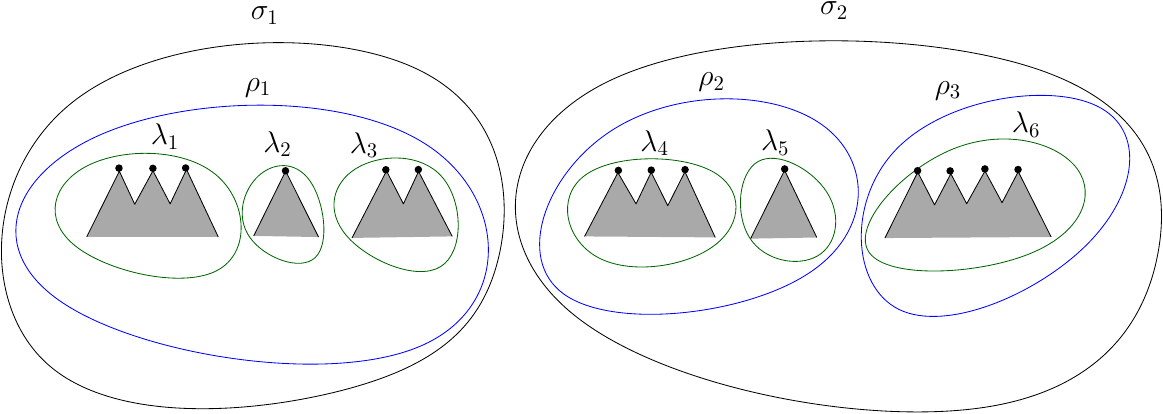} 
\caption{\small 
The figure illustrates the nested decomposition $\lambda \hookrightarrow \gr \hookrightarrow \gs$ in \eqref{eq: decomposition clusters 3}.
The configuration $Z_n^*$ at time $t$ is represented by $n = 14$ black dots. 
Collision trees, depicted by grey triangles,  are created from each dots and all the trees with labels in a subset $\lambda_i$ interact via external recollisions, forming connected clusters (grey mountains).
These trees are then regrouped in coarser partitions $\gr$ and $\gs$ in order to evaluate the corresponding cumulants.  
Green clusters $\lambda$ are called forests, blue clusters $\gr$ are called jungles, and black clusters $\gs$ are called initial clusters.}
\label{fig: clusters}
\end{figure}

Thus we can write
\begin{equation}
\label{eq: decomposition clusters 3}
F_{n, [0,t] }^\eps(H^{\otimes n} )  
=  \int dZ_n^*\!    \sum_{\gl , \gr, \gs \atop \gl  \hookrightarrow \gr \hookrightarrow \gs}
 \int   \Big( \prod_{i=1}^\ell d\mu\big( \Psi^\eps_{\gl_i} \big)   \cH \big( \Psi^\eps_{\gl_i} \big)  \mb_{\gl_i} \Big)\;  \gp_{ \gr} \,      f^{\eps 0}_{\gs}    \,    .
\end{equation}
The order of the sums can be exchanged, starting from the coarser partition $\gs$: we obtain
\begin{equation}
\label{eq: decomposition clusters 4}
 F_{n, [0,t] }^\eps(H^{\otimes n} ) 
 =  \int dZ_n^* \!\sum_{s =1}^n    \sum_{\gs \in \cP_n^s} 
  \prod_{j =1}^s   \! \!  \sum_{\gl, \gr \atop \gl \hookrightarrow \gr \hookrightarrow \gs_j}\!
\int  \Big( \prod_{i=1}^\ell d\mu\big( \Psi^\eps_{\gl_i} \big)   \cH \big( \Psi^\eps_{\gl_i} \big)  \mb_{\gl_i} \Big)
\;  \gp_{ \gr}  
f^{\eps 0}_{\gs_j}   \end{equation}
where the generic  variables  $\gl, \gr$ denote now nested partitions of the subset $\gs_j$.

\section{Dynamical cumulants} \label{sec:dyncum}
\setcounter{equation}{0}
%\subsection{Scaled cumulants}
Using the inversion formula \eqref{eq: cumulants inverse general}, 
the cumulant of order $n$\index{dynamical cumulant} is defined as the term in 
\eqref{eq: decomposition clusters 4} such that~$\gs$ has only 1 element, i.e. $\gs = \{1,\dots, n\} $. We therefore define the (scaled) cumulant, recalling notation~(\ref{eq : cumulants time 0}),
\begin{equation}
\begin{aligned}
\label{eq: decomposition cumulant}
\cum =   \int  dZ_n^* \mu_\eps^{n-1} \sum_{\ell =1}^n \sum_{\gl \in \cP_n^\ell}
\sum_{r =1}^\ell   \sum_{\gr \in \cP_\ell^r}  
\int  \Big( \prod_{i=1}^\ell d\mu\big( \Psi^\eps_{\gl_i} \big)   \cH \big( \Psi^\eps_{\gl_i} \big)  \mb_{\gl_i} \Big)\; \gp_{ \gr} \;  f^{\eps 0}_{\{1,\dots,r\}} (\Psi^{\eps 0}_{\rho_1}, \dots , \Psi^{\eps 0}_{\rho_r})  \,    .
\end{aligned}
\end{equation}

In the simple case $n=2$, the above formula reads
\begin{equation*}
\begin{aligned} 
& f_{2, [0,t] }^\eps(H^{\otimes 2} )  =   \int  dZ_2^*\, \mu_\eps \Big\{ 
\int  d\mu ( \Psi^\eps_{\{1,2\}})   \,\indc_{ \{ 1\} \sim_r \{ 2\} }\,
\cH \big( \Psi^\eps_{\{1,2\}} \big) F^{\eps 0} (\Psi^{\eps 0}_{\{1,2\}}) 
\\
&\quad\;\;  -
\int  \prod_{i=1}^{2} \Big[d \mu ( \Psi^\eps_{\{i\}})  \; 
\cH \big( \Psi^\eps_{\{i\}} \big) \Big] \indc_{ \{1\} \sim_o \{2\}}
F^{\eps 0} \left(\Psi^{\eps 0}_{\{1\}},\Psi^{\eps 0}_{\{2\}}\right) 
\\
&\quad\;\;  +
\int  \prod_{i=1}^{2} \Big[d \mu ( \Psi^\eps_{\{i\}})  \;
\cH \big( \Psi^\eps_{\{i\}} \big) \Big] \left( 
F^{\eps 0} \left(\Psi^{\eps 0}_{\{1\}},\Psi^{\eps 0}_{\{2\}}\right) 
- F^{\eps 0} \left(\Psi^{\eps 0}_{\{1\}}\right) F^{\eps 0} \left(\Psi^{\eps 0}_{\{2\}}\right) 
\right)
 \Big\} ,
\end{aligned}
\end{equation*}
where we used \eqref{eq:defDM}, \eqref{eq: phi rho tilde graphs} and \eqref{eq : cumulants time 0}. The three lines on the right hand side represent the three possible correlation mechanisms between particles $1^*$ and $2^*$ (i.e.\,between the subtrees $1$ and $2$): respectively the  recollision, the  overlap and the correlation of initial data. 
%Notice that for $|x^*_1-x^*_2| < \e$ we have that $\indc_{ \{1\} \sim_r \{2\} } = 0$ and $\indc_{ \{1\} \sim_o \{2\}} = 1$, and therefore only the very last term inside the curly brackets survives, which is completely factorized
%(and equal to $ - F^\eps _1 (t, z^*_1) F^\eps _1 (t, z^*_2)$ in the case $H = 1$).

%\xout{Notice that to simplify we write~$\cum$ for~$f^\eps_{n,[0,t]} (h^{\otimes n})$.}
More generally, looking at Eq.\,\eqref{eq: decomposition cumulant}, we are going to check that  $\cum $ is a cluster of order $n$, and identify a minimal structure in the spirit as the Penrose partition scheme recalled in Chapter~\ref{cluster-chap}.

\begin{itemize}
\item
We start with $n$ trees which are grouped into  $\ell$ \underline{forests}\index{forest} in the partition $\gl$.
In each forest~$\gl_i$ we shall identify   $|\lambda_i| - 1$ ``clustering recollisions". These recollisions give rise to~$ \sum_{i=1}^\ell (|\lambda_i| - 1) = n-\ell$ constraints. %For example in Figure~\ref{fig: clusters} there are 6 forests.

\item
The $\ell$ forests are then grouped into $r$ \underbar{jungles}\index{jungle} $\gr$ and in each jungle $\gr_i$,
we shall identify~$|\gr_i| - 1$ ``clustering overlaps".  These give rise to~$\sum_{i=1}^r (|\gr_i| - 1) = \ell-r$ constraints. \item
The $r$ elements of $\gr$ are then coupled by the initial cluster, and this gives rise to~$r-1$ 
%independent 
constraints.
%For example in Figure~\ref{fig: clusters} there are 3 jungles.
\end{itemize}

By construction $n- 1 = \sum_{i=1}^r  (|\gr_i| - 1) + \sum_{i=1}^\ell (|\lambda_i| - 1) + r-1$.
The dynamical decomposition~\eqref{eq: decomposition cumulant}
implies therefore that the cumulant of order $n$  is associated with pseudo-trajectories with $n-1$ clustering constraints,  and we expect that each of these~$n-1$ clustering constraints will provide a small factor of order   $1/\mu_\eps$.
To quantify rigorously this smallness, we need to identify   $n-1$ ``independent'' degrees of freedom. For clustering overlaps this will be an easy task. Clustering recollisions will require more attention, as they introduce a strong dependence between different trees.

Let us now analyze Eq.\,\eqref{eq: decomposition cumulant} in more detail.
The decomposition can be interpreted in terms of a graph in which the edges represent all possible correlations (between points in a tree, between trees in a forest and between forests in a jungle). In these correlations, 
some play a special role as they specify minimally connected subgraphs in jungles or forests: this is made precise in      the two following   important notions.

Let us start with the easier case of overlaps in a jungle. The following definition assigns a minimally connected graph (cf.\,Definition \ref{def:graphs}) on the 
set of forests grouped into a given jungle. 
\begin{Def}[Clustering overlaps]\index{clustering overlap}
\label{def:ClustOv}

Given a jungle $\rho_i = \{\lambda_{j_1},\dots,\lambda_{j_{|\gr_i|}}\}$ and a pseudo-trajectory $\Psi^\eps_{\rho_i}$, 
we call ``clustering overlaps'' the set of 
$|\gr_i|-1$ overlaps 
%such that
%- each overlap is an  external overlap between forests 
%between forests $v_j$ and $v'_j$, with $v_j,v'_j \in \rho_i\,, v_j\neq v'_j$;
%there are exactly  external overlaps, and 
\begin{equation} \label{ovrel}
(\lambda_{j_1} \sim_o \lambda_{j'_1}),\dots,(\lambda_{j_{|\gr_i|-1}}\sim_o\lambda_{j'_{|\gr_i|-1}})
\end{equation} 
such that
$$\Big\{\{\lambda_{j_1} , \lambda_{j'_1}\},\dots,\{\lambda_{j_{|\gr_i|-1}},\lambda_{j'_{|\gr_i|-1}}\}\Big\} = E(T_{\gr_i})$$ 
where $T_{\gr_i}$ is the minimally connected graph on $\rho_i$ constructed via the Penrose algorithm. 
%$k=1,\cdots,|\gr_i|-1$, we call $k$-th overlap time the time
Given a pseudo-trajectory $\Psi^\eps_{\rho_i}$ with clustering overlaps, we define $|\gr_i|-1$ overlap times as follows: the $k$-th overlap time is
\begin{equation} \label{eq:deftauOV}
\tau_{{\rm{ov}},k} := \sup\Big\{ \tau \geq 0\;:\, 
\min_{\substack{  \mbox{\tiny $q$  {\rm  in}  $\Psi^\e_{\gl_{j_k}}$} \\ \mbox{\tiny $q'$ {\rm  in} $\Psi^\e_{\gl_{j_ k'}}$}}}
 |x_{q'}(\tau) - x_q(\tau)| < \e \Big\}\;.
\end{equation}
% Finally, if $q,q'$ realize the minimum in \eqref{eq:deftauOV}, we define the corresponding overlap vector as
%\begin{equation}\label{eq:omov}
%\omega_{{\rm{ov}},k} := \frac{x_{q'}(\tau_{{\rm{ov}},k}) - x_q(\tau_{{\rm{ov}},k})}{\e}\;.
%\end{equation}
\end{Def}
\begin{Rmk}Contrary to the case of   clustering recollisions defined below (Definition~{\rm\ref{def:ClustRec}}), there is no privileged way of extracting this minimally connected graph, so we choose   the Penrose algorithm (see the proof of Proposition~{\rm\ref{graph-representation}}) for simplicity.
Remark that the times~$\tau_{{\rm{ov}},k} $ are not ordered.
\end{Rmk}

\smallskip

Each one of the $|\gr_i|-1$ overlaps is a strong geometrical constraint which will be used in Part III to gain a small factor $t/ \mu_\e$. More precisely, in Chapter \ref{estimate-chap} we   assign to each forest $\lambda_{j_k}$ a root~$z^*_{\lambda_{j_k}}$ (chosen among the roots of $\Psi^\eps_{\lambda_{j_k}}$).  
Then, it will be possible to ``move rigidly" the whole pseudo-trajectory~$\Psi^\eps_{\lambda_{j_k}}$, acting just on $x^*_{\lambda_{j_k}}$. It follows that one easily translates the condition of ``clustering overlap'' into~$|\gr_i|-1$ independent
constraints on the relative positions of the roots. In fact remember that
%t, in formula \eqref{eq: decomposition cumulant}, 
the pseudo-trajectories~$\Psi^\eps_{\lambda_{j_k}}, \Psi^\eps_{\lambda_{j'_k}}$ do not interact with each other by construction. Therefore~$\lambda_{j_k} \sim_o \lambda_{j'_k}$ means that the two pseudo-trajectories meet at some time $\tau_{{\rm{ov}},k}>0$ and, immediately after (going backwards), they cross each other freely.  This  corresponds to a small measure set in the variable $x^*_{\lambda_{j_k'}}-x^*_{\lambda_{j_k}}$. 
%Thanks to the non-interaction between different forests, one has 
%have always one vertex of degree $1$, it is be enough to ``prune'' vertices of degree $1$ in an arbitrary order.
%In other words, every forest as a whole is an independent degree of freedom.

Contrary to overlaps,  recollisions are unfortunately not independent from one another. For this reason, the study of recollisions 
of trees in a forest needs more care. 
In this case we need to fix the order of the recollision times. Then
we can identify an ordered sequence of relative positions (between trees) which do not affect the previous recollisions.
One by one and following the ordering, such degrees of freedom are shown to belong to a small measure set.
The precise identification of degrees of freedom will be explained in Section \ref{subsec:estDC} and is based on
the following notion.

%We recall that a collision tree is identified by the label of its root. The following definition assigns a minimally connected graph on the 
%set of collision trees grouped into a given forest. 
%

\begin{Def}[Clustering recollisions]\index{clustering recollision}
\label{def:ClustRec}

Given a forest $\lambda_i = \{i_1,\dots,i_{|\lambda_i|}\}$ and a pseudo-trajectory $\Psi^\eps_{\lambda_i}$, we call ``clustering recollisions'' the set of recollisions 
%between trees $$(j_1,j'_1),\cdots,(j_{|\lambda_i|-1},j'_{|\lambda_i|-1})$$
identified by the following iterative procedure.

- The first clustering recollision is the first external recollision in $\Psi^\eps_{\lambda_i}$ (going backward in time);
we    rename the recolliding trees $j_1,j'_1$ and the recollision time $\tau_{\rm{rec},1}$.% and choose $j_1 < j'_1$. 

- The $k$-th clustering recollision is the first external recollision in $\Psi^\eps_{\lambda_i}$ (going backward in time) 
such that, calling $j_k,j'_k$ the recolliding trees, $\{\{j_1,j'_1\},\dots,\{j_k,j'_k\}\} = E\left(G^{(k)}\right)$ where $G^{(k)}$ is a graph  with no cycles (and no multiple edges). We denote the recollision time $\tau_{\rm{rec},k}$.

In particular, 
\begin{equation}
\label{eq:defCR} 
\tau_{\rm{rec},1} \geq \dots \geq \tau_{\rm{rec},|\lambda_i|-1} \quad \mbox{and} \quad 
 \Big\{\{j_1,j'_1\},\dots,\{j_{|\lambda_i|-1},j'_{|\lambda_i|-1}\}\Big\} = E(T_{\gl_i})  
\end{equation}
where $T_{\gl_i}$ is a minimally connected graph on $\lambda_i$.

If $q,q'$ are the particles realizing the $k$-th recollision, we define the corresponding recollision vector by
\begin{equation}\label{eq:omrec}
  \omega_{\rm{rec},k} := \frac{x_{q'}  (\tau_{\rm{rec},k}) -   x_{q} (\tau_{\rm{rec},k}) }{\eps}\;.
 \end{equation}
\end{Def}

The important difference between Definition \ref{def:ClustRec} and Definition \ref{def:ClustOv} is that we have given an order to the recollision times in Eq.\,\eqref{eq:defCR} (which does not exist in Eq.\,\eqref{eq:deftauOV}).

From now on, in order to distinguish, at the level of graphs, between clustering recollisions and clustering overlaps, we shall decorate edges
as follows.
\begin{Def}[Edge sign]\index{edge sign}
\label{def: edge sign}
An edge has sign $ +$ if it represents a clustering recollision.
An edge has sign $ - $ if it represents a clustering overlap.
\end{Def}

Collecting together clustering recollisions and clustering overlaps, we obtain $r$ minimally connected 
clusters, one for each jungle. In particular, we can construct a graph $G_{\gl , \gr}$ made of $r$ minimally connected 
components. To each $e \in E(G_{\lambda, \rho})$, we associate a sign (+ for a recollision and $-$ for an overlap), and a clustering time $\tau^{clust}_e$.
%One way to do this is the following. We first set $G'_{\gr_i} = \cup_{\lambda_i \in \gr_i}T_{\gl_i}$ \textcolor{black}{
%(a graph with all positive edges)}. Secondly, we construct trees $\left(T_{\gr_i}\right)_{i=1}^r$ as in Definition \ref{def:ClustOv}.
%%In this step we have some freedom % for instance according to the Penrose partition scheme at page \pageref{Penrose} 
%(notice that $T_{\gl_i}$ is uniquely determined, while~$T_{\gr_i}$ is not). 

%Next, for each one of the overlap relations $\lambda_{i} \sim_o \lambda_{i'}$ in \eqref{ovrel}, we specify the two subtrees $j \in \lambda_{i} , j' \in  \lambda_{i'}$ realizing the first overlap between
%the forests going backward in time, and we call $G''_{\gr_i}$ the graph on $\gr_i$ with \textcolor{black}{
%(negative)} edges $\cup \{j,j'\}$. 
%Then we obtain that $T'_{\gr_i} := G'_{\gr_i} \cup G''_{\gr_i}$ is a minimally connected graph on the jungle.
%Finally, we set
%\begin{equation} \label{eq:dcg}
%G_{\gl , \gr} := \bigcup_{i=1}^{r}T'_{\gr_i}
%\end{equation}
%where the union runs over disjoint trees. 
%For each $\{j,j'\} \in E(G_{\gl,\gr})$, we have identified
%a clustering (recollision or overlap) time, which we will denote $\tau^{\rm clust} _{\{j,j'\}}$.
%

Our main results describing the structure of dynamical correlations will be  proved in the third part of this paper.  
The major breakthrough in this work    is to remark that one can obtain uniform bounds for the cumulant of order $n$  for all $n$ with a controlled growth.  We recall indeed that we expect each  clustering to produce a small factor~$t/\mu_\eps$, so that the (scaled) cumulant~$f^\eps_n(t) $ of order~$n$ defined in~(\ref{eq: decomposition cumulant}) should be bounded in~$\eps$.  Moreover the number of minimally connected graphs with~$n$ vertices is~$n^{n-2}$ so we expect~$f^\eps_n(t)$ to grow as~$(Ct)^{n-1} n!$. This is made precise in the following theorem, which provides in particular 
  sharp controls on the cumulant generating function $\gL^\eps_{[0,t]}$ from which the large deviation estimates are derived in Chapter  \ref{LDP-chap}.
The following theorem will be proved in Section~\ref{subsec:proofThm} as Theorem~\ref{cumulant-thm1*}.
 
\begin{Thm}
\label{cumulant-thm1}
Consider the system of hard spheres under the  initial measure~{\rm(\ref{eq: initial measure})},  with~$f^0 $ satisfying~{\rm(\ref{lipschitz})}.
Let  $H : D([0,\infty[) \mapsto \bbR$  be a continuous function such that 
\begin{equation} \label{eq:boundHn}
|H^{\otimes n}(Z_n ([0,t]) )| \leq \exp \Big(\alpha\, n + \frac{\beta_0} 4 \sup_{s\in [0,t] }   |V_n(s)|^2 \Big) 
\end{equation}
for some~$\alpha \in \R$.
Define  the scaled cumulant  $ \cum$  by {\rm(\ref{eq: decomposition cumulant})}, with the   notation~{\rm(\ref{eq: measure nu})}. 
  Then  there exists a  positive constant $C$   such that  the following uniform a priori bound holds for any~$t \leq T_0$:
 \begin{equation}  \label{eq:numerozero}
 | \cum|  \leq (Ce^{\alpha})^n\big(t+\eps \big ) ^{n-1}  n!\;.
 \end{equation}
In particular  there is a constant~$c<1$ depending only on the dimension such that  setting $H = e^{h}-1$,
%Let  $h : D([0,\infty]) \to \bbR$  be a continuous function such that 
%$$|(e^h) ^{\otimes n}(Z_n ([0,t]) )| \leq \exp \Big(\alpha n + \frac{\beta_0} 4 \sup_{s\in [0,t] }   |V_n(s)|^2 \Big) 
%$$
%with $ \alpha, \beta_0, T_\alpha$ defined by {\rm(\ref{defTstar})}.
%
the  series defining the cumulant generating function is absolutely convergent on a time $[0,T_\alpha]$ with~$T_\alpha = c\, e^{-\alpha} \beta_0^{(d+1)/2}/C_0$:
\begin{equation}
\label{Ieps-def dynamics}
\begin{aligned}
\forall t \leq T_\alpha \, , \quad  \gL^\eps_{[0,t]} (e^h) 
:=\frac{1}{\mu_\eps} \log \bbE_\eps \left( \exp \Big(  \sum_{i =1}^\cN h \big( {\bf z}^\eps_i ([0,t] \big)  \Big)   \right)
=  \sum_{n = 1}^\infty {1\over n!}  f^\eps_{n, [0,t]} \big( ( e^h - 1)^{\otimes n} \big) \, .
\end{aligned}
\end{equation}
   \end{Thm}

Note that \eqref{Ieps-def dynamics} follows easily from the uniform bounds \eqref{eq:numerozero} on the rescaled cumulants, recalling Proposition~\ref{prop: exponential cumulants}.

In the next chapter, we shall prove the existence of the limiting cumulant generating function (Theorem~\ref{cumulant-thm2}) and the form of the limit will be characterized explicitly (Theorem~\ref{Thm: exponential cumulants dynamics}). As is known from the general theory \cite{DV,dembozeitouni,RezLect} such a result implies upper and lower large deviation bounds, which will be obtained later on in Chapter~\ref{LDP-chap} (see Sections~\ref{sec:  Upper bound LD} and~\ref{sec:  Lower bound LD}).

%%%% Analysis of the limiting cumulants

%\input{BGSRS_part2.tex}

%%% TCL + grandes deviations

\chapter{Characterization of the limiting cumulants}\label{HJ-chapter}
\label{Limiting cumulants}
\setcounter{equation}{0}

Thanks to the uniform bounds obtained in Theorem \ref{cumulant-thm1} we expect that, for all $n$,  there is a limit $\lcum$ for~$\cum$ as $\mu_\eps \to \infty$.
 Our goal in this chapter is first to obtain  a description 
 of~$\lcum$  in terms of
 a series expansion  similar to {\rm(\ref{eq: decomposition cumulant})}, with a precise definition of the limiting pseudo-trajectories (see Theorem~\ref{cumulant-thm2} in Section~\ref{limiting pseudo-traj}
 below): the main feature of those pseudo-trajectories is that they correspond to minimally connected collision graphs.

  In Section~\ref{Limiting cumulant generating function} we derive a series expansion for the limiting
 cumulant generating function (Theorem~\ref{Thm: exponential cumulants dynamics}) which 
 is shown to satisfy a Hamilton-Jacobi equation in Section~\ref{limitequations} (Theorem~\ref{HJ-prop}); the fact that the limiting  graphs  have no cycles  is crucial for the derivation of this equation.

This Hamilton-Jacobi equation encodes all the dynamical correlations. In particular, 
the convergence of the typical density to the Boltzmann equation is recovered from the Hamilton-Jacobi equation in Section  \ref{sec: Boltzmann equation} and the limit covariance in Section  \ref{sec: Equation for the limit covariance}.

\section{Limiting pseudo-trajectories and graphical representation of   limiting cumulants}\label{limiting pseudo-traj}
\setcounter{equation}{0}

In this section we       characterize the limiting cumulants~$\lcum$  by their integral representation. 
This means that we have to specify both the  limiting pseudo-trajectories and the limiting measure.
 
 We first describe the formal limit of \eqref{eq: decomposition cumulant}. To   this end, we
  start by giving a definition of minimal pseudo-trajectories associated with cumulants for fixed~$\eps$. Recall that the cumulant $\cum$ of order $n$ corresponds to graphs of size $n$ which are completely connected, either by recollisions, or by overlaps, or by initial correlations.  
  It will be proved in Chapter~\ref{convergence-chap}  that
  
  \begin{itemize}
\item clusterings coming from the defect of factorization of the initial data are smaller by a factor $O(\eps)$ and thus will not contribute to the limit,
  
  \item cycles are created by    additional  (non clustering)  recollisions or overlaps and have a vanishing contribution in the limit.    
\end{itemize}
Thus 
only  pseudo-trajectories corresponding to  minimally connected graphs  will be considered in this section.

\begin{Def}[Minimal cumulant pseudo-trajectories]\label{Psineps}
Let $m\geq 0$. 
The  cumulant pseudo-trajectory~$\Psi^\e_{n,m}$ associated with the minimally connected graph~$T \in \cT_n^\pm$ decorated with edge signs~$\left(s^{\rm clust}_e\right)_{e \in E(T)}$, and the decorated collision tree $a \in \cA_{n,m} ^\pm $ is obtained by fixing~$Z_n^*$ and a collection of~$m$     creation times~$T_m$ in decreasing order,   and parameters~$( \Omega_m, V_m)$.
 The  cumulant pseudo-trajectory is constructed backward according to the following rules. At each step the set of particles follows the backward free transport until two of them approach at a distance~$\eps$ or we reach a time $t_k$. 
 
 At a time~$t_k$, a new particle, labeled~$k$, is adjoined at   position~$x_{a_k}(t_k)+s_k\e\omega_k$ and with velocity~$v_k$. 
 \begin{itemize}
\item 
  If~$s_k >0 $ 
 then the velocities~$v_k$ and~$v_{a_k}$ are changed to~$v_k(t_k^-)$ and~$v_{a_k}(t_k^-)$ according to the laws~\eqref{V-def},
 
 \item then all particles are transported (backwards) in $\cD^\eps_{n+k} $.  
 \end{itemize}
 
 When two  particles, say $\{q_e,q_e'\}$,  touch, we look at the roots $j$ and $j'$ of their respective subtrees. 
 \begin{itemize}
 \item If $e=\{j,j'\} $ is not an edge of $T$ or if  this edge has already appeared before in the (backward) process, then the pseudo-trajectory is not admissible.
 \item  Else   we have a   clustering recollision if $s^{\rm clust}_e = +$  or a clustering  overlap if $s^{\rm clust}_e = -$. 
  We say that $\{q_e,q_e'\}$ is a representative of the edge $e$, and we denote this by $\{q_e,q_e'\}  \approx e $.  The clustering time is denoted~$\tau_e^{\rm clust}$, and the clustering angle can be  defined 
  %(after neglecting a set of parameters whose contribution will be shown to vanish, as~$\e \to 0$) 
  by
  $$ 
 \omega^{\rm clust}_{e } := \frac{ x_{q_e}(\tau^{\rm clust}_{e }) - x_{q_e'} (\tau^{\rm clust}_{e})} {\eps}  \in {\mathbb S}^{d-1} \,.
$$
  \end{itemize}
  The pseudo-trajectory is admissible if at time 0 all edges of $T$ have appeared in the construction. We  will order the clustering times, and the edges of $T$ accordingly, and we will denote by~$(  \Theta^{\rm clust}_{n-1} , \Omega^{\rm clust}_{n-1}) $ the collection of clustering times and angles.
\end{Def}
Theorem \ref{cumulant-thm1} will be proved in Section \ref{subsec:proofThm} by establishing, in particular, the uniform convergence of the series expansion \eqref{eq: decomposition cumulant} (on the number of created particles $m$, see \eqref{eq: measure nu}). We  thus focus here on a fixed $m$ and a fixed tree $a \in \cA^\pm_{n,m}$.

The clustering constraints provide $n-1$ conditions on the roots $(z_i^*)_{1\leq i \leq n}$ of the trees, so only one root will be  free.  We set this root to be~$z_{n}^*$.
Given   $(x_i^*, v_i^*)$ and $v_j^*$ as well as collision parameters~$(a, T_m,\Omega_m,V_m)$, since the trajectories are piecewise affine one can perform the local change of variables
\begin{equation}
\label{infinitesimalij}
x_j^* \in\bbT^d \mapsto(\tau^{\rm clust}_{e } ,  \omega^{\rm clust}_{e }) \in (0,t) \times  {\mathbb S}^{d-1}
\end{equation}
with Jacobian
    $ \mu_\eps^{-1}\big( (v_{q_e}(\tau^{\rm clust+}_{e })-v_{q'_e}(\tau^{\rm clust+}_{e}))\cdot \omega^{\rm clust}_{e}\big)_+ \,.
  $ 
  This provides  the   identification of measures
\begin{equation}
\label{identificationmeasures}
 \mu_\eps dx_i^* dv_i^* dx_j^* dv_j^*  \\
= dx_i^* dv_i^*dv_j^* 
d\tau ^{\rm clust}_{e} d\omega^{\rm clust}_{e }
\big( (v_{q_e}(\tau^{\rm clust}_{e})-v_{q_e'}(\tau^{\rm clust}_{e }))\cdot\omega^{\rm clust}_{e}\big)_+ \,.
\end{equation}
We shall explain in Section~\ref{subsec:estDC}
 how to identify a good sequence of roots to perform this change of variables iteratively (see Figure~\ref{fig: roots Chapter 5}).
 
\begin{figure}[h] 
\centerline{\includegraphics[width=4in]{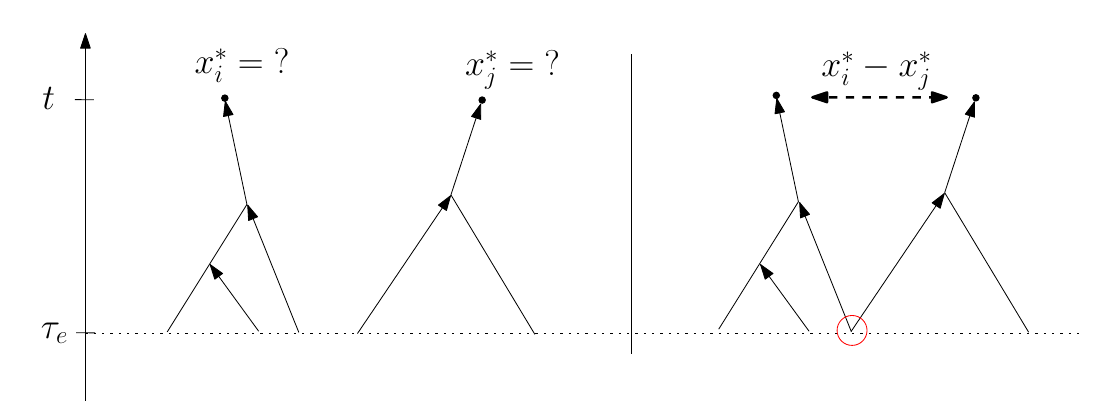}}
\caption{On the left figure, two trees (with roots $x_i^*, x_j^*$) are built independently in the time interval $[\tau_e, t]$ and their roots are not fixed a priori. On the right figure, the clustering condition at time $\tau_e$ imposes a constraint on the relative position  $x_i^*- x_j^*$ of the roots : the trees are shifted rigidly to satisfy the clustering. This procedure is applied iteratively to determine all relative positions at time $t$. Only one root, say $x_n^*$, has to be prescribed.}
\label{fig: roots Chapter 5}
\end{figure}

 For each  tree $a\in \cA^\pm_{n,m} $,  and each minimally connected graph $T\in \cT_n^\pm$, the cumulant pseudo-trajectories are then reparametrized by the  root $x_n^*$, the  velocities $V_n^*$ at time $t$, the sequence $(q_e, q'_e)_{e \in E(T)}$ of clustering particles, the clustering parameters $(\Theta_{n-1}^{\rm clust}, \Omega_{n-1}^{\rm clust})$ and  the collision parameters $(T_m, \Omega_m, V_m)$.

Now let us introduce the limiting cumulant pseudo-trajectories and measure.

\begin{Def}[Limiting cumulant pseudo-trajectories]\label{barPsi}
Let $m\geq 0$. The limiting cumulant pseudo-trajectories~$\Psi_{n,m}$ associated with the ordered trees~$T \in \cT_n^\pm$ and $a \in \cA_{n,m} ^\pm $ are obtained by fixing~$x_n^*$ and $V_n^*$,
 \begin{itemize}
 \item %a partition in forests $\gl \in \cP_n^\ell$, 
 for each~$e  \in E(T)$, a representative~$\{q_e,q'_e\}\approx e$
  \item a collection of~$m$  ordered   creation times $T_m$,   and parameters~$( \Omega_m, V_m)$
\item    a collection of  clustering times and angles~$(  \Theta^{\rm clust}_{n-1} , \Omega^{\rm clust}_{n-1}) $.
\end{itemize}
 At each creation time~$t_k$, a new particle, labeled~$k$, is adjoined at 
 position~$x_{a_k}(t_k) $ and with velocity~$v_k$:
 \begin{itemize}
  \item   if~$s_k = +$,  then the velocities~$v_k$ and~$v_{a_k}$ are changed to~$v_k(t_k^-)$ and~$v_{a_k}(t_k^-)$ according to the laws~\eqref{V-def}, 
\item then all particles follow the backward free flow until the next creation or clustering  time.
\end{itemize}
At each  clustering time $\tau^{\rm clust} _{e}$ the particles $q_e$ and~$q_e'$ are  at the same position:
   \begin{itemize}
  \item  if~$s_{e} = +$, then the velocities~$v_{q_e}$ and~$v_{q'_e}$ are changed  according to the scattering rule, with scattering vector $ \omega^{\rm clust}_{e }$,
 % if~$ (v_{k_i} -v_{k_j})\cdot\omega^{\rm clust} _{\{i,j\}} >0 $ then the velocities~$v_{k_i}$ and~$v_{k_j}$ are changed  according to \eqref{V-def},
\item then  all particles follow the backward free flow until the next creation or clustering time. 
\end{itemize}
 \end{Def}

 Note that, in Definition \ref{Psineps}, positions $X_{n}^*$ at time $t$ were fixed and  clustering conditions were considered as admissibility constraints, while here the positions $X_{n}^*$ at time $t$ are 
not prescribed: they are determined according to an algorithm devised in Section~\ref{subsec:estDC}.

 We can therefore define the limiting   measure, with the notation introduced above: 
\begin{equation}
\label{defdmusing}
 \begin{aligned}
d\mu_{{\rm sing}, T,a}\left( \Psi_{n,m}\right)  &:=  dT_{m}  d\Omega_{m}  dV_{m} dx_n^*  dV_{n}^*  d \Theta^{\rm clust}_{n-1} d\Omega^{\rm clust}_{n-1} \prod_{i=1} ^m  s_i  \big( (v_{i}-v_{a_i}(t_i)\cdot\omega_i\big)_+\\
&\qquad \times\prod_{e\in E(T)} \sum_{\{q_e,q_e'\}  \approx e}  s^{\rm {clust}}_{e  }  \big( (v_{q_e}(\tau^{\rm clust}_{e })-v_{q_e'}(\tau^{\rm clust}_{e }))\cdot\omega^{\rm clust}_{e }\big)_+\,.
\end{aligned}\end{equation}
 We stress the fact that this measure is supported on singular pseudo-trajectories, in the sense that the pseudo-particles interact one with the other at  distance~0.

Equipped with these notations, we can now state the result  that will be proved in 
 Chapter~\ref{convergence-chap}.

 \begin{Thm}
\label{cumulant-thm2}
With the previous notation and the assumptions of Theorem~{\rm\ref{cumulant-thm1}}, for all $t \leq T_0$, the cumulant $\cum$ converges   when~$\mu_\eps \to \infty$ to~$\lcum $  given by
  \begin{equation}
\label{eq: decomposition lcumulant}
\forall t \leq T_0 \, , \quad  \lcum =    \sum_{T \in \cT^\pm_n} \sum_{m=0}^\infty \sum_{a \in \cA^\pm_{n,m} } \int d \mu_{{{\rm sing}, T,a }} ( \Psi_{n,m})  \,   \cH \big( \Psi_{n,m}  \big)      \left(f^0\right)^{\otimes m+n} ( \Psi^0_{n,m} )\,  .
\end{equation}
In particular by Theorem~{\rm\ref{cumulant-thm1}} there exists a constant~$C>0$ and a time~$T_\alpha<1/C$ depending only on~$\alpha, C_0, \beta_0$ such that
 $$\forall t \leq T_\alpha \, , \quad | \lcum|  \leq C^n t^{n-1}   n!\, ,$$
and  the limiting cumulant generating function~{\rm(\ref{Ieps-def dynamics})} has the form  
\begin{equation}
\label{I-def dynamics}
\forall t \leq T_\alpha \, , \quad \gL_{[0,t]}( e^h)
=  \sum_{n = 1}^\infty {1\over n!}  f_{n, [0,t]} \big( ( e^h - 1)^{\otimes n} \big)
=  \lim_{\mu_\eps \to \infty}  \gL^\eps_{[0,t]} (e^h) \, .
\end{equation}
\end{Thm}
  \medskip
 
 Recall that the convergence time $T_0$, in Theorem \ref{thm: Lanford}, 
of the particle system to the solution $f$ of the Boltzmann equation depends only on~$f^0$ 
through $C_0,\beta_0$: as noted in Remark~\ref{rem:Tetoile}, there holds~$T_0\sim C_0^{-1}\beta_0^{(d+1)/2}$.
The parameter $\alpha $ quantifies the size of the deviations from $f$ which can be observed. The time $T_\alpha $ is then adjusted accordingly~: $T_\alpha \sim T_0 e^{-\alpha}$.}

\section{Limiting cumulant generating function}\label{Limiting cumulant generating function}
\setcounter{equation}{0}
 The following result provides a graphical expansion of~$ \gL_{[0,t]}(e^h)$.  \begin{Thm}
\label{Thm: exponential cumulants dynamics}
 Under the assumptions of Theorem~{\rm\ref{cumulant-thm1}},  the limiting cumulant generating function~$ \gL_{[0,t]} $ satisfies for all~$t \leq T_\alpha$
\begin{equation}\label{decgLeh}
 \gL_{[0,t]}(e^h) +1=\sum_{K = 1}^\infty {1\over K!} \sum_{  \tilde T \in \cT_K^\pm} \int d\mu_{{\rm sing},   \tilde T } (\Psi_{K,0}) (e^h)^{ \otimes  K}  (\Psi_{K,0})  f^{0 \otimes K} (\Psi_{K,0}^0)   \, ,
\end{equation}
where
\begin{equation}\label{defdmusingtilde}
 \begin{aligned}
d\mu_{{\rm sing},   \tilde T }  &:=  dx_K^* dV_K  \prod_{e= \{q,q'\} \in E(  \tilde T)}  s_e  \big( (v_{q}(\tau_e)-v_{q'}(\tau_e ))\cdot\omega_{e }\big)_+d\tau_e d\omega_e \, .
\end{aligned}\end{equation}
Furthermore the series is absolutely convergent for~$t \in [0,T_\alpha]$~:
 \begin{equation}
 \label{Lambda-absconv}
  \int d |\mu_{{\rm sing},  \tilde T } (\Psi_{K,0})| \,  (e^h)^{ \otimes  K}  (\Psi_{K,0})   \, f^{0 \otimes K} (\Psi_{K,0}^0)  \leq \big(Ct \big ) ^{K-1}   \, .
  \end{equation}
\end{Thm}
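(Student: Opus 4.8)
The plan is to obtain \eqref{decgLeh} by a ``linked cluster'' resummation of the limiting-cumulant formula: $\gL_{[0,t]}$ is the logarithm of the (limiting) moment generating functional, so it should be a sum over \emph{connected} objects, which after the limit are the minimally connected pure-collision pseudo-trajectories $\Psi_{K,0}$. Concretely, I would start from the expansion $\gL_{[0,t]}(\gamma)=\sum_{n\ge 1}\frac1{n!}f_{n,[0,t]}\big((\gamma-1)^{\otimes n}\big)$ of \eqref{I-def dynamics} (legitimate by the uniform bounds of Theorems~\ref{cumulant-thm1} and~\ref{cumulant-thm2}), insert the integral representation \eqref{eq: decomposition lcumulant} for $f_{n,[0,t]}$ given by Theorem~\ref{cumulant-thm2}, and use that, under the quasi-product hypothesis on $f^0$, the initial-data cumulants $f^{\eps 0}_{\{1,\dots,r\}}$ degenerate into the pure product $(f^0)^{\otimes(n+m)}$ in the limit.

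The heart of the argument is a re-indexing. In a limiting cumulant pseudo-trajectory $\Psi_{n,m}$ the $m$ creation edges (each joining a created particle to its parent $a_k$) together with the $n-1$ clustering edges (the representatives $\{q_e,q'_e\}$ of the edges of $T\in\cT_n^\pm$) form a graph on the set of $K:=n+m$ particles with $m+(n-1)=K-1$ edges; being connected, it is a tree $\tilde T\in\cT_K^\pm$. Conversely, from a tree $\tilde T$ on $K$ labelled particles, a choice of $n$ of them declared ``roots,'' and a total order of the $K-1$ edges (supplied by the integration over the clustering times $\tau_e$, coinciding times forming a null set) one recovers $(n,m,a,T)$: processing edges backwards in time, an edge with exactly one endpoint already active is a creation edge (hence determines $a$), an edge with both endpoints active is a clustering edge, and contracting the creation forest turns the clustering edges into $T$. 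Under this bijection $d\mu_{{\rm sing},T,a}(\Psi_{n,m})$ is identified with $d\mu_{{\rm sing},\tilde T}(\Psi_{K,0})$ of \eqref{defdmusingtilde}: both attach to every edge the same cross-section weight $s_e\big((v_{q_e}-v_{q'_e})\cdot\omega_e\big)_+\,d\tau_e\,d\omega_e$, both keep a single free root position $dx^*_K$ and the $K$ velocities $dV_K$, and the scattering rule is the same whether an event is a creation or a clustering.

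Next I would resum over \emph{which} $n$ particles are the roots: $(\gamma-1)^{\otimes n}$ on the roots times $1$ on the remaining created particles, summed over the root subset, rebuilds $\prod_{i=1}^K\gamma(z_i([0,t]))=\gamma^{\otimes K}(\Psi_{K,0})$ except for the single term in which the root subset is empty, while the prefactors $\frac1{n!}$, the cardinality $|\cA_{n,m}^\pm|$, the choices of root subset and the edge orderings collapse to $\frac1{K!}$ by the counting of labelled trees (Lemma~\ref{lem: Cayley+}). The missing empty-subset term is $\sum_{K\ge1}\frac1{K!}\sum_{\tilde T\in\cT_K^\pm}\int d\mu_{{\rm sing},\tilde T}(\Psi_{K,0})\,f^{0\otimes K}(\Psi^0_{K,0})$; its $K=1$ contribution equals $\int_{\D}f^0=1$, and its $K\ge 2$ contributions vanish by a gain/loss cancellation: for a leaf edge $e$, summing the $s_e=+$ and $s_e=-$ contributions and performing the scattering change of variables $(v_{q_e},v_{q'_e})\mapsto(v'_{q_e},v'_{q'_e})$ (an energy-preserving involution of unit Jacobian) yields an integrand antisymmetric under exchanging the two colliding velocities. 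This identifies the constant with $1$ and accounts for the $+1$ in \eqref{decgLeh}.

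For the absolute convergence \eqref{Lambda-absconv}, I would bound a single $\tilde T$-term by the geometric estimates already used for Theorem~\ref{cumulant-thm2} specialised to $m=0$: integrating each $\omega_e$ over ${\mathbb S}^{d-1}$ produces a factor $C|v_{q_e}-v_{q'_e}|$, each $\tau_e$ over $(0,t)$ a factor $t$, the free root over $\bbT^d$ a factor $O(1)$, and the remaining velocity integrals converge because the total kinetic energy is conserved along the pseudo-trajectory (so $\sup_s|V_K(s)|^2=|V_K^*|^2$) while $f^{0\otimes K}$ supplies a Gaussian factor $e^{-\frac{\beta_0}2|V_K^*|^2}$ dominating the polynomial velocity weights and the bound \eqref{eq:boundHn} on $\gamma^{\otimes K}=(1+H)^{\otimes K}$; this gives $(Ct)^{K-1}$. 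Then $\sum_{K}\frac1{K!}|\cT_K^\pm|(Ct)^{K-1}\le\sum_K\frac1{K!}2^{K-1}K^{K-2}(Ct)^{K-1}$ converges for $t\le T^\star$ by Stirling, which also licenses all the rearrangements above. I expect the genuine difficulty to be the bookkeeping of the second and third paragraphs: matching the ordered and nested time structure built into cumulant pseudo-trajectories (ordered creation times, ordered recollision times within a forest) with the freely integrated clustering times of $\Psi_{K,0}$, and checking that promoting a created particle to a root leaves the singular measure $d\mu_{\rm sing}$ and the initial product untouched. The estimate \eqref{Lambda-absconv} is, by contrast, a routine rerun of the bounds of Chapter~\ref{estimate-chap}.
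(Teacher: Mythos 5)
Your overall route is the same as the paper's: expand $\gL_{[0,t]}$ via \eqref{I-def dynamics} and \eqref{eq: decomposition lcumulant}, merge the $m$ creation edges and the $n-1$ clustering edges into a single tree $\tilde T\in\cT_K^\pm$ on $K=n+m$ particles, and resum over the choice of roots to turn $(\gamma-1)^{\otimes n}$ into $\gamma^{\otimes K}-1$. But there is a genuine gap in the central combinatorial step. The correspondence $(a,T,\{q_e,q'_e\}_{e\in E(T)})\mapsto(\tilde T,\eta)$ is injective but \emph{not} surjective, and your reconstruction rule silently assumes surjectivity. Processing the edges of $\tilde T$ backwards in time, you cover the cases ``exactly one endpoint active'' (creation) and ``both endpoints active'' (clustering), but not the case where an edge is encountered while \emph{neither} endpoint has yet been activated --- e.g.\ $K=3$, roots $\eta=\{1\}$, edges $\{1,2\}$ and $\{2,3\}$ with $\tau_{\{2,3\}}>\tau_{\{1,2\}}$, so that going backward the first clustering joins two particles that are neither roots nor yet created. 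Such a triple $(\tilde T,\eta,\text{ordering})$ corresponds to no $(a,T)$ whatsoever. Consequently the binomial identity $\sum_{\eta\subset\{1,\dots,K\}}(\gamma-1)^{\otimes\eta}=\gamma^{\otimes K}$ that you invoke in your third paragraph cannot be applied as stated: the sum over admissible configurations misses, for \emph{non-empty} $\eta$ as well, all the orderings in which a ``blue--blue'' edge comes first. You do supply a cancellation argument, but only for the empty root set.

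The missing ingredient is precisely the paper's key observation: when one adds back all non-admissible pseudo-trajectories, their total contribution to $\sum_{\tilde T}\int d\mu_{{\rm sing},\tilde T}\,\gamma^{\otimes K}f^{0\otimes K}$ vanishes identically, because at the \emph{first} clustering between two non-activated particles the recollision and overlap contributions (the $s_e=\pm$ signs of that edge) compensate exactly --- the same gain/loss mechanism you use for the empty-subset term, but applied locally at the first offending edge for arbitrary $\eta$. Only after this cancellation can one sum freely over all $(\tilde T,\eta)$ and apply the binomial identity; the single configuration where the mechanism fails ($\eta=\emptyset$, $K=1$, no edge to cancel) then produces the $+1$ in \eqref{decgLeh}. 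Two smaller remarks: the collapse of the prefactors to $1/K!$ is a direct symmetrization $\tfrac1{n!}=\tfrac1{K!}\binom{K}{n}(K-n)!$ (choice of roots plus ordering of created particles), not an application of Lemma~\ref{lem: Cayley+}; and your treatment of \eqref{Lambda-absconv} is in line with the paper, which indeed reduces it to the cross-section estimates of Chapter~\ref{estimate-chap}.
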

 Compared to Theorem~\ref{cumulant-thm2}, all dynamical connections are dealt with in a symmetric way, resorting to one connected graph~$ \tilde T \in  \cT_K^\pm$, rather than a graph~$T \in \cT^\pm_n$ encoding recollisions and overlaps and a tree~$a \in \cA^\pm_{n,m}$ encoding collisions.

\begin{proof}
By definition and thanks to Theorem~\ref{cumulant-thm2}, $$
 \gL_{[0,t]} \left(e^h   \right)  = \sum_{n = 1}^\infty  {1\over n!}   \sum_{T \in \cT^\pm_{n} }\sum_{m=0}^\infty \sum_{a \in \cA^\pm_{n,m} } \int d \mu_{{\rm sing}, T,a } ( \Psi_{n,m})  (e^h- 1)^{\otimes n}  \left(f^0\right)^{\otimes (m+n)} \, .
 $$
Note that  the trajectories of particles  $i\in \{1, \dots,  m\}$ can be extended on the whole interval $[0,t]$ just by transporting $i$ without collision on $[t_i, t]$~: this  is actually  the only way to have a set of~$m+n$ pseudo-trajectories which is minimally connected (any additional collision would add a non clustering constraint, or require   adding new particles). It can therefore be identified to some $\Psi_{m+n,0}$ (see Figure~\ref{fig:Thm6}).

 Let us now fix $K=n+m$ and symmetrize over all arguments~:
 $$
 \begin{aligned}
 \gL_{[0,t]} \left(e^h  \right)  &= \sum_{K = 1}^\infty {1\over K!} \sum_{n=1} ^K  {K! \over n! (K-n) ! }  (K-n) !  \sum_{T \in \cT^\pm_{n} }  \sum_{a \in \cA^\pm_{n,K-n} } \int d \mu_{{\rm sing}, T,a }( \Psi_{n, K-n })  (e^h- 1)^{\otimes n } \left(f^0\right)^{\otimes K} \\
 & = \sum_{K = 1}^\infty {1\over K!} \sum_{n=1} ^K  \sum_{\eta \atop |\eta| =n}   \sum _{(\eta^c)^\prec}  \sum_{T \in \cT^\pm_{\eta } }  \sum_{a \in \cA^\pm_{\eta ,(\eta^c)^\prec } } \int d \mu_{{\rm sing}, T,a } ( \Psi_{\eta , (\eta^c)^\prec })  (e^h- 1)^{\otimes \eta}   \left(f^0\right)^{\otimes K} 
 \end{aligned}
 $$
 where $\eta$ stands for a subset of $\{1^*,\dots, n^*, 1,\dots  , K-n\} $ with cardinal $n$;  $\eta^c$ denotes its complement and $(\eta^c)^\prec$ indicates that we have chosen an order on the set $\eta^c$. We denote by $\cA^\pm_{\eta ,(\eta^c)^\prec }$
the set of signed trees with roots $\eta$ and added particles with prescribed order in $(\eta^c)^\prec$. 
 
Note that the combinatorics of collisions $a$ and recollisions or overlaps~$T$ (together with the choice of the representatives $\{ q_e, q'_e \}_{e\in E(T)}$) can be described by a single minimally connected graph $\tilde T \in \cT^\pm_{K}$.
In order to apply Fubini's theorem, we then need to understand  the mapping 
$$( a, T, \{ q_e, q'_e \}_{e\in E(T)} ) \mapsto (\tilde T ,\eta)\,.$$ 

It is easy to see that  this mapping is injective but not surjective. Given a pseudo-trajectory $\Psi_{K,0}$ compatible with $\tilde T$ and a set $\eta$ of cardinality $n$, we reconstruct $( a, T, \{q_e, q'_e \}_{e\in E(T)} ) $ as follows.
We  color in red the~$n$ particles  belonging  to  $\eta$ at time $t$,  and in blue the~$K-n$  other particles. Then we follow the dynamics backward. At each clustering, we apply the following rule
\begin{itemize}
\item if the clustering involves one red particle and one blue particle, then it corresponds to a collision in the Duhamel pseudo-trajectory. The corresponding edge of $\tilde T$ will be described by $a$. We then change the color of the blue particle to red. 
\item if the clustering involves two red particles, then it corresponds to a recollision in the Duhamel pseudo-trajectory. The corresponding edge of $\tilde T$ is therefore an edge $e \in E(T)$ and the two colliding particles determine the representative $\{ q_e, q'_e \}$.
\item if the clustering involves two blue particles, then the pseudo-trajectory is not admissible for~$(\tilde T, \eta)$, as it is not associated to any $( a, T, \{ q_e, q'_e \}_{e\in E(T)} ) $.
\end{itemize}

\begin{figure}[h] 
\centerline{\includegraphics[width=3.5in]{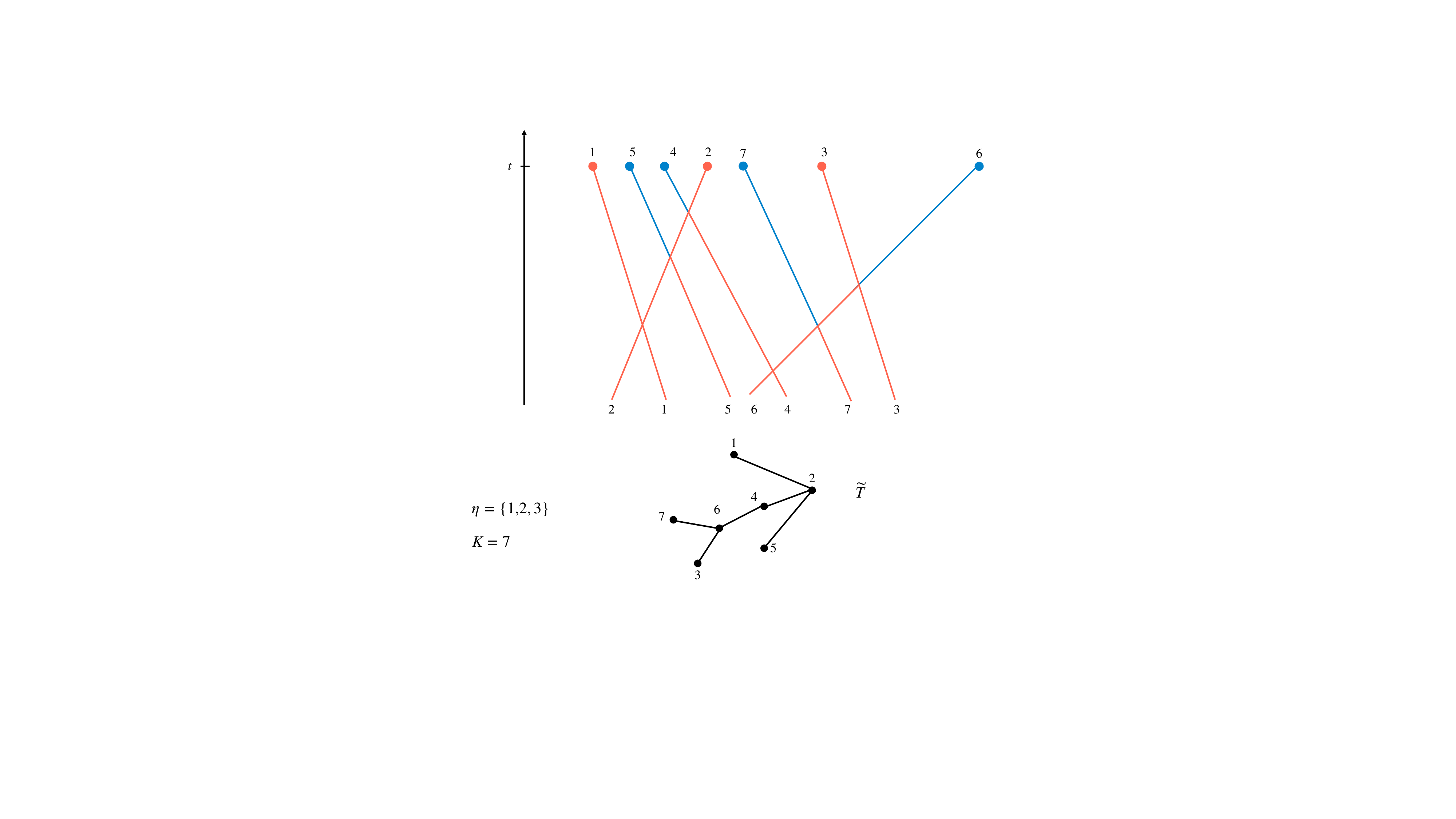}}
\caption{A couple~$(\eta,\tilde T)$ and an associate pseudo-trajectory~$\Psi_{K,0}$.}
\label{fig:Thm6}
\end{figure}

However the  contribution   of the non admissible pseudo-trajectories $\Psi_{K,0}$  to
$$\sum_{\tilde T \in \cT_\eta^\pm} \int d\mu_{{\rm sing}, \tilde T } (\Psi_{\eta,0}) (e^h)^{ \otimes  \eta}   \left(f^0\right)^{\otimes K} $$
 is  exactly zero. Indeed the blue parts of the trajectories are not weighted, so that the overlap and the recollision terms  associated with the first clustering between two blue particles (i.e.\,the $\pm$ signs of the corresponding edge)   exactly compensate.
 
We therefore conclude that 
$$\begin{aligned}
 \gL_{[0,t]} \left(e^h \right)
&= \sum_{K\geq 1 }{1\over K!}  \sum_{\tilde T \in \cT^\pm_{K} } \int d  \mu_{{\rm sing}, \tilde T  } ( \Psi_{K,0}) \left(f^0\right)^{\otimes K}  \sum_{n = 1} ^K \sum _{\eta \in \cP^n_K}  (e^h-1)^{\otimes \eta}  \\
&= \sum_{K\geq 1}{1\over K!}  \sum_{\tilde T \in \cT^\pm_{K} }\int d  \mu_{{\rm sing}, \tilde T  } ( \Psi_{K,0}) (e^h)  ^{\otimes K}  \left(f^0\right)^{\otimes K} - 1 
\end{aligned}
$$
which is exactly (\ref{decgLeh}). 
Note that the compensation mechanism described above does not work for~$n=0$ and $K=1$, which is the reason for the $-1$ in the final formula.

The bound (\ref{Lambda-absconv}) comes from the definition of $\mu_{{\rm sing}, \tilde T  }$ together with the estimates used in the proof of Theorem 4 to control the collision cross-sections.
\end{proof}

\bigskip
\section{Hamilton-Jacobi  equations}\label{limitequations}
\setcounter{equation}{0}

We   consider test functions on the trajectories which write as 
\begin{equation}
\label{defHgammaphi}
h   (z([0,t])  ) = g \big (t,z(t)\big)  -  \int_0^t D_s  g \big ( s,z(s) \big) ds 
\end{equation}
recalling the notation~$D_s  g:= \partial_s g + v\cdot \nabla_x g$. The effect of this specific choice will be to integrate the  transport term in the Hamilton-Jacobi equation. 
We choose complex-valued functions here as we shall be  using properties of analytic functionals of~$g$ later; all the results obtained so far can easily be adapted to this more general setting.
To stress the dependence on $g$, we introduce a specific notation for the corresponding exponential moment~\eqref{I-def dynamics} 
\begin{align}
\label{I-def dynamics J}
\cI (t,g) 
%& :=\frac{1}{\mu_\eps} \log 
%\bbE_\eps \left( \exp \Big(  \sum_{i =1}^\cN H \big( {\bf z}_i ([0,t] \big)  \Big)   \right)\\
 :=\gL_{[0,t]} ( e^{ g(t) -\int_0^t D_s g} ) \, .
\end{align}
Note that $g$ is defined here by its final value $g(t)$ and its transport $Dg = (D_s g)_{0 \leq s \leq t}$, and these two functions will be considered as two independent variables. 

The following statement specifies the functional framework in which~$\cI$ is well defined as a convergent series, and identifies the equation it satisfies.
We recall that for any $\alpha \geq 0$, there exists  $T_\alpha$ (depending only on $\alpha$, $C_0$ and $\beta_0$) such that the cumulant generating function $\Lambda^\eps_{[0,t]}(e^h)$ is uniformly convergent on $[0,T_\alpha]$ provided that $e^h - 1$ satisfies (\ref{eq:boundHn}).
We then define
\begin{equation}
\label{eq: space B}
\begin{aligned}
{\bB }_{\alpha} :=    \Big\{g \in C^1([0,T_\alpha] \times \D;{\mathbb C}) \
&\,   : \qquad 
 | g(t,z)| \leq  ( 1-{t\over 2T_\alpha} )( \alpha +\frac{\beta_0}8 |v|^2) \, ,\\
 &  \sup_{s \in [0,T_\alpha]}   | D_s g (s,z) |  \leq {1\over 2T_\alpha} (\alpha  +\frac{\beta_0}8 |v|^2)
\Big\}\, .
\end{aligned}
\end{equation}

Let us translate Theorems~\ref{cumulant-thm1} and~\ref{Thm: exponential cumulants dynamics} in terms of the functional~$\cI $. 
For~$t $ in~$ [0,T_\alpha]$, let $h$ be defined as in~\eqref{defHgammaphi} with 
$g$ in~${\bB }_{\alpha}$. One has
\begin{equation} 
\begin{aligned}
\label{eq: borne uniforme}
\Big| 
\Big(e^{h (z_i([0,t]) )}-1\Big)^{\otimes n} \Big|  &\leq 
e^{\sum_{i = 1}^n \big| h (z_i([0,t]))  \big|} 
\leq
e^{ \alpha_0n +\frac{\beta_0}8 (1-\frac t{2T_\alpha})   |V_n(t)|^2  + \frac{\beta_0}8 {\frac1{ 2T_\alpha}}\int_0^t |V_n(s)|^2\, ds}\\
&\leq e^{ \alpha n +\frac{\beta_0}8  \sup_{s \in [0,t]}|V_n(s)|^2} \, , 
\end{aligned}
\end{equation}
which is the assumption on~$H = e^{h}-1$ of Theorem~\ref{cumulant-thm1}.
In particular, the   series  
\begin{equation} 
\label{eq:Itglimserie}
\cI (t,g) := - 1
+ \sum_{K = 1}^\infty {1\over K!} \sum_{  T \in \cT_K^\pm} \int d\mu_{{\rm sing},   T } (\Psi_{K,0}) (e^{g(t) - \int_0^t D_sg(s)ds})^{ \otimes  K}  (\Psi_{K,0})  f^{0 \otimes K} (\Psi_{K,0}^0) 
\end{equation}
    is absolutely convergent for~$t \in [0,T_\alpha]$ and~$g \in \bB_\alpha$.  
Note that~(\ref{eq:Itglimserie}) shows that~$\cI$ is    analytic with respect to~$g(t)$:  in particular one can differentiate~$ \cI (t,g)$ with respect to the final condition~$g(t)$, in a direction~$\Upsilon $ and by   term-wise derivation of the series~\eqref{eq:Itglimserie} we find:
\begin{equation}
\label{J-derivative 2}
\begin{aligned}
\int_\bbD dz     {\d \cI (t,g) \over \d g(t) } (z)   \Upsilon (z) 
&=
  \sum_{K}{1\over K!}  \sum_{\tilde T \in \cT^\pm_{K} }\sum_{i=1}^K  \int d  \mu_{{{\rm sing}, \tilde T  }} (\Psi_{K,0})  \Upsilon (z_i(t) )  \\
  & \qquad\qquad\qquad \times
  \left(e^{g (t) -  \int_0^{t}  D_sg  ds} \right)^{\otimes K} (\Psi_{K,0}) \left(f^0\right)^{\otimes K} (\Psi^0_{K,0})
 \, .
 \end{aligned}
 \end{equation}

%\begin{Rmk}
% Additional estimates on $\cI$ will be derived in Proposition  
% \ref{prop: analyticity J part 1}.  
%The fact that the right-hand side of~{\rm(\ref{HJ-eq})} is well defined for~$(t,g) \in [0,T_\alpha] \times \bB_\alpha$, and that~{\rm(\ref{HJ-eq})}   has a unique solution  in the space of differentiable functions in~$t$ with values in~$\bB_\alpha$, is proved in Chapter~{\rm\ref{LDP-chap}} using analytic techniques as in Appendix~{\rm\ref{CauchyKol}}.
%\end{Rmk}

We first state a regularity result on ${\d \cI (t,g) \over \d g(t) }$ needed to define the singularity in the Hamilton-Jacobi equation derived in Theorem \ref{HJ-prop}.
Additional results on $\cI$ in an appropriate functional setting will be derived later in Proposition  \ref{prop: analyticity J} in order to obtain the uniqueness of the Hamilton-Jacobi equation. 
\begin{Prop}
\label{prop: analyticity J part 1} 
For $t \leq T_\alpha$ and~$g \in \bB_\alpha$, the functional 
derivative $(x,v) \mapsto \displaystyle{\d \cI (t,g) \over \d g(t)} (x,v)$
is a continuous function in~$x \in \T^d$ with values in the space~$ {\mathcal M}_v( \bbR^d)$ of weighted measures  in~$v \in \R^d$: there is a constant~$C$ such that for any $g \in\bB_\alpha$,
$$\label{improved-est}
\forall t \leq T_\alpha \, , 
\forall x \in \T^d,
\qquad 
 \Big \| {\d  \cI ( t, g )\over \d g(t)} (x,v) \;  \exp ( \frac{\beta_0}  8 |v|^2) (1+|v|)  \Big  \|_{  {\mathcal M}_v( \bbR^d)}   
 \leq  C \,.
$$
 \end{Prop}

\begin{proof}
Given $K$, we consider the associated integral in the series expansion \eqref{J-derivative 2}. 
The integrand is uniformly bounded by the assumption \eqref{lipschitz} on $f^0$ and  inequality  \eqref{eq: borne uniforme}   
\begin{equation}
\label{eq: upper bound integrand}
\Gamma_K  (\Psi_{K,0})  := 
\left(e^{g (t) -  \int_0^{t}  D_sg  ds} \right)^{\otimes K} (\Psi_{K,0}) 
\left(f^0\right)^{\otimes K} (\Psi^0_{K,0})
 \leq e^{ \alpha K - \frac{3\beta_0}8  |V_K(0)|^2} .
\end{equation}
The   measure~$\mu_{{\rm sing},\tilde T}$ is invariant under global translations in $x$.
Thanks to the upper bound \eqref{eq: upper bound integrand},  each integral in \eqref{J-derivative 2} is uniformly bounded in terms of 
$\| \exp ( - \frac{ \beta_0}8 |v|^2 ) (1+|v|)^{-1}   \Upsilon \|_{L^1_x ( L^\infty_v)}$
\begin{equation}
\label{eq: borne L1 L infty}
\begin{aligned}
&\Big|  \int d  \mu_{{{\rm sing}, \tilde T  }} (\Psi_{K,0})  \Gamma_K  (\Psi_{K,0}) 
 \Upsilon (z_i(t) ) \Big|\\
&\leq  \Big|  \int d  \mu_{{{\rm sing}, \tilde T  }} (\Psi_{K,0})  e^{ \alpha K - \frac{\beta_0}8 |V_K(0)|^2} \Big| \; \| \exp (   -\frac{\beta_0}8 |v|^2 ) (1+|v|)^{-1}  \Upsilon \|_{L^1_x ( L^\infty_v)}\, .
\end{aligned}
\end{equation}

Furthermore, using the continuity of $g$ and $f^0$, we deduce that  $\Gamma_K  (\Psi_{K,0})$ is a continuous function of the root $z_i(t)$, 
as changing the position of the root boils down to translating rigidly the whole pseudo-trajectory. Therefore, by density approximation, one can  extend the convergence  and the bound~(\ref{eq: borne L1 L infty})  to any $\Upsilon$ such that $ \Upsilon \exp (-\frac{\beta_0}  8 |v|^2) (1+|v|)^{-1}   \in {\cM}_x\left(L^\infty_v \right)$ where ${\cM}_x$ is the space of measures. 
Proposition~\ref{prop: analyticity J part 1}  is proved by summing the expansion \eqref{J-derivative 2}.
\end{proof}

The next theorem  is the key to derive the large deviation functional in Chapter~\ref{LDP-chap}.  As a byproduct, it will also allow us  to prove  that  the limit first cumulant~$f_1$ solves  the Boltzmann  equation, and to derive  the equation on the limit covariance. 
\begin{Thm}[Hamilton-Jacobi equation for the limit cumulant generating function]
\label{HJ-prop}
For any~$\alpha>0$, the functional~$ \cI(t,g) $  is well defined on $[0,T_\alpha] \times \bB_\alpha$, and  the series defining~$ \cI(t,g) $ is a solution of the mild form of the Hamilton-Jacobi equation on $[0,T_\alpha] \times \bB_\alpha$~:
\begin{equation}
\label{HJ-eq}
\displaystyle
\begin{cases}
\cI(t,g) &= 
\displaystyle{ \cI (0, g) +
\frac12 \int_0^t d \tau
  \int  {\d \cI \over \d g( \tau )}  ( \tau ,g) (z_1)  {\d  \cI  \over \d g(\tau) } (\tau,g) (z_2)\Big(e ^{\Delta g(\tau)} - 1 \Big)   d\mu (z_1, z_2, \omega) } \, ,\\
 \cI (0, g) &   = \displaystyle {  \int dz \,  f^0(z) (e^{g(0,z)}-1) }\, ,
 \end{cases}
\end{equation}
where we used the notation \eqref{eq: measure mu}-\eqref{Delta-def}
$$
d\mu(z_1, z_2, \omega):=  
\delta_{x_1 - x_2}  ((v_1-v_2)\cdot \omega)_+ d\omega dv_1 dv_2 dx_1 \, ,
$$
and 
$$ \Delta g(z_1, z_2, \omega) := g(z'_1) + g(z'_2) - g(z_1) - g(z_2)\,.$$
\end{Thm}
We will see in Chapter \ref{LDP-chap} that this Hamilton-Jacobi  equation provides a complete characterization of~$\cI$ which will be crucial to identify the large deviation functional by means of Legendre transform.

\begin{proof}
At time 0, the exponential moment \eqref{eq:Itglimserie} reduces to the exponential moment of independent particles thus only the term $K =1$ remains
\begin{equation} 
%\label{eq:Itglimserie}
\cI (0,g) = - 1+ \int d z e^{g(0,z)}   f^0(z)  =  \int dz \,  f^0(z) (e^{g(0,z)} - 1) \, .
\end{equation}

To recover the mild form of the Hamilton-Jacobi equation \eqref{HJ-eq}, 
we are going to reparametrize each term of the series \eqref{eq:Itglimserie}
of $\cI (t,g)$ by singling out the last clustering collision.
Given a tree $T$ in $\cT_K^\pm$ with~$K \geq 2$, let $\tau_e := \tau_e^{\rm clust} \in [0,t]$ be the last clustering time which occurs at the edge $e$ and is associated with the scattering vector $\omega_e := \omega^{\rm clust}_{e }$ and the sign $s_e := s^{\rm {clust}}_{e  } \in \{-1,1\}$.
By removing the edge $e$, the tree $T$ is split into two trees $T_1 \in \cT_{K_1}^\pm$
and $T_2 \in \cT_{K_2}^\pm$ with sizes $K_1 + K_2 = K$ and clustering times belonging to~$[0,\tau_e]$. These trees generate two pseudo-trajectories $\Psi_{K_1,0}, \Psi_{K_2,0}$ on $[0,\tau]$ which are then constrained to cluster at time $\tau_e$. 
The whole pseudo-trajectory $\Psi_{K,0}$ on $[0,t]$ (generated by~$T$) is then recovered  
by merging the pseudo-trajectories $\Psi_{K_1,0}, \Psi_{K_2,0}$ at time $\tau_e$ and extending them on~$[0,t]$ with a scattering, or not, according to the sign $s_e$.
This procedure is abbreviated by 
\begin{equation}
\label{eq: merging pseudo-travectories}
\Psi_{K,0} = \Psi_{K_1,0} \wedge \Psi_{K_2,0}\, .
\end{equation}
This leads to 
\begin{align} 
& \sum_{  T \in \cT_K^\pm} \int d\mu_{{\rm sing},   T } (\Psi_{K,0}) (e^{g(t) - \int_0^t D_sg(s)ds})^{ \otimes  K}  (\Psi_{K,0})  f^{0 \otimes K} (\Psi_{K,0}^0) 
\nonumber \\
& \quad = \frac{1}{2}
\sum_{K_1, K_2 \atop K_1 + K_2  = K} \frac{K!}{K_1 ! \, K_2 !}
\sum_{ T_1 \in \cT_{K_1}^\pm \atop T_2 \in \cT_{K_2}^\pm} 
\int_0^t d \tau_e \int d\mu_{{\rm sing},   T_1 }^{[0, \tau_e]} (\Psi_{K_1,0})
 d\mu_{{\rm sing},   T_2 }^{[0, \tau_e]} (\Psi_{K_2,0})    f^{0 \otimes K_1} (\Psi_{K_1,0}^0) f^{0 \otimes K_2} (\Psi_{K_2,0}^0)
\nonumber  \\
& \qquad \qquad  \qquad \times 
\sum_{ i \in T_1 \atop j \in T_2} 
\sum_{s_e = \pm 1} \int d \omega_e \; s_e \;
\delta_{x_i (\tau_e) - x_j (\tau_e) }  ((v_i (\tau_e^-) -v_j (\tau_e^-))\cdot \omega_e)_+ \;
 \ (e^{g(t) - \int_0^t D_sg(s)ds})^{ \otimes  K},
\label{eq: K1, K2}
\end{align}
where the edge $e = (i,j)$.
By construction the parameters associated with the pseudo-trajectories~$\Psi_{K_1,0}$ and~$\Psi_{K_2,0}$ are independent and the corresponding measures on $[0, \tau_e]$ factorize.
We used the notation~$\mu_{{\rm sing},   T_1 }^{[0, \tau_e]}$ to stress the fact that the clustering times of the measure are restricted to $[0, \tau_e]$.
The last line of the identity \eqref{eq: K1, K2} encodes the clustering constraint at $\tau_e$.

To recover the factorization of the Hamilton-Jacobi equation \eqref{HJ-eq}, we first note that all the particles evolve in straight line in $[\tau_e, t]$, so that for any $k \leq K$
\begin{equation*}
g \big (t,z_k(t)\big)  -  \int_0^t D_s  g \big ( s ,z_k (s) \big) d s 
= g \big ( \tau_e ,z_k( \tau_e^+ )\big)  -  \int_0^{\tau_e} D_s  g \big ( s ,z_k (s) \big) d s .
\end{equation*}
If $s_e =1$, a scattering occurs between the particles $(i,j)$ forming the edge $e$ so that their velocities jump at time $\tau_e$; if~$s_e=-1$ on the other hand,   the trajectories are unchanged. With the notation~\eqref{Delta-def}, the discontinuity at the collision can thus be rewritten as
\begin{align}
(e^{g(t) - \int_0^t D_s g(s)ds})^{ \otimes  K}
& = (e^{g(\tau_e) - \int_0^{\tau_e} D_s g(s)ds})^{ \otimes  K_1} \; (e^{g(\tau_e) - \int_0^{\tau_e} D_s g(s)ds})^{ \otimes  K_2} \nonumber
\\
& \qquad \times \left( 1 + 1_{s_e = 1} \Big[ 
\exp \Big(  \Delta g(\tau_e) \; ( z_i ( \tau_e^- ), z_j ( \tau_e^- ), \omega_e )  \Big) - 1 \Big]
 \right) \, .
 \label{eq: factorisation bis}
\end{align}
It follows that  except for the interaction at time $\tau_e$ between particles $i,j$, the test functions factorize. We can rewrite \eqref{eq: K1, K2} as 
\begin{align} 
& \sum_{  T \in \cT_K^\pm} \int d\mu_{{\rm sing},   T } (\Psi_{K,0}) (e^{g(t) - \int_0^t D_sg(s)ds})^{ \otimes  K}  (\Psi_{K,0})  f^{0 \otimes K} (\Psi_{K,0}^0) 
\nonumber \\
& \quad = \frac{K! }{2}
\sum_{K_1 = 0}^K  \sum_{ T_1 \in \cT_{K_1}^\pm \atop T_2 \in \cT_{K_2}^\pm} 
 \sum_{ i \in T_1 \atop j \in T_2}  \int_0^t d \tau_e  \prod_{\ell = 1,2} \;  
\left[
 \frac{1}{K_\ell ! } \int d\mu_{{\rm sing},   T_\ell }^{[0, \tau_e]} (\Psi_{K_\ell,0})
 f^{0 \otimes K_\ell} (\Psi_{K_\ell,0}^0)   (e^{g(\tau_e) - \int_0^{\tau_e} D_s g(s)ds})^{ \otimes  K_\ell} \right]
\nonumber  \\
& \qquad \quad  \qquad \times 
 \int d \omega_e \; 
\delta_{x_i (\tau_e) - x_j (\tau_e) }  ((v_i (\tau_e^-) -v_j (\tau_e^-))\cdot \omega_e)_+ \;
 \ \Big[ 
\exp \Big(  \Delta g(\tau_e) \; ( z_i ( \tau_e^- ), z_j ( \tau_e^- ), \omega_e )  \Big) - 1 \Big],
\label{eq: K1, K2 bis}
\end{align}
where only the contribution $s_e =1$ remains. Indeed the constant $1$ in the last line of \eqref{eq: factorisation bis} cancels out after summing over $s_e = \pm 1$.

Summing \eqref{eq: K1, K2 bis} over all $K \geq 1$ in order to rebuild $\cI(t,g)$, the product of the functional derivatives~$\displaystyle {\d \cI (\tau_e,g) \over \d g(\tau_e)}$ defined in \eqref{J-derivative 2} can be identified
$$
\cI(t,g) = \cI (0, g) +
\frac12 \int_0^t d \tau_e
  \int  {\d \cI \over \d g( \tau_e )}  ( \tau_e ,g) (z_1)  {\d  \cI  \over \d g(\tau_e) } (\tau_e,g) (z_2)\Big(e ^{\Delta g(\tau_e)} - 1 \Big)   d\mu (z_1, z_2, \omega_e).
$$
Theorem \ref{HJ-prop} is proved.
\end{proof}

\section{The Boltzmann equation for the limit first cumulant}
\label{sec: Boltzmann equation}

The Hamilton-Jacobi equation~\eqref{HJ-eq} encodes all the limiting correlations of the microscopic dynamics. 
As a first consequence, we are going to recover the convergence of the density to the solution of the Boltzmann equation already stated in Theorem  \ref{thm: Lanford}. 
 
%The following notation for the duality on~$\bbD$   will be useful :
%\begin{equation}
%\label{eq: dualite en z}
%\la \gp, \psi \ra := \int_\bbD dz \;  \gp(z) \; \psi (z)\, .
%\end{equation}
Let us denote t the backward transport operator  by $S_t \phi(x,v):=\phi(x-tv,v)$, for any test function $\phi$.

\begin{Prop} 
\label{prop: Boltzmann mild}
In the Boltzmann-Grad limit, 
the rescaled one-particle density  converges  in the time interval $[0,T_0]$ 
 in the sense of measures 
\begin{equation}
\label{eq: limit weak density}
\lim_{\mu_\eps \to \infty} F^\eps_1(t ) = f_1(t )
  = \frac{\partial \cI(t,0)}{\partial g(t)}\, \cdotp\end{equation}
 The limit $f_1$ is a mild solution of the  Boltzmann equation in a weak form
\begin{equation}
\label{eq: mild Boltzmann}
\begin{aligned}
&\int_\bbD f_1(t,z)  \psi (z) \, dz
= \int_\bbD S_t f^0 (z)   \psi   (z) \, dz\\
&\qquad+ \int_0^tds \int  S_{t-s}\big( f_1(s,z'_1)   f_1(s,z'_2)-  f_1(s,z_1)   f_1(s,z_2)\big)      
\psi (z_1) \,   d\mu (z_1, z_2, \omega)\,  ,
\end{aligned}
\end{equation}
for any continuous  bounded test function $\psi$.
%This equation has a unique solution on $[0,T_0]$ which coincides with the one of \eqref{boltzmann-eq} so that  $f_1 = f$.}
\end{Prop}

\begin{proof}
We will consider only functional derivatives of $\cI$ at $g=0$, thus  $\alpha$ can be chosen arbitrarily small so that all the equations obtained from Theorem \ref{HJ-prop} are valid up to the time   $T_0 = T_{\alpha \big | \alpha =0}$. 

By definition~(\ref{I-def dynamics J}) of~$\cI$ 
\begin{equation}
\label{eq: defIChap5}
\cI(t,g) = \sum_{n = 1}^\infty \frac1{n!} f_{n,[0,t]} \Big(\big(e^h - 1 \big)^{\otimes n} \Big)
\quad \text{with} \quad h\big(z([0,t])\big)= g(t,z(t)) -\int_0^t D_s g(s,z(s)) \, ds\, ,
\end{equation}
is a uniformly convergent series for $t \leq T_\alpha$ and in particular it is analytic with respect to  $g(t)$ for $g$ in~$\bB_\alpha$.
Given   a test function~$\psi$ defined on~$\mathbb D$ (and acting at time~$t$), 
 the derivative at $g =0$ is given by 
\begin{equation}
\label{first derivative f1}
 \Big \langle \frac{\partial \cI (t,0)}{\partial g(t)}  , \psi \Big \rangle 
=f_{1,[0,t]} (\psi)  = \int_\bbD  f_1 (t) \psi (z) \, dz
 \, ,
 \end{equation}
where~$\la \, \cdot \,  , \, \cdot \, \ra$ denotes the duality bracket. Theorem \ref{cumulant-thm2} implies that  $f^\eps_{1, [0,t]}$ converges to $f_{1,[0,t]}$.
As  $F^\eps_1(t)  = f^\eps_{1}(t)$, this  leads to \eqref{eq: limit weak density}.

The Hamilton-Jacobi equation~(\ref{HJ-eq}) will  enable  us to obtain rather easily   that the equation satisfied by~$f_1$ is the Boltzmann equation. 
Let us start by 
computing the derivative  with respect to~$g(t)$ of~$  \cI(0,g)$.  
First, we note that for all~$s \in [0,t]$, then $g(s)$ is a function of~$g(t)$ and  $Dg$ through the Duhamel formula
$$
g(t,x+tv,v) = g(s,x+sv,v) + \int_s^t D_\sigma g(\sigma, x+\sigma v, v) \, d\sigma \, , 
$$
which may be recast as follows:
\begin{equation}
\label{g(s)g(t)Dg}
\forall s \in [0,t] \, , \quad 
g(s) = S_{s-t}g (t) -  \int_s^t S_{s-\sigma} D_\sigma g(\sigma) \, d\sigma \, .
\end{equation}
This formula will be key to track the impact of the variations of $g(s)$ in the functional derivatives under a perturbation of  $g$ at time $t$ (or of $Dg$ later on).
Recalling that
$$ 
\cI (0, g) =   \int f^0(z) \big( e^{g(0,z)}-1\big)  \, dz
\, , 
$$
we therefore find that taking the derivative with respect to  $g(t)$ in the direction $\psi$ is given by 
\begin{equation}
\label{derivative at time 0}
\langle \frac{\partial \cI(0,g)}{\partial g(t)}, \psi \rangle   =     \int f^0(z) \big(S_{-t} \psi \big)(z) e^{g(0,z)} \, dz \, ,
\end{equation}
hence in particular at $g=0$
\begin{equation}
\label{derivative at 0}
\langle \frac{\partial \cI(0,0)}{\partial g(t)} , \psi \rangle  =   \int  \big(S_tf^0 \big)(z) \psi (z) \, dz \, .
\end{equation}
Next differentiating~(\ref{HJ-eq})  with respect to~$g(t)$ in the direction~$\psi$, we find
\begin{equation}
\label{eqHJ'}
\begin{aligned}
\langle \frac{\partial \cI(t,g)}{\partial g(t)}  , \psi \rangle
& =  \int  \big(S_tf^0 \big)(z) \psi (z) \, dz \\
&   + \int_0^t ds\int   {\d \cI ( s ,g)\over \d g( s )}   (z_1)\Big \langle  {\d^2  \cI (s,g) \over \d g(s) \d g(t) } , \psi \Big\rangle  (z_2)\Big(e ^{\Delta g(s)} - 1 \Big)   d\mu (z_1, z_2, \omega)  \\
&   +\frac12 \int_0^tds \int   {\d \cI  ( s ,g) \over \d g( s )}  (z_1)   
{\d   \cI  (s,g) \over \d g(s)}   (z_2)
\;  \Delta S_{s-t}\psi \, e^{\Delta g(s)}    d\mu (z_1, z_2, \omega) .
\end{aligned}
\end{equation}
Note that Proposition \ref{prop: analyticity J part 1}  allows to handle  
the singularity of the measure $d\mu$.

Evaluating the result at~$g=0$ produces, thanks to~(\ref{first derivative f1}), (\ref{g(s)g(t)Dg}) and~(\ref{derivative at 0}),
\begin{align*}
\langle \frac{\partial \cI(t,0)}{\partial g(t)}  , \psi \rangle
 =  \int  \big(S_tf^0 \big)(z) \psi (z) \, dz   +\frac12 \int_0^t ds\int  f_1(s,z_1)   f_1(s,z_2) \Delta S_{s-t}\psi \,    d\mu (z_1, z_2, \omega) \, .
\end{align*}
Finally thanks to~(\ref{first derivative f1})  again, we recover that for any smooth function $\psi$
$$
\begin{aligned}
\la f_1(t)  ,  \psi    \ra  &= \int  \big(S_tf^0 \big)(z) \psi (z) \, dz +\int_0^t ds\int \big( f_1(s,z'_1)   f_1(s,z'_2)-  f_1(s,z_1)   f_1(s,z_2)\big)   S_{s-t}\psi     \,    d\mu (z_1, z_2, \omega) \\
&= \int  \big(S_tf^0 \big)(z) \psi (z) \, dz +\int_0^tds \int  S_{t-s}\big( f_1(s,z'_1)   f_1(s,z'_2)-  f_1(s,z_1)   f_1(s,z_2)\big)    \psi  (z_1)    \,   d\mu (z_1, z_2, \omega) .
\end{aligned}
$$
The  proposition is proved. \end{proof}

\section{Equation for the limit covariance}
\label{sec: Equation for the limit covariance}

The fluctuation field covariance is defined for any test functions  
$\psi, \gp$ on~$\mathbb D$ by 
\begin{equation}
\label{eq: definition fluctuation field cov}
\forall s \leq t \, , \qquad  \cC_\eps (t,s, \psi, \gp  ) 
:=  \bbE_\eps \left( \zeta^\eps_t (\psi) \zeta^\eps_s(\gp) \right).
\end{equation}
The Hamilton-Jacobi equation~(\ref{HJ-eq}) enables us   to deduce 
dynamical equations characterizing the limit covariance.
For this, we shall need the following notations :
\begin{Def}
\label{Def: linearized et covariance}
The (adjoint) linearized operator is defined as
 \begin{equation}
\label{eq: adjoint Ls}
\begin{aligned}
  \cL_t^*  \gp(z)&:=v\cdot \nabla_x\gp(z) + {\mathbf L}_t^*\gp(z) \, ,\quad \mbox{with} \\
 {\mathbf L}_t^*\gp(z)&:=\int d\mu_{z}  (  z_2,  \omega)  f (t,z_2) \,\Delta\gp (z,z_2,\omega)  \, ,
  \end{aligned}
  \end{equation}
   with notation~{\rm(\ref{defdmuzi})} for the measure~$d\mu_{z}  (  z_2,  \omega) $. We also set  \begin{equation}
\label{eq: noise covariance}
{\bf  Cov}_t ( \gp, \psi) 
:= \frac{1}{2}  \int d\mu (z_1, z_2, \omega) \,
 f (t, z_1)\, f (t, z_2) \; \Delta \psi \Delta \gp \, . 
 \end{equation}
\end{Def}

\begin{Prop} 
\label{prop: covariance mild}
The covariance of the particle system converges to a quadratic form $\cC$  in the time interval $[0,T_0]$ 
in a weak sense, i.e. for any bounded continuous functions $\gp, \psi$ 
\begin{equation}
\label{eq: limit weak covariance}
\forall s\leq t\leq T_0, \qquad 
\lim_{\mu_\eps \to \infty} \cC_\eps (t, s, \psi, \gp)  = \cC (t,s, \psi, \gp)   \, .
\end{equation}
The limit $\cC$  is a solution of the system of equations for $t\leq T_0$
\begin{equation} \label{eq: systeme covariance}
\left\{ \begin{aligned} 
 \mathcal C(t,t,\psi, \gp) & = \mathcal C(0,0,S_{-t}\psi,S_{-t}\gp ) + \int_0^t ds\, {\bf  Cov}_s( S_{s-t}\psi, S_{s-t}\gp)   \\
    & + \int_0^t ds\, \mathcal C(s,s,S_{s-t}\psi, {\mathbf L}_s^*S_{s-t}\gp)  
  + \int_0^t ds\, \mathcal C(s,s, {\mathbf L}_s^*S_{s-t}\psi, S_{s-t}\gp)   \, ,  
  \\
  \int_0^t   \mathcal C(t,\sigma ,\psi,\phi_\sigma) \,d \sigma
& =  \int_0^t d \sigma   \, 
\left( 
\mathcal C(\sigma ,\sigma , S_{\sigma-t}\psi, \phi_\sigma) 
+  \int_ \sigma^t ds   \;   
\mathcal C \Big( s, \sigma,   {\mathbf L}_s^* S_{s-t}\psi ,  \phi_\sigma \Big)  \right) \, , 
\end{aligned}
\right.
\end{equation}
where $\psi$, $\gp$ and $(\phi_\sigma)_{\sigma \leq T_0}$ are test functions on~$\mathbb D$. 
 \end{Prop}
It is shown in the appendix that~(\ref{eq: systeme covariance}) provides a complete characterization of~$\mathcal C(t,s,\psi, \gp)$, at least for a short time: see Proposition~\ref{prop: well posedness covariance mild}.
 
\begin{proof}
The proof of the proposition is split into 2 steps.

\medskip

\noindent
{\bf Step 1.} Convergence  of  the covariance \eqref{eq: limit weak covariance}.

Recall first that the covariance, for fixed $\eps$,  is determined by the first two cumulants
$$
\begin{aligned}
\forall s \leq t \, , \quad  \cC_\eps (t,s, \psi, \gp)  & = \bbE_\eps \left(\frac1{\mu_\eps}  \sum_{i} \gp ( {\bf z}^\eps_{i} (s))  \psi ( {\bf z}^\eps_{i} (t))\right)+ \bbE_\eps \left(\frac1{\mu_\eps}  \sum_{(i_1,i_2) } \gp ( {\bf z}^\eps_{i_1} (s))  \psi ( {\bf z}^\eps_{i_2} (t))\right) \\
 & \quad  -\mu_\eps\bbE_\eps \left(\frac1{\mu_\eps}  \sum_{i } \gp ( {\bf z}^\eps_{i} (s))\right) \times \bbE_\eps\left(\frac1{\mu_\eps}  \sum_{i }  \psi ( {\bf z}^\eps_{i} (t))\right)
 \\
 & =  f^\eps_{1, [0,t]} ( \varphi(s)\psi(t) ) + f^\eps_{2, [0,t]} ( \varphi(s) \otimes \psi(t) ) 
 \end{aligned}$$
where with slight abuse, we   denote   by $f^\eps_{2,[0,t]}=f^\eps_{2, [0,t]}   \left( \psi  \otimes \gp \right)$ the bilinear form obtained by polarization
$$
f^\eps_{2, [0,t]}   \left( \psi  \otimes \gp \right) 
: = 
\frac12 \Big( f^\eps_{2, [0,t]}   \left( (\psi  + \gp)^{\otimes 2} \right) 
- f^\eps_{2, [0,t]}   \left( \psi^{\otimes 2}  \right) - f^\eps_{2, [0,t]}   \left(  \gp^{\otimes 2} \right) \Big) \;.
$$
By the convergence of the cumulants proved in Theorem \ref{cumulant-thm2}, the limit
covariance is 
\begin{equation}
\label{eq: limiting covariance}
\forall s \leq t \, , \quad \cC(t,s, \psi, \gp)
 :=   f_{1,[0,t]} \big(  \psi(t) \gp \big( s) \big)    +  f_{2,[0,t]} \big(   \psi \big(t) \otimes \gp(s)  \big) \, .
 \end{equation}

\bigskip

\noindent
{\bf Step 2.} Derivation of the system of equations \eqref{eq: systeme covariance}.
 
 We start by establishing the equation for the covariance at a single time $t$.
As in \eqref{first derivative f1}, differentiating twice~$\cI$ with respect to~$g(t)$ in the direction~$\psi$ provides
$$
\Big \langle \frac{\partial^2 \cI}{\partial ^2g(t)}, \psi \otimes \psi \Big \rangle _{\big|g=0}
=f_{1,t} (\psi^2) + f_{2,t} (\psi\otimes\psi) =\mathcal C(t,t,\psi,\psi) \, .
$$  
The corresponding formula for~$\mathcal C(t,t,\varphi,\psi)$ follows by polarization.
Thanks to~(\ref{derivative at time 0}), there holds
$$
\Big \langle 
 \frac{\partial^2 \cI(0,g)}{\partial^2 g(t) },  \psi \otimes  \psi   \Big \rangle _{\big|g=0}
= \int f^0(z)\big(S_{-t}\psi\big)^2(z)\, dz = \mathcal C(0,0,S_{-t}\psi,S_{-t}\psi) \,  .
$$
By using  the identity  \eqref{g(s)g(t)Dg}, the functional can be also differentiated at different times 
\begin{equation}
\label{eq: changement de temps s t}
\Big \langle  {\d^2  \cI  (s,g) \over \d g(s) \d g(t) }, \psi \Big\rangle  (z_1 )
= \Big \langle  {\d^2  \cI  (s,g) \over \d g(s) \d g(s) }, S_{s-t} \psi \Big\rangle  (z_1 ).
\end{equation}
Thus   differentiating~(\ref{eqHJ'}) one more time and computing the result at~$g=0$ provides 
\begin{equation}
\label{Cttpsipsi}
\begin{aligned}
\mathcal C(t,t,\psi,\psi) & = \mathcal C(0,0,S_{-t}\psi,S_{-t}\psi) \\
&\quad 
+ 2 \int_0^t ds \int   
\Big \langle  {\d^2  \cI  (s,0) \over \d g(s) \d g(s) }   , S_{s-t} \psi  \Big \rangle  (z_1 )
\; {\d \cI  ( s ,0)\over \d g( s )}  (z_2)
\; \Delta S_{s-t}\psi  \,   d\mu (z_1, z_2, \omega)  \\
&  \quad 
+\frac12 \int_0^tds \int   {\d \cI ( s ,0) \over \d g( s )}  (z_1)   
{\d   \cI   (s,0)  \over \d g(s)  }  (z_2)
\big( \Delta S_{s-t}\psi \big)^2 d\mu (z_1, z_2, \omega)\\
& = \mathcal C(0,0,S_{-t}\psi,S_{-t}\psi) \\
&\quad 
+ 2 \int_0^t ds \int   
\Big \langle  {\d^2  \cI  (s,0) \over \d g(s) \d g(s) }  (z_1 ) , S_{s-t} \psi  \Big\rangle
\; f  (s, z_2)
\; \Delta S_{s-t}\psi  \,   d\mu (z_1, z_2, \omega)  \\
&  \quad 
+
\frac12 \int_0^tds \int f  (s, z_1)   f (s, z_2)\big( \Delta S_{s-t}\psi \big)^2 d\mu (z_1, z_2, \omega) \, ,
\end{aligned}
\end{equation}
where    $\displaystyle {\d \cI ( s ,0) \over \d g( s )}$ has been replaced by $f(s)$ thanks to Proposition \ref{prop: Boltzmann mild}.

With these notations, \eqref{Cttpsipsi} can be rewritten as 
\begin{equation}
\label{mild form covariance}
\mathcal C(t,t,\psi,\psi) = \mathcal C(0,0,S_{-t}\psi,S_{-t}\psi) + 2\int_0^t ds\, \mathcal C(s,s,S_{s-t}\psi, {\mathbf L}_s^*S_{s-t}\psi) + \int_0^t ds\, {\bf  Cov}_s( S_{s-t}\psi, S_{s-t}\psi) \, .
\end{equation}
Thus the first equation of the system \eqref{eq: systeme covariance} is recovered 
by polarisation.

Now let us turn to the equation on the covariance at two different times.
Given~$\phi$ a test function defined on~$[0,t]\times \mathbb D$, the integrated covariance can be recovered by differentiating with respect to  $Dg$  in the direction~ $\phi_\sigma = \phi (\sigma)$, a given smooth function. Setting  
$$
\Phi(t,z([0,t])) :=\int_0^t \phi(\sigma ,z( \sigma )) \, d \sigma
= \int_0^t \phi_\sigma  \, d \sigma \, ,
$$ 
one has$$
\la \frac{\partial^2 \cI(t,0)}{\partial g(t)\partial Dg}  , \psi \otimes \Phi   \ra
= -f_{1,[0,t]} (\psi \, \Phi) - f_{2,[0,t]} (\psi\otimes\Phi) 
= -\int_0^t\mathcal C(t,s,\psi,\phi_s) \,ds,
$$
where the minus sign comes from the fact that the test function is $g(t) -\displaystyle \int_0^t ds D_s g$.

We are now going to 
 derive the second equation on the covariance at different times, differentiating~(\ref{eqHJ'})
 again. We
recall from \eqref{g(s)g(t)Dg} that the variations of $g(s)$ in the directions $\psi$ and $\phi$ are given by 
\begin{equation}
\label{g(s)g(t)Dg bis}
\forall s \in [0,t] \, , \quad 
\delta g(s) = S_{s-t} \psi  -  \int_s^t S_{s-\sigma} \phi_\sigma \, d\sigma .
%\quad \mbox{with} \quad S_t \phi(x,v):=\phi(x-tv,v)\, .
\end{equation}

We start by observing that taking a second derivative in~\eqref{derivative at time 0}  
leads to
$$
 \Big \la \frac{\partial^2 \cI(0,0)}{\partial g(t)\partial Dg}  ,  \psi \otimes  \Phi  \Big \ra 
 = - \int dz f^0(z) S_{-t} \psi(z) \int_0^t  S_{-\sigma} \phi (\sigma,z) d \sigma
 = -\int_0^t\mathcal C \big( 0,0,  S_{-t}  \psi, S_{-\sigma} \phi_\sigma \big) \,d \sigma .
$$
Taking the derivative at intermediate times $s \in [0,t]$ on $\cI(s,g)$ with respect to  $Dg$ is more delicate as there is a contribution of the variation of $\delta g(s)$ by \eqref{g(s)g(t)Dg bis} and another contribution accounting for the variations on $[0,s]$: recalling~(\ref{eq: defIChap5}),
\begin{equation}
\label{eq: Phi derivee}
\la \frac{\partial \cI(s,0)}{\partial Dg},   \Phi   \ra
= - \int_s^t  \Big \la \frac{\partial \cI(s,0)}{\partial g(s)}  ,    S_{s-\sigma} \phi_\sigma  \Big \ra \, d\sigma  
-   \int_0^s \Big \la \frac{\partial \cI(s,0)}{\partial D_\sigma g}  ,   \phi_\sigma     \Big \ra \, d\sigma  .
\end{equation}

Differentiating~(\ref{eqHJ'}) one more time and using \eqref{eq: changement de temps s t}, there holds
$$
\begin{aligned}
 \int_0^t & \mathcal C(t,\sigma ,\psi,\phi_\sigma) \,d \sigma
=  \int_0^t\mathcal C(0,0,S_{-t}\psi,S_{-\sigma} \phi_\sigma ) \,d \sigma\\
&\quad   
+ \int_0^t ds \int   
 \Big \langle  {\d^2  \cI  (s,0) \over \d g(s) \d g(s) }, S_{s-t} \psi \Big\rangle  (z_1 )
\,  {\d \cI  ( s ,0 ) \over \d g( s )}  (z_2 )
\Big(\Delta \int_s^t  S_{s-\sigma} \phi_\sigma \, d\sigma \Big)  
\,   d\mu (z_1, z_2, \omega)  \\
&\quad 
-  \int_0^t ds  \int  \la
  {\d^2  \cI  (s,0)\over \d g(s) \d Dg}  , \Phi  \ra (z_1)  \,
 {\d \cI  ( s ,0) \over \d g( s )}   (z_2 ) \Delta  S_{s-t}\psi\   \,   d\mu (z_1, z_2, \omega)  \\
&  \quad 
+\frac12 \int_0^t ds  \int   {\d \cI  ( s ,0)\over \d g( s )}  (z_1)   
{\d   \cI (s,0)  \over \d g(s)}    (z_2)
\big( \Delta S_{s-t}\psi \big) \Big(\Delta \int_s^t  S_{s-\sigma} \phi_\sigma \, d\sigma \Big)  d\mu (z_1, z_2, \omega) \, .
\end{aligned}
$$
Using that ${\d   \cI (s,0)  \over \d g(s)} = f(s)$ by Proposition \ref{prop: Boltzmann mild}, the adjoint linearized operator \eqref{eq: adjoint Ls} and the 
covariance~\eqref{eq: noise covariance}, we get 
%  \cL_t^*  \gp(z)&:=v\cdot \nabla_x\gp(z) + {\mathbf L}_t^*\gp(z) \, ,\quad \mbox{with} \\
% {\mathbf L}_t^*\gp(z)
$$
\begin{aligned}
\int_0^t  \mathcal C(t,\sigma ,\psi,\phi_\sigma) \,d \sigma
&
=   \int_0^t\mathcal C(0,0,S_{-t}\psi,S_{-\sigma} \phi_\sigma ) \,d \sigma    
+ \int_0^t ds  \int_s^t d\sigma      \;  
 \Big \langle  {\d^2  \cI  (s,0) \over \d g(s) \d g(s) }, S_{s-t} \psi  \otimes  {\mathbf L}_s^* S_{s-\sigma} \phi_\sigma 
\Big\rangle  \\
& 
-  \int_0^t ds \;  \la  {\d^2  \cI  (s,0) \over \d g(s) \d Dg}  , 
    {\mathbf L}_s^* S_{s-t}\psi \otimes \Phi \ra 
+  \int_0^t ds \int_s^t d\sigma \; 
{\bf  Cov}_s \Big(  S_{s-t}\psi  ,   S_{s-\sigma} \phi_\sigma  \Big)  .
\end{aligned}
$$
 From identity \eqref{eq: Phi derivee}, we finally obtain 
$$
\begin{aligned}
\int_0^t  \mathcal C(t,\sigma ,\psi,\phi_\sigma)\,d \sigma
&=   \int_0^t\mathcal C(0,0,S_{-t}\psi,S_{-\sigma} \phi_\sigma ) \,d \sigma    
+ \int_0^t ds  \int_s^t d\sigma      \;   
\Big \langle  {\d^2  \cI  (s,0) \over \d g(s) \d g(s) }, S_{s-t} \psi  \otimes  {\mathbf L}_s^* S_{s-\sigma} \phi_\sigma \Big\rangle  \\
& 
+  \int_0^t ds  \int_s^t  d \sigma   
\Big \la  {\d^2  \cI  (s,0)\over \d g(s) \d g(s) } , 
    {\mathbf L}_s^* S_{s-t}\psi \otimes S_{s-\sigma} \phi_\sigma \Big\ra \\
& +   \int_0^t ds  \int_0^s d \sigma   \Big \la
  {\d^2  \cI  (s,0)\over \d g(s) \d D g}  , 
    {\mathbf L}_s^* S_{s-t}\psi \otimes  \phi_\sigma \Big\ra \\
& +  \int_0^t ds \int_s^t d\sigma \; 
{\bf  Cov}_s \Big(  S_{s-t}\psi  ,   S_{s-\sigma} \phi_\sigma  \Big)  .
\end{aligned}
$$
Noticing that 
$$
\mathcal C ( s, \sigma, \psi, \phi )
= \Big \la  {\d^2  \cI  (s,0)\over \d g(s) \d D_\sigma g } , \psi \otimes  \phi \Big\ra , 
$$
this can be rewritten in terms on the covariance $\mathcal C$.
$$
\begin{aligned}
\int_0^t   \mathcal C(t,\sigma ,\psi,\phi_\sigma) \,d \sigma
&=   \int_0^t d \sigma  \, \mathcal C(0,0,S_{-t}\psi,S_{-\sigma} \phi_\sigma )    
+ \int_0^t ds  \int_s^t d\sigma \;  
\mathcal C \Big( s,s, S_{s-t} \psi    ,  {\mathbf L}_s^* S_{s-\sigma} \phi_\sigma 
\Big)  \\
& 
+  \int_0^t ds  \int_s^t  d \sigma    \; 
\mathcal C \Big( s,s, {\mathbf L}_s^* S_{s-t}\psi , S_{s-\sigma} \phi_\sigma \Big) 
 +   \int_0^t ds  \int_0^s d \sigma  \;   \mathcal C \Big( s, \sigma   , 
    {\mathbf L}_s^* S_{s-t}\psi ,  \phi_\sigma \Big) \\
& +  \int_0^t ds \int_s^t d\sigma \; 
{\bf  Cov}_s \Big(  S_{s-t}\psi  ,   S_{s-\sigma} \phi_\sigma  \Big)  .
\end{aligned}
$$
Finally swapping the integrals in $s,\sigma$ by Fubini's Theorem, we get    
$$
\begin{aligned}
\int_0^t   \mathcal C(t,\sigma ,\psi,\phi_\sigma) \,d \sigma
& =   \int_0^t d \sigma   \, \mathcal C(0,0,S_{-t}\psi,S_{-\sigma} \phi_\sigma )  
+ \int_0^t d \sigma \int_0^\sigma ds \;  
\mathcal C \Big( s,s, S_{s-t} \psi   ,  {\mathbf L}_s^* S_{s-\sigma} \phi_\sigma 
\Big)  \\
& 
+   \int_0^t d \sigma \int_0^\sigma ds    \; 
\mathcal C \Big( s,s, {\mathbf L}_s^* S_{s-t}\psi , S_{s-\sigma} \phi_\sigma \Big) 
+  \int_0^t d \sigma \int_\sigma^t ds   \;   \mathcal C \Big( s, \sigma   , 
    {\mathbf L}_s^* S_{s-t}\psi ,  \phi_\sigma \Big) \\
& + \int_0^t d \sigma \int_0^\sigma ds \; 
{\bf  Cov}_s \Big(  S_{s  - t}\psi  ,   S_{s - \sigma} \phi_\sigma  \Big)  .
\end{aligned}
$$
Noticing that  \eqref{mild form covariance}  implies 
\begin{align*}
\mathcal C(\sigma ,\sigma , S_{\sigma-t}\psi, \phi_\sigma ) 
&= \mathcal C(0,0,S_{- t }\psi,S_{- \sigma}\phi_\sigma) 
+  \int_0^\sigma ds\, \mathcal C(s,s,S_{s-t}\psi, {\mathbf L}_s^*S_{s-\sigma}\phi_\sigma) \\
& +  \int_0^\sigma ds\, \mathcal C(s,s, {\mathbf L}_s^* S_{s-t}\psi, S_{s-\sigma}\phi_\sigma) 
+ \int_0^\sigma ds\, {\bf  Cov}_s( S_{s-t}\psi, S_{s-\tau}\phi_\sigma) \, ,
\end{align*}
the formula for the covariance simplifies
\begin{align*}
\int_0^t   \mathcal C(t,\sigma ,\psi,\phi_\sigma)\,d \sigma
 =  \int_0^t d \sigma   \, 
\left( 
\mathcal C(\sigma ,\sigma , S_{\sigma-t}\psi, \phi_\sigma) 
+  \int_\sigma^t ds   \;   
\mathcal C \Big( s, \sigma,   {\mathbf L}_s^* S_{s-t}\psi ,  \phi_\sigma \Big)  \right)  .
\end{align*}
This completes the derivation of the system of equations \eqref{eq: systeme covariance}.
\end{proof}

\part{Fluctuations around the Boltzmann dynamics }

\chapter{Fluctuating Boltzmann equation}
\label{TCL-chap} 
\setcounter{equation}{0}

The goal of this chapter is to prove Theorem  \ref{thmTCL}, describing the  limit of the fluctuation field $( \gz^\gep_t )_{t }$, %introduced in \eqref{eq: fluctuation field}, i.e. defined for any smooth test function $\gp$  as
 of which we recall the definition:
\begin{equation*}
\gz^\gep_t \big(  \gp  \big) := \frac1{ \sqrt{\mu_\eps }} 
\Big( \sum_{i = 1}^\cN  \gp \big(  \mathbf z^\eps_i(t)  \big) -  \mu_\eps \bbE_\eps\big(\pi^\eps_t(\gp)\big)  \Big)
\end{equation*}
on test functions $\gp$.
%By definition, for any fixed $\eps$ and any $h \in C^0(\bbD)$,  $t \mapsto\gz^\gep_t \big(  h  \big)$ belongs to the Skorohod space $D( [0,T^\star])$ 
%of functions of time which are right continuous  with left limits.
Namely we prove that, in the Boltzmann-Grad limit, $\zeta^\eps_t$ converges to a process~$\zeta_t$
which solves, in a weak sense clarified below (see Section  \ref{subsec: Functional setting}), the fluctuating Boltzmann equation 
\begin{equation}
\label{eq: OU bis}
d\hat \gz_t   = \cL_t \,\hat\gz_t\, dt + d\eta_t \,.
\end{equation}
We recall that  $f $ is the solution of the Boltzmann equation on $[0,T_0]$, that the linearized  Boltzmann operator is defined as 
$\cL_t := - v \cdot \nabla_x + {\bf L}_t$ with the collision part 
\begin{equation}
\label{defLtbold}
\begin{aligned}
{\bf L}_t \,\gp(z_1) := \int  d\mu_{z_1}  (  z_2,  \omega) 
 \Big( f (t,z_2') \gp(z_1') + f (t,z_1') \gp(z_2')  - f (t,z_2)  \gp(z_1) -  f (t,z_1)  \gp(z_2) \Big) \, ,  
\end{aligned}
\end{equation}
and that $d \eta_t(x,v)$  is a Gaussian noise  with zero mean and covariance given in~\eqref{eq: noise covariance}, which we recall
\begin{equation}
\label{defCovt}
{\bf  Cov}_t ( \psi,\gp) 
:= \frac{1}{2}  \int d\mu (z_1, z_2, \omega) \,  f (t, z_1)\, f (t, z_2) \; \Delta \psi \Delta \gp \, . 
\end{equation}
where the scattering measures are defined as in \eqref{eq: measure mu} and 
\eqref{defdmuzi}
$$\begin{aligned}
& d\mu_{z_1}  (  z_2,  \omega) = \delta_{ x_1-   x_2}  \big ( (    v_1 - v_2) \cdot \omega\big)_+ d\omega  
d   v_2,\\ &
d\mu (z_1, z_2, \omega) = \delta_{x_1-   x_2} \left( (v_1 - v_2) \cdot \omega \right)_+
d \omega \, d x_1 dv_1 d v_2  \, ,
\end{aligned}$$
and we recall the notation 
\begin{equation}
\label{eq: Delta psi}
\Delta \psi (z_1,z_2,\omega) = \psi ( z_1' ) + \psi   ( z_2' )  -  \psi  ( z_1) -  \psi ( z_2 )\, .
\end{equation}
 The limiting Gaussian process \eqref{eq: OU bis} will be characterized  by its covariance in Section  \ref{subsec: Functional setting}.

In order to obtain the convergence of the fluctuation field, 
we shall proceed in two steps, establishing first  the convergence of the characteristic function in Section \ref{characteristic-sec}, and then some tightness in Section~\ref{tightness-sec}.

\section{Weak solutions for the  limit process}
%\label{OU-sec}
\label{subsec: Functional setting}
\setcounter{equation}{0}
A solution~$\hat \gz_t $ to~(\ref{eq: OU bis}) is a Gaussian process: its law   is therefore  completely characterized by its covariance. 
  In this section we study the equation governing this covariance
\begin{equation}
\label{defhatCov}
 \hat \cC(t,s, \psi,\gp) :=\bbE \big(\hat  \gz_t (\psi) \hat \gz_s (\gp) \big) 
\end{equation}
 and prove that it is  precisely the   equation obtained Proposition~\ref{prop: covariance mild}, namely~(\ref{eq: systeme covariance}). Since there is a unique solution to~(\ref{eq: systeme covariance}) (see Proposition~\ref{prop: well posedness covariance mild}),   the limiting covariance~$  \cC(t,s, \psi,\gp) $ is equal to~$ \hat \cC(t,s, \psi,\gp)   $, at least for short times.

\subsection{Equation for the covariance}

Denote by   $\mathcal{U}(t,s)$\label{defmathcalU} the semigroup associated with $\cL_\tau$ between times~$s<t$, meaning that
$$ \d_t \mathcal{U}(t,s) \varphi -\cL_t \mathcal{U}(t,s) \gp = 0\,, \qquad \mathcal{U}(s,s) \gp = \gp\,,$$
and
$$ \d_s  \mathcal{U}(t,s) \gp +  \mathcal{U}(t,s) \cL_s \gp= 0\,, \qquad \mathcal{U}(t,t) \gp = \gp\,.$$
By definition, $  \mathcal{U}^* (t,s) \gp$ satisfies 
\begin{equation}
\label{eq: linearized backward}
\d_s   \mathcal{U}^* (t,s) \gp+  \cL_s^*  \mathcal{U}^* (t,s) \gp= 0\,, \qquad   \mathcal{U}^* (t,t) \gp  = \gp\,,
\end{equation}
and
$$
\d_t \mathcal{U}^* (t,s) \varphi -\mathcal{U}^* (t,s)\cL_t^*   \gp = 0\,, \qquad \mathcal{U}^* (s,s) \gp = \gp\,, $$
where we recall that
$ \cL_s^* = v\cdot \nabla_x +{\bf L}_s^*$ with
\begin{equation}
\label{eq: L*}
{\bf L}_s^*\,\psi (z_1) := \int d\mu_{z_1}  (  z_2,  \omega)  f (s,z_2) \,\Delta\psi (z_1,z_2,\omega)  \,  .
\end{equation}
Formally, a solution of the limit process~\eqref{eq: OU bis} satisfies
for any test function $\gp$
%\begin{equation*}
%\hat \gz_t (\gp)  = \gz_0 (\gp)  + \int_0^t ds  \,\hat  \gz_s (  \cL_s^* \gp)  + 
%\int_0^t   \, d\eta_s (  \mathcal{U}^* (t,s) \gp) \, ,
%\end{equation*}
%or alternatively 
\begin{equation*}
\hat \gz_t (\gp)  = \gz_0 (\mathcal{U}^*(t,0)  \gp)  + \int_0^t   \, d\eta_s ( \mathcal{U}^*(t,s)  \gp)  \,.
\end{equation*}
For any $t \geq s$ and test  functions $\gp,\psi$, the covariance is then given by 
$$\begin{aligned}
\bbE \big(\hat  \gz_t (\psi) \hat \gz_s (\gp) \big) 
& = \bbE \Big(  \gz_0  \big( \mathcal{U}^*(t,0) \psi  \big) \;  \gz_0  \big( \mathcal{U}^*(s,0) \gp  \big) \Big)
+ \bbE \left(  \int_0^t \int_0^s d  \eta_\sigma \, d  \eta_{\sigma'} 
\big( \mathcal{U}^*(t,\sigma) \psi \big) \big( \mathcal{U}^*(s,\sigma') \gp \big)   \right)  \\
& + \bbE \left(  \gz_0  \big( \mathcal{U}^*(t,0) \psi  \big) \;  
 \int_0^s  d  \eta_{\sigma'} 
\big( \mathcal{U}^*(s,\sigma') \gp \big) \right) +
\bbE \left(  \gz_0  \big( \mathcal{U}^*(s,0) \gp  \big) \; \int_0^t   d  \eta_\sigma  
\big( \mathcal{U}^*(t,\sigma) \psi \big) \right)  \\
\end{aligned}$$
so that according to~(\ref{defCovt}) and~(\ref{defhatCov})
\begin{equation}
\label{eq: covariance hors equilibre}
  \hat \cC(t,s, \psi,\gp) = \bbE \Big(  \gz_0  \big( \mathcal{U}^*(t,0) \psi  \big) \;  \gz_0  \big( \mathcal{U}^*(s,0) \gp  \big) \Big)
+   \int_0^s  d\sigma \;
{\bf  Cov}_\sigma \left( \mathcal{U}^*(t,\sigma) \psi , \mathcal{U}^*(s,\sigma) \gp  \right) \, .
 \end{equation}
%{\color{blue} Notice that~(\ref{eq: covariance hors equilibre}) can also be written
%\begin{equation}
%\label{eq: covariance hors equilibre bis}
% \hat \cC(s,t,\gp,\psi) = \bbE \Big(  \gz_0  \big( \mathcal{U}^*(t,0) \psi  \big) \;  \gz_0  \big( \mathcal{U}^*(s,0) \gp  \big) \Big)
%+   \int_0^s  d\sigma \;
%{\bf  Cov}_\sigma \left( \mathcal{U}^*(t,\sigma) \psi , \mathcal{U}^*(s,\sigma) \gp  \right) \, .
%\end{aligned}\end{equation}}
\begin{Def}\label{OU-def}
A weak solution to~{\rm(\ref{eq: OU bis})} is  a Gaussian process with covariance satisfying~{\rm(\ref{eq: covariance hors equilibre})}.
\end{Def}
 
Let us take formally the time derivative of \eqref{eq: covariance hors equilibre}  for~$t> s$. This  gives 
$$ \begin{aligned}
\partial_t \hat \cC(t,s, \psi,\gp)   & = \bbE \left(  \gz_0  \big( \mathcal{U}^*(t,0) \cL_t^* \psi  \big)  \gz_0  \big( \mathcal{U}^*(s,0) \gp  \big) \right)
+    \int_0^s    \!  \! d\sigma 
{\bf  Cov}_\sigma \left( \big( \mathcal{U}^*(t,\sigma)  \cL_t^* \psi  \big) , \big( \mathcal{U}^*(s,\sigma) \gp \big)   \right)
 \\
& =\hat  \cC(t,s, \cL_t^*\psi, \gp) \, .  
\end{aligned}
$$
For $s= t$,  the time derivative is
$$
\begin{aligned}
\partial_t \hat\cC(t,t,\psi,\gp)   & = \hat\cC(t,t, \cL_t^*\psi,\gp) +\hat \cC(t,t, \psi, \cL_t^* \gp)
+ {\bf  Cov}_t (    \psi   ,  \gp  ) \,.
\end{aligned}
$$
We recognize here 
the   equation~(\ref{eq: systeme covariance}) satisfied by the limit covariance~$ \cC(s,t,\gp,\psi)$  (see Proposition~\ref{prop: covariance mild}), written in infinitesimal form: 
\begin{equation}
\label{covariance-dyn}
\forall s \leq t, \qquad 
\begin{cases}
\d_t \cC(t,s, \psi,\gp)    =  \cC(t,s, \cL_t^*\psi,\gp),  \\
\d_t   \cC(t,t,\psi,\gp)    =   \cC(t,t, \cL_t^*\psi,\gp) +   \cC(t,t, \psi, \cL_t^* \gp)
+ {\bf  Cov}_t (    \psi   ,  \gp  )\,, \\
 \cC(0,0,\psi,\gp) =\displaystyle \int dz \gp(z) \psi(z) f^0(z) \, .
\end{cases}
\end{equation}
 The link between~\eqref{eq: systeme covariance} and~(\ref{eq: covariance hors equilibre}) is made rigorous in Lemma \ref{Lem: equiv lim cov} below. 
 The set of equations~\eqref{covariance-dyn} is used in the physics literature to describe correlations at equal and unequal times: we refer to \cite{EC81} which includes a comparison of several equivalent formulations of the right-hand side. 

\begin{Rmk}\label{rmk equiliibrium}
The equilibrium case~(when~$f^0 = M$ is a Maxwellian) is much simpler. 
The linear operator $\cL_{\rm eq}:=-v\cdot \nabla_x + {\bf L}_{\rm eq}$, where ${\bf L}_{\rm eq}$ is   the (autonomous) linearized operator  around~$M$, generates  indeed  a semigroup $\mathcal{U}_{\rm eq} $  of self-adjoint contractions   on~$L^2(M dvdx)$.  
By the method of~\cite{holley1978generalized}, one can  construct a martingale solution of
the  generalized Ornstein-Uhlenbeck  equation
\begin{equation}
\label{eq: OU bis equilibrium} 
d \hat \gz_t   = \cL_{\rm eq} \,\hat\gz_t\, dt + d\eta_t \;.
\end{equation}
Moreover, the covariance structure is such that the fluctuations   exactly compensate the dissipation~: using the symmetry of the equilibrium measure 
$M(z_1') M(z_2') = M(z_1) M(z_2)$ and denoting by~$ \mathcal{U}_{\rm eq}^* $ the adjoint of~$ \mathcal{U}_{\rm eq} $  in~$L^2(\D)$,    one gets 
%the function~$\psi_s :=  \mathcal{U}_{\rm eq}^* (t,s)\gp$ satisfies the backward equation 
%$$
% \d_s \psi_s+  v\cdot \nabla_x \psi_s =- \int_{\R^d}\int_{{\mathbb S}^{d-1}}  \,d \omega \, d w   \left( (v - w) \cdot \omega \right)_+  M(w) \Delta  \psi_s \,, \quad \psi_t  = \gp\,,
%$$
%which gives rise to
$$
\begin{aligned}
\int_0^t du  \, {\bf  Cov}  \big(   \mathcal{U}_{\rm eq}^*(t,\sigma) \gp,  \mathcal{U}_{\rm eq}^*(t,\sigma) \gp \big)
 & = - 2 \int_0^t d\sigma  \int   \mathcal{U}_{\rm eq}^* (t,\sigma) \gp   M {\bf L}_{\rm eq}^* \; \mathcal{U}_{\rm eq}^*(t,\sigma) \gp \\
& =  - 2 \int_0^t d\sigma  \int   \mathcal{U}_{\rm eq}^* (t,\sigma) \gp   M (-\d_\sigma -v\cdot \nabla_x)  \; \mathcal{U}_{\rm eq}^*(t,\sigma) \gp\\
& =\int M |  \gp|^2 -  \int M \, | \mathcal{U}_{\rm eq}^* (t,0 ) \gp|^2 \, .
\end{aligned}
$$
 Out of equilibrium the structure of the linearized operator  is lost: it is no longer autonomous, and the  semigroup generated by $\cL_t$ is no longer a contraction.  
\end{Rmk}

\subsection{Functional setting for~(\ref{eq: covariance hors equilibre})}

 Let us define a functional setting for the semi-group~$\mathcal{U}^*(t,s)$, and check that in this setting the right-hand side of~(\ref{eq: covariance hors equilibre}) is well defined.   
 By a Cauchy-Kovalevskaya type argument (see Theorem~\ref{nishidatheorem} and Section~\ref{boltz-Cauchy}) one can prove that there is a time~$T_0 \sim C_0^{-1}\beta_0^{(d+1)/2}$ such that there is  a unique solution $f$ to the Boltzmann equation  on the time interval $[0,T_0]$ which satisfies
\begin{equation}\label{Mmax}
\|f(t)\|_{L^\infty_{-\beta_0/2}} \leq 4 C_0 \, , 
\end{equation}
with
\begin{equation}
\label{eq: espaces L infini beta}
L^\infty_\beta := \left \{ \gp = \gp(x,v) \, : \, 
\|\gp\|_{ L^\infty_\beta} :=\sup_\D \exp \big( -\frac\beta2  |v|^2\big) |\gp (x,v) |  <+\infty \right\}\,.
\end{equation}
For any $\beta>0$, we  introduce the weighted $L^2$ space\label{L2beta}
\begin{equation}
\label{eq: espaces L2beta}
L^2_\beta := \left \{ \gp = \gp(x,v) \, : \, 
\|\gp\|_{L^2_\beta}:=\Big(\int _\D \exp \big(- \frac\beta2  |v|^2\big) \, \gp^2 (x,v) dxdv  \Big)^\frac12<+\infty \right\}\, .
\end{equation}
In particular, $(L^2_\beta)_{\beta>0}$ is an increasing sequence of Hilbert spaces and an application of Theorem \ref{nishidatheorem}   leads to  the following result: we refer to Section \ref{U*-Cauchy} of  the appendix for the proof.
 \begin{Prop}
\label{prop: borne U*}
    There is a time~$T\in (0,T_0]$ with~$T\sim C_0^{-1}\beta_0^{(d+1)/2}$,  such that for any~$\gp$ in~$ L^2_{\beta_0/4}$
    and
   any $s \leq t \leq T $,     $ \mathcal{U}^* (t,s) \gp $ is well defined and belongs to~$ L^2_{3\beta_0/8}$.
\end{Prop}
This proposition implies that   the  covariance is well defined, as stated in the next proposition. 
 \begin{Prop}
\label{prop:uniq-cov}
    There exists a time~$T\in (0,T_0]$  with~$T\sim C_0^{-1}\beta_0^{(d+1)/2}$, such that for any~$\gp$ and~$\psi$ in~$ L^2_{\beta_0/4}$ and  all times  $0 \leq s \leq t \leq T $,   the  covariance $\hat \cC(t,s,\psi,\gp)$ is well defined by~\eqref{eq: covariance hors equilibre}. \end{Prop}
\begin{proof}[Proof of Proposition {\rm\ref{prop:uniq-cov}}]
Denote~$\psi (\sigma) =  \mathcal{U}^*(t,\sigma) \psi$ and~$\gp(\sigma) =  \mathcal{U}^*(s,\sigma) \gp$. Then
by  the definition of the covariance~\eqref{eq: noise covariance} and by~\eqref{Mmax}, for any $\gp$ and~$\psi \in L^2_{\beta_0/4}$ there holds $ \forall s \leq t \leq T$
$$ \begin{aligned}
\int_0^s  d\sigma \,
{\bf  Cov}_\sigma \Big(\big( \mathcal{U}^*(t,\sigma) \psi \big) , \big( \mathcal{U}^*(s,\sigma) \gp \big)  \Big)
&\leq 2\int_0^s \int  d\mu (z_1, z_2, \omega) f(\sigma, z_1) f(\sigma, z_2)\Big( (\Delta \psi(\sigma))^2 +(\Delta \gp (\sigma))^2\Big) \\
 \leq C \int_0^s& \int d\mu (z_1, z_2, \omega) \exp ( -\frac{\beta_0}4 (|v_1|^2 +|v_2|^2))  \Big( \psi^2(\sigma,z_1) + \gp^2(\sigma,z_1)\Big)  \end{aligned}
$$
which is finite since $\psi(\sigma), \gp(\sigma)$ belong to $L^2_{3\beta_0/8}$ by Proposition~\ref{prop: borne U*}.
Therefore, 
\begin{align}
\label{eq: condition covariance}
\forall s \leq t \leq T\, , \qquad  \int_0^s  d\sigma \,
{\bf  Cov}_\sigma \Big(\big( \mathcal{U}^*(t,\sigma) \psi \big) , \big( \mathcal{U}^*(s,\sigma) \gp \big)  \Big)   < + \infty \, .
\end{align}
Similarly, the first term in the right-hand side of \eqref{eq: covariance hors equilibre} is bounded by applying Proposition \ref{prop: borne U*}, and since
 $$
  \Big|
  \hat \cC(0,0,\psi,\gp)  \Big|=\displaystyle  \Big|\int dz \gp(z) \psi(z) f^0(z)   \Big|< \infty$$thanks to~\eqref{lipschitz}.  This concludes the proof of Proposition \ref{prop:uniq-cov}.
\end{proof} 
 
\subsection{Identification with the limit covariance}

We now prove that the covariance~$\hat \cC(t,s,\psi,\gp)$ constructed above satisfies
the same equation~(\ref{eq: systeme covariance})  as the limiting covariance~$  \cC(t,s,\psi,\gp)$.
 \begin{Lem}
\label{Lem: equiv lim cov}
  The covariance $\hat \cC(t,s)$ defined by {\rm(\ref{eq: covariance hors equilibre})} and Proposition~{\rm\ref{prop:uniq-cov}} satisfies~\eqref{eq: systeme covariance} for~$(s,t) \in [0,T]^2$.
  As a consequence, the covariance $\hat \cC$ coincides on $[0,T]^2$ with the limit covariance $\cC$ of the hard sphere system  defined by~\eqref{eq: limit weak covariance}.
\end{Lem}
\begin{proof}[Proof of Lemma {\rm\ref{Lem: equiv lim cov}}] 
By definition (see Section \ref{U*-Cauchy} of the appendix),
\begin{equation}\label{Ustartsigma'1}
\forall s   \leq t\, , \quad \mathcal{U}^*(t,s) \psi =  S_{s - t}\psi  + \int_s^t du\, \mathcal{U}^*(u,s) {\bf L}_u^* S_{u-t}\psi\, .
\end{equation}
Similarly
$$\begin{aligned}& \mathcal{U}^*(t,s) \psi  \otimes \mathcal{U}^*(t,s) \gp
= S_{s  -t}\psi \otimes S_{s-t}\gp  \\
& \quad +  \int_s^tdu\,\mathcal{U}^*(u,s){\bf L}_u^* S_{u-t}\psi \otimes \mathcal{U}^*(u,s) S_{u- t}\gp
+\int_s^t du \,\mathcal{U}^*(u,s) S_{u-t}\psi \otimes \mathcal{U}^*(u,s) {\bf L}_u^*S_{u- t}\gp\,.
  \end{aligned}
$$ We consider first the case $t=s$ in \eqref{eq: covariance hors equilibre}  which we recall
\begin{align}\label{recall covariance hors equilibre}
\hat \cC(t,t,\psi,\gp)   =  \int  \mathcal{U}^*(t,0) \psi \; \mathcal{U}^*(t,0) \gp \; f^0
+ \int_0^t  d\sigma \; {\bf  Cov}_\sigma \left( \mathcal{U}^*(t,\sigma) \psi , \mathcal{U}^*(t,\sigma) \gp  \right) \,,
\end{align}
and we want to prove that it satisfies~(\ref{eq: systeme covariance}), namely (omitting the integration parameters~$dz$ to lighten notation)
$$
\begin{aligned}\hat \cC  (t,t,\psi,\gp)  &=  \int  S_{-t} \psi \; S_{-t} \gp \; f^0 +  \int_0^t d\sigma\,\hat \cC(\sigma,\sigma,{\mathbf L}_\sigma^*S_{\sigma-t}\psi, S_{\sigma-t}\gp) \\
&\qquad +  \int_0^t d\sigma\,\hat \cC(\sigma,\sigma,S_{\sigma-t}\psi, {\mathbf L}_\sigma^*S_{\sigma-t}\gp) + \int_0^t d\sigma\, {\bf  Cov}_\sigma( S_{\sigma-t}\psi, S_{\sigma-t}\gp) \, .
\end{aligned}$$
Noting that $ {\bf  Cov}_u(\psi,\gp)$ is a linear operator on the tensor product $\psi \otimes \gp$, we find from \eqref{recall covariance hors equilibre}  that
$$
\begin{aligned}&\hat \cC(t,t,\psi,\gp)  =  \int  S_{-t} \psi \; S_{-t} \gp \; f^0 +
\int_0^t  d\sigma \int   \mathcal{U}^*(\sigma,0)
{\bf L}_{\sigma}^* S_{   \sigma-t } \psi  \otimes \mathcal{U}^*( \sigma,0)S_{   \sigma-t } \gp  
  f^0 \\
&+\int_0^t  d\sigma \int   \mathcal{U}^*(\sigma,0)
S_{   \sigma-t } \psi  \otimes \mathcal{U}^*( \sigma,0){\bf L}_{\sigma}^* S_{   \sigma-t } \gp  
  f^0 
+  \int_0^t  d\sigma \; {\bf  Cov}_\sigma  
  \left( S_{\sigma-t} \psi , S_{\sigma-t} \gp  \right) \\
  & +  \int_0^t  d\sigma \int_\sigma^t  d\sigma' {\bf  Cov}_\sigma  
  \left( 
    \mathcal{U}^*(\sigma',\sigma)
 {\bf L}_{\sigma'}^*
  S_{\sigma '-t}  \psi  ,  \mathcal{U}^*(\sigma',\sigma)   S_{\sigma '-t} \gp  \right) 
\\
& +  \int_0^t  d\sigma \int_\sigma^t  d\sigma' {\bf  Cov}_\sigma  
  \left( 
    \mathcal{U}^*(\sigma',\sigma)
  S_{\sigma '-t}  \psi  ,  \mathcal{U}^*(\sigma',\sigma)  {\bf L}_{\sigma'}^*  S_{\sigma '-t} \gp  \right) \, .\end{aligned}$$
To conclude we notice that thanks to~(\ref{recall covariance hors equilibre}) again
$$
\begin{aligned} 
& \int_0^t d\sigma\,\hat \cC(\sigma,\sigma,{\mathbf L}_\sigma^*S_{\sigma-t}\psi, S_{\sigma-t}\gp)  +  \int_0^t d\sigma\,\hat \cC(\sigma,\sigma,S_{\sigma-t}\psi, {\mathbf L}_\sigma^*S_{\sigma-t}\gp)\\
& =  \int_0^t  d\sigma \int   \mathcal{U}^*(\sigma,0)
{\bf L}_{\sigma}^* S_{   \sigma-t } \psi  \otimes \mathcal{U}^*( \sigma,0)S_{   \sigma-t } \gp  
  f^0 +\int_0^t  d\sigma \int   \mathcal{U}^*(\sigma,0)
S_{   \sigma-t } \psi  \otimes \mathcal{U}^*( \sigma,0){\bf L}_{\sigma}^* S_{   \sigma-t } \gp  
  f^0 
  \\
  & +  \int_0^t  d\sigma \int_\sigma^t  d\sigma' {\bf  Cov}_\sigma  
  \left( 
    \mathcal{U}^*(\sigma',\sigma)
 {\bf L}_{\sigma'}^*
  S_{\sigma '-t}  \psi  ,  \mathcal{U}^*(\sigma',\sigma)   S_{\sigma '-t} \gp  \right) 
\\
& +  \int_0^t  d\sigma \int_\sigma^t  d\sigma' {\bf  Cov}_\sigma  
  \left( 
    \mathcal{U}^*(\sigma',\sigma)
  S_{\sigma '-t}  \psi  ,  \mathcal{U}^*(\sigma',\sigma)  {\bf L}_{\sigma'}^*  S_{\sigma '-t} \gp  \right) \, ,
\end{aligned}$$
and the result follows.

We  now study   the case of two different times. Consider~$\psi,(\gp_\sigma)_{\sigma \in [0,t]}$ in~$ L^2_{\beta_0/4}$: recalling
\begin{equation}
\label{recall covariance hors equilibre biss}
\begin{aligned}
 \hat \cC(t,\sigma,\psi,\gp_\sigma)  =\int   \mathcal{U}^*(t,0) \psi  \otimes \mathcal{U}^*(\sigma,0) \gp _\sigma f^0
+   \int_0^\sigma  d\sigma' \;
{\bf  Cov}_{\sigma '} \left( \mathcal{U}^*(t,\sigma') \psi , \mathcal{U}^*(\sigma,\sigma') \gp_\sigma  \right) \, ,
\end{aligned}\end{equation}
   we want to prove that it satisfies~(\ref{eq: systeme covariance}) namely
 \begin{align}\label{recall covariance hors equilibre bis}
\int_0^t  \hat \cC(  t,\sigma,\psi  , \varphi_\sigma) \,d \sigma
 =  \int_0^t d \sigma   \, 
\left( 
 \hat \cC(\sigma ,\sigma,S_{\sigma-t}\psi,\gp_\sigma ) 
+  \int_ \sigma^t d\sigma'   \;   
 \hat \cC \Big( \sigma', \sigma,    {\mathbf L}_{\sigma'}^* S_{\sigma'-t}\psi,  \varphi_\sigma\Big)  \right)  .\end{align}
Note that   by the semi-group property in Corollary \ref{cor: mild formula lin properties},
\begin{equation}\label{Ustartsigma'}
\forall s \leq \sigma\leq t\, , \quad \mathcal{U}^*(t,s) \psi = \mathcal{U}^*(\gs,s) S_{\gs - t}\psi  + \int_\gs^t du\, \mathcal{U}^*(u,s) {\bf L}_u^* S_{u-t}\psi\, ,
\end{equation}
so  identity~(\ref{recall covariance hors equilibre biss}) can be written
 $$
\begin{aligned} 
&\int_0^t  \hat \cC(t,\sigma,\psi,\gp_\sigma) \,d \sigma
 =  \int_0^t d \sigma \int  \mathcal{U}^*(\sigma,0) S_{\sigma - t}\psi  \otimes \mathcal{U}^*(\sigma,0) \gp _\sigma f^0
\\
&+\int_0^t d\gs \int_\gs^t d\gs' 
\int  \mathcal{U}^*(\sigma',0){\bf L}_{\gs'}^* S_{\sigma' - t}\psi  \otimes \mathcal{U}^*(\sigma,0) \gp _\sigma f^0\\
&+ \int_0^t d\gs \int_0^\gs d\gs' 
{\bf  Cov}_{\sigma '} \left( \mathcal{U}^*(\sigma,\sigma') S_{\sigma - t}\psi  , \mathcal{U}^*(\sigma,\sigma')  \gp_\sigma  \right)\\
&+ \int_0^t d\gs \int_0^\gs d\gs'  \int_{\gs }^t du
{\bf  Cov}_{\sigma '} \left( \mathcal{U}^*(u,\sigma') {\bf L}_{u}^* S_{u - t}\psi  , \mathcal{U}^*(\sigma,\sigma')  \gp_\sigma  \right)\, .
 \end{aligned}$$
 Now we note that the first term on the right-hand side adds up to the third to produce
 $$
 \begin{aligned} 
  \int_0^t d \sigma \int  \mathcal{U}^*(\sigma,0) S_{\sigma - t}\psi  \otimes \mathcal{U}^*(\sigma,0) \gp _\sigma f^0+\int_0^t d\gs \int_0^\gs d\gs' 
&{\bf  Cov}_{\sigma '} \left( \mathcal{U}^*(\sigma,\sigma') S_{\sigma - t}\psi  , \mathcal{U}^*(\sigma,\sigma')  \gp_\sigma  \right)\\
&\qquad = \int_0^t d\gs\,\hat \cC(\gs,\gs,S_{\gs-t}\psi,\gp_\gs) \, .
 \end{aligned}$$
 Finally exchanging the role of~$u$ and~$\sigma'$ in the last term on the right-hand side, we find that the two remaining terms    add up to
 $$
   \int_0^t d \sigma   \, 
  \int_ \sigma^t d\sigma'   \;   
 \hat \cC \Big( \sigma', \sigma,    {\mathbf L}_{\sigma'}^* S_{\sigma'-t}\psi,  \varphi_\sigma\Big) \, .  $$
 The result follows.
 By Proposition~\ref{prop: well posedness covariance mild} stating the uniqueness of the solution to~(\ref{eq: systeme covariance}), we deduce that~$ \hat \cC (t,s)=    \cC(t,s) $ for~$0 \leq s \leq t \leq T$. Lemma~{Lem: equiv lim cov}
is proved. 
\end{proof}

\section{Convergence of the process}

The limiting covariance has been characterized in the previous section. 
Let $\theta_1, \dots, \theta_\ell$ be a collection of times in $[0,T]$. 
Given a collection of  smooth bounded test functions $\{ \gp_j \}_{j \leq \ell}$, we consider the discrete sampling
$$H \big(z([0,T_0])\big) = \displaystyle \sum _{j= 1}^\ell \gp_j \big(z(\theta_j)\big)\;.$$
Let us define
\begin{equation}
\label{spacetimeduality}
\lA  \gz^\gep, H \rA  
:= 
\frac{1}{ \sqrt{\mu_\eps }} 
 \sum _{j= 1}^\ell
 \left[
\sum_{i = 1}^\cN  \gp_j \big(  {\bf z}^\eps_i( \theta_j ) \big) -  \mu_\eps \int \, F_1^\eps(\theta_j,z) \,  \gp_j \big(  z \big)\, dz \right] .
\end{equation} 
The convergence of the fluctuation field $\zeta^\eps$  is obtained by proving \begin{itemize}
\item 
  the convergence of the characteristic function $\bbE_\eps  \left(  \exp \big( {\bf i} \lA  \gz^\gep, H \rA  \big) \right)$ which implies that the limiting process is a weak solution of \eqref{eq: OU bis} in the sense of Definition \ref{OU-def}
\item  and the tightness of the fluctuation field.
\end{itemize}
 This will complete the proof of  
Theorem \ref{thmTCL}.

\subsection{Convergence of the characteristic function}\label{characteristic-sec}

\setcounter{equation}{0}

We are going to prove the convergence of time marginals of the process $\left(\gz^\gep_t\right)_{t \geq 0}$.
\begin{Prop}
\label{Prop: characteristic function}
The characteristic function $\bbE_\eps  \left(  \exp \big( {\bf i} \lA  \gz^\gep, H \rA  \big) \right)$
converges to the characteristic function of 
  the Gaussian process
with covariance given by \eqref{eq: covariance hors equilibre}.
%, the covariance of which 
%solves the dynamical equations   \eqref{eq: systeme covariance}.
\end{Prop}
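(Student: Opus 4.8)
The strategy is to compute $\bbE_\eps\big(\exp({\bf i}\lA\gz^\gep,H\rA)\big)$ by recognizing it as an exponential moment of the empirical measure, and then invoke the cumulant expansion machinery of Theorem~\ref{cumulant-thm1}--\ref{cumulant-thm2}. First I would write, using the definition~\eqref{spacetimeduality} and the fact that $H$ is a discrete sampling $H(z([0,T^\star]))=\sum_{j}\gp_j(z(\theta_j))$,
\begin{equation*}
\bbE_\eps\big(\exp({\bf i}\lA\gz^\gep,H\rA)\big)
= \exp\Big(-{\bf i}\sqrt{\mu_\eps}\sum_j\int F_1^\eps(\theta_j,z)\gp_j(z)\,dz\Big)\,
\bbE_\eps\Big(\exp\big(\sum_{i=1}^\cN \tfrac{{\bf i}}{\sqrt{\mu_\eps}}H({\bf z}^\eps_i([0,T^\star]))\big)\Big).
\end{equation*}
The second factor is exactly $\exp\big(\mu_\eps\,\gL^\eps_{[0,T^\star]}(e^{h_\eps})\big)$ with $h_\eps := \tfrac{{\bf i}}{\sqrt{\mu_\eps}}H$, in the notation of~\eqref{Ieps-def dynamics}. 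Thus I would study $\mu_\eps\,\gL^\eps_{[0,T^\star]}(e^{h_\eps})$ through the cumulant series $\sum_{n\geq1}\tfrac{1}{n!}\,\mu_\eps\,f^\eps_{n,[0,T^\star]}\big((e^{h_\eps}-1)^{\otimes n}\big)$, which converges absolutely and uniformly in $\eps$ by Theorem~\ref{cumulant-thm1}, since $|e^{h_\eps}-1|$ is uniformly bounded (indeed $O(\mu_\eps^{-1/2})$).

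Next I would perform the asymptotic analysis term by term. Expanding $e^{h_\eps}-1 = \tfrac{{\bf i}}{\sqrt{\mu_\eps}}H - \tfrac{1}{2\mu_\eps}H^2 + O(\mu_\eps^{-3/2})$, the scaling $\mu_\eps\,f^\eps_{n,[0,T^\star]}\big((e^{h_\eps}-1)^{\otimes n}\big)$ contributes at leading order only for $n=1$ and $n=2$: the $n=1$ term gives $\mu_\eps\big({\bf i}\mu_\eps^{-1/2}f^\eps_{1}(H) - \tfrac12\mu_\eps^{-1}f^\eps_1(H^2)+\dots\big)$, whose divergent piece $\mu_\eps^{1/2}{\bf i}f^\eps_{1,[0,T^\star]}(H)$ is precisely cancelled by the prefactor $\exp(-{\bf i}\sqrt{\mu_\eps}\sum_j\int F_1^\eps(\theta_j)\gp_j)$ (using $f^\eps_{1,[0,t]}(H)=\int F^\eps_1(t)h$ for each sampled time, summed over $j$), leaving $-\tfrac12 f^\eps_1(H^2)\to -\tfrac12\int_\D f(\theta_j)\,H^2$; the $n=2$ term gives $-\tfrac12 f^\eps_{2,[0,T^\star]}(H^{\otimes2})\to -\tfrac12 f_{2,[0,T^\star]}(H^{\otimes2})$ by Theorem~\ref{cumulant-thm2}; all $n\geq3$ terms carry a net power $\mu_\eps^{1-n/2}\to0$ with uniformly controlled remainders by the factorial bound $|f^\eps_{n,[0,t]}|\leq(Ce^{\alpha_0})^n(t+\eps)^{n-1}n!$ (with $\alpha_0$ chosen from the uniform bound on $H$), which makes the tail summable. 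Hence $\mu_\eps\,\gL^\eps_{[0,T^\star]}(e^{h_\eps})\to -\tfrac12\big[\sum_j\int_\D f(\theta_j,z)\gp_j(z)^2\,dz + f_{2,[0,T^\star]}(H^{\otimes2})\big] =: -\tfrac12\,V(H)$, a real nonnegative quadratic form in the $\gp_j$'s. This identifies the limit characteristic function as $\exp(-\tfrac12 V(H))$, i.e.\ that of a centered Gaussian vector $(\hat\gz_{\theta_1}(\gp_1),\dots,\hat\gz_{\theta_\ell}(\gp_\ell))$ with $\bbE\big((\sum_j\hat\gz_{\theta_j}(\gp_j))^2\big)=V(H)$; polarizing in the $\gp_j$ recovers the covariance $\hat\cC(\theta_j,\theta_k,\gp_j,\gp_k)$.

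Finally I would check that this covariance solves the dynamical equations~\eqref{covariance-dyn}. The cleanest route is to use the dynamical equation~\eqref{cumulant-eq2} for $f_2$ (equivalently the Hamilton-Jacobi equation~\eqref{HJ-eq}, which directly encodes the generating-functional identity): differentiating $V$ in the sampled times and using~\eqref{cumulant-eq1}--\eqref{cumulant-eq2} together with the definition of $\cL_t^*$, $\mathbf{Cov}_t$, one verifies the three relations in~\eqref{covariance-dyn}, with the $R^{1,2}(f_1,f_1)$ source in~\eqref{cumulant-eq2} producing exactly the noise covariance $\mathbf{Cov}_t$ after the change of variables $(v_1,v_2,\omega)\mapsto(v_1',v_2',\omega)$, and the $L_{f_1}$ term producing the linearized-transport part. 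By the uniqueness statement of Lemma~\ref{lem-covariance-dyn}, the limiting covariance must coincide with $\hat\cC$ given by~\eqref{eq: covariance field OU}.

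\textbf{Main obstacle.} The delicate point is not the $n=1,2$ computation but controlling the $n\geq3$ tail \emph{uniformly} while extracting the limit: one must commute the $\mu_\eps\to\infty$ limit with the infinite sum over $n$, which requires the factorial bound of Theorem~\ref{cumulant-thm1} to beat the $n!$ from the number of terms, and then the termwise convergence of Theorem~\ref{cumulant-thm2}. A secondary subtlety is that $h_\eps$ is complex-valued, so one needs the versions of Theorems~\ref{cumulant-thm1}--\ref{cumulant-thm2} for complex test functions (as used already in Chapter~\ref{HJ-chapter}, cf.\ the discussion around~\eqref{defHgammaphi}); the bound~\eqref{eq:boundHn} must be applied with the modulus of $e^{h_\eps}-1$, which is fine since $H$ is bounded and the velocity growth of $H^{\otimes n}$ is controlled. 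Verifying the covariance equations is then a bookkeeping exercise once~\eqref{cumulant-eq2} is in hand.
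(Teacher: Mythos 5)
Your proposal follows essentially the same route as the paper: rewrite the characteristic function via the cumulant generating functional with test function $h_\eps=\tfrac{\mathbf i}{\sqrt{\mu_\eps}}H$, use the factorial bounds of Theorem~\ref{cumulant-thm1} (in the form $|f^\eps_{n,[0,t]}((e^{h_\eps}-1)^{\otimes n})|\leq (C\|H\|_\infty/\sqrt{\mu_\eps})^n n!$) to reduce to the $n=1,2$ cumulants after cancellation of the $\sqrt{\mu_\eps}$ term, pass to the limit with Theorem~\ref{cumulant-thm2}, and then verify~\eqref{covariance-dyn} from the limiting cumulant equations~\eqref{cumulant-eq1}--\eqref{cumulant-eq2}, with $R^{1,2}(f_1,f_1)$ yielding ${\bf Cov}_t$ and $L_{f_1}$ the linearized part. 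This matches the paper's two-step proof, including the final appeal to the uniqueness statement of Lemma~\ref{lem-covariance-dyn}.
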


%By Lemma \ref{lem-covariance-dyn} the covariance is given by \eqref{eq: covariance field OU} on $[0,T]$.
%The covariance of the density field out of equilibrium was first computed in \cite{S81}.
%A discussion on the result of \cite{S81} is postponed to Section \ref{appendix-spohn} 
%at the end of this chapter.

\begin{proof}
The characteristic function can be rewritten  in terms of the empirical measure
\begin{align} \label{eq:FFcf22}
\bbE_\eps  \Big( \exp \big( {\bf i}  \lA  \gz^\gep, H \rA \Big) 
= \bbE_\eps \Big( \exp \big( {\bf i} \, \sqrt{\mu_\eps} \lA \pi^\eps , H \rA \big)  \Big)
 \exp \left( - {\bf i} \,  \sqrt{\mu_\eps}   \sum_{j= 1}^\ell  \int  F^\eps_1(\theta_j,z)\, \gp_j(z)\,dz  \right).
\end{align}
Thanks to Proposition \ref{prop: exponential cumulants}, we get
$$
%\label{eq: exponential cumulants}
\log \bbE_\eps  \Big( \exp  \left( {\bf i}  \lA  \gz^\gep, H \rA \right) \Big)   
= \mu_\eps \sum_{ n=1}^\infty   \frac{1}{n !}
 f_{n, [0,t] }^\eps \left( \big( e^{  {\bf i} \, H \over \sqrt{\mu_\eps}} - 1\big)^{\otimes n} \right)  
-   {\bf i}  \, \sqrt{\mu_\eps}   \sum_{j= 1}^\ell  \int  F^\eps_1(\theta_j,z)  \gp_j (z)\,dz\, .
$$
As $H$ is bounded,  the series converges uniformly on $[0,T_0]$ for any $\mu_\eps$ large  enough. 
At leading order, only the terms $n=1$ and $n=2$ will be relevant  in the limit since by Theorem \ref{cumulant-thm1*} 
$$
\Big| f_{n, [0,t] }^\eps \left( \big( e^{  {\bf i} \, H \over \sqrt{\mu_\eps}} - 1\big)^{\otimes n} \right)  \Big| 
\leq 
\left( {C \|H\|_{\infty}  \over {\sqrt{\mu_\eps}}}\right)^n n!  \, .
$$
Expanding the exponential  with respect to $\mu_\eps$, we notice that the  term 
of  order $\sqrt{\mu_\eps}$ cancels  so
$$
\log \bbE_\eps  \left( \exp  \left( {\bf i}  \lA  \gz^\gep, H \rA \right) \right)    
 = - \frac{1}{2}     f_{1, [0,t]} ^\eps  \left( H^2 \right)
-  \frac{1}{2}  f_{2, [0,t]}^\eps  \left( H^{\otimes 2} \right)
+ O \left( \frac{ \|H\|_{\infty}^3 }{\sqrt{ \mu_\eps} } \right) \,.$$
As the cumulants $f_{1, [0,t]} ^\eps  \left( H^2 \right), f_{2, [0,t]} ^\eps  \left( H^{\otimes 2} \right)$ converge (see Theorem \ref{cumulant-thm2}), the characteristic function has a limit
\begin{align*}
\lim_{\mu_\eps \to \infty} \bbE_\eps  \left( \exp \left( {\bf i}  \lA  \gz^\gep, H \rA \right) \right)   
 = \exp \left( - \frac{1}{2}  \sum_{i,j \leq \ell}   \cC(\theta_i,\theta_j, \gp_i, \gp_j) \right),
\end{align*}
where the limiting covariance is given by \eqref{eq: limiting covariance}
   and thus
by \eqref{eq: covariance hors equilibre} thanks to Lemma \ref{Lem: equiv lim cov}. Proposition~\ref{Prop: characteristic function} is proved.
\end{proof} 
%\begin{align}
%\label{eq: limiting covariance}
%\cC(s,t,\gp,\psi)
%= f_{1, [0,t]}   \big( \psi (z(s))  \gp (z(t)) \big)
%+ f_{2, [0,t]}   \big( \psi (z(s)) , \gp (z(t)) \big)
%%\quad \text{with} \quad \psi_s = \psi (z(s)),  \gp_t = \gp (z(t)) \,,
%\end{align}
%denoting abusively by $f_{2, [0,t]}   \left( \psi  , \gp \right)$ the bilinear symmetric form obtained by polarization
%$$
%f_{2, [0,t]}   \left( \psi  , \gp \right) 
%: = 
%\frac12 \Big( f_{2, [0,t]}   \left( (\psi  + \gp)^{\otimes 2} \right) 
%- f_{2, [0,t]}   \left( \psi^{\otimes 2}  \right) - f_{2, [0,t]}   \left(  \gp^{\otimes 2} \right) \Big) \,.
%$$

\begin{Rmk}
The moments of the fluctuation field can be obtained by computing derivatives of~\eqref{eq:FFcf22}. As a byproduct of our analysis, one  then verifies  the Wick's pairing rule: for all $n \geq 1$, the moments of order $2n+1$ vanish in the limit $\mu_\eps \to \infty$ and
$$
\lim_{\mu_\eps \to \infty}\left| \bbE_\eps \left(\prod_{j=1}^{2n} \gz^\eps_{\theta_j} (\varphi_j) \right)
- \sum_{\sigma \in \cP_{2n}^n \atop |\sigma_k| = 2} \prod_{\{i,j\} \in \sigma}
\bbE_\eps \left( \gz^\eps_{\theta_i} (\varphi_i)  \gz^\eps_{\theta_j} (\varphi_j) \right) 
\right| = 0\;.
$$
We omit the details of this computation, which is not to be used in this paper.
\end{Rmk}

%\begin{Rmk}
%Notice that the dynamical equations  \eqref{covariance-dyn} are given by the second order expansion of the Hamilton-Jacobi equations derived in Theorem \ref{HJ-prop}.
%\end{Rmk}

\subsection{Tightness and proof of Theorem 2%\ref{thmTCL}
}
\label{tightness-sec}
In this section we  prove a tightness property for the law of the process~$ \left(\gz^\gep_t\right)_{t \in [0,T_0]}$.  This is  made possible by considering test functions in  a space with more  regularity than~$L^2_{\beta_0}$.
 In order to construct a convenient function space let us consider 
a Fourier-Hermite basis of~$\D$:  let~$\{ \tilde e_{j_1} (x) \}_{j_1 \in \bbZ^d}$ be the Fourier basis of $\bbT^d$
and~$\{ e_{j_2} (v) \}_{j_2 \in \bbN^d}$ be the Hermite basis of~$L^2(\bbR^d)$ constituted of the eigenmodes of the harmonic oscillator $-\Delta_v +  |v|^2$. 
This provides a basis~$\big\{ h_j (z) = \tilde e_{j_1} (x)  e_{j_2} (v)\big\}_{j = (j_1,j_2)}$ 
of Lipschitz functions on~$\bbD$, exponentially decaying in $v$,  such that  for all $j = (j_1,j_2)$
\begin{equation}
\label{eq: sobolev norm hypotheses}
\| h_j  \|_\infty  \leq c  \,  , \qquad 
\| \nabla h_j \|_\infty = \| \nabla_v h_j \|_\infty + \| \nabla_x h_j \|_\infty < c (1+|j|)  \,  , \qquad 
\| v \cdot \nabla_x h_j \|_\infty < c (1+ \, |j|) ^\frac32 \,   ,
\end{equation}
with $|j| := |j_1|+|j_2|$ and for some constant $c$ (see~\cite{Gradshteyn}). Then we define for any    real number~$k\in \R$  the
   Sobolev-type space~${\mathcal H}_{ k} (\bbD)$    by the norm
\begin{equation}
\label{eq: sobolev norm}
\| \varphi \|_{k}^2 := \sum_{j = (j_1,j_2)} (1+ |j|^2)^{k}  \left( \int_\bbD dz  \,  \varphi(z) h_j (z) \right)^2.
%\quad \text{with} \quad   |j| := |j_1|+|j_2|\, .
\end{equation}

 Following \cite{Billingsley} (Theorem 13.2 page 139), 
the tightness of the law of the process  in  $D \big([0,T_0],{\mathcal H}_{-k} (\bbD) \big)$ (for some large positive  $k$)
is a consequence of the following proposition.
\begin{Prop}
\label{Prop: tightness process}
There is $k>0$ large enough such that 
\begin{align}
\label{eq: tighness 1}
\forall \delta'>0\, , \qquad 
&  \lim_{\delta \to 0} \lim_{\mu_\eps \to \infty} \bbP_\varepsilon \Big( \sup_{ |s -t| \leq \delta \atop s,t \in [0,T_0 ]}
\big\| \gz^\gep_t  - \gz^\gep_s \big \|_{-k} \geq \delta'
\Big) = 0\, ,\\
& 
\lim_{A \to \infty} \lim_{\mu_\eps \to \infty} \bbP_\varepsilon \Big( \sup_{ t \in [0,T_0 ]}
\big \| \gz^\gep_t \big \|_{-k} \geq A \Big) = 0\, .
\label{eq: tighness 0}
\end{align}
\end{Prop}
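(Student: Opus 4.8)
\textbf{Plan of proof of Proposition \ref{Prop: tightness process}.}

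The strategy is to control the increments of the fluctuation field through an integral (Garsia--Rodemich--Rumsey type) inequality adapted to jump processes, as announced in the strategy section. For a fixed basis element $h_j$, I would first analyze the process $t \mapsto \gz^\gep_t(h_j)$. Because the microscopic dynamics is a jump process in velocity and piecewise free transport in position, $\gz^\gep_t(h_j)$ is not continuous; its increment between $s$ and $t$ decomposes into a \emph{transport part} (the bulk displacement $v\cdot\nabla_x h_j$ integrated over $[s,t]$, plus the correction coming from $\partial_t\int F_1^\eps h_j$), which is Lipschitz in time, and a \emph{collision part}, which is a sum of jumps each of size $O(\mu_\eps^{-1/2})$ occurring at collision times. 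The first step is therefore to establish a quantitative moment bound of the form
\begin{equation}
\label{eq: tightness moment bound}
\bbE_\eps\Big( \big| \gz^\gep_t(h_j) - \gz^\gep_s(h_j) \big|^{2p} \Big) \leq C_p\, \big( |t-s|\, \|v\cdot\nabla_x h_j\|_\infty^2 + |t-s|\,\|h_j\|_{\mathrm{coll}}^2 \big)^{p} \,,
\end{equation}
for integer $p\geq 1$, where the collisional seminorm is controlled by $\|h_j\|_\infty$ and $\|\nabla_v h_j\|_\infty$ through the cross-section weight in $d\mu$. This moment estimate should follow from the cumulant expansion of Theorem \ref{cumulant-thm1}: one expands $\bbE_\eps(\exp({\bf i}\lA \gz^\gep, H\rA))$ with $H$ a two-time increment functional $h_j(z(t))-h_j(z(s))$, and reads off that the $2p$-th moment is a finite sum of products of cumulants $f^\eps_{n,[0,\cdot]}$, each of which carries the small factor coming from the fact that $h_j(z(t))-h_j(z(s))$ is supported, up to the Lipschitz transport term, on trajectories having a collision in $[s,t]$ — an event of measure $O(|t-s|)$ against $d\mu(\Psi^\eps_n)$. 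The polynomial-in-$|j|$ loss in \eqref{eq: sobolev norm hypotheses} is exactly what dictates how large $k$ must be taken.

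Next I would upgrade the pointwise-in-$j$ increment bound to a bound in the Hilbert norm $\|\cdot\|_{-k}$. Writing $\|\gz^\gep_t-\gz^\gep_s\|_{-k}^2 = \sum_j (1+|j|^2)^{-k} |\gz^\gep_t(h_j)-\gz^\gep_s(h_j)|^2$ and using \eqref{eq: tightness moment bound} with \eqref{eq: sobolev norm hypotheses}, the series $\sum_j (1+|j|^2)^{-k}(1+|j|)^3$ converges provided $k > d + 3/2$ (recall $j$ ranges over a $2d$-dimensional lattice, so convergence really requires $k$ larger, roughly $k > 2d + 3/2$); this fixes the admissible range of $k$. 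One then obtains $\bbE_\eps(\|\gz^\gep_t-\gz^\gep_s\|_{-k}^{2p}) \leq C_p |t-s|^{p}$ for all $p$. The second claim \eqref{eq: tighness 0} follows similarly (and more easily) by taking $s=0$, using that $\gz^\gep_0$ is asymptotically a white noise with finite $\|\cdot\|_{-k}$ moments, together with the increment bound applied on $[0,T^\star]$; a maximal inequality then gives the uniform-in-time control.

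The delicate point — and the main obstacle — is passing from the moment bound on fixed pairs $(s,t)$ to the \emph{uniform} modulus of continuity estimate \eqref{eq: tighness 1}, i.e. controlling $\sup_{|s-t|\leq\delta}\|\gz^\gep_t-\gz^\gep_s\|_{-k}$, because the process has jumps and the classical Kolmogorov--Chentsov / Garsia--Rodemich--Rumsey argument presupposes continuity. The plan is to adapt the Garsia--Rodemich--Rumsey inequality to the Skorokhod setting: one isolates the (Lipschitz) transport part, for which the classical argument applies directly with the exponent $p$ chosen large (thanks to the arbitrarily high moments available from the analytic cumulant bounds), and treats the jump part separately by showing that, with probability tending to $1$, no $O(1)$-sized accumulation of jumps occurs in any window of length $\delta$ — each individual jump being $O(\mu_\eps^{-1/2})$ and the expected number of jumps of a tagged particle in time $\delta$ being $O(\delta)$ by the a priori bound, so that the contribution of jumps in a $\delta$-window to $\|\cdot\|_{-k}$ is $O(\sqrt\delta)$ in probability after summing over the $O(\mu_\eps)$ particles and using the cumulant control to tame correlations between them. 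Combining the two parts and letting first $\mu_\eps\to\infty$, then $\delta\to 0$, yields \eqref{eq: tighness 1}. Together with Proposition \ref{Prop: characteristic function} (convergence of finite-dimensional distributions to the Gaussian process with covariance solving \eqref{covariance-dyn}), tightness in $D([0,T^\star],{\mathcal H}_{-k})$ gives convergence in law of $\gz^\gep$ to the process characterized in Section \ref{subsec: Covariance of OU}, completing the proof of Theorem \ref{thmTCL}.
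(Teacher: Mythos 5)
Your overall architecture coincides with the paper's (moment bounds on increments extracted from the cumulant expansion, a Garsia--Rodemich--Rumsey type inequality adapted to discontinuous paths, and a separate treatment of the jumps), but your quantitative starting point is wrong. The moment bound you propose, $\bbE_\eps\big(|\gz^\gep_t(h_j)-\gz^\gep_s(h_j)|^{2p}\big)\leq C_p\,(\,\cdots|t-s|\,)^p$, fails for $|t-s|\lesssim \mu_\eps^{-1}$: a single collision in $[s,t]$ has probability of order $\mu_\eps|t-s|$ and produces a jump of $\gz^\gep(h_j)$ of size $\mu_\eps^{-1/2}$, hence contributes $\mu_\eps^{1-p}|t-s|$ to the $2p$-th moment. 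For $p=2$ this is $|t-s|/\mu_\eps$, which dominates $|t-s|^2$ precisely in the regime of very small increments; the correct bound (Lemma \ref{Lem: delta time}) is $\bbE_\eps\big(|\gz^\gep_t(h)-\gz^\gep_s(h)|^4\big)\leq C(\|\nabla h\|_\infty+1)\big(|t-s|^2+|t-s|/\mu_\eps\big)$. This extra additive term is exactly the obstruction that makes the classical Kolmogorov/GRR criterion fail, and it cannot be removed by taking $p$ large, since $\mu_\eps^{1-p}|t-s|$ always beats $|t-s|^p$ below the scale $\mu_\eps^{-1}$.

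Your repair of the jump part is also pitched at the wrong scale: in a window of fixed length $\delta$ there are typically $O(\mu_\eps\delta)$ collisions among the $\cN$ particles, so ``no accumulation of jumps in a $\delta$-window'' cannot be the mechanism. The argument that closes is to introduce a microscopic cutoff $\alpha_\eps=\mu_\eps^{-7/3}$ and prove (Lemma \ref{Lem: short time}) that with probability tending to $1$ at most one collision occurs in each window of length $2\alpha_\eps$; below that scale the increment reduces to $\int_s^t \gz^\gep_u(v\cdot\nabla_x h_j)\,du+O(\|h_j\|_\infty/\sqrt{\mu_\eps})$ and is controlled by a second-moment bound. The scales between $\alpha_\eps$ and $\delta$ are then bridged by the modified GRR inequality (Proposition \ref{prop: Modified Garsia, Rodemich, Rumsey inequality}), whose double time integral carries the cutoff $\indc_{|t-s|>\alpha_\eps}$ and whose exponent $\gamma=7/3$ is tuned so that $\int\int\big(|t-s|^{2-\gamma}+\mu_\eps^{-1}|t-s|^{1-\gamma}\indc_{|t-s|>\alpha_\eps}\big)\,ds\,dt$ stays bounded uniformly in $\eps$. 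Without the correct form of the fourth moment, the choice of $\gamma$, $\alpha_\eps$ and $k$ cannot be made and your argument does not close.
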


The identification of the limit Gaussian law in Proposition \ref{spacetimeduality} together with the above tightness property complete the characterization of the limiting process and therefore the proof of   Theorem  \ref{thmTCL}.

\medskip

The proof of  Proposition \ref{Prop: tightness process} relies on the following  modified version of the Garsia, Rodemich, Rumsey inequality \cite{Varadhan}
which will be used to control the modulus of continuity (its derivation is postponed 
to Section~\ref{App: Proof of Proposition Modified Garsia, Rodemich, Rumsey inequality}).

\begin{Prop}
\label{prop: Modified Garsia, Rodemich, Rumsey inequality}
Given $b \geq 4$, 
choose  two functions~$\Psi (u) = u^b$ and $p(u) = u^{\gamma/b}$ with $\gamma$ belonging to~$ ] 2,3[$.
Let $\gp : [0,T_0] \to \bbR$ be a given function and define 
for $a >0$ 
\begin{equation}
\label{eq: bound GRR alpha 0} 
B_a : = 
\int_0^{T_0} \int_0^{T_0} ds dt \;
\Psi \left( \frac{| \gp_t - \gp_s|}{p(|t-s|)}\right) {\bf 1}_{|t - s| > a }\, .
\end{equation}
%If $\gp$ satisfies
%\begin{equation}
%\label{eq: modulus short} 
%\sup_{0 \leq s,t\leq {T^\star} \atop |t - s| \leq  2 \alpha} \big| \gp_t - \gp_s \big|
%\leq \nu\, ,
%\end{equation}
The modulus of continuity of $\gp$ is controlled by
\begin{equation}
\label{eq: modulus GRR cutoff} 
\sup_{0 \leq s,t \leq {T_0} \atop |t - s| \leq \delta} \big| \gp_t - \gp_s \big|
\leq 2 \sup_{0 \leq s,t\leq {T_0} \atop |t - s| \leq  2 a } \big| \gp_t - \gp_s \big| 
\; + \; C \; B_a^{1/b} \; \delta^{\frac{\gamma- 2}{b}}\,  ,
\end{equation}
for some constant $C$ depending only on $b$ and $\gamma$.
\end{Prop}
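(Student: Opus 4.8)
The statement is a deterministic, quantitative refinement of the Garsia--Rodemich--Rumsey inequality in which the double integral defining $B_\alpha$ has been truncated to $|t-s|>\alpha$, so that only pair distances bounded away from zero are ``seen''. The idea is to run the standard GRR argument but to keep track of the cutoff $\alpha$ so as to separate the bad small scales (where $B_\alpha$ gives no information) from the good scales $|t-s|>\alpha$ (where the classical chaining estimate applies). The left-hand side is then split as $\sup_{|t-s|\le\delta}|\varphi_t-\varphi_s| \le \sup_{|t-s|\le 2\alpha}|\varphi_t-\varphi_s| + (\text{a term coming from the GRR chain on scales between }2\alpha\text{ and }\delta)$, and it is the second piece that will produce the $8\sqrt2\,B_\alpha^{1/4}\delta^{\gamma/4-1/2}$ contribution.

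\textbf{Key steps.} First I would recall the basic lemma behind GRR: given the truncated quantity $B_\alpha$, for any $t$ there exists a decreasing sequence $t=t_0, t_1, t_2,\dots$ of points, with $|t_{k}-t_{k+1}|$ shrinking geometrically, such that at each stage the increment $|\varphi_{t_k}-\varphi_{t_{k+1}}|$ is controlled by $p$ evaluated at the scale and by a local average of $\Psi(|\varphi_t-\varphi_s|/p(|t-s|))$ over a suitable interval; the truncation $\mathbf 1_{|t-s|>\alpha}$ means this construction can only be carried down to scales of order $\alpha$, at which point one stops. Second, I would sum the telescoping series $\varphi_t - \varphi_{t_\infty}$ along this chain: with $\Psi(u)=u^4$ and $p(u)=u^{\gamma/4}$ the geometric sum of the $p$-increments converges (this is where $\gamma>2$ is used, guaranteeing $\gamma/4-1/2>0$ so that $\delta^{\gamma/4-1/2}\to 0$, and $\gamma<3$ keeps $p$ subquadratic so the one-dimensional integrability behind $B_\alpha<\infty$ is meaningful), and the sum is bounded by a constant times $B_\alpha^{1/4}$ times the largest scale to the power $\gamma/4-1/2$. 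Third, applying this for two points $s,t$ with $|t-s|\le\delta$ and using the triangle inequality through a common intermediate point, the largest scale entering the chain is $\max(\delta,2\alpha)$; the part of the chain at scales $\le 2\alpha$ cannot be estimated by $B_\alpha$ and is instead dumped into the term $2\sup_{|t-s|\le 2\alpha}|\varphi_t-\varphi_s|$, while the part at scales in $[2\alpha,\delta]$ gives $8\sqrt2\,B_\alpha^{1/4}\delta^{\gamma/4-1/2}$ after tracking the explicit geometric constants (the $8\sqrt2$ comes from the two chains, the $\Psi^{-1}(u)=u^{1/4}$ exponent, and the standard doubling in the GRR dyadic construction). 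Finally I would check the endpoint and normalization conventions so that the constants $2$ and $8\sqrt2$ come out exactly as stated rather than as an unspecified $C$.

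\textbf{Main obstacle.} The real work is bookkeeping: producing the precise constants $2$ and $8\sqrt2$ rather than an abstract constant. The classical GRR proof is usually stated with non-explicit constants, so one must redo the dyadic point-selection argument carefully (choosing at each level a point where the relevant average is no larger than twice its mean, which is where a factor $2$ enters), sum the resulting geometric series with the specific choices $\Psi(u)=u^4$, $p(u)=u^{\gamma/4}$, and control the contribution of the scales below the cutoff by the explicit uniform oscillation over $|t-s|\le 2\alpha$ rather than by $B_\alpha$. A secondary, more conceptual point to get right is that truncating the GRR integral at $|t-s|>\alpha$ really does only cost the uniform-oscillation term at scale $2\alpha$: one must verify that the chain from $t$ down to its limit can be arranged so that every increment it uses has endpoints at distance $>\alpha$ (so that $B_\alpha$ controls it) except for the final, sub-$2\alpha$ leg, which is absorbed into $\sup_{|t-s|\le 2\alpha}|\varphi_t-\varphi_s|$. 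Once the dyadic scheme is set up with these two features, inequality~\eqref{eq: modulus GRR cutoff} follows by assembling the pieces, and the application to the jump process $\zeta^\varepsilon$ (where $\varphi_t=\langle\!\langle\zeta^\varepsilon_t,h_j\rangle\!\rangle$ and $\alpha$ is taken to be the typical jump spacing, which vanishes as $\mu_\varepsilon\to\infty$) will be carried out in the proof of Proposition~\ref{Prop: tightness process}.
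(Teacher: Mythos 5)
Your plan follows essentially the same route as the paper's proof: run the classical GRR chaining with the geometric point selection (Chebyshev at each level, whence the factor $2$), observe that as long as the current scale $t_{n-1}$ exceeds $2\alpha$ the increment $t_{n-1}-u_n=(1-2^{-4/\gamma})t_{n-1}$ stays above $\alpha$ so the truncated $B_\alpha$ still controls every link, stop the chain at scale $2\alpha$ and absorb the two leftover legs into $2\sup_{|t-s|\le 2\alpha}|\varphi_t-\varphi_s|$, then rescale to get the $\delta^{\gamma/4-1/2}$ factor. The two technical points you flag as the main obstacles are exactly the ones the paper's proof addresses, so the proposal is correct in outline.
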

In the standard Garsia, Rodemich, Rumsey inequality, \eqref{eq: bound GRR alpha 0} is assumed to hold with $a =0$
leading to a stronger conclusion as $\gp$ is then proved to be  H\"older continuous. 
The cut-off $a >0$ allows us to consider functions $\gp$ which may be discontinuous.
\bigskip

\begin{proof}[Proof of Proposition {\rm\ref{Prop: tightness process}}]

At time 0, all the moments of  $\gz^\gep_0$ are bounded, so   
\eqref{eq: tighness 0} can be deduced from the control of the initial fluctuations and 
the bound \eqref{eq: tighness 1} on the modulus of continuity.
Thus it is enough to prove \eqref{eq: tighness 1}. 
For this, we are going to show that 
\begin{align}
\label{eq: tighness 2}
& \forall \delta'>0\, , \qquad 
\lim_{\delta \to 0} \lim_{\mu_\eps \to \infty} \bbP_\varepsilon \left( 
\sum_j  \frac{1}{ (1+ |j|^2)^{k} } 
 \sup_{ |s -t| \leq \delta \atop s,t \in [0,T_0]}
\big| \gz^\gep_t (h_j)  - \gz^\gep_s (h_j) \big |^2  \geq \delta' \right) = 0 \,  ,
\end{align}
where $\{ h_j (z)\}_{j = (j_1,j_2)}$ is the family of test functions introduced above.

We are going to apply Proposition \ref{prop: Modified Garsia, Rodemich, Rumsey inequality}
to the functions $t \mapsto \gz^\gep_t (h_j)$ with $b=4$ and a time scale cut-off $a$ vanishing as  $\alpha_\eps = \mu_\eps^{-7/3}$.
In order to do so, the short time fluctuations have first to be controlled.
This will be achieved thanks to the following lemma.
\begin{Lem}
\label{Lem: short time}
The time scale cut-off will be denoted by  $\alpha_\eps = \mu_\eps^{-7/3}$. For the basis of functions introduced in  {\rm\eqref{eq: sobolev norm hypotheses}}, there is~$k>0$ large enough so that
\begin{align} 
\label{eq: tighness short}
& \forall \delta'>0\, , \qquad 
 \lim_{\mu_\eps \to \infty} \bbP_\varepsilon \left( 
\sum_j \frac{1}{ (1+|j|^2)^k } 
\sup_{ |s -t| \leq 2 \alpha_\eps \atop s,t \in [0,T_0]}
\big| \gz^\gep_t (h_j)  - \gz^\gep_s (h_j) \big |^2  \geq \delta'  \right) = 0\, .
\end{align}
\end{Lem}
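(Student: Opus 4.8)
The strategy is to exploit that on the vanishingly short time scale $\alpha_\eps=\mu_\eps^{-7/3}$ the empirical measure changes only through free transport of the particles and through a number of collisions which, with overwhelming probability, stays uniformly bounded; both contributions will be $o(1)$ after summation against the Sobolev weights $(1+|j|^2)^{-k}$. First I would split, for $0\le s\le t\le T^\star$,
$$
\gz^\eps_t(h_j)-\gz^\eps_s(h_j)=M^\eps_j(s,t)-D^\eps_j(s,t),\qquad
M^\eps_j(s,t):=\frac1{\sqrt{\mu_\eps}}\sum_{i=1}^{\cN}\big(h_j(\mathbf z^\eps_i(t))-h_j(\mathbf z^\eps_i(s))\big),
$$
$D^\eps_j(s,t):=\sqrt{\mu_\eps}\int\big(F_1^\eps(t,z)-F_1^\eps(s,z)\big)h_j(z)\,dz$. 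For the deterministic increment $D^\eps_j$, integrating the first BBGKY equation~\eqref{BBGKYGC} in time, integrating the transport term by parts, and using the uniform (in $\eps$) a priori bounds on $F_1^\eps$ and $F_2^\eps$ on $[0,T^\star]$ (Theorem~\ref{cumulant-thm1}, or Theorem~\ref{thm: Lanford}) together with $\|h_j\|_\infty\le c$ and $\|v\cdot\nabla_x h_j\|_\infty\le c(1+|j|)^{3/2}$ from~\eqref{eq: sobolev norm hypotheses}, one obtains $|D^\eps_j(s,t)|\le C(1+|j|)^{3/2}|t-s|\,\sqrt{\mu_\eps}$; for $|t-s|\le2\alpha_\eps$ this is $\le C(1+|j|)^{3/2}\mu_\eps^{-11/6}$, so $\sum_j(1+|j|^2)^{-k}|D^\eps_j(s,t)|^2\le C\mu_\eps^{-11/3}$ uniformly in $s,t$, which tends to $0$ provided $k$ is large enough that $\sum_j(1+|j|^2)^{-k}(1+|j|)^3<\infty$.

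For the random increment $M^\eps_j$ I would use the elementary bound valid along any hard-sphere trajectory,
$$
\big|h_j(\mathbf z^\eps_i(t))-h_j(\mathbf z^\eps_i(s))\big|\le \|v\cdot\nabla_x h_j\|_\infty\,|t-s|+2\|h_j\|_\infty\,n_i(s,t),
$$
where $n_i(s,t)$ is the number of collisions undergone by particle $i$ on $[s,t]$ (between consecutive collisions the velocity is constant and the position affine, and at each collision the position is continuous). Summing over $i$ and using $\sum_i n_i(s,t)=2N^\eps_{\rm col}(s,t)$, where $N^\eps_{\rm col}(s,t)$ is the total number of collisions of the system on $[s,t]$, this gives
$$
\sup_{|s-t|\le2\alpha_\eps}\big|M^\eps_j(s,t)\big|\le \frac{C}{\sqrt{\mu_\eps}}\Big((1+|j|)^{3/2}\,\cN\,\alpha_\eps+\sup_{|s-t|\le2\alpha_\eps}N^\eps_{\rm col}(s,t)\Big),
$$
so it only remains to control $\cN$ and $\sup_{|s-t|\le2\alpha_\eps}N^\eps_{\rm col}(s,t)$ on an event of probability tending to $1$.

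For the particle number this is standard Poisson-type concentration: $\bbE_\eps\cN\le C\mu_\eps$ and $\bbP_\eps(\cN>2\bbE_\eps\cN)\to0$. For the collisions I would partition $[0,T^\star]$ into $K=\lceil T^\star/\alpha_\eps\rceil$ intervals $I_1,\dots,I_K$ of length $\alpha_\eps$; any $[s,t]$ with $t-s\le 2\alpha_\eps$ is covered by at most three consecutive such intervals, whence $\sup_{|s-t|\le 2\alpha_\eps}N^\eps_{\rm col}(s,t)\le 3\max_k N^\eps_{\rm col}(I_k)$. The crucial uniform-in-$\eps$ estimate is then
$$
\bbE_\eps\big[N^\eps_{\rm col}(I)\big(N^\eps_{\rm col}(I)-1\big)\big]\le C\,(\mu_\eps|I|)^2,\qquad I\subset[0,T^\star],
$$
obtained by expressing the expected number of ordered pairs of collisions as an integral of the four-particle correlation function $F^\eps_4$ over two collision manifolds and two time variables (in the spirit of Proposition~\ref{prop: identification proba Duhamel}), and using the Boltzmann-Grad scaling $\mu_\eps\eps^{d-1}=1$ together with the uniform bounds on $F^\eps_4$. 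By Markov's inequality and a union bound, $\bbP_\eps\big(\exists k:\,N^\eps_{\rm col}(I_k)\ge 2\big)\le CK(\mu_\eps\alpha_\eps)^2=CT^\star\mu_\eps^2\alpha_\eps=CT^\star\mu_\eps^{-1/3}\to0$, and it is precisely here that the exponent $7/3>2$ is used. On the intersection of the two good events one has $\sup_{|s-t|\le2\alpha_\eps}N^\eps_{\rm col}(s,t)\le 3$ and $\cN\le C\mu_\eps$, hence $\sup_{|s-t|\le2\alpha_\eps}|M^\eps_j(s,t)|\le C(1+|j|)^{3/2}\mu_\eps^{-11/6}+C\mu_\eps^{-1/2}$ and $\sum_j(1+|j|^2)^{-k}\sup_{|s-t|\le2\alpha_\eps}|M^\eps_j(s,t)|^2\le C\mu_\eps^{-11/3}+C\mu_\eps^{-1}\to0$ for $k$ large. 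Combining this with the deterministic bound on $D^\eps_j$ and the two probability estimates yields~\eqref{eq: tighness short}.

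The main obstacle is the second factorial moment bound on the number of collisions in a short time window, uniformly in $\eps$: there is no elementary bound on how many collisions a given hard sphere can undergo, so this is the one point where the full a priori correlation estimates of the paper (equivalently, the convergence of the Duhamel series on $[0,T^\star]$) are genuinely needed. Everything else is bookkeeping with the Fourier--Hermite weights and standard concentration for the (sub-)Poisson-distributed particle number $\cN$.
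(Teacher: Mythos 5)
Your proof is correct, and the first half coincides with the paper's: the heart of the argument in both cases is the estimate $\bbP_\eps(\text{two collisions in a window of length }\alpha_\eps)\leq C\mu_\eps^2\alpha_\eps^2$ (obtained from the correlation functions $F^\eps_2,F^\eps_4$ via the Duhamel representation, Proposition~\ref{prop: identification proba Duhamel}), followed by a union bound over the $T^\star/\alpha_\eps$ windows, which is exactly where $\alpha_\eps=\mu_\eps^{-7/3}$ yields the margin $\mu_\eps^{-1/3}$; your factorial-moment formulation $\bbE_\eps[N(N-1)]\leq C(\mu_\eps|I|)^2$ is equivalent to the paper's splitting into the events $\cA^1_i$ and $\cA^{1,2}_i$. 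Where you genuinely diverge is in the treatment of the increment on the good event. The paper recombines the particle sum with its mean to write $\gz^\eps_t(h_j)-\gz^\eps_s(h_j)=\int_s^t\zeta^\eps_u(\bar h_j)\,du+O(\|h_j\|_\infty/\sqrt{\mu_\eps})$ with $\bar h_j=v\cdot\nabla_x h_j$, and then invokes the second-moment bound $\bbE_\eps\big(\gz^\eps_u(\bar h_j)^2\big)\leq c(1+|j|)^3$ coming from the cumulant estimates; after Cauchy--Schwarz this gives $c\,\alpha_\eps^2(1+|j|)^3$ per window and $O(\alpha_\eps)$ after summing over windows, an argument that would work for any $\alpha_\eps\to0$. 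You instead bound the two pieces separately and deterministically: the free-transport part of the particle sum by $(1+|j|)^{3/2}\cN|t-s|/\sqrt{\mu_\eps}\lesssim(1+|j|)^{3/2}\sqrt{\mu_\eps}\,\alpha_\eps=(1+|j|)^{3/2}\mu_\eps^{-11/6}$ on the event $\{\cN\leq C\mu_\eps\}$, and the drift $D^\eps_j$ by the same order via BBGKY. This avoids the variance estimate on $\gz^\eps_u(\bar h_j)$ entirely, at the price of needing the stronger condition $\sqrt{\mu_\eps}\,\alpha_\eps\to0$ -- which the choice $7/3>1/2$ comfortably provides -- and of the (elementary, sub-Poissonian) concentration of $\cN$. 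Both routes are valid here; the paper's is the one that generalizes to the windows of macroscopic length $\delta$ appearing in the Garsia--Rodemich--Rumsey step, while yours is more self-contained for this particular lemma.
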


To control the fluctuations on time scales of order $\delta$, it will be enough to rely on averaged estimates
of the following type.
\begin{Lem}
\label{Lem: delta time}
There exists a constant~$C$ such that for any   function $h$ and for any~$\gep>0$ and~$ s,t $ in~$ [0,T_0]$  
\begin{equation}
\label{eq: bound esperance t,s} 
\bbE_\varepsilon \left( \big( \gz^\gep_t (h)  - \gz^\gep_s (h) \big)^4 
\right) \leq  C  \, \|h\|_{\infty}^2 ( \| \nabla h\|^2_{L^\infty}  + \|h\|^2_{\infty} )  \; \Big(  |t - s|^2 + \frac{1}{\mu_\eps} |t - s| \Big) \,  .
\end{equation}
\end{Lem}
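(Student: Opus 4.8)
Assume $s\le t$ and set $g_{s,t}(z([0,t])):=h(z(t))-h(z(s))$, a bounded measurable function of the trajectory with $|g_{s,t}|\le 2\|h\|_\infty$; if moreover $z$ has no jump on $[s,t]$, then its velocity $v$ is constant there and $g_{s,t}(z([0,t]))=\int_s^t v\cdot\nabla_x h(z(\tau))\,d\tau$, so that $|g_{s,t}(z([0,t]))|\le\|\nabla h\|_\infty\,|t-s|\,(1+|v|)$. By definition of the fluctuation field and Proposition~\ref{prop: identification proba Duhamel},
\begin{equation*}
\gz^\gep_t(h)-\gz^\gep_s(h)=\frac1{\sqrt{\mu_\eps}}\bigl(Y-\bbE_\eps Y\bigr),\qquad Y:=\sum_{i=1}^\cN g_{s,t}\bigl({\bf z}^\eps_i([0,t])\bigr).
\end{equation*}
I would then use that, since $g_{s,t}$ is bounded, $\bbE_\eps\bigl((Y-\bbE_\eps Y)^4\bigr)=\kappa_4(Y)+3\,\kappa_2(Y)^2$, and read off the two cumulants from the generating functional of Theorem~\ref{cumulant-thm1}: inserting $\lambda g_{s,t}$ into~\eqref{Ieps-def dynamics} gives $\frac1{\mu_\eps}\log\bbE_\eps(e^{\lambda Y})=\sum_{n\ge1}\frac1{n!}f^\eps_{n,[0,t]}\bigl((e^{\lambda g_{s,t}}-1)^{\otimes n}\bigr)$, and differentiating at $\lambda=0$ yields $\mu_\eps^{-1}\kappa_2(Y)=f^\eps_{1,[0,t]}(g_{s,t}^2)+f^\eps_{2,[0,t]}(g_{s,t}^{\otimes2})$, and $\mu_\eps^{-1}\kappa_4(Y)$ as a finite universal combination of terms $f^\eps_{n,[0,t]}(g_{s,t}^{b_1}\otimes\cdots\otimes g_{s,t}^{b_n})$ with $1\le n\le4$, $b_i\ge1$, $b_1+\cdots+b_n=4$. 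Hence
\begin{equation*}
\bbE_\eps\bigl((\gz^\gep_t(h)-\gz^\gep_s(h))^4\bigr)=\frac1{\mu_\eps}\,\mu_\eps^{-1}\kappa_4(Y)+3\bigl(f^\eps_{1,[0,t]}(g_{s,t}^2)+f^\eps_{2,[0,t]}(g_{s,t}^{\otimes2})\bigr)^2,
\end{equation*}
so the problem reduces to showing that each $f^\eps_{n,[0,t]}(g_{s,t}^{b_1}\otimes\cdots\otimes g_{s,t}^{b_n})$, $n\le4$, is $O(|t-s|)$ with the right dependence on $h$.

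\textbf{Step 2: short-time estimates on the cumulants.} For $n=1$ there is no clustering constraint, so $f^\eps_{1,[0,t]}(g_{s,t}^b)=\mu_\eps^{-1}\bbE_\eps\bigl(\sum_i g_{s,t}({\bf z}^\eps_i([0,t]))^b\bigr)$, and I would split each particle according to whether it has a jump on $[s,t]$: on the collision event I use $|g_{s,t}|^b\le(2\|h\|_\infty)^b$ together with the fact that the expected number of particles colliding in a window of length $|t-s|$ is of order $\mu_\eps|t-s|$ (Boltzmann--Grad collision rate), while on the no-jump event I use $|g_{s,t}|^b\le(2\|h\|_\infty)^{b-2}\|\nabla h\|_\infty^2|t-s|^2(1+|v|)^2$ together with the uniform velocity moments of $F^\eps_1$. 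This gives, for $b\ge2$, $|f^\eps_{1,[0,t]}(g_{s,t}^b)|\le C_h\bigl(|t-s|+\|\nabla h\|_\infty^2|t-s|^2\bigr)$ with $C_h$ depending on $\|h\|_\infty$. For $2\le n\le4$ I would revisit the proof of the a priori bound of Theorem~\ref{cumulant-thm1}: its $n-1$ clustering constraints already produce the factor $\mu_\eps^{-(n-1)}(t+\eps)^{n-1}$, and I would in addition localise, for each of the $n$ collision trees carrying a factor $g_{s,t}$ at its root, by the same dichotomy — on the jump event the first creation/recollision time of that tree is forced into $[s,t]$, shrinking one time integral to an interval of length $|t-s|$; on the no-jump event the transport bound on $g_{s,t}$ is used, the velocity factor being absorbed by the Gaussian weight $e^{\beta_0|V|^2/4}$ already present in the estimate. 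Since at least two roots carry such a factor, this should yield $|f^\eps_{n,[0,t]}(g_{s,t}^{b_1}\otimes\cdots\otimes g_{s,t}^{b_n})|\le C_h(1+\|\nabla h\|_\infty)^2(t+\eps)\,|t-s|$ for $2\le n\le4$.

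\textbf{Step 3: assembling, and the main difficulty.} Plugging these bounds in, the square term $3\bigl(f^\eps_{1,[0,t]}(g_{s,t}^2)+f^\eps_{2,[0,t]}(g_{s,t}^{\otimes2})\bigr)^2$ — which is just $3\,\bbE_\eps((\gz^\gep_t(h)-\gz^\gep_s(h))^2)^2$ — is of order $C_h(1+\|\nabla h\|_\infty)^2|t-s|^2$ and produces the $|t-s|^2$ term, while $\mu_\eps^{-1}\cdot\mu_\eps^{-1}\kappa_4(Y)$ is of order $\mu_\eps^{-1}C_h(1+\|\nabla h\|_\infty)^2|t-s|$, the largest contribution being $\mu_\eps^{-1}f^\eps_{1,[0,t]}(g_{s,t}^4)$ through its collision part; this produces the $\mu_\eps^{-1}|t-s|$ term, and altogether the claimed estimate follows (in its application the lemma is only used with $h=h_j$, for which $\|\nabla h_j\|_\infty\le c(1+|j|)$ and the extra polynomial factor is harmless since it is absorbed by the weight $(1+|j|^2)^{-k}$ of the $\mathcal H_{-k}$ norm for $k$ large). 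The one genuinely delicate step is the refinement of Theorem~\ref{cumulant-thm1} used in Step~2: I need to check that forcing an endpoint creation/recollision time of a tree into a short window, or replacing $g_{s,t}$ by its transport bound, is compatible with the tree/cluster combinatorics and does not spoil the geometric control of recollisions and overlaps on the pseudo-trajectories, so that each factor $g_{s,t}$ really buys a factor $|t-s|$ (beyond the single ``free'' time integration over $[0,t]$ already accounted for). The rest is bookkeeping with Proposition~\ref{prop: exponential cumulants}.
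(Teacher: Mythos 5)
Your proposal is correct and follows essentially the same route as the paper: the fourth moment is expressed through the cumulant generating functional as $3\bigl(f^\eps_{1,[0,t]}(H^2)+f^\eps_{2,[0,t]}(H^{\otimes 2})\bigr)^2+\mu_\eps^{-1}\sum_{n\le 4}C_\kappa f^\eps_{n,[0,t]}(H^{\kappa_1},\dots,H^{\kappa_n})$, and each cumulant is shown to be $O(|t-s|)$ by the jump/no-jump dichotomy (localising a creation or recollision time in $[s,t]$, or using the transport bound $|h(z(t))-h(z(s))|\le \|\nabla h\|_\infty|t-s|(1+|v|)$ otherwise). The ``delicate step'' you single out is precisely the refinement stated as item~3 of Theorem~\ref{cumulant-thm1*} (estimate~\eqref{timecontinuity}), whose proof in the paper is the same localisation argument you sketch.
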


We postpone the proofs of the two previous statements  %\ref{Lem: short time}, \ref{Lem: delta time} 
and conclude first the proof of  \eqref{eq: tighness 2}.

Notice that Lemma \ref{Lem: delta time}
 implies that the random variable associated with any function $h_j$ satisfying~\eqref{eq: sobolev norm hypotheses}
\begin{equation}
%\label{eq: bound GRR esperance} 
B_{\alpha_\gep } (h_j) := 
\int_0^{T_0} \int_0^{T_0} ds \, dt 
 \frac{ \big|  \gz^\gep_t (h_j)  - \gz^\gep_s (h_j) \big|^4}{|t-s|^\gamma}
 {\bf 1}_{|t -s |> \alpha_\gep}
\end{equation}
has finite expectation
\begin{equation}
%\label{eq: bound GRR esperance} 
\bbE_\varepsilon \big( B_{\alpha_\gep}  (h_j) \big) 
\leq C(1+|j| )^2  \int_0^{T_0} \int_0^{T_0} ds dt 
\left( |t-s|^{2 - \gamma}   + \frac{1}{\mu_\eps} |t - s|^{1- \gamma} {\bf 1}_{|t -s |> \alpha_\gep} \right) \,  .
\end{equation}
Setting now $\gamma  = 7/3$, we get an upper bound uniform with respect to $\gep$ for $\alpha_\eps = \mu_\eps^{-7/3}$
\begin{equation}
\label{eq: bound  esperance B} 
\bbE_\varepsilon \big( B_{\alpha_\gep}  (h_j) \big) 
\leq C(1+|j| )^2  \left( 1    + \frac{\alpha_\eps^{2-\gamma}}{\mu_\eps} \right)
\leq C' (1 + |j| )^2\;.
\end{equation}
%Consider a  trajectory sample  such that for the basis of functions introduced in  \eqref{eq: sobolev norm hypotheses}, 
%for some~$k>0$ to be fixed below,
%$$
%\nu_j : = \sup_{ |s -t| \leq 2 \alpha_\eps \atop s,t \in [0,T^\star]}
%\big| \gz^\gep_t (h_j)  - \gz^\gep_s (h_j) \big |  
%\quad \text{with} \quad
%\sum_j  \frac{1}{(1+ |j|^2)^k } \nu_j^2 \leq \frac{\delta'}{16} \, \cdotp
%$$
%Thanks to Proposition \ref{prop: Modified Garsia, Rodemich, Rumsey inequality},
% the modulus of continuity $t \mapsto \gz^\gep_t (h_j)$ is bounded from above by 
% \begin{equation*}
%%\label{eq: modulus GRR cutoff} 
%\sup_{0 \leq s,t, \leq {T^\star} \atop |t - s| \leq \delta} \big| \gz^\gep_t (h_j) - \gz^\gep_s (h_j) \big|
%\leq 2 \nu_j + 8 \sqrt{2} \left( B_{\alpha_\gep} (h_j) \right)^{1/4} \delta^{\frac{\gamma}{4} - \frac{1}{2}}\, ,
%\end{equation*}
%so that
% \begin{equation*}
%%\label{eq: modulus GRR cutoff} 
%\sup_{0 \leq s,t, \leq {T^\star} \atop |t - s| \leq \delta} 
%\sum_j \frac{1}{(1+ |j|^2)^k }
%\big| \gz^\gep_t (h_j) - \gz^\gep_s (h_j) \big|^2
%\leq \frac{\delta'}{2} 
%+ 2^8 \, \delta^{\frac{\gamma}{2} - 1} 
%\sum_j \frac{\sqrt{ B_{\alpha_\gep} (h_j)} }{ (1+ |j|^2)^k  }  \, \cdotp
%\end{equation*}

 From Proposition \ref{prop: Modified Garsia, Rodemich, Rumsey inequality},
 a large modulus of continuity of $t \mapsto \gz^\gep_t (h_j)$ induces a deviation of the 
random variable $B_{\alpha_\gep}(h_j)$. This implies that on average
\begin{align}
& 
\bbP_\varepsilon \Big( 
\sum_j \frac{1}{(1+ |j|^2)^k  } \; \sup_{ |s -t| \leq \delta \atop s,t \in [0,T_0]} \;
\big| \gz^\gep_t (h_j)  - \gz^\gep_s (h_j) \big |^2  
\geq \delta'  
\Big) \nonumber \\
&  \qquad  \leq 
\bbP_\varepsilon \Big( 
\sum_j \frac{1}{ (1+ |j|^2)^k  } 
 \sup_{ |s -t| \leq 2 \alpha_\eps \atop s,t \in [0,T_0]}
\big| \gz^\gep_t (h_j)  - \gz^\gep_s (h_j) \big|^2  
\geq \frac{\delta'}{16}  
\Big)
+
\bbP_\varepsilon \Big( \sum_j \frac{\sqrt{ B_{\alpha_\gep} (h_j)} }{ (1+ |j|^2)^k }   
\geq \frac{\delta'  }{C \, \delta^{\frac{\gamma}{2} - 1}}
\Big)\, .
\label{tight-est}
\end{align}
The first term in (\ref{tight-est}) tends to 0 by Lemma \ref{Lem: short time} and the second one can be estimated by the 
Markov inequality and by the upper bound \eqref{eq: bound  esperance B}, along with the Cauchy-Schwarz inequality
\begin{align*}
\bbP_\varepsilon \Big( \sum_j \frac{\sqrt{ B_{\alpha_\gep} (h_j)} }{  (1+ |j|^2)^k }   
\geq \frac{\delta'  }{C \, \delta^{\frac{\gamma}{2} - 1}} \Big)
\leq  C_1 \frac{\delta^{\gamma - 2 }}{\delta'^2  }  \sum_j \frac{1}{  (1+ |j|^2)^k }   
\bbE_\varepsilon \big( B_{\alpha_\gep} (h_j) \big) 
\leq \frac{C_2}{\delta'^2}  \, \delta^{\gamma - 2}\,,
\end{align*}
for some constants $C_1,C_2$ and $k$ large enough.
As $\gamma = 7/3$, the limit \eqref{eq: tighness 2} holds and Proposition~\ref{Prop: tightness process} is proved.
\end{proof}

\subsection{Averaged time continuity}
\label{subsec: Averaged time continuity}

We prove now Lemma \ref{Lem: delta time}.
Denoting 
$$H(z([0,t])) : = h(z(t)) - h(z(s))\, , 
$$
the moments can be recovered by taking derivatives of the exponential moments
\begin{equation}
\label{eq: control 4th moment}
\bbE_\eps  \left( \big( \gz^\gep_t (h)  - \gz^\gep_s (h) \big)^4  \right) = \left( {\d ^4 \over \d \lambda^4}  \bbE_\eps  \left( \exp  \left( {\bf i}\lambda  \lA  \gz^\gep, H \rA \right) \right)    \right)_{| \lambda = 0}.
\end{equation}
We recall from Proposition \ref{prop: exponential cumulants} that
$$
\log \bbE_\eps  \left( \exp  \left( {\bf i}  \lambda \lA  \gz^\gep, H \rA \right) \right) 
= \mu_\eps \sum_{ n=1}^\infty   \frac{1}{n !}
 f_{n, [0,t] }^\eps \left( \big( e^{  {\bf i}\lambda H\over \sqrt{\mu_\eps}} - 1\big)^{\otimes n}
 \right)  
 -  
 \sqrt{\mu_\eps} \,  {\bf i} \, \lambda    F_1^\eps(H) 
= O(\lambda^2).
$$
Thus expanding the exponential moment at the 4th order leads to
\begin{align*}
\bbE_\eps  \left( \exp  \left( {\bf i} \lambda \lA  \gz^\gep, H \rA \right) \right) 
= &1+ \mu_\eps \sum_{ n=1}^\infty   \frac{1}{n !}
 f_{n, [0,t] }^\eps 
 \left( \big( e^{  {\bf i}\lambda H\over \sqrt{\mu_\eps}} - 1 \big)^{\otimes n}
 \right)   
-  \sqrt{\mu_\eps}  {\bf i}\lambda    F_1^\eps(H) \\
& - \frac{\lambda^4}{2} \left(  \frac{1}{2}     f_{1, [0,t]} ^\eps  \left( H^2 \right)
+ \frac{1}{2}  f_{2, [0,t]} ^\eps  \left( (H)^{\otimes 2} \right)\right)^2 + o(\lambda^4) \, .
%= &
% f_{1, [0,t] }^\eps \Big( \mu_\eps \big( e^{  {\bf i}\lambda H\over \sqrt{\mu_\eps}} - 1- {  {\bf i}\lambda \over \sqrt{\mu_\eps}}  H \big)  \Big) 
% +
% \sum _{n=2}^4  \mu_\eps f_{n, [0,t] }^\eps \left( \big( e^{  {\bf i}\lambda H\over \sqrt{\mu_\eps}} - 1 \big)^{\otimes n} \right)   
% \\
%& + \frac{\lambda^4}{8} \left(   f_{1, [0,t]} ^\eps  \left(  H^2 \right)
%+  f_{2, [0,t]} ^\eps  \left( H^{\otimes 2}  \right)\right)^2 + o(\lambda^4) .
\end{align*}
The fourth moment can be recovered by taking the 4th derivative with respect to $\gl$
\begin{equation}
\label{eq: ordre 4}
\begin{aligned}
\bbE \left( \big( \gz^\gep_t (h)  - \gz^\gep_s (h) \big)^4  \right)  
&=   3 \left(   f_{1, [0,t]} ^\eps  \left(  H^2 \right)
+  f_{2, [0,t]} ^\eps  \left( H^{\otimes 2}  \right)  \right)^2  \\
& \qquad + {1\over \mu_\eps } \sum_{n=1}^4 \sum_{\kappa_1 +\dots +\kappa_n = 4} 
C_\kappa \; f_{n, [0,t]} ^\eps (H^{\kappa_1} \otimes \dots\otimes H^{\kappa_n})
\end{aligned}
 \end{equation}
 denoting abusively by $f^\eps_{n,[0,t]} $  the $n$-linear form obtained by polarization.
Point {\it 3}.~of Theorem  \ref{cumulant-thm1*}   applied with $\delta = t-s$ implies 
\begin{equation}  
% \label{timecontinuity}
\left|  f_{1, [0,t]} ^\eps  \left(  H^2 \right) +  f_{2, [0,t]} ^\eps  \left( H^{\otimes 2}  \right)  \right|  
\leq   C \,( \| \nabla h\|_{\infty} + \|h\|_{\infty}) \; \|h\|_{\infty} \;  |t-s|  \,  (t+\eps)   \,.
\end{equation}
Furthermore for any $\kappa_1 +\dots +\kappa_n = 4$,  Point {\it 3}.~of Theorem  \ref{cumulant-thm1*} implies  also
\begin{equation*}
\left|  f_{n, [0,t]} ^\eps (H^{\kappa_1}\otimes\dots\otimes H^{\kappa_n}) \right|
\leq C \, \|h\|_{\infty}^3 \,  (t+\eps)^3 (t-s)( \| \nabla h\|_{\infty} + \|h\|_{\infty})  \;.
\end{equation*} 
Combined with  \eqref{eq: ordre 4}, this leads to  
\begin{equation}
\label{eq: norme 4 t -s}
\bbE \left( \big( \gz^\gep_t (h)  - \gz^\gep_s (h) \big)^4  \right)  
\leq 
C(t+\eps)^2  \|h\|_{\infty}^2  ( \| \nabla h\|^2_{\infty} +\|h\|^2_{\infty}) \; |t-s| \left( |t-s| + \frac{t+\eps }{\mu_\eps}  \right) . 
\end{equation}
 
This concludes  the proof of Lemma \ref{Lem: delta time}. \qed
 \begin{Rmk}
 Notice that  since the assumption \eqref{eq: time-cont + growth exp} is satisfied, the norms  $\| h \exp ( - \beta_0 v^2 / 4 ) \|_{L^\infty}$ and $\| \nabla h \exp ( - \beta_0 v^2/4)  \|_{L^\infty}$ could have been used instead of~$\| h   \|_{L^\infty}$ and $\| \nabla h    \|_{L^\infty}$. \end{Rmk}
 
\subsection{Control of small time fluctuations}
\label{smalltimefluctuations}

We are now going to prove Lemma \ref{Lem: short time} by localizing the estimates into short time intervals. 
For this divide~$[0,T_0]$ into overlapping intervals~$I_i := [i \alpha_\gep, (i+2) \alpha_\gep]$ of size $2 \alpha_\gep$.
Define also the set of trajectories such that 
at least two distinct collisions occur in the particle system during the time interval $I_i$
\begin{equation}
\label{eq: au moins 2 collisions}
\cA_i:= \Big\{ \text{At least two collisions occur in the Newtonian dynamics $\{ {\bf z}^\eps_\ell (t) \}_{\ell \leq \cN}$ during $I_i$} 
\Big\}.
\end{equation}
We are going to show that the probability of $\cA = \cup_i \cA_i$ vanishes in the limit
\begin{equation}
\label{eq: proba cA}
\lim_{\eps \to 0} \bbP_\eps ( \cA) = 0.
\end{equation}

Assuming the validity of \eqref{eq: proba cA} for the moment, let us first conclude the proof of 
Lemma \ref{Lem: short time} by restricting to the event $\cA^c$.
By construction for any trajectory in $\cA^c$, there is at most one collision during each  time interval $I_i$. Then, except for at most 2 particles, the particles move in straight lines as their 
velocities remain unchanged and it is enough to track 
the variations of the test functions with respect to the positions.
Thus, for any $t,s$ in $I_i$ and a smooth function $h_j$, we get
\begin{align*}
& \sqrt{\mu_\eps} \left( \gz^\gep_t \big(  h_j  \big) 
 - \gz^\gep_s \big(  h_j  \big) \right)
=    
\sum_{\ell = 1}^\cN \big( h_j \big( {\bf z}^\eps_\ell(t) \big) - h_j \big( {\bf z}^\eps_\ell(s) \big) \big) -  \mu_\eps \int dz \big(  F_1^\eps (t,z)  -  F_1^\eps(s,z)  \big) h_j (z)
\\
&  \qquad \qquad =    
\sum_{\ell = 1}^\cN  \int_s^t du  \; {\bf v}^\eps_\ell (u) \cdot \nabla h_j \big( {\bf z}^\eps_\ell( u ) \big) - \mu_\eps
\int dz \big(  F_1^\eps(t,z)  -  F_1^\eps(s,z)  \big) h_j (z)
+ O( \| h_j \|_\infty)\, ,
\end{align*}
where the error occurs from the fact that at most two particles may have collided in the time interval~$[s,t]
\subset I_i$.
Using the Duhamel formula, the particle density (at fixed $\eps$) can be also estimated by
the free transport up to small corrections which may occur from the collision operator
$C^\eps_{1,2} F_2^\eps$
\begin{align}
\label{eq: error term h}
\mu_\eps \int dz \big(  F^\eps_1(t,z)  -  F^\eps_1(s,z)  \big) h_j (z)
= \mu_\eps \int_s^t du \int dz  F^\eps_1(u,z) \,  v \cdot \nabla  h_j (z) 
+ \mu_\eps \alpha_\eps O( \| h_j \|_\infty)\,.
\end{align}
Recall that $\mu_\eps \alpha_\eps \to 0$ when $\mu_\eps$ tends to infinity.
Setting  $\bar h_j (z) := v \cdot \nabla  h_j (z)$, the time difference can be rewritten 
 for any trajectory in $\cA^c$ as a time integral
\begin{align}
\label{eq: h et bar h}
\gz^\gep_t \big(  h_j  \big) - \gz^\gep_s\big(  h_j  \big) 
& =\frac{1}{\sqrt{\mu_\eps}}
 \int_s^t du  \left( \mu_\eps\la \pi^\eps_u , \bar h_j \ra - \mu_\eps \int F_1^\eps (u,z) \bar h_j (z) dz\right)
+  \frac{1}{\sqrt{\mu_\eps}}  O( \| h_j \|_\infty)\\
& = \int_s^t du \; \zeta^\eps_u (\bar h_j ) +  \frac{1}{\sqrt{\mu_\eps}}  O( \| h_j \|_\infty)\, . \nonumber
\end{align}

Thus thanks to  \eqref{eq: h et bar h},  we get
\begin{align*} 
%\label{eq: h norm infinie petite}
U:=\bbP_\varepsilon & \left( \cA^c \bigcap \left \{
\sum_j \frac{1}{ (1+|j|^2)^k } 
\sup_{ |s -t| \leq 2 \alpha_\eps \atop s,t \in [0,T_0]}
\big| \gz^\gep_t (h_j)  - \gz^\gep_s (h_j) \big |^2  \geq \delta'  \right\} \right) \\
& \qquad \leq \bbP_\varepsilon  \left( \cA^c \bigcap \left \{
\sum_j \frac{1}{ (1+|j|^2)^k } 
\sup_{ i \leq \frac{T_0}{\alpha_\gep} } \;  \sup_{s,t \in I_i}
\big| \gz^\gep_t (h_j)  - \gz^\gep_s (h_j) \big |^2  \geq \delta'  \right\} \right) \nonumber\\
& \qquad \leq \bbP_\varepsilon  \left( \cA^c \bigcap \left \{
\sum_j \frac{1}{ (1+|j|^2)^k } 
\sup_{ i \leq \frac{T_0}{\alpha_\gep} } \;  \sup_{s,t \in I_i}
\big|  \int_s^t du \; \zeta^\eps_u (\bar h_j ) \big |^2  \geq \frac{\delta'}{2}  \right\} \right), \nonumber
\end{align*}
where the error term in \eqref{eq: h et bar h} was controlled by choosing $k$ large enough and 
$\eps$ small enough so that $\frac{1}{\sqrt{\mu_\eps}} \ll \gd'/2$. At this stage, the constraint $\cA^c$ can be dropped and 
by the  Bienaym\'e-Tchebichev inequality there holds
\begin{align}
\label{eq: very small intervals}
U  & \leq \sum_j \frac{1}{\gd' (1+|j|^2)^k } 
\bbE_\varepsilon \left(  
\sup_{ i \leq \frac{T_0}{\alpha_\gep} } \;  \sup_{s,t \in I_i}
\big| \int_s^t du \; \zeta^\eps_u (\bar h_j )  \big |^2  \right)  \\
& 
\leq \sum_{i=1}^{\frac{T_0}{\alpha_\gep}}  \sum_j \frac{1}{\gd' (1+|j|^2)^k } 
\bbE_\varepsilon \left(   \sup_{s,t \in I_i}
\big| \int_s^t du \; \zeta^\eps_u (\bar h_j )  \big |^2  \right) . \nonumber
\end{align}
Using  the  Cauchy-Schwarz inequality and then the fact that $t,s$ belong to 
$I_i = [ i \alpha_\gep, (i+1) \alpha_\gep]$, we get
\begin{equation}
\label{eq: etape intermediaire}
\begin{aligned}
\bbE_\varepsilon \left(  \sup_{s,t \in I_i}
\big| \int_s^t du  \;  \zeta^\eps_u (\bar h_j )   \big |^2  \right) 
\leq \bbE_\varepsilon \left(  \sup_{s,t \in I_i}
 |t-s| \;  \int_s^t du   \;   | \zeta^\eps_u (\bar h_j )|^2  \right)\\
\leq \alpha_\eps \int_{i \alpha_\gep}^{(i+1) \alpha_\gep} du \; 
\bbE_\varepsilon \Big(  \gz^\eps_u \big(\bar h_j \big)^2 \Big)  
\leq c \, \alpha_\eps^2   (1+|j|)^3.
\end{aligned}
\end{equation}
In the last inequality, an argument similar argument to \eqref{eq: norme 4 t -s} leads to 
the control of the second moment of $\gz^\eps_u \big(\bar h_j \big)$ by $\| \bar h_j  \|_\infty^2 \leq c (1+|j|)^3$ as $\bar h_j = v \cdot \nabla_x  h_j$ (see \eqref{eq: sobolev norm hypotheses}).

Combining \eqref{eq: very small intervals} and \eqref{eq: etape intermediaire}, we deduce that 
for $k$ large enough
\begin{align} 
\label{eq: very small intervals 2}
U \leq \sum_{i=1}^{\frac{T_0}{\alpha_\gep}}  \sum_j \frac{ c \, \alpha_\eps^2   (1+|j|)^3 }{\gd' (1+|j|^2)^k } \leq \frac{C}{\gd'} \alpha_\eps \xrightarrow{\eps \to 0} 0 .
\end{align}
%By construction the test functions $h_j$ satisfy  \eqref{eq: sobolev norm hypotheses}  so that $\bar h_j (z) = v \cdot \nabla_x  h_j (z)$ are bounded in $L^\infty$ by~$(1+|j|^2)$.
%We deduce that for $k$ large enough, the series converges and

\medskip

Thus to complete the proof of Lemma \ref{Lem: short time}, it remains only to show \eqref{eq: proba cA}, i.e. that  the probability concentrates on $\cA^c$.
To the estimate the probability of the set $\cA_i$  introduced in \eqref{eq: au moins 2 collisions},
we distinguish two cases :
\begin{itemize}
\item A particle has at least two collisions during $I_i$. This event will be denoted by 
$\cA^1_i$ if the corresponding particle has label $1$, and can be separated into two subcases: either particle~$1$ encounters two different particles during~$I_i$, or it encounters the same one due to space periodicity.
\item Two collisions occur involving different particles. This event will be denoted by 
$\cA^{1,2}_i$ if the corresponding particles are $1$ and $2$.
\end{itemize}
The occurence of two collisions in a time interval of length $\alpha_\gep$ has a probability which can be estimated by using Proposition \ref{prop: identification proba Duhamel} with $n=1,2$, which allows  to reduce to an estimate on  pseudo-trajectories thanks to the Duhamel formula: {\color{black}noticing that the space-periodic situation leads to an exponentially small contribution, since it forces the velocity of the colliding particles to be of order~$1/\alpha_\eps$, we find}
\begin{equation}
\label{eq: etape intermediaire 2}
\bbP_\varepsilon \left(  \cA_i  \right)
 \leq  \mu_\eps  \bbP_\varepsilon \left( \cA^1_i  \right) 
 + \mu_\eps^2  \bbP_\varepsilon \left(  \cA^{1,2}_i \right) 
 \leq C \big( \mu_\eps  + \mu_\eps^2 \big) \alpha_\eps^2
 \leq C \alpha_\eps  \mu_\eps^{-1/3},
\end{equation}
where we used that $\alpha_\eps = \mu_\eps^{-7/3}$.
Summing over the
$\frac{T_0}{\alpha_\eps}$ time intervals, we deduce that 
$\bbP_\varepsilon \left(  \cA  \right) \leq C T_0  \mu_\eps^{-1/3}$.
Thus the probability of $\cA$ vanishes as $\eps$ tends to 0. This completes 
the proof of \eqref{eq: proba cA} and thus of Lemma \ref{Lem: short time}. \qed

\begin{Rmk}
\label{Rmk: extension fonction hj}
Remark that the proof of Lemma {\rm\ref{Lem: short time}} still holds for sequences of functions $(h_j)_{j \geq 1}$ satisfying 
$$\| h_j \|_\infty \ll \mu_\eps^{1/2} (1+ j^2)\, , \qquad  
\bbE_\varepsilon \Big(  \gz^\eps_u \big(v \cdot \nabla h_j \big)^2 \Big) \leq  c \,  (1+|j|)^3\, .
$$\end{Rmk}

\section{The modified Garsia, Rodemich, Rumsey inequality}
\label{App: Proof of Proposition Modified Garsia, Rodemich, Rumsey inequality}
\setcounter{equation}{0}

Proposition \ref{prop: Modified Garsia, Rodemich, Rumsey inequality} is a slight adaptation of~\cite{Varadhan}.
For simplicity we suppose that $T_0=1$ and set 
\begin{equation}
%\label{eq: bound GRR alpha 0} 
B_a:= 
\int_0^1 \int_0^1 ds dt \;
\Psi \left( \frac{| \gp_t - \gp_s|}{p(|t-s|)}\right) {\bf 1}_{|t - s| > a}\, .
\end{equation}

\noindent
{\bf Step 1:}\\
We are first going to show that there exists $w,w' \in [0, 2 a]$  
such that
\begin{align}
\label{eq: modulus alpha} 
\big| \gp_{1- w'} - \gp_{w} \big|
\leq 8 \int_0^1 
\Psi^{-1} \left( \frac{4B_a}{u^2} \right) dp(u)
& 
\leq   8 (4  B_a)^{1/b} \int_0^1  \frac{d ( u^{\frac{\gamma}{b}})}{u^{2/b}} 
\leq   C \,  B_a^{1/b}.
%&\leq   8 \sqrt{2} B_a^{1/4} \int_0^1  \frac{d ( u^{\frac{\gamma}{4}})}{\sqrt{u}} 
%\leq   c B_a^{1/4}\, .
\end{align}
Define
\begin{equation}
\label{eq: bound GRR alpha} 
B_a (t) = 
 \int_0^1 ds \;
\Psi \left( \frac{\gp_t - \gp_s }{p(|t-s|)}\right) {\bf 1}_{|t - s| > a}
\quad \text{with} \quad
B_a =  \int_0^1 dt B_a (t).
\end{equation}
There is $t_0 \in (0,1)$ such that $B_a (t_0) \leq B_a $. Suppose that $t_0> 2 a$, 
then we are going to prove that there is $w \in [0, 2 a]$ such that 
\begin{equation}
\label{eq: borne t0 0}
\big| \gp_{w} - \gp_{t_0} \big|
\leq 4 \int_a^1 
\Psi^{-1} \left( \frac{4B_a}{u^2} \right) dp(u).
\end{equation}
If $t_0< 1- 2 a$, we can show the reverse inequality 
\begin{equation*}
\big| \gp_{1 - w'} - \gp_{t_0} \big|
\leq 4 \int_a^1 
\Psi^{-1} \left( \frac{4B_a}{u^2} \right) dp(u).
\end{equation*}
Combining both inequalities, will be enough to complete \eqref{eq: modulus alpha}.

\bigskip

Let us assume that $t_0> 2 a$, we are going to build a sequence $\{ t_n, u_n \}_n$  
$$
t_0> u_1> t_1 > u_2> \dots 
$$
such that $t_{n-1}> 2 a$ and $u_n$ is defined by 
\begin{equation}
\label{eq: recurrence}
p(u_n) = \frac{1}{2} p(t_{n-1}), \quad \text{i.e.} \quad
u_n = \frac{1}{2^{4/\gamma}} t_{n-1}.
\end{equation}
The sequence will be stopped as soon as $t_n<2 a$.

Initially $t_0> 2 a$ and $u_1$ is defined by \eqref{eq: recurrence}.
Suppose that the sequence has been built up to $t_{n-1}$.
By construction 
$$
t_{n-1} - u_n = \left( 1- \frac{1}{2^{4/\gamma}} \right) t_{n-1} > a
\quad \text{since} \quad t_{n-1} > 2 a.
$$
Thus 
$$
\int_0^{u_n} ds \;
\Psi \left( \frac{| \gp_{t_{n-1}} - \gp_s|}{p(|t_{n-1}-s|)}\right) 
= 
\int_0^{u_n} ds \;
\Psi \left( \frac{| \gp_{t_{n-1}} - \gp_s|}{p(|t_{n-1}-s|)}\right) {\bf 1}_{|t_{n-1} - s| > a}
\leq  B_a (t_{n-1}) .
$$
Furthermore
$$
  \int_0^{u_n} dt B_a (t) \leq B_a ,
$$
%In both integrals, there must be an interval of size larger than $\frac{u_n}{2}$ such that 
thus there is $t_n \in [0,u_n]$ such that 
$$
B_a (t_n) \leq \frac{2 B_a}{u_n} 
\quad \text{and} \quad 
\Psi \left( \frac{| \gp_{t_{n-1}} - \gp_{t_n}|}{p(|t_{n-1} - t_n|)}\right) 
\leq  \frac{2 B_a (t_{n-1}) }{u_n} \leq  \frac{4 B_a}{u_{n-1} \, u_n}
\leq  \frac{4 B_a}{ u_n^2}.
$$
We deduce that 
$$
| \gp_{t_{n-1}} - \gp_{t_n}| \leq
\Psi^{-1} \left(  \frac{4 B_a}{ u_n^2} \right)  p(|t_{n-1} - t_n|)
\leq \Psi^{-1} \left(  \frac{4 B_a}{ u_n^2} \right)  p(t_{n-1} ).
$$
Suppose that $t_n > 2 a$ then using that 
$$u_n > t_n \Rightarrow p(u_n) > p(t_n) = 2 p(u_{n+1}),$$ 
we get
$$
p(t_{n-1}) = 2 p(u_n) =
4 \big( p(u_n) - p(u_n)/2   \big) \leq 4 \big( p(u_n) - p(u_{n+1})   \big)
$$
and also
\begin{equation}
\label{eq: integrale 1 step}
| \gp_{t_{n-1}} - \gp_{t_n}| \leq
4 \Psi^{-1} \left(  \frac{4 B_a}{ u_n^2} \right)  \big( p(u_n) - p(u_{n+1}) \big)
\leq
4 \int_{u_{n+1}}^{u_n}   \Psi^{-1} \left(  \frac{4 B_a}{ u^2} \right)  d p (u)  .
\end{equation}
We then iterate the procedure to define $t_{n+1}$.

If $t_n <  2 a$, we set $w = t_n$
and we stop the procedure at step $n$ with the inequality
\begin{equation}
\label{eq: integrale 2 step}
| \gp_{t_{n-1}} - \gp_{w}| = | \gp_{t_{n-1}} - \gp_{t_n}| \leq
4 \int_{0}^{u_n}   \Psi^{-1} \left(  \frac{4 B_a}{ u^2} \right)  d p (u)  ,
\end{equation}
where we used that $$
p(t_{n-1}) = 2 p(u_n)  \leq 4 \big( p(u_n) - p(0)   \big).
$$
Summing the previous inequalities of the form \eqref{eq: integrale 1 step}, we deduce \eqref{eq: borne t0 0}
from 
\begin{equation}
\label{eq: integrale 3 step}
| \gp_{t_0} - \gp_{w}| \leq
\sum_{i=1}^n | \gp_{t_{i-1}} - \gp_{t_i}| \leq
4 \int_{0}^{u_1}   \Psi^{-1} \left(  \frac{4 B_a}{ u^2} \right)  d p (u)  .
\end{equation}
This completes the proof of~(\ref{eq: modulus alpha}).

\bigskip

\noindent
{\bf Step 2: proof of \eqref{eq: modulus GRR cutoff}.}\\
We are going to proceed by a change of variables. Given $x<y$ such that $y-x > 4 a$, we set
$p_{y-x}(u) = p( (y-x) u)$ and 
$\psi_t = \gp(x+ (y-x) t)$
\begin{align*}
%\label{eq: bound GRR alpha xy} 
B_{ \frac{a}{y-x}}^{(\psi)} 
& = 
\int_0^1 \int_0^1 ds dt \;
\Psi \left( \frac{| \gp_t - \gp_s|}{p_{y-x}(|t-s|)}\right) {\bf 1}_{\{ |t - s| >  \frac{a}{|y-x|} \}}\\
& =  \frac{1}{|y-x|^2}
\int_x^y \int_x^y ds' dt' \;
\Psi \left( \frac{| \psi_{t'} - \psi_{s'}|}{p( |t'-s'|)}\right) {\bf 1}_{\{ |t' - s'| > a \}}
\leq \frac{B_a}{|y-x|^2} \, .
\end{align*}
Applying \eqref{eq: modulus alpha} to the function $\psi$, 
there exists $w,w' \in [0,2 a]$  
such that
\begin{align*}
\big| \psi_{1- \frac{w'}{y-x}} - \psi_{\frac{w}{y-x}} \big|
\leq 8 \int^1_{0}  
\Psi^{-1} \left( \frac{4B_{ \frac{a}{y-x}}^{(\psi)} }{u^2} \right) dp_{y-x}(u)
\leq 8 \int^1_{0} 
\Psi^{-1} \left( \frac{4 B_a }{|y-x|^2 u^2} \right) dp_{y-x}(u).
\end{align*}
Changing again variables, we get for some constant $C$ depending only on $\gamma,b$ 
\begin{align*}
\big| \gp_{y - w'} - \gp_{x+ w } \big|
\leq 8 \, (y-x)^{\frac{\gamma}{b} - \frac{2}{b}} \, \int^1_0 
\Psi^{-1} \left( \frac{4 B_a }{u^2} \right) dp(u)
\leq C B_a^{1/b} \; (y-x)^{\frac{\gamma -2 }{b}}.
\end{align*}
By bounding $\big| \gp_y  -  \gp_{y - w'}\big|$ and $\big| \gp_{x+ w } - \gp_y \big|$ by the supremum of  the local fluctuations in a time interval less than $2 a$, we conclude
to~\eqref{eq: modulus GRR cutoff}. The proposition is proved. \qed

\section{Spohn's formula for the covariance}
\label{appendix-spohn}

\setcounter{equation}{0}

%and written in the form
%\begin{align*}
%&\bbE \Big( \gz_t  (\psi) \gz_s  (\gp) \Big)
%=  \int dz \, \psi (z)\,  \left(\mathcal{U} (t, s) \,  \gp \, f(s) \right)(z)\\
%& \qquad\qquad  
% + \int_0^{s}  d \tau   \int dx dv dw 
% \, \cR \big( f(\tau) , f(\tau)\big)(x,v,w) \,   \left(\mathcal{U}^*(\theta_1,\tau) \psi \right)(x,v)  \, \left(\mathcal{U}^*(\theta_2, \tau) \gp \right)(x,w) \,.
%\end{align*} 
%with the recollision operator 
%\begin{equation}
%\label{eq: cRt}
%\cR (g,g)(x,v_1,v_2) :=\int_{{\mathbb S}^{d-1}} \left(  g( x,v'_1 )\, g( x,v_2')  - g( x,v_1 ) \, g(x,v_2 )\right)    \left((v_2- v_1)\cdot \omega \right)_+ d\omega\, ,
%\end{equation}

For the sake of completeness, we are going to show that the covariance $\hat \cC$ of the Ornstein-Uhlenbeck process computed in  \eqref{eq: covariance hors equilibre} coincides with 
the formula obtained by Spohn in \cite{S81} and recalled below in  \eqref{eq: Spohn Formula}.
Formula \eqref{eq: Spohn Formula} is striking as the recollision operator   $R^{1,2}$ emphasizes the contribution to  the covariance of the recollisions in the microscopic dynamics.
\begin{Prop}
\label{Prop: covariance OU}
Recall that  $\mathcal{U}(t,s)$ stands for the semi-group associated with the time dependent operator $\cL_\tau$  for~$\tau$ between times $s<t$.
Given two times $t \geq s$, there holds
\begin{align}
\label{eq: Spohn Formula}
& \cC(s,t,\gp,\psi) 
%= 
%\bbE \Big( \gz_{\theta_1}  (h_1) \gz_{\theta_2}  (h_2) \Big)
=  \int dz \,\mathcal{U}^* (t, s) \psi (z) \;     \gp (z) \, f(s,z) \\
& \qquad\qquad  
 + \int_0^{t}  d \tau   \int dx dv dw 
 \, R^{1,2} \left( f(\tau) , f(\tau)\right)(x,v,w) \,   \left(\mathcal{U}^*(t,\tau) \psi \right)(x,v)  \, 
 \left(\mathcal{U}^*(s, \tau) \gp \right)(x,w) 
\, , \nonumber
 \end{align} 
where the recollision operator $R^{1,2}$ is defined by
\begin{equation}
\label{eq: cRt}
R^{1,2}(g,g) (z_1, z_2) := \int \Big(  g (z_1') g (z_2') - g (z_1 ) g (z_2)\Big) d\mu_{z_1,z_2} ( \omega)  \, .
\end{equation}
\end{Prop}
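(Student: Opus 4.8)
The goal is to show that the two formulas for the covariance of the limiting process—formula \eqref{eq: covariance field OU}, proved in Lemma~\ref{lem-covariance-dyn} to be the solution of the dynamical equations \eqref{covariance-dyn}, and Spohn's formula \eqref{eq: Spohn Formula}—agree. The natural strategy is to verify that the right-hand side of \eqref{eq: Spohn Formula} also solves \eqref{covariance-dyn}, and then invoke the uniqueness statement of Lemma~\ref{lem-covariance-dyn} on $[0,T]$. Denote by $\widetilde{\cC}(s,t,\gp,\psi)$ the right-hand side of \eqref{eq: Spohn Formula}, with the convention that the time integral $\int_0^t$ should actually be read (as the structure of the formula and the semigroup arguments make clear) as $\int_0^{\min(s,t)} = \int_0^s$ when $s\le t$, so that both semigroups $\mathcal{U}^*(t,\tau)$ and $\mathcal{U}^*(s,\tau)$ are well-defined propagators from $\tau$ forward.

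First I would check the initial condition: at $t=s=0$, the first term of $\widetilde{\cC}$ is $\int dz\,\psi(z)\gp(z) f^0(z)$ and the second term vanishes, matching the third line of \eqref{covariance-dyn}. Next, for $t>s$, differentiate $\widetilde{\cC}$ in $t$: only the factor $\mathcal{U}^*(t,s)\psi$ in the first term and $\mathcal{U}^*(t,\tau)\psi$ in the integrand depend on $t$, and since $\partial_t \mathcal{U}^*(t,\cdot) = \mathcal{U}^*(t,\cdot)\cL_t^*$ acting on the left argument (or rather $\mathcal{U}^*$ intertwines with $\cL^*$ appropriately), one gets $\partial_t\widetilde{\cC}(s,t,\gp,\psi) = \widetilde{\cC}(s,t,\gp,\cL_t^*\psi)$, which is exactly the first line of \eqref{covariance-dyn}. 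The only subtlety is justifying differentiation under the integral and the semigroup identity $\partial_t \mathcal{U}^*(t,\tau)\psi = \cL_t^*\,\mathcal{U}^*(t,\tau)\psi$ in the functional space $L^2_{3\beta_0/4}$ of Proposition~\ref{prop: borne U*}; this is routine given the estimates already established.

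The substantive computation is the equal-time equation, i.e. verifying the second line of \eqref{covariance-dyn}. Setting $t=s$ in $\widetilde{\cC}$ and differentiating, I expect three contributions: the $t$-derivative hitting $\mathcal{U}^*(t,s)\psi|_{s=t}$ produces a term $\widetilde{\cC}(t,t,\gp,\cL_t^*\psi)$; the $s$-derivative (through both the evaluation of $f$ at time $s$ in the first term, using the Boltzmann equation $(\partial_s + v\cdot\nabla_x)f = C^{1,2}(f,f)$, and through $\mathcal{U}^*(s,\tau)\gp$ in the integrand) produces $\widetilde{\cC}(t,t,\cL_t^*\gp,\psi)$ plus a leftover; and the upper limit $\tau = t$ of the integral produces a boundary term $\int dx\,dv\,dw\; R^{1,2}(f(t),f(t))(x,v,w)\,\psi(x,v)\,\gp(x,w)$. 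The key algebraic identity to establish is that this boundary term, combined with the leftover from differentiating $f(s,z)$ via the Boltzmann equation in the first term, equals precisely the noise covariance ${\bf Cov}_t(\psi,\gp)$ of \eqref{eq: noise covariance}. This is the analogue of the computation of $\bbA_2$ in the proof of Proposition~\ref{Prop: characteristic function}, and I would mirror it: expand $R^{1,2}(f,f)$ using \eqref{eq: cRt}, use the collision change of variables $(v_1,v_2,\omega)\mapsto(v_1',v_2',\omega)$ and the identity $C^{1,2}(f,f) = \int d\mu_{z_1}(z_2,\omega)(f(z_1')f(z_2')-f(z_1)f(z_2))$ to rewrite the transport-compensated $\partial_s f$ term, and collect everything into the symmetric bilinear form $\frac12\int d\mu(z_1,z_2,\omega)f(t,z_1)f(t,z_2)\,\Delta\psi\,\Delta\gp$.

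\textbf{Main obstacle.} The bookkeeping of which terms in $\partial_t\widetilde{\cC}(t,t,\cdot,\cdot)$ regroup into the "clean" linearized pieces $\widetilde{\cC}(t,t,\gp,\cL_t^*\psi) + \widetilde{\cC}(t,t,\cL_t^*\gp,\psi)$ versus the noise piece ${\bf Cov}_t$ is delicate, because—just as in the $\bbA_1/\bbA_2$ split in the proof of Proposition~\ref{Prop: characteristic function}—the decomposition is not canonical: some terms coming from the recollision operator $R^{1,2}$ must be moved into the linearized part and compensated. I expect the main work to be in carefully matching the gain/loss structure of $R^{1,2}$ and of $C^{1,2}$ against the definition \eqref{eq: L*} of ${\bf L}_t^*$ and the symmetrized form \eqref{eq: Delta psi} of $\Delta\psi$, exactly reproducing the manipulation that took $\bbA_2$ to ${\bf Cov}_t(\psi,\gp)$. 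Once that identity is in hand, uniqueness from Lemma~\ref{lem-covariance-dyn} closes the argument on $[0,T]$. A secondary, purely technical point is checking that $\widetilde{\cC}$ a priori lies in the class where uniqueness applies, i.e. that $\mathcal{U}^*(t,\tau)\psi$ and $\mathcal{U}^*(s,\tau)\gp$ have enough integrability against $R^{1,2}(f,f)$ for the $\tau$-integral to converge; this follows from Proposition~\ref{prop: borne U*} together with the Gaussian weight \eqref{Mmax} on $f$, exactly as in \eqref{eq: condition covariance}.
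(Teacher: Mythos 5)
Your strategy is correct in substance but follows a genuinely different route from the paper. You propose to check that the right-hand side of \eqref{eq: Spohn Formula} solves the dynamical system \eqref{covariance-dyn} and then invoke the uniqueness of Lemma~\ref{lem-covariance-dyn}; the paper instead transforms \eqref{eq: covariance field OU} \emph{directly} into \eqref{eq: Spohn Formula}. Concretely, it writes ${\bf Cov}_u(\gp,\psi)=\int \gp\,\gS_u\psi$ for an explicit operator $\gS_u$, establishes the operator identity \eqref{eq: linearized and covariance},
$\gS_t\gp(z_1) = -\big(f_t\cL_t^*+\cL_t f_t\big)\gp(z_1)+\partial_t f(t,z_1)\,\gp(z_1)+\int dz_2\,R^{1,2}(f_t,f_t)(z_1,z_2)\gp(z_2)$,
and observes that the first two pieces assemble into the total derivative $\partial_u\big[\mathcal{U}(t,u)\,f_u\,\mathcal{U}^*(t,u)\big]$, so the $u$-integral in \eqref{eq: covariance field OU} telescopes and leaves exactly the $R^{1,2}$ term of Spohn's formula. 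Your equal-time computation requires precisely the weak form of that same identity: what you call ``boundary term at $\tau=t$ plus the Boltzmann-equation derivative of $f$ equals the linearized pieces plus ${\bf Cov}_t$'' is \eqref{eq: linearized and covariance} paired against $\psi\otimes\gp$, and it is indeed proved by the same gain/loss manipulation as $\bbA_2$ in Proposition~\ref{Prop: characteristic function}. So the algebraic core is shared; only the packaging differs. Three remarks. First, your route leans on the uniqueness statement, which Lemma~\ref{lem-covariance-dyn} provides only on $[0,T]$ with $T$ possibly smaller than $T^\star$; the paper's telescoping argument is uniqueness-free, which is what it buys. Second, your bookkeeping of the equal-time derivative is slightly off as written: at $s=t$ the $\partial_t$- and $\partial_s$-derivatives of $\mathcal{U}^*(t,s)\psi$ cancel (since $\mathcal{U}^*(t,t)=\mathrm{Id}$), so it is cleaner to differentiate $\widetilde\cC(t,t,\gp,\psi)$ in the single variable $t$; the three contributions are then $\int \psi\gp\,\partial_t f_t$, the boundary term $\int R^{1,2}(f_t,f_t)\,\psi\otimes\gp$, and the two propagator terms, and one lands directly on the weak identity above. (Also note $\partial_t\mathcal{U}^*(t,\tau)\psi=\mathcal{U}^*(t,\tau)\cL_t^*\psi$, with $\cL_t^*$ acting on $\psi$ first, not $\cL_t^*\mathcal{U}^*(t,\tau)\psi$; this does not affect your conclusion for the first line of \eqref{covariance-dyn}.) Third, your reading of the time integral as $\int_0^{\min(s,t)}$ is the right one, consistent with \eqref{eq: covariance field OU}; the paper only details the case $s=t$ where the issue does not arise.
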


\begin{proof}
The covariance at time $t = s=0$ is indeed given by 
\begin{align*}
\bbE \left( \gz_0 ( \gp) \gz_0 (\psi) \right) = \int dz \gp (z) f^0 \psi (z) 
= \int dz \gp (z) \psi (z) f (0,z)  \, .
\end{align*}
We will simply derive \eqref{eq: Spohn Formula} when $s=t$ and the case $s<t$ can be easily deduced.
The covariance~${\bf  Cov}_t$ introduced in  \eqref{eq: noise covariance}
can be rewritten in terms of the operator $\gS_t$  
\begin{equation}
\label{eq: operateur SIGMA}
\gS_t   \psi (z_1) 
 :=  -      \int d\mu_{z_1} ( z_2, \omega)\;
\Big[ f (t,z_1 ) f (t,z_2) +  f (t,z_1' ) f (t,z_2')\Big] \Delta \psi\, ,
\end{equation}
with the notation $d\mu_{z_1}$ as in \eqref{defdmuzi} and $\Delta \psi$ as in \eqref{eq: Delta psi}.
Indeed, one can check that for any functions $\gp, \psi$, the covariance 
can be recovered as follows
\begin{align*}
\int \gp \gS_t   \psi (z_1)  dz_1 & = -\frac12  \int d\mu(z_1,z_2, \omega)
\Big[ f (t,z_1 ) f (t,z_2) +  f (t,z_1' ) f (t,z_2')\Big] \Delta \psi (\gp(z_1) +\gp(z_2))
\\
%&= - \frac14  \int d\mu(z,z_2, \omega)
%\Big[ f (t,z_1 ) f (t,z_2) +  f (t,z_1' ) f (t,z_2')\Big] \Delta \psi (\gp(z_1) +\gp(z_2)-\gp(z_1')- \gp(z'_2))
%\\
&= \frac12 \int d\mu(z_1,z_2, \omega)f (t,z_1 ) f (t,z_2) ( \Delta \psi ) ( \Delta \gp ) 
= {\bf  Cov}_t ( \gp, \psi) \, .
\end{align*}
The covariance~$\hat \cC$ of the Ornstein-Uhlenbeck process computed in  \eqref{eq: covariance hors equilibre} reads 
\begin{equation}
\begin{aligned}
\label{eq: covariance at t}
\cC (t,t ,\gp, \psi) 
%&= 
% \bbE \left(  \gz_0  \big( \mathcal{U}^*(t,0) \psi  \big) \;  \gz_0  \big( \mathcal{U}^*(s,0) \gp  \big) \right)
%+   \int_0^s  du \;
%{\bf  Cov}_u \left(\big( \mathcal{U}^*(t,u) \psi \big) , \big( \mathcal{U}^*(s,u) \gp \big)  \right)\\
=  \int dz_1 \,  \mathcal{U}^*(t,0) \psi (z_1) \, f^0\,  \mathcal{U}^*(t,0) \gp (z_1) 
+ \int_0^t du  \int dz_1 \,    \gp  (z_1) \, \big[  \mathcal{U}(t,u)\;  \gS_u \; \mathcal{U}^*(t,u) \psi \big](z_1)  \, .
\end{aligned}
\end{equation}
The following identity is the key to identify \eqref{eq: covariance at t} and \eqref{eq: Spohn Formula}
\begin{equation}
\label{eq: linearized and covariance}
\begin{aligned}
\gS_t \gp (z_1) = - \Big( f_t \cL_t^*    +  \cL_t  f_t \Big) \gp(z_1)
+ \partial_t  f (t,z_1) \,  \gp(z_1) + \int    d z_2 \, R^{1,2} \big( f(t),f(t) \big) (z_1,z_2)   \gp(z_2) \, .
\end{aligned}
\end{equation}
Let us postpone for a while the proof of this identity and 
complete first the proof of \eqref{eq: Spohn Formula}.

Replacing the expression (\ref{eq: linearized and covariance}) of $\gS_u$ in the second line of \eqref{eq: covariance at t} and recalling that~$\mathcal{U}(t,t)\varphi =\varphi $, we get that 
\begin{align*}
\int_0^t du  \int dz_1 \,    \gp  (z_1) \, & \big[  \mathcal{U}(t,u)\;  \gS_u \; \mathcal{U}^*(t,u) \psi \big](z_1)  \\
= & \int_0^t du \, \int dz_1 \;    \gp  (z_1) \; \big[  \mathcal{U}(t,u)\; \Big( - \big( \cL_u  f_u +  f_u \cL_u^*\big)
+ \partial_u  f (u) \Big) \; \mathcal{U}^*(t,u) \psi \big](z_1)   \\
& + \int_0^t du \, \int d z_1 d z_2  \,     \mathcal{U}^* (t,u)\gp  (z_1) \; R^{1,2} \big( f(u),f(u) \big)  (z_1,z_2)  \; \mathcal{U}^*(t,u) \psi (z_2)\, .
\end{align*}
Noticing that the time derivative is given by
\begin{align*}
\partial_u \Big[ \mathcal{U}(t,u)\;   f_u  \; \mathcal{U}^*(t,u)  \Big]
=
\mathcal{U}(t,u)\; \Big( - \big( \cL_u  f_u +  f_u \cL_u^*\big)
+ \partial_u  f(u)  \Big) \; \mathcal{U}^*(t,u)   \,  ,
\end{align*}
we conclude that 
\begin{align*}
& \int_0^t du  \int dz_1 \,    \gp  (z_1) \, \big[  \mathcal{U}(t,u)\;  \gS_u \; \mathcal{U}^*(t,u) \psi \big](z_1)  
 =   \int d z_1 \;  \Big(  \gp  (z_1) \; f_t  \psi (z_1) -
   \gp  (z_1) \;   \mathcal{U}(t,0) f^0 \; \mathcal{U}^*(t,0) \psi  (z_1) \Big)  \\
& \qquad  \qquad  \qquad  \qquad + \int_0^t du \int dz_1 dz_2  \;  
\mathcal{U}^* (t,u)\gp  (z_1) \; R^{1,2} \big( f(u),f(u) \big)  (z_1,z_2)  \; \mathcal{U}^*(t,u) \psi (z_2)\,  .
\end{align*}
Finally the covariance \eqref{eq: covariance at t} reads
$$
\begin{aligned}
\cC (t,t, \gp, \psi) 
=  \int dz  \gp  (z) \, f_t  \psi (z)
%\\ & \qquad  
 + \int_0^t du   \int d z_1 dz_2  \,     \mathcal{U}^* (t,u)\gp  (z_1) \, R^{1,2} \big( f(u),f(u) \big)  (z_1,z_2)  \; \mathcal{U}^*(t,u) \psi (z_2) \,   .
\end{aligned}
$$
This completes the proof of Proposition \ref{Prop: covariance OU}.
It remains then to establish the identity \eqref{eq: linearized and covariance}.
Let us  write the decomposition  $\gS_t = \gS_t^+ + \gS_t^-$ with 
\begin{align*}
%\label{eq: noise covariance}
\gS_t^+   \psi (z_1) 
 :=  -      \int  d\mu_{z_1} ( z_2, \omega) 
f (t,z_1') f (t,z_2') \Delta \psi\, ,
\qquad
\gS_t^-   \psi (z_1) 
 :=  -      \int d\mu_{z_1} ( z_2, \omega) 
f (t,z_1 ) f (t,z_2) \Delta \psi\, 
.
\end{align*}
Recall that $\cL_T^*$ was computed in \eqref{eq: L*}. We get
\begin{align*}
%\label{eq: linearized Boltzmann}
f(t) \cL_t^*   \gp(z_1)  & =  f (t) \; v_1 \cdot \nabla  \gp (z_1) 
+ \int   d\mu_{z_1} ( z_2, \omega) 
f (t,z_1 ) f (t,z_2)  \Delta \gp
 =  f (t) \; v_1 \cdot \nabla  \gp (z_1) - \gS_t^- \gp (z_1)\,.
\end{align*}
and
\begin{align*}
%\label{eq: linearized Boltzmann}
\cL_t  f(t) \gp(z_1)  & = - v_1 \cdot \nabla [ f (t) \gp] (z_1) 
 + \int   d\mu_{z_1} ( z_2, \omega)  
\Big( f (t,z_1') f (t,z_2') \big( \gp(z_1') +  \gp(z_2')  \big) \\
& \qquad \qquad    \qquad \qquad     \qquad \qquad  
  - f (t,z_1) f (t,z_2)  \big(   \gp(z_2) +   \gp(z_1) \big) \Big) \\
& = - v_1 \cdot \nabla [ f (t) \gp] (z_1)   
+ \int  d\mu_{z_1} ( z_2, \omega)  \Big(f (t,z_1') f (t,z_2')  \Delta \gp\\
& \qquad \qquad    + \big[ f (t,z_1') f (t,z_2') - f (t,z_1) f (t,z_2)  \big]  \big(   \gp(z_1) +  \gp(z_2) \big) \Big)\\
%& = -  v \cdot \nabla [ f (t) \gp] (x,v) - \gS_t^+   \gp (x,v)  \\
%&  +\int  d \nu d w \,  \big( (v - w) \cdot \nu \big)_+ 
%  \big[ f (t,x,w') f (t,x,v')   -  f (t,x,v) f (t,x,w)  \big] \big( \gp(v) +  \gp(w)  \big) \\
& = - v_1 \cdot \nabla [ f (t) \gp] (z_1) - \gS_t^+ \gp (z_1) 
 + \int    d z_2 \, R^{1,2} \big( f(t),f(t) \big)  (z_1,z_2) \big( \gp(z_1) +  \gp(z_2)  \big)\,,
\end{align*} 
where we used the notation \eqref{eq: cRt}.
As a consequence, we get that 
$$
\begin{aligned}
%\label{eq: linearized Boltzmann}
f(t) \cL_t^*   \gp(z_1)  +  \cL_t  f(t) \gp(z_1)
& = - \gp  \; v_1 \cdot \nabla  f (t,z_1) -   \gS_t \gp (z_1)
%\\& \qquad
+ \int    d z_2 R^{1,2} \big( f(t),f(t) \big)  (z_1,z_2) \big( \gp(z_1) +  \gp(z_2)  \big)\,.
\end{aligned}
$$As $ f$ solves the Boltzmann equation, we have
$$
\partial_t  f (t,z_1) = -  v_1 \cdot \nabla  f (t,z_1) +  \int    d z_2  R^{1,2} \big( f(t),f(t) \big) (z_1,z_2)\,.
$$
This leads to further simplifications  as
\begin{align*}
f(t) \cL_t^*   \gp(z_1)  +  \cL_t  f(t) \gp(z_2)
& = \gp  \; \partial_t  f (t,z_1) -   \gS_t \gp (z_1)
+ \int    d z_2 \,  R^{1,2} \big( f(t),f(t) \big)  (z_1,z_2)   \gp(z_2)\,  ,
\end{align*}
thus \eqref{eq: linearized and covariance} holds. Proposition~\ref{Prop: covariance OU}
is proved.
\end{proof}

\chapter{Large deviations}
\label{LDP-chap} 
\setcounter{equation}{0}

This  chapter is devoted to the study of large deviations, and to the proof of Theorem \ref{thmLD}.
We are going to evaluate the probability of an atypical event, namely that the empirical measure 
remains close  to a probability density $\gp$  (which is different from the solution to the Boltzmann equation $f$) during a short time interval.

\bigskip

   It is well known, see e.g.~\cite{dembozeitouni, DV}, that the large deviation functional can be deduced from the 
exponential moments  by Legendre transform.  
We recall the definition~(\ref{I-def dynamics J}) of the limiting cumulant generating function \begin{align}
\label{I-def dynamics Jbis}
\cI (t,g) 
%& :=\frac{1}{\mu_\eps} \log 
%\bbE_\eps \left( \exp \Big(  \sum_{i =1}^\cN H \big( {\bf z}_i ([0,t] \big)  \Big)   \right)\\
 :=\gL_{[0,t]} ( e^{ g(t) -\int_0^t D_s g} )  =  \lim_{\mu_\eps \to \infty}  \gL^\eps_{[0,t]} (e^{ g(t) -\int_0^t D_s g}) \, ,\end{align} which is well defined (see Theorem \ref{cumulant-thm2}) in the set 
\begin{equation}
\label{eq: space Bbis}
\begin{aligned}
{\bB }_{\alpha} :=    \Big\{g \in C^1([0,T_\alpha] \times \D;{\mathbb C}) \
&\,   : \qquad 
 | g(t,z)| \leq  ( 1-{t\over 2T_\alpha} )( \alpha +\frac{\beta_0}8 |v|^2) \, ,\\
 &  \sup_{s \in [0,T_\alpha]}   | D_s g (s,z) |  \leq {1\over 2T_\alpha} (\alpha  +\frac{\beta_0}8 |v|^2)
\Big\}\, ,
\end{aligned}
\end{equation}
as long as~$t \leq T_{\alpha }$. 
The Legendre transform of~$\cI$ 
defines implicitly the large deviation functional (see~(\ref{eq: legendre transform GD}) below), and one of the goals of this chapter is to identify it with the following functional, as    previously conjectured by Rezakhanlou \cite{RezLNM} and Bouchet \cite{bouchet}: \begin{align}
\label{eq: fonc GD sup p 2}
\widehat \cF(t,\gp) 
:=  \widehat \cF (0,\gp_0) + \sup_p \left\{ \lA p,  D \gp\rA - \int_0^t \cH \big( \gp(s) ,p(s) \big)ds 
\right\},
\end{align}
where the supremum is taken over bounded measurable functions $p$ on~$[0,t]\times {\mathbb D}$,
%(cf. Equation (1.14) in \cite{Rez2}), 
and the Hamiltonian is given by
\begin{equation}
\label{eq: Hamiltonian 2}
\cH(\gp,p) := \frac{1}{2} \int  d\mu (z_1, z_2, \omega) 
\gp (z_1) \gp (z_2) \big( \exp \big( \Delta p \big) -1 \big) \, .
\end{equation}
We have denoted as in~(\ref{spacetimeduality}) the duality  on~$[0,t] \times   \bbD$    by 
%$\la \cdot,\cdot \ra$ (resp. $\lA \cdot,\cdot \rA$)
$$
  \lA \gp, \psi \rA := \int_0^{t} ds \int_\bbD dz  \; \gp(s,z) \; \psi (s,z)\, .
$$

\bigskip
We will be able to prove that $\widehat \cF$ describes indeed  the large deviations only for  a restricted class of functions, constructed as follows.
Consider the biased Boltzmann equation already introduced in \eqref{biased-Boltz} :
\begin{align}
\label{hamiltonian-traj1 detail}
& D  \varphi = \int 
\big( \varphi  (z') \varphi  (z_2') \exp( - \Delta   p) - \varphi  (z)\varphi (z_2) \exp( \Delta   p) \big) d\mu_z ( z_2, \omega)
\quad \text{with} \quad  
\varphi (0)   =f^0 e^{p (0)} \, ,  \end{align}
where~$p$ is a Lipschitz function in space and time, and define for any~$r,T>0$ the set
\begin{equation}
\label{eq: set R 2}
\begin{aligned}
\cR_{r,T} := \Big\{ \varphi :  [0,T] \times \bbD \mapsto \bbR^+ \, : \,  \varphi \hbox{ is a strong solution of   \eqref{hamiltonian-traj1 detail} on $[0,T]$ for some  }  p \\   
\hbox{ such that } \, \,  \|p\|_{W^{1,\infty}([0,T] \times {\mathbb D})} < r  \Big\} \,.
\end{aligned}
 \end{equation}
We shall  prove
the following theorem in  Section~\ref{I-section}~:
\begin{Thm}
\label{thm: F = ha F} 
For any~$  r>0$, there is~$ \alpha>0$ (depending on~$r, C_0$ and~$\beta_0$),
%, see Remark~{\rm\ref{rmk: choice of alpha}}, 
and a time~$T \in (0,T_\alpha ]$  (recalling that $T_\alpha$ is defined in Theorem~{\rm\ref{cumulant-thm2})} such that \begin{equation}
\label{eq: F = hat F}
\forall \gp \in \cR_{r,T} \, , \quad \forall t \leq T \, , \qquad 
\widehat \cF (t,\gp) =  \cF(t, \gp)  \,,
\end{equation}
where~$\cF$ is the Legendre transform of~$\cI$
\begin{equation}
\label{eq: legendre transform GD}
\cF (t, \gp) :=  \sup_{ g \in {\bB }_{  \alpha}} \Big\{   - \lA \varphi, D g \rA + \la \varphi (t),  g (t) \ra  
-  \cI (t, g) \Big\}\,  .\end{equation}
\end{Thm}
  Building on Theorem \ref{cumulant-thm2} and 
standard methods of the  large deviation theory \cite{dembozeitouni},  we shall then prove
the following large deviation principle in Section~\ref{LD-sec}.
\begin{Thm} 
\label{thmLDbis}
Consider a system of hard spheres initially distributed according to  the grand canonical measure {\rm(\ref{eq: initial measure})} where $f^0$ satisfies~{\rm(\ref{lipschitz})}. 
Let~$r>0$ be fixed,  and the associate parameters~$ \alpha>0$ and~$T>0$ of Theorem~{\rm\ref{thm: F = ha F}}.  In the Boltzmann-Grad limit $\mu_\eps \to \infty$, the empirical measure satisfies the following
large deviation estimates~:
% large deviation principle
\begin{itemize}
\item For any closed set ${\bf F} \subset D([0,T], \cM (\bbD))$,  
\begin{align}
\label{eq: large deviations upper bound}
\limsup_{\mu_\eps \to \infty} \frac{1}{\mu_\eps} 
\log \bbP_\eps \left( \pi^\gep\in {\bf F} \right) \leq - \inf_{\gp \in \bf F}  \cF(T,\gp)\, .
\end{align}
\item For any open set ${\bf O} \subset D([0,T], \cM (\bbD))$, 
\begin{align}
\label{eq: large deviations lower bound}
\liminf_{\mu_\eps \to \infty} \frac{1}{\mu_\eps} 
\log \bbP_\eps \left( \pi^\gep \in {\bf O} \right) \geq - \inf_{\gp \in {\bf O} \cap \cR_{r,T}}\cF(T,\gp)\,.
\end{align}
\end{itemize}
\end{Thm}

\section{Identification of the large deviation functional} \label{I-section}

In this section, we prove Theorem \ref{thm: F = ha F}. From now on, we fix a real number~$r>0$.
The main step  of the proof will be to provide a more explicit formula for $\cI (t, g)$   by using that the Hamilton-Jacobi equation~\eqref{HJ-eq} has a unique solution.

\subsection{Mild solutions of the Hamilton-Jacobi equation}

For any~$\alpha>0$,  fix a function $g$ in~$\bB_{\alpha}$.
At the formal level, the Hamilton-Jacobi equation  \eqref{HJ-eq} states
that for any~$t \in [0,T_{\alpha } ]$
\begin{equation}
\label{HJ-eq 2} 
\d_t \cI (t,g) = \cH \Big( {\d \cI (t, g) \over \d g(t)}, g(t)\Big)
\quad \text{with} \quad 
\cH \Big( \varphi, p\Big) = \frac12  \int  \varphi (z_1) \varphi (z_2)\Big(e ^{\Delta p} - 1 \Big)   d\mu (z_1, z_2, \omega)\, ,
\end{equation}
with initial condition 
\begin{equation}
\label{eq: initial condition}
\cI (0, g) =   \big \la f^0, (e^{g(0)}-1) \big \ra  \,.
\end{equation}
As noticed before, all the limiting cumulants at time 0,  except the first one, equal zero so that 
$\cI (0, g)$ coincides with the exponential moment of independent variables distributed according to $f^0$ and tilted by the function $g(0)$.

\medskip

We would like to use a method of characteristics to obtain a mild solution~$\hat \cI(t,g)$ of~(\ref{HJ-eq 2})-(\ref{eq: initial condition}).
Given~$t $ in~$[0,T_{\alpha }]$, define the  Hamiltonian system on the time interval $[0,t]$ 
\begin{align}
\label{hamiltonian-traj1}
& D_s  \varphi_t = {\d \cH \over \d p } ( \varphi_t, p_t)\, , 
\quad  \text{with} \quad    
\varphi_t(0)   =f^0 e^{p_t (0)}\,  ,\\
\label{hamiltonian-traj2}
& D_s  (p_t -g) =-  {\d \cH \over \d \varphi } ( \varphi_t, p_t)\, , 
\quad  \text{with} \quad   
p_t(t)   = g(t)\,. 
\end{align}
The subscript $t$ stresses the fact that the functions $\varphi_t (s), p_t(s)$ depend on~$t$.
As customary, the boundary conditions are prescribed in terms of the initial time (for \eqref{hamiltonian-traj1}) and the final time $t$ (for \eqref{hamiltonian-traj2}).  The condition \eqref{hamiltonian-traj1} is identical to the biased Boltzmann equation 
\eqref{hamiltonian-traj1 detail} used to define $\cR_{r,T}$. 
Note that~\eqref{hamiltonian-traj2} reads
\begin{align}
\label{hamiltonian-traj2 detail}
& D_s  (p_t -g) =- \int 
\varphi_t (z_2) \big( \exp( \Delta   p_t) -1  \big) d\mu_z ( z_2, \omega)
\quad \text{with} \quad  
p_t(t)   = g(t)\,.
\end{align}
The local well-posedness of the Hamiltonian equations~\eqref{hamiltonian-traj1}-\eqref{hamiltonian-traj2}  will be obtained by a Cauchy-Kovalevskaya argument after recasting the system in more symmetric variables (see  Section~\ref{symplectic} and Appendix~\ref{EL-Cauchy}).
%in Proposition \ref{Lem: definition hat I}. 
%\begin{Prop}
%\label{Lem: well-posedness phi pI}
%For any~$\alpha>0$, there exists  a time~$ T_{\alpha }^{\rm\tiny mH}   \in (0,T_{\alpha }]$, depending only on~$\alpha$, $\beta_0$ and~$C_0$, such that the following holds. Given~$t $ in~$ [0,  T_{\alpha }^{\rm\tiny mH}]$ and~$g$ in~$  \bB_{\alpha}$,  there is a unique solution 
% $(\varphi_t , p_t)$ to the system of equations~{\rm\eqref{hamiltonian-traj1}}-{\rm\eqref{hamiltonian-traj2}} on~$[0,t]$ such that 
%\begin{equation}\label{estimatephi_tp_t}
%0<  \sup_{s \in [0,t] }  \varphi_t (s,x,v)  \leq \bar C \exp \Big(- \frac {3\beta_0}8 |v|^2\Big) \, , 
%\quad
%\sup_{s \in [0,t] }  | p_t (s,x,v) |  \leq \log \bar C +  \frac {5\beta_0}{16} |v|^2
%\end{equation}
%for    some constant~$\bar C >1$.
%\end{Prop}

\medskip

Let us now explain how  the functions $\varphi_t , p_t $ can be used to build a more explicit 
representation of the functional $\cI$.
For $g \in \bbB_{\alpha}$ and $(\varphi_t, p_t)$ solution to~\eqref{hamiltonian-traj1}-\eqref{hamiltonian-traj2}, define the action associated with the  Hamiltonian system  \eqref{hamiltonian-traj1}-\eqref{hamiltonian-traj2} by
\begin{align}
\label{hatI-def}
\widehat  \cI (t,g)   := \la f^0, (e^{p_t(0)}-1) \ra +   \lA D_s  (p_t-g) , \varphi _t \rA + \int_0^t \cH (\varphi_t(s) , p_t(s)) ds \, .
\end{align}

\begin{Prop}
\label{action-lem}
Let~$\alpha>0$ and $g \in \bbB_{\alpha}$. Assume that the   Hamiltonian system \eqref{hamiltonian-traj1}-\eqref{hamiltonian-traj2} admits a unique continuous solution  on~$[0,T]$
for any forcing~$\tilde g$ in a neighborhood of~$g$ in~$\bbB_{\alpha}$.
Denote by~$(\varphi_t , p_t)$  the solution  on~$[0,T]$ associated with~$g$. Then   the functional~$\widehat \cI$ defined by~{\rm(\ref{hatI-def})} satisfies the Hamilton-Jacobi equation \eqref{HJ-eq} on~$[0, T]$ and the following identities:
\begin{equation}
\label{eq: derivees hat I}
\frac{\partial \widehat \cI}{\partial g(t)} (t,g)= \varphi_t (t)\, , \qquad \frac{\partial \widehat \cI}{\partial D g} (t,g) = - \varphi_t  \, .
\end{equation}
\end{Prop} 

\begin{proof}
Let us first compute the time derivative of $\widehat \cI (t,g)$ for a fixed function $g$
\begin{align}
\label{eq: derivee temps hat cI} 
\d_t \widehat  \cI (t,g) = & \la f^0, e^{p_t(0)} \delta p_t (0)\ra 
+ \la D_t (p_t-g)(t) ,  \varphi _t(t) \ra + \cH (\varphi_t(t) , p_t(t)) \\
& +   \lA  D_s \delta  p_t  ,  \varphi _t\rA +  \lA  D_s (p_t-g) , \delta \varphi _t \rA 
\nonumber \\
& +   \lA  \delta \varphi_t,  {\d\cH\over \d\varphi}  (\varphi_t , p_t )  \rA 
 +  \lA \delta p_t , {\d\cH\over \d p}  (\varphi_t , p_t ) \rA\, , \nonumber
\end{align}
where  $\delta$ stands for the derivative with respect to the variations of the final time; for example
\begin{equation*}
\forall s \leq t, \quad \delta p_t (s) = \lim_{u \to 0} \frac{p_{t+u} (s) - p_t (s)}{u}\,\cdotp
\end{equation*}
In particular, we will prove that
\begin{equation}
\label{eq: variation p}
\delta p_t (t) = - \partial_t ( p_t (t) - g(t) ) = - D_t ( p_t (t) - g(t) ) \, ,
\end{equation}
where the time derivative is only with respect to the argument $s \mapsto p_t (s) - g(s)$.
The first part of~\eqref{eq: variation p} follows by  
\begin{align*}
%\label{eq: variation p}
\frac{p_{t+u} (t) - p_t (t)}{u} 
& = \frac{p_{t+u} (t) -p_{t+u} (t+u) + p_{t+u} (t+u) - p_t (t)}{u}\\
& = \frac{p_{t+u} (t) -p_{t+u} (t+u) + g (t+u) - g (t)}{u}
\xrightarrow[u \to 0]{} - \partial_t ( p_t (t) - g(t) )\, ,
\end{align*}
thanks to the boundary condition $(p_s -g)(s)=0$. Using once again the boundary condition, we deduce that 
$v \cdot \nabla_x(p_t -g)(t)=0$ so that the second equality in \eqref{eq: variation p} is proved.

Integrating by parts the first term in the second line of \eqref{eq: derivee temps hat cI}, we get  
\begin{align*}
\lA  D_s \delta  p_t  ,  \varphi _t \rA 
& = - \lA  \delta  p_t   , D_s \varphi _t \rA + \la \delta  p_t (t)  , \varphi _t(t) \ra  
- \la \delta p_t (0)  , \varphi _t(0) \ra \\
& = - \lA  \delta p_t  , D_s \varphi _t \rA   -  \la  D_t ( p_t (t) - g(t) )  , \varphi _t(t) \ra
- \la \delta p_t (0) ,f^0 e^{p_t (0)} \ra\, ,
\end{align*}
where we used the boundary conditions $\varphi_t(0)   = f^0 e^{p_t (0)}$ 
and the identity \eqref{eq: variation p}.
 From the equations~(\ref{hamiltonian-traj1})-(\ref{hamiltonian-traj2}), we deduce  that the integral contributions of $\delta p_t$ and $\delta \varphi_t$ vanish. 
Therefore $\widehat \cI$ satisfies the Hamilton-Jacobi equation
\begin{equation}
\label{dt-hatI 2}
\d_t \widehat  \cI (t,g)= \cH (\varphi_t(t) , p_t(t))\,.
\end{equation}
The mild form \eqref{HJ-eq} is then a consequence of identities~(\ref{eq: derivees hat I}) by time integration.

\medskip

Let us now fix $t$ and differentiate (\ref{hatI-def}) with respect to $g(t)$ and $D_s g$.
The corresponding variations~$\delta g(t)$  and $\delta D_s g$ are independent.
We get
\begin{align*}
\d  \widehat  \cI   (t,g) 
= & \la f^0, e^{p_t(0)} \delta p_t (0)\ra 
 +   \lA  D_s \delta p_t  ,  \varphi _t\rA 
 -    \lA  \delta D_s  g ,  \varphi _t\rA 
 +  \lA  D_s (p_t-g) , \delta \varphi _t \rA \\
&  +   \lA  \delta \varphi_t,  {\d\cH\over \d\varphi}  (\varphi_t , p_t )  \rA 
 +  \lA \delta p_t , {\d\cH\over \d p} (\varphi_t , p_t ) \rA\, .
\end{align*}
%Recall that $g$ is characterized by $D_s g$ and its final condition $g(t)$,
%thus $D_s \delta g  = 0$ since  $D_s g$ is independent from $g(t)$.
By integration by parts and using the boundary conditions $\varphi_t(0)   = f^0 e^{p_t (0)}$ and $p_t (t)=g(t)$, we obtain  
\begin{align*}
\lA  D_s \delta p_t  ,  \varphi _t \rA 
& = - \lA  \delta  p_t   , D_s \varphi _t \rA + \la \delta p_t (t)  , \varphi _t(t) \ra  
- \la \delta p_t (0)  , \varphi _t(0) \ra \\
& = - \lA  \delta  p_t   , D_s \varphi _t \rA + \la \delta g (t)  , \varphi _t(t) \ra  
- \la \delta p_t (0) ,f^0 e^{p_t (0)} \ra \,. 
\end{align*}
Thus
\begin{align*}
\d  \widehat  \cI   (t,g) 
= & \la f^0, e^{p_t(0)} \delta p_t (0)\ra  -  \lA \delta D_s  g ,  \varphi _t\rA 
 - \lA  \delta  p_t   , D_s \varphi _t \rA + \la \delta g (t)  , \varphi _t(t) \ra  
- \la \delta p_t (0) ,f^0 e^{p_t (0)} \ra \\
& +  \lA  D_s (p_t-g) , \delta \varphi _t \rA 
  +   \lA  \delta \varphi_t,  {\d\cH\over \d\varphi}  (\varphi_t , p_t )  \rA 
 +  \lA \delta p_t , {\d\cH\over \d p} (\varphi_t , p_t ) \rA \,.
\end{align*}
Combining this identity and equations (\ref{hamiltonian-traj1})-(\ref{hamiltonian-traj2}) to simplify the Hamiltonian contribution, this completes the statement  \eqref{eq: derivees hat I}
$$ 
\partial \widehat  \cI  (t,g) =  \la \delta g (t)  , \varphi _t(t) \ra  -  \lA \delta D_s  g ,  \varphi _t\rA     \,.
$$
Proposition~\ref{action-lem}
 is proved. \end{proof}

 \medskip
  As a consequence of Theorem \ref{HJ-prop} page~\pageref{HJ-prop} and Proposition \ref{action-lem},
 the functionals $\cI, \widehat \cI$ are both solutions of the Hamilton-Jacobi equation 
\eqref{HJ-eq 2} and we are going to deduce that they coincide on some short time interval. The proof of the following result   is postponed to Section \ref{sec: I = hat I} as this requires to reparametrize the Hamiltonian variables in order to show the uniqueness of the Hamilton-Jacobi equation.

\begin{Prop}
\label{prop: I = hat I}
Let~$\alpha>0$ be given. There exists a time~$T^\star_\alpha>0$ such that 
the functional~$\widehat \cI$ is well defined on~$[0,T^\star_\alpha] \times \bB_{\alpha}$ and
the functionals $\cI, \widehat \cI$ coincide on 
$[0,T^\star_\alpha] \times \bB_{\alpha}$:
$$
\cI (t,g) = \widehat \cI (t,g) \qquad \text{ for any} \quad t \leq T^\star_\alpha\, , \  g\in \bB_{\alpha}\,. 
$$
\end{Prop} 
%Thanks to the explicit form \eqref{hatI-def} of $\widehat \cI$,  
%Proposition \ref{prop: I = hat I} provides a characterisation of $\cI$ on $[0,T^\star]$. 
%This will be key to derive Theorem \ref{thm: F = ha F}, i.e. 
%that $\cF = \widehat \cF$ on the set $\cR_{r,T}$.
%

%\begin{proof}[Proof of Proposition \ref{prop: I = hat I}]
%Proposition \ref{Lem: definition hat I} will establish that the system  \eqref{hamiltonian-traj1}-\eqref{hamiltonian-traj2} is well-posed, from this we deduce that the functional $\widehat \cI$ is well-defined.
%As $\cI, \widehat \cI$ are both solutions of the Hamilton-Jacobi equation 
%\eqref{HJ-eq 2}, the identification follows from :
%\begin{itemize}
%\item the uniqueness of the Hamilton-Jacobi equation \eqref{HJ-eq 2} derived in Proposition \ref{statementuniqueness},
%\item  the regularity estimates on $\cI$ and $\widehat \cI$ obtained in Propositions \ref{prop: analyticity J}  and 
%\ref{Lem: definition hat I}.
%\end{itemize}
%The proof of these propositions   are postponed to Section \ref{sec: I = hat I} as this requires to introduce a new set of variables.
%\end{proof}

\subsection{Identification of the Legendre transform $\cF$}
\label{sec: F = ha F}

In this section, we prove  Theorem \ref{thm: F = ha F}.
 Fix  a  function   $\bar \gp$ satisfying the biased Boltzmann equation 
\eqref{hamiltonian-traj1 detail}  for some~$\bar p$ 
 such that 
\begin{equation}
\label{eq: estimate bar p}
\|\bar p\|_{W^{1,\infty} ([0,T_0] \times \bbD) } < r \, . 
\end{equation}
Noticing that
$$
{\d \cH \over \d p } ( \bar \gp , \bar p) = \int 
\big(\bar \varphi  (z') \bar\varphi  (z_2') \exp( - \Delta   \bar p) -\bar  \varphi  (z)\bar \varphi (z_2) \exp( \Delta   \bar p) \big) d\mu_z ( z_2, \omega)\, , 
$$this biased Boltzmann equation can be rewritten in the more compact form~(\ref{hamiltonian-traj1}) which we recall
\begin{align}
\label{eq: sol R}
D_t  \bar \gp = {\d \cH \over \d p } ( \bar \gp , \bar p)\, , 
\quad  \text{with} \quad    
\bar \gp (0)   =  f^0  e^{\bar p(0)} .
\end{align}
By Appendix~\ref{boltz-Cauchy} (see \eqref{Mmax-appendix}), Equation~(\ref{eq: sol R}) has a  unique solution on~$[0,  T_0 e^{-5r}] $ such that 
\begin{equation}
\label{eq: norm strong solution}
\sup_{t \in [0,T_0 e^{-5r}]} \Big  \|  \bar \gp  (t)  \, \exp \Big( \frac{\beta_0}{4} \, |v|^2  \Big)  \Big \|_\infty \leq 4 C_0 e^r \,.
\end{equation}
%A  slight modification of the argument in Appendix~\ref{boltz-Cauchy} shows that for any~$\beta_0/10 \leq \beta < \beta' \leq \beta_0/2$
%\begin{align*}
% \left  \|  \left| {\d \cH \over \d p } ( \bar \gp , \bar p) \right|  \, \exp \Big( { \beta }  \, |v|^2  \Big)  \right \|_\infty 
%  & \leq C \Big  \|  \bar \gp  \, \exp \Big( { \beta'}  \, |v|^2  \Big)  \Big \|_\infty ^2  \exp( 4 r 
%)  \frac{{\beta}_0 }{\beta'-\beta}{\beta}_0 ^{-\frac {(d+1)}2} \,.
%\end{align*}
%Since on the other hand
%$$
% \left  \|  f^0  e^{\bar p(0)} \exp \Big( \frac{ \beta_0 }2 \, |v|^2  \Big)  \right \|_\infty 
%\leq C_0e^r\, , 
%$$
%Theorem~\ref{nishidatheorem}  shows that Equation~(\ref{eq: sol R}) has a  solution on~$[0,  T_0 e^{-5r}] $ such that 
%\begin{equation}
%\label{eq: norm strong solution}
%\sup_{t \in [0,T_0 e^{-5r}]} \Big  \|  \bar \gp  (t)  \, \exp \Big( \frac{\beta_0}{10} \, |v|^2  \Big)  \Big \|_\infty \leq 2C_0 e^r \,.
%\end{equation}
We then set
$$
T := \min (T_0 e^{-5r},T^\star_\alpha)\, ,
$$
with $T^\star_\alpha$ as in Proposition \ref{prop: I = hat I}.
Note   that $\bar \varphi$ is smooth, non-negative and that the conservation of mass, momentum and energy are satisfied~:
\begin{equation}
\label{eq: conservation bis}
\la D_s \bar \varphi, 1\ra = \la D_s \bar \varphi, v_i \ra = \la D_s \bar \varphi, |v|^2 \ra = 0\,.
\end{equation}
\begin{Rmk}
\label{rmk: regularite}
It has been shown in \cite{Heydecker21, BBBC22} that  the functional $\widehat \cF$ is not relevant to describe the large deviations of some functions $\varphi$ which are weak solutions of the homogeneous Boltzmann equation but do not conserve energy. Such functions are much more irregular than those in $\cR_{r,T}$ (see e.g.~{\rm\eqref{eq: conservation bis}}), thus the counterexample in  \cite{Heydecker21}  does not contradict Theorem~{\rm\ref{thm: F = ha F}}.
\end{Rmk}

Equation \eqref{eq: sol R} implies that $\bar p$ is a critical point of the variational problem~(\ref{eq: fonc GD sup p 2}) on~$[0,T]$, which we recall:
\begin{align*}
\widehat \cF(t, \bar \gp) 
:=  \widehat \cF \big( 0, \bar \gp (0) \big) 
+ \sup_p \left\{ \lA p,  D_s \bar \gp \rA - \int_0^t \cH \big( \bar \gp(s) , p(s) \big)ds 
\right\},
\end{align*}
where the supremum is taken over bounded    $p$ on~$[0,t]\times {\mathbb D}$.
Indeed since $\bar \varphi \geq 0$, the function $p \mapsto \cH( \bar \varphi, p) $ is convex and one can check that for any bounded $p$ and for all~$t \in [0,T]$, 
\begin{align*}
 \lA p,  D_s \bar \gp \rA - \int_0^t \cH \big( \bar \gp(s) ,p(s) \big)ds  
&\leq \lA \bar p,  D_s\bar \gp\rA - \int_0^t \cH \big( \bar \gp(s) , \bar p(s) \big)ds
+ \lA p- \bar p ,  D_s \bar \gp - {\d \cH\over \d p}  \big( \bar \gp ,\bar p \big)\rA \\
&\leq  \lA \bar p,  D_s\bar \gp\rA - \int_0^t \cH \big( \bar \gp(s) , \bar p(s) \big)ds\, ,
\end{align*}
where the last term in the first inequality is equal to 0 thanks to  \eqref{eq: sol R}
and the fact that $p,\bar p$  are bounded.
The previous inequality implies that the supremum  $\widehat \cF$ is reached at $\bar p$:
%the critical points $p$ defined by $ \Delta p = \Delta \bar p$.
\begin{align}
\label{eq:  hat F bar p}
\forall t \in [0,T]\, , \quad \widehat \cF(t, \bar \gp) 
=  \widehat  \cF \big( 0, \bar \gp (0) \big) 
+  \lA \bar p,  D_s \bar \gp \rA - \int_0^t \cH \big( \bar \gp(s) , \bar p(s) \big)ds \, .
\end{align}

\medskip

We turn now to the analysis of $\cF (t, \bar \gp)$. 
By the identification of $\cI$ and $\widehat \cI$ in Proposition \ref{prop: I = hat I},
the variational problem \eqref{eq: legendre transform GD} can be rewritten, for all~$t \leq T$,
\begin{equation}
\label{eq: legendre transform GD bis}
\cF (t, \bar \gp) :=  
\sup_{ g \in \bbB_{\alpha} }\Big\{   - \lA \bar \varphi, D_s g \rA + \la \bar \varphi(t),  g(t) \ra  - \widehat \cI(t,g) \Big\} \, .
\end{equation}
 Let us first build a critical point $\bar g$ for this variational problem. 
Given~$\bar p$ satistfying~(\ref{eq: estimate bar p}) and~$ \bar \gp$ solving~(\ref{eq: sol R}), we define $\bar g$ as the solution of 
\begin{equation}
\label{eq: bar g}
D_s \bar g = D_s  \bar p  +  {\d \cH \over \d \varphi } ( \bar \gp , \bar p)
\quad  \text{with} \quad   
\bar g(t) = \bar p(t) \,. 
\end{equation}
 By assumption \eqref{eq: estimate bar p} on $\bar p$, we get 
$$
\big| D_s  \bar p \big| \leq (1+|v|) \|\bar p\|_{W^{1,\infty}}  \leq  (1+|v|)r
$$
and there holds
\begin{align*}
\left| {\d \cH \over \d \varphi } ( \bar \gp , \bar p) \right|
& = \left| \int \bar \gp (z_2) \big( \exp( \Delta  \bar p ) -1  \big) d\mu_z ( z_2, \omega) \right|\\
& \leq \left| \int  \bar \gp (z_2)  \big|  \Delta \bar p \big|  \;  \exp \big( \big| \Delta  \bar p \big| \big)  d\mu_z ( z_2, \omega) \right|\\
& \leq C C_0  r  \exp( 5 r 
)  {\beta}_0 ^{-\frac d2} \Big(|v| + \beta_0^{-\frac12} \Big) \,,
\end{align*}
where we used the weighted estimate \eqref{eq: norm strong solution}
on $\bar \varphi$ to control the divergence of the cross section. 
The constant~$C$ is universal and  depends only on the dimension.
Thus we deduce from \eqref{eq: bar g} that 
\begin{equation}
\label{eq: tuning r}
\big| D_s \bar g (s,x,v) \big| 
 \leq C C_0 r  \exp( 5 r )  {\beta}_0 ^{-\frac d2} \Big(|v|  +\beta_0^{-\frac12} \Big) + (1 + |v|)r\quad  \text{and}\quad\big| \bar g(t,x,v) \big|  \leq  r \,. 
\end{equation}
Given $r >0$ which quantifies the size of the observables in the large deviation principle, 
the parameter~$\alpha$ is then  chosen large enough by using the estimates \eqref{eq: tuning r} 
so that $\bar g$ belongs to $\bbB_{\alpha}$. Note that the larger~$ \alpha$ is chosen, the smaller~$T_{\alpha}= c\, e^{-\alpha} \beta_0^{(d+1)/2}/C_0$   will be, and hence also the time of validity of Theorem~{\rm\ref{thm: F = ha F}}.
%\begin{Rmk}
%\label{rmk: choice of alpha}
%According to~{\rm(\ref{eq: tuning r})} and the definition~{\rm(\ref{eq: fonc GD sup p 2})} of~$\bbB_{\alpha}$, and recalling that~$T_\alpha = c\, e^{-\alpha} \beta_0^{(d+1)/2}/C_0$, the parameter~$\alpha$ must satisfy, for some universal constant~$C$,
%$$
%C C_0^2 r  \exp( 5 r )  \Big(1+\beta_0  |v|^2  \Big) + C{\beta}_0 ^{\frac {d-1}2} \Big(\beta_0 +\beta_0  |v|^2  \Big) r \leq  e^{\alpha}  (\alpha  +\frac{\beta_0}8 |v|^2)\, .
%$$
%\end{Rmk}

\medskip
 By construction~$\bar \gp$ belongs to~${\mathcal R}_{r,T}$ and~$(\bar \gp, \bar p, \bar g)$ satisfy the Hamiltonian system
\eqref{hamiltonian-traj1}-\eqref{hamiltonian-traj2}   on~$[0,T]$, so  from 
Proposition \ref{action-lem}, the following holds
$$
\frac{\partial \widehat \cI}{\partial g(t)} (t,\bar g)= \bar \gp (t)  \,, 
\qquad \frac{\partial \widehat \cI}{\partial D  g} (t, \bar g) = - \bar \gp   \,.
$$ 
This implies that $\bar g$ is a critical point of
\begin{equation}
\label{functional}
 (g(t), D_s g)  \mapsto  - \lA \bar \varphi, D_s g \rA + \la \bar \varphi(t),  g(t) \ra  
- \widehat \cI(t,g)  \, .
\end{equation}
Since $\widehat \cI(t,g) = \cI(t,g) = \Lambda_{[0,t]} \big( e^{g(t) - \int_0^t D_sg}\big)$ is strictly convex with respect to $(g(t), D  g)$, the supremum in  \eqref{eq: legendre transform GD bis} is reached at $\bar g$.
Thus 
\begin{align}
\label{eq: minimum atteint F}
\cF (t, \bar \gp) & =   \la \bar \varphi(t), \bar g(t) \ra  - \lA \bar \varphi, D_s \bar g \rA 
  - \widehat \cI(t, \bar g) \\
& =   
 \la \bar \gp (t), \bar g(t) \ra  - \la f^0, (e^{\bar p(0)}-1) \ra -   \lA D_s  \bar p , \bar \gp\rA - \int_0^t \cH (\bar \gp (s) , \bar p(s)) ds  \,  , \nonumber
\end{align}
where $\widehat  \cI (t,\bar g)$ is replaced by its explicit representation \eqref{hatI-def}  in the second line.
As $\bar g(t) = \bar p(t)$ and~$\bar \gp (0)  = f^0 e^{\bar p(0)}$, an integration by parts leads to 
\begin{align*}
\cF (t, \bar \gp) =   
 \la \bar \gp (0), \bar p(0) \ra  + \la f^0  - \bar \gp (0) \ra +   \lA   \bar p , D_s \bar \gp\rA - \int_0^t \cH (\bar \gp (s) , \bar p(s)) ds   \, .
\end{align*}
As the initial large deviation functional is given by  
\begin{equation*}
\widehat \cF (0,\gp(0)) =  \Big \la \gp_0 \log \left( \frac{\gp_0}{f^0} \right)
-  \gp_0 +f^0 \Big\ra
\end{equation*}
and $\widehat \cF (t, \bar \gp)$ by \eqref{eq:  hat F bar p}, this shows that $\cF (t, \bar \gp) =  \widehat \cF (t, \bar \gp)$ on~$[0,T]$. The proof of Theorem \ref{thm: F = ha F} is complete, provided that we can construct solutions of the Hamiltonian equations to define $\hat \cI$, and prove the uniqueness of solutions to the Hamilton-Jacobi equation. 
\qed

\section{Symmetrization of the Hamiltonian system: proof of $\cI  = \widehat \cI$} \label{symplectic}
 
This section is devoted to the proof of Proposition {\rm\ref{prop: I = hat I}.

In order to prove the two missing statements, i.e. the local well-posedness of the Hamiltonian equations~\eqref{hamiltonian-traj1}-\eqref{hamiltonian-traj2}, and the uniqueness for the Hamilton-Jacobi equation~\eqref{HJ-eq}, the idea is to apply Theorem~\ref{nishidatheorem}, which requires to define  suitable  functional settings in which we have  loss continuity estimates of the type (\ref{L*-est original}). 

To do so, it will  be convenient to reparametrize the Hamiltonian variables  
 and instead of $p,\varphi$ to consider 
\begin{equation}
\label{new-variables} (\psi, \eta) := (\varphi e^{-p}, e^p)\,.
\end{equation}
In these new variables, the Hamiltonian \eqref{eq: Hamiltonian 2} is rewritten in a more symmetric form
\begin{align}
\label{eq: Hamiltonien bis}
\cH'(\psi, \eta) & := \frac12 \int \psi(z_1)  \psi(z_2) \big( \eta(z'_1) \eta(z'_2) - \eta(z_1) \eta(z_2) \big) 
\, d\mu(z_1, z_2, \omega)\\
& = - \frac14 \int \big( \psi(z_1')  \psi(z_2')  - \psi(z_1)  \psi(z_2)  \big) \big( \eta(z'_1) \eta(z'_2) - \eta(z_1) \eta(z_2) \big) \, d\mu(z_1, z_2, \omega). \nonumber
\end{align}

\subsection{Uniqueness for the Hamilton-Jacobi equation}
\label{sec: I = hat I}

Consistently we  characterize $g$ using the variables 
 $\gamma(s) := e^{g(s)}$ and $\phi (s) := D_s g(s)$ which are related by the continuity equation
\begin{equation}
\label{eq: continuiy equation}
\forall s \leq t\, , \qquad 
D_s \gamma(s) - \phi(s) \gamma(s) = 0\,.  
\end{equation}
The functional $\cI (t,g)$ becomes then
\begin{align}
\label{I-def dynamics new J}
 \cJ(t, \phi, \gamma  ) :=  \gL_{[0,t]} \left( \gamma  e^{ -\int_0^t \phi} \right)
\end{align}
and the Hamilton-Jacobi equation (\ref{HJ-eq}) can be rewritten in terms of the new Hamiltonian $\cH'$
%\begin{equation}
%\label{HJ-new}
%\d_t \cJ(t, \phi, \gamma(t) ) = \cH' \left({\d \cJ \over \d \gamma}, \gamma(t) \right)
%\end{equation}
\begin{equation}
\label{HJ-Appendix form}
 \cJ (t)   = \cJ (0) +   \int_0^t  F(\cJ(s)) \, ds \,  ,
\end{equation}
when~$\phi$ and~$\gamma(t)$ are related by~(\ref{eq: continuiy equation}) and where
$$
\begin{aligned}
F\big(\cJ(s,\phi, \gamma (s))\big ) & :=   \cH' \left({\d \cJ \over \d \gamma} (\phi,\gamma(s)) , \gamma(s) \right)\\
& =  \frac12  \int {\d \cJ\over \d \gamma} (\phi,\gamma(s))(z_1)  {\d   \cJ  \over \d \gamma }(\phi,\gamma(s) )(z_2)\Big(\gamma  (s,z'_1)  \gamma  (s,z'_2)\  - \gamma  (s,z_1) \gamma  (s,z_2) \Big)   d\mu (z_1, z_2, \omega)\, ,
\end{aligned}
$$
with initial condition \eqref{eq: initial condition} 
\begin{equation}
\label{eq: initial condition 2}
\cJ(0, 0, \gamma(0) ) =  \la f^0, (\gamma(0) -1 ) \ra  \,.
\end{equation}

%With these notations,
%\begin{equation}
%\label{J-def}
% \Lambda_{[0,t]} \left( \gamma e^{-\int_0^t \phi (s) ds}\right)=  \cJ(t, \Phi, \gamma)\,.
% \end{equation}

% We then define the functional space, analogous to the set $\bbB_{\alpha}$ introduced in \eqref{eq: space Bbis}:
% for all~$t \leq T_{\alpha }$,
%\begin{equation}\begin{aligned}
%\label{eq: ensemble B}
%\cB_{\alpha,t}  & := 
%\Big\{  ( \phi, \gamma) \in C^0([0,t ] \times \D;{\mathbb C}) \times C^0(\D;{\mathbb C}) \
%  \Big|  \,    |\gamma(x,v) | \leq  \exp \Big( \big(1 - \frac t{2T_{\alpha }}\big)( \alpha  +\frac{\beta_0}8 |v|^2)\Big) ,\\
%   & \, \qquad\qquad\qquad\qquad\qquad\qquad \qquad\qquad\qquad  \sup_{s \in [0,t]}   | \phi (s,x,v) |  \leq {1\over 2T_{\alpha }} (\alpha_r  +\frac{\beta_0}8|v|^2)
%\Big\}\,,
%\\    \mbox{and}\quad  \cB_{\alpha } & := \cB_{\alpha ,T_{\alpha }}   \, . \end{aligned}
%\end{equation}
Inspired by Appendix~\ref{CauchyKol}, we define the scale of function spaces
$$\begin{aligned}\label{defbalpha}
\cB_{\alpha,\beta,t} := 
\Big\{ ( \phi, \gamma) \in C^0([0,t] \times \D;{\mathbb C}) \times C^0(\D;{\mathbb C}) \
  :   \quad  
& |\gamma(x,v) | \leq  \exp \Big( \Big(1 - \frac t{2T_{\alpha }}\Big)( \alpha +\frac\beta8 |v|^2)\Big) ,\\
 &  \sup_{s \in [0,t]}   | \phi (s,x,v) |  \leq {1\over 2T_{\alpha }} (\alpha  +\frac\beta8 |v|^2)
\Big\}\, .
\end{aligned}
$$ 
Finally let us set
\begin{equation}
\label{eq: norm J}
\| \cJ (t)  \|_{\alpha,\beta} : = 
\sup_{(\phi,\gamma )\in {\mathcal B}_{\alpha, \beta,t} }  
\big| \cJ (t,\phi,\gamma)  \big| \, .
\end{equation}
%and
%$${\cN(\cJ) :=  \sup_{   \rho <1 \atop t < T(1-\rho)}
%  \| \cJ (t) \|_{ \alpha_r \rho  ,   \beta_0  \rho}  \Big(1- {t \over  T(1 - \rho )}\Big) }$$
%for some well chosen~$T \leq T_{\hat \alpha}$ and the norm 

\begin{Prop}
\label{statementuniqueness}
Let~$\alpha_0>0$ be given. There exists $T^{\rm\tiny HJ}_{\alpha_0 }\in (0, T_{\alpha_0 }]$ such that the Hamilton-Jacobi equation \eqref{HJ-Appendix form} has locally a unique  solution~$  \cJ $ in $[0,T^{\rm\tiny HJ}_{\alpha_0 }]$, in the class of functionals which satisfy:
\begin{itemize}
\item 
for  any $0\leq\alpha <\alpha' \leq \alpha_0$, $0\leq  \beta<\beta'\leq \beta_0$, $t \in [0,T_{\alpha_0 }]$
and $(\phi, \gamma) \in \cB_{\alpha,\beta,t}$
\begin{equation}
\label{loss-continuity-est}
 \Big\|  {\d \cJ (t,\phi,\gamma) \over \d \gamma }  \Big\|_{\cM\left((1+ |v|)    \exp \big(  \big(1-\frac t{2T_{\hat \alpha}}\big)  (  \alpha   +\frac\beta 8 |v|^2)\big) dx dv   \right)}   
\leq  
C \left( {1\over \alpha' - \alpha} +{1\over \beta' - \beta} \right)\| \cJ (t)  \|_{\alpha',\beta'} ;
\end{equation}
\item 
the derivative $\displaystyle{\d \cJ (t,\phi,\gamma) \over \d \gamma}$
is a continuous function   on~$\D$, and  there is a constant~$C$ such that for any $(\phi, \gamma) \in {\cB}_{r,T_{\alpha_0} }$,
\begin{equation}
\label{improved-est}
\forall t \leq T_{\alpha_0}\, , \qquad 
 \Big \| {\d \cJ (t,\phi,\gamma) \over \d \gamma} (1+ |v|) \exp ( \frac{\beta_0}  8 |v|^2) \Big  \|_{C^0(\D)}   
 \leq  C \,.
\end{equation}
\end{itemize}
\end{Prop}

\begin{proof}
According to Theorem~\ref{nishidatheorem}, there is a unique solution to~(\ref{HJ-Appendix form}) provided  that for all~$0\leq\alpha<\alpha'  \leq \alpha_0$, $0\leq  \beta<\beta'\leq \beta_0$
\begin{equation}
\label{loss-continuity-estF}
 \|F(\cJ(t) )-F(\cJ' (t))\|_{\alpha,\beta} \leq C \left( {1\over \alpha' - \alpha} +{1\over \beta' - \beta} \right)\| (\cJ - \cJ')(t)  \|_{\alpha',\beta'} \,   .
\end{equation}

It suffices to prove that~(\ref{loss-continuity-estF}) holds if~$\cJ$ satisfies~{\rm(\ref{loss-continuity-est})-(\ref{improved-est})}. Let us write
$$
\begin{aligned}
F(\cJ )-F(\cJ' ) =  \frac12  \int &{\d  (  \cJ -  \cJ '  ) \over \d \gamma} (s,\phi,\gamma(s))(z_1)  {\d  (\cJ+ \cJ ')  \over \d \gamma }(s,\phi,\gamma(s) )(z_2) \\
&  \times\Big(\gamma  (s,z'_1)  \gamma  (s,z'_2)\  - \gamma  (s,z_1) \gamma  (s,z_2) \Big)   d\mu (z_1, z_2, \omega)\, .
\end{aligned}
$$
If~$(\phi, \gamma) $ belongs to~$ \cB_{\alpha,\beta,t}$ then
 $$
 \forall s \leq t \, , \quad \big |\gamma (s,x,v)\big| \leq \exp \Big(
  \big(1-\frac s{2T_{  \alpha}}\big) \big(\alpha + \frac{\beta }8 |v|^2)
 \Big) \,,
 $$
so  we deduce that for any~$\alpha',\beta'$ with  $0\leq\beta<\beta'\leq \beta_0$, $0\leq\alpha<\alpha' \leq \alpha_0 $   
 $$ 
 \begin{aligned}
 \Big|  F(\cJ(s) )-F(\cJ' (s))  \Big| 
 &\leq C     \Big \|  {\d \big(  \cJ (s,\phi,\gamma)-  \cJ '(s,\phi,\gamma)\big)    \over \d \gamma} \Big \|_{\cM\left((1+ |v|)    \exp \big( \big(1-\frac s{2T_{\alpha_0}}\big)  (  \alpha   +\frac\beta 8 |v|^2)\big) dx dv   \right)}   \\
 & \qquad \qquad 
 \times \Big  \| {\d\big(  \cJ (s,\phi,\gamma)+  \cJ '(s,\phi,\gamma)\big)    \over \d \gamma} (1+ |v|) \exp ( \frac{\beta_0}  8 |v|^2)  \Big \|_{C^0(\D)} \\
 & \leq  C  \left( {1\over \alpha' - \alpha} +{1\over \beta' - \beta} \right) \|  \cJ(s)- \cJ'(s)\|_{\alpha',\beta'} 
  \end{aligned}
 $$
 where $C$ is a generic constant depending only on $\alpha_0, \beta_0$.
 Taking the supremum on all couples $(\phi,\gamma  )$ in~$ \cB_{  \alpha,  \beta, t }$, we  obtain that  
  $$\Big\|F(\cJ(s) )-F(\cJ'(s) ) \Big\|_{ \alpha, \beta}   \leq  C   \left( {1\over \alpha' - \alpha} +{1\over \beta' - \beta} \right) \Big\|  \cJ(s )- \cJ'(s )\Big\|_{\alpha',\beta'}\,.$$

% 
% % 
%We finally introduce the weight in time: we set~$\beta = \rho \beta_0$ and~$\alpha =  \rho  \alpha_r$, and we let~$\beta'$ and~$\alpha'$ depend on~$s$ in the following way: $\beta' 
%= \widetilde \rho(s)\beta_0$   and~$\alpha' =   \widetilde \rho(s)  \alpha_r$ with
%$$ \widetilde \rho(s):= \frac12 \Big( 1 +\rho - {s \over T^\star} \Big) $$
% with~$T$ to be chosen small enough. Then
% $$
% \widetilde \rho- \rho = \frac12 \Big( 1 -\rho - {s \over  T^\star} \Big) \quad \mbox{and} \quad 1- \widetilde \rho= \frac12 \Big( 1 -\rho + {s \over T^\star } \Big)\, , 
%$$and we get, for a constant~$C$  depending only on~$\alpha_r$ and~$\beta_0$,
%$$\begin{aligned}
%\Big\|\cJ (t)-\cJ '  (t)\Big\|_{\alpha, \beta}
% \Big(
% 1- {
% t \over  T^\star(1-\rho )} 
% \Big) & \leq  C \cN( \cJ - \cJ')   \Big(1- {t \over  T^\star(1-\rho )}\Big) \int_0^t   \left( {1\over \widetilde \rho - \rho  } \right) \left(1- {s \over T^\star (1- \widetilde \rho )} \right)^{-1} ds\\
%& \leq 2 C \cN( \cJ - \cJ')   \Big(1- {t \over  T^\star(1-\rho )}\Big) \int_0^t   \left( {1\over 1 - \rho - {s \over  T^\star  } } \right) \left( {1 -\rho + {s \over T^\star } \over 1 -\rho - {s \over  T^\star } } \right) ds\\
%& \leq  4C \cN( \cJ - \cJ')   \Big(1-\rho - {t \over  T^\star }\Big) \int_0^t  {ds\over \Big( 1 - \rho - {s \over  T ^\star }\Big)^2  } \\
%& \leq 4CT^\star   \cN(\cJ - \cJ') \, .
%\end{aligned} $$
%For $T^\star $ sufficiently small, we obtain that the constant~$4C T^\star$ is strictly less than 1, which implies finally that~$\cN(\cJ - \cJ') =0$.
Proposition~\ref{statementuniqueness} is proved.
\end{proof}

Having in mind to  use the  uniqueness criterion of Proposition \ref{statementuniqueness} to establish Proposition \ref{prop: I = hat I}, we now need to rewrite   $\cI$ and $\widehat \cI$ in the  new variables and to prove some regularity estimates.

\subsection{ Regularity of the limiting cumulant generating function $\cJ$}

\begin{Prop}
\label{prop: analyticity J} 
Let~$\alpha_0>0$ be fixed. For $t \leq T_{\alpha_0}$, the functional 
$ \cJ ( t, \phi, \gamma ) $ defined by {\rm(\ref{I-def dynamics new J})}  is an analytic function 
of  $\gamma$, on $ {\mathcal B}_{ \alpha_0,t} $. For any $\alpha' \in ]\alpha, \alpha_0] $, $\beta'\in]\beta, \beta_0] $ and all~$(\phi,\gamma) \in  {\mathcal B}_{  \alpha, \beta,t} $, 
the derivative~$\displaystyle {\d  \cJ ( t, \phi, \gamma) \over \d \gamma }$ 
satisfies the  loss continuity estimate {\rm(\ref{loss-continuity-est})}.
Moreover,  the derivative $\displaystyle{\d  \cJ ( t, \phi, \gamma )  \over \d \gamma}$
is a continuous function    on~$\D$
 satisfying the  estimate {\rm(\ref{improved-est})}.
 \end{Prop}

\begin{proof}
Thanks to~(\ref{J-derivative 2}) we find  that $\displaystyle{\d  \cJ ( t, \phi, \gamma )  \over \d \gamma }$ is a function on $\bbD$, 
for which we are going to establish properties \eqref{loss-continuity-est} and \eqref{improved-est}.

\noindent
{\it Step \rm{1}. Proof of  \eqref{loss-continuity-est}.}
Let~$(\phi,\gamma) $ be in~$ {\mathcal B}_{  \alpha, \beta,t} $  and let~$\Upsilon$ be   a continuous function on~$\mathbb D $ satisfying
$$
|\Upsilon(x,v)| \leq  (1+|v|)   \exp \Big( \big(1-\frac t{2T_{\alpha_0 }}\big) (  \alpha   +\frac\beta 8 |v|^2)\Big)  \, .
$$
It is easy to check that for a suitable choice of $\lambda>0$, the couple~$(\phi,\gamma +\lambda e^{i\theta} \Upsilon)$ belongs to~${\mathcal B}_{\alpha', \beta' ,t}$. Indeed it suffices to notice that
$$
\begin{aligned}
\Big|   \gamma + \lambda  e^{i\theta}\Upsilon \Big| 
&<    \big(1+ \lambda   (1+|v|)   \big)   \exp \Big( \big(1-\frac t{2T_{\alpha_0}}\big)  (  \alpha   +\frac\beta 8 |v|^2)\Big)\\
& \leq    \exp \Big(  \big(1-\frac t{2T_{\alpha_0  }}\big)  (  \alpha   +\frac\beta 8 |v|^2) +2\lambda +\frac\lambda2 |v|^2 \Big) \\
& \leq   \exp \Big(  \big(1-\frac t{2T_{\alpha_0  }}\big)  (  \alpha '  +\frac{\beta'}  8 |v|^2)\Big)\, , 
\end{aligned}
$$provided that $\lambda \leq \min \Big(\displaystyle \frac {\alpha'- \alpha } {  4}  ,  \frac{\beta' - \beta} {  4}  \Big)$.
Then by analyticity, choosing~$\lambda = \min   \Big(\displaystyle\frac{\alpha'- \alpha }  {  4} ,  \frac{\beta' - \beta} {  4}   \Big)$,  the derivative can be estimated  by  a contour integral
$$
\int_\bbD dz     {\d  \cJ ( t, \phi, \gamma )   \over \d \gamma } (z) \;  \Upsilon (z) 
% \Big \la {\d J \over \d H  } (H), \Upsilon \Big \ra  
= {1\over2\pi \lambda } \int _0^{2\pi}  \cJ \Big(t,  \phi, (\gamma +\lambda e^{i\theta} \Upsilon) \Big)  e^{-i\theta} d\theta \,,
$$
and we conclude   that for all~$(\phi,\gamma) $   in~$ {\mathcal B}_{  \alpha, \beta,t } $, 
$$
  \Big\|   {\d  \cJ ( t, \phi, \gamma )   \over \d \gamma }  \Big \|_{\cM\left((1+ |v|)     \exp \left( \big(1-\frac t{2T_{\alpha_0}}\big) (  \alpha   +\frac\beta 8 |v|^2)\right) \right) }       
 \leq  C\left( {1\over \alpha' - \alpha } +{1\over \beta' - \beta} \right) \| \cJ (t)  \|_{\alpha',\beta'}  \,.
$$
This completes \eqref{loss-continuity-est}.

\noindent
{\it Step \rm{2}. Proof of  \eqref{improved-est}.}
For the second estimate, we use the series expansion \eqref{J-derivative 2}.
The   measure~$\mu_{{\rm sing},\tilde T}$ is invariant under global translations, and since $\Upsilon$ depends only on one   variable in~$\D$,~\eqref{J-derivative 2} still makes sense
if $\exp ( - \frac{ \beta_0}8 |v|^2 )  \Upsilon $ is only a measure.
Up to changing the parameter of the weights, we get the result.

 Proposition~\ref{prop: analyticity J}  is proved.
\end{proof}

\subsection{ Definition and regularity of $\widehat \cJ$}

The same  change of variables  is used to define~$\widehat \cJ(t, \phi, \gamma(t) )$ which is the counterpart of  $\widehat  \cI (t,g)$ introduced in \eqref{hatI-def}~:
\begin{align}
\label{eq: fonc hat J sup psi eta}
\widehat \cJ(t, \phi,\gamma)
:=   &
 \la f^0 , ( \eta_t (0) -1)\ra + \lA D \eta_t , \psi_t \rA - \lA  \phi_t , \psi_t \, \eta_t \rA   
 +   \int_0^t \;  \cH'  \Big( \psi_t (s) ,  \eta_t (s)  \Big)  \, ds ,
\end{align}
 where $(\psi, \eta) = (\varphi e^{-p}, e^p)$.

In these new variables, the Hamiltonian equations (\ref{hamiltonian-traj1})-(\ref{hamiltonian-traj2}) 
on the time interval $[0,t]$ can be rewritten
\begin{equation}
\label{eq: Euler f}
\begin{aligned}
& D_s \psi_t  + \psi_t \,  \phi_t   = {\d \cH' \over \d \eta  } ( \psi_t, \eta_t), \quad \psi_t (0)   =f^0  ,\\
& D_s  \eta_t   - \eta_t \, \phi_t   =-  {\d \cH' \over \d \psi } ( \psi_t, \eta_t), \quad \eta_t (t)   = \gamma(t)\,.
\end{aligned}
\end{equation}
Note that the structure of this Hamiltonian system is more symmetric
than (\ref{hamiltonian-traj1})-(\ref{hamiltonian-traj2})
and it can be interpreted as  a system of modified Boltzmann equations.
Indeed \eqref{eq: Euler f} can be written
\begin{equation}
\label{eq: Euler f 2}
\begin{aligned}
& D_s \psi_t  = - \psi_t \, \phi_t
+ \int d\mu_{z_1}(z_2,\omega)\, 
  \eta_t (z_2) 
\Big( \psi_t(z_1') \psi_t(z_2')  -  \psi_t(z_1) \psi_t(z_2) \Big)
\quad  \text{with} \quad  
\psi_t (0) = f^0,\\
& D_s   \eta_t  =   \eta_t \, \phi_t  - \int d\mu_{z_1}(z_2,\omega)\, 
  \psi_t (z_2)  \Big( \eta_t (z_1') \eta_t (z_2') - \eta_t (z_1) \eta_t (z_2) \Big) 
\quad \text{with} \quad   
\eta_t (t) =  \gamma\, .
\end{aligned}
\end{equation}
In particular contrary to \eqref{hamiltonian-traj1}, the boundary conditions  in \eqref{eq: Euler f 2} are time independent.

We are now going to check that the  modified Hamiltonian equations \eqref{eq: Euler f 2} admit unique solutions. From this, we will deduce that $\widehat \cJ$ is well defined and satisfies the regularity assumptions of Proposition~\ref{statementuniqueness}.

\begin{Prop}
\label{Lem: definition hat I}
Let~$\alpha>0$ be fixed. There exists a time~$ T_{\alpha }^{\rm\tiny H'} \in (0,T_{\alpha }]$ such that for any~$(\phi,\gamma) $ in~$ \cB_{\alpha , \beta_0,T_\alpha}$ and $t $ in~$ [0,  T_{\alpha }^{\rm\tiny H'}]$,  there is a unique solution 
 $(\psi_t , \eta_t)$ to the system of   modified Hamiltonian equations \eqref{eq: Euler f 2} on~$[0,t]$ such that 
 for the norm introduced in \eqref{eq: espaces L infini beta}
\begin{equation}
\label{eq: bornes psi eta}
 \sup_{s \in [0,t] } \| \psi_t (s)\|_{L^\infty_{{\color{black}-3\beta_0/4}}} \leq   C,
\quad
\sup_{s \in [0,t] } \| \eta_t (s)\|_{L^\infty_{{ \color{black}\beta_0/2}}} \leq   C\,  .  
\end{equation}
  If $(\phi, \gamma)$ take real values and $\gamma>0$ 
then $(\psi_t, \eta_t)$ are  both positive functions.
For any~$t \in [0, T_{\alpha }^{\rm\tiny H'}]$, the functional $ \widehat \cJ ( t, \phi, \gamma)$ is well defined and depends analytically on $\gamma$. Furthermore, it satisfies estimates~{\rm(\ref{loss-continuity-est})} and
{\rm(\ref{improved-est})}.
\end{Prop}

\begin{proof} \ \\
{\it Step 1. Well-posedness of the system of  modified Hamiltonian equations \eqref{eq: Euler f 2}.}

This is once again  a consequence of the Cauchy-Kovalevskaya argument of Appendix~\ref{CauchyKol}. 
The proof is therefore postponed to the appendix \ref{EL-Cauchy}. Let us just point out here that to implement the strategy, it is more convenient to rewrite~\eqref{eq: Euler f 2} in a mild form, denoting~$S_s$ the transport operator in~$\D$:
\begin{equation}
\label{EL-mild}
\forall s \leq t, \qquad
\begin{aligned}
& \psi_t (s)  = S_s   f^0+\int_0^s S_{s-\sigma}  F_1 \big( \phi_t (\sigma), \eta_t (\sigma), \psi_t (\sigma) \big) d\sigma \,, \\
&  \eta_t (s)  =   S_{s- t } \gamma_t - \int_s^t S_{s-\sigma }  F_2 \big( \phi_t (\sigma), \eta_t (\sigma),\psi_t (\sigma) \big) d\sigma 
\,,
\end{aligned}
\end{equation}
with 
$$ 
\begin{aligned}
& F_1 ( \phi, \eta, \psi)  =  - \psi \, \phi+ \int   d\mu_{z_1}(z_2,\omega)\, 
\eta (z_2) 
\Big( \psi(z_1') \psi(z_2')  -  \psi(z_1) \psi(z_2) \Big)\,,\\
& F_2( \phi, \eta, \psi) =\eta  \, \phi -  \int  d\mu_{z_1}(z_2,\omega)\, 
 \psi (z_2)  \Big( \eta(z_1') \eta(z_2') -\eta(z_1) \eta(z_2) \Big)\,.
\end{aligned}
$$
The positivity of $(\psi_t, \eta_t)$ is proved by rewriting \eqref{eq: Euler f} in the form
\begin{align*}
& D_s \psi_t  +  \psi_t  \Big( \phi_t + K_1(\psi_t ,\eta_t) \Big) 
= \int d\mu_{z_1}(z_2,\omega)\,  \eta_t (z_2) \psi_t (z_1') \psi_t (z_2')  
\quad  \text{with} \quad  \psi_t(0) = f^0,\\
& D_s   \eta_t  + \eta_t \Big( -  \phi_t + K_2 (\psi_t, \eta_t) \Big)
= -  \int d\mu_{z_1}(z_2,\omega)\,  \psi_t (z_2)  \eta_t (z_1') \eta_t (z_2')  
\quad  \text{with} \quad    \eta_t (t) =  \gamma .  
\end{align*}
The first equation is a transport equation with a (nonlinear) damping term $\phi_t + K_1(\psi_t, \eta_t)$ and a source term which is nonnegative  (as long as $\psi_t, \eta_t$ are positive). It therefore preserves the positivity.
The second equation is a backward transport equation with a damping term $-  \phi_t + K_2 (\psi_t, \eta_t)$ and a source term which is nonpositive (as long as $\psi_t, \eta_t$ are positive). It also preserves the positivity.
The solution~$(\psi_t, \eta_t)$ obtained by iteration (using the fixed point argument) is therefore positive.

 \medskip

\noindent
{\it Step 2. Regularity estimates on $\widehat \cJ(t, \phi,\gamma)$.}\\
Since the solution $(\psi_t, \eta_t)$ to the Hamiltonian equations is obtained as a fixed point of a contracting (polynomial) map depending linearly on $\gamma$ (see (\ref{EL-mild})), it is straightforward to check that $(\psi_t, \eta_t)$ depends analytically on $\gamma$ (for instance using the iterated Duhamel series expansion).  Proceeding as in Proposition \ref{action-lem}, we can show 
$$
{\d \widehat \cJ(t, \phi,\gamma) \over \d \gamma} 
 =  \psi_t (t) \,.
$$
The estimates \eqref{eq: bornes psi eta} on $\psi_t$ lead directly to \eqref{improved-est}.
The inequality \eqref{loss-continuity-est} can be obtained by a contour estimate as in 
the derivation of Proposition \ref{prop: analyticity J}. Proposition \ref{Lem: definition hat I} is proved.
\end{proof}

 \bigskip

\subsection{Conclusion of the proof of Proposition {\rm\ref{prop: I = hat I}}}
By Proposition \ref{Lem: definition hat I}, the functional $\widehat \cJ$ is well defined on some time interval $[0, T_{\alpha }^{\rm\tiny H'}]$, so   $\widehat \cI$ is also well defined and the formal computations in Proposition~\ref{action-lem} are justified.
By implementing a proof similar to the one of Proposition \ref{action-lem},
$\widehat \cJ$ is a solution of the Hamilton-Jacobi equation \eqref{HJ-Appendix form} in $[0, T_{\alpha }^{\rm\tiny H'}]$
\begin{equation*}
\forall t \leq T_{\alpha }^{\rm\tiny H'}\, , \qquad 
\d_t \widehat \cJ\left(t, \phi, \gamma(t)\right ) = \cH' \left({\d \widehat \cJ \over \d \gamma}, \gamma (t)\right)\, .
\end{equation*}
The regularity assumptions of Proposition \ref{statementuniqueness} hold for $\cJ$ (see  
Proposition \ref{prop: analyticity J})  and for $\widehat \cJ$ (see Proposition \ref{Lem: definition hat I}), thus 
$\cJ$ and $\widehat \cJ$ coincide on $[0,T^\star_\alpha] \times  {\mathcal B}_{\alpha}$, up to requiring~$T^\star_\alpha \leq \min( T_{\alpha }^{\rm\tiny H'}, T_{\alpha }^{\rm\tiny HJ}) $.

  Given $g$, the functions $(\psi,\eta)$ are positive by Proposition \ref{Lem: definition hat I}, so that $\varphi = \psi \eta$ and $p = \log\eta$ are well defined.     Going back to the original variables, we conclude that $\cI$ and $\widehat \cI$ coincide on $[0,T^\star_\alpha ] \times \bbB_{\hat \alpha}$.

%%%%%%%%%%%%%%%%%%%%%%%%%%%%%%%%%%%%%%%%%%%%%%%%%%%%%%%%%%%%%%%%%%

\section[Large deviations]{The large deviation estimates}
\label{LD-sec}
\setcounter{equation}{0}

In this section, we fix $\alpha$ according to  (\ref{eq: tuning r}),  and $T$ as in Theorem \ref{thm: F = ha F}.  Recall that $\cM(\bbD)$ stands for the set of positive measures with finite mass on $\bbD$.
We are now going to prove the large deviation estimates of Theorem \ref{thmLDbis}
in terms of the functional $\cF$ given by the Legendre transform for $\gp \in D([0,T], \cM(\bbD))$
\begin{equation*}
%\label{eq: legendre transform GD}
\cF (T, \gp) :=  \sup_{ g \in \bbB_{\alpha}} \Big\{   - \lA \varphi, D g \rA + \la \varphi(T),  g(T) \ra  
-  \cI (T, g) \Big\}\,  .
\end{equation*}
The method of the proof is standard (see e.g. the textbook \cite{dembozeitouni} or \cite{DV}) as the difficult work has been achieved already in Theorems \ref{cumulant-thm1} and \ref{cumulant-thm2} to derive the convergence of the cumulant generating function  of the particle system to the limiting functional $\cI (t, g)$.
For the sake of completeness, we sketch the main steps of the proof.

We   first start by proving upper and  lower large deviation bounds in a topology weaker than the Skorokhod topology.
%To define this weak topology, we consider 
% $(h_i)_{i \geq 0} $  a countable family of continuous functions which is dense in $\bbB$. 
This weak topology on $D([0,T], \cM(\bbD))$ is
generated by open sets of the form below, for any $\nu \in D([0,T], \cM(\bbD))$ and for test functions $g$ in $\bbB_{\alpha}$ and $\gd >0$:
\begin{equation}
\label{eq: petit voisinage}
{\bf O}_{\gd,g} ( \nu) := 
\Big\{ \nu' \in D([0,T_{\alpha}], \cM(\bbD)) \, : \quad   \big| 
\big( \lA \nu' , D g \rA - \la \nu_{T} ',  g_{T} \ra \big)
- 
\big( \lA  \nu , D g \rA - \la \nu_{T},  g_T \ra  \big) \big|  < \gd/2
\Big \}    .
\end{equation} 
Then, in Section \ref{sec: Tightness},  the topology will be enhanced to the Skorokhod topology by a tightness argument.

\subsection{Upper bound}
\label{sec:  Upper bound LD}
We are going to prove the large deviation upper bound~(\ref{eq: large deviations upper bound})
for any compact set ${\bf F}$ of $D([0,T], \cM(\bbD))$ in the weak topology 
\begin{align}
\label{eq: large deviations weak upper bound}
\limsup_{\mu_\eps \to \infty} \frac{1}{\mu_\eps} 
\log \bbP_\eps \left( \pi^\gep\in {\bf F} \right) \leq - \inf_{\gp \in \bf F}  \cF  (T,\gp)\,.
\end{align}
General closed sets will be considered in Section \ref{sec: Tightness}.

We are  first going to show that for any density $\gp$ in $\bf F$ and $\gd >0$,
there exists $g \in \bbB_{\hat \alpha}$ and an open set~${\bf O}_{\gd,g} ( \gp)$ of $\gp$ such that 
\begin{align}
\label{eq: large deviations upper bound local}
\limsup_{\mu_\eps \to \infty} \frac{1}{\mu_\eps} 
\log \bbP_\eps \left( \pi^\gep \in {\bf O}_{\gd,g} ( \gp) \right) \leq -  \cF (T,\gp) + \gd\,.
\end{align}
%with 
%$$
%\hat \cF_\gd(\gp) = \hat \cF(\gp) - \gd.
%% \inf \left\{ \hat \cF(\gp) - \gd, \frac{1}{\gd} \right\}.
%$$

Then by compactness, for any $\gd>0$, a finite covering of ${\bf F} \subset \cup_{i \leq K} {\bf O}_{\gd,g_i} ( \gp_i)$ can be extracted so that 
\begin{align*}
\limsup_{\mu_\eps \to \infty} \frac{1}{\mu_\eps} 
\log \bbP_\eps \left( \pi^\gep \in {\bf F} \right) \leq - \inf_{i \leq K} \cF(T,\gp_i ) + \gd
\leq - \inf_{\gp \in  {\bf F}}  \cF  (T,\gp) + \gd\,.
\end{align*}
Letting $\gd \to 0$, we recover the upper bound \eqref{eq: large deviations weak upper bound}.

We turn now to the derivation of \eqref{eq: large deviations upper bound local}.
For any density $\gp$ in $\bf F$, we know from \eqref{eq: legendre transform GD} that there exists $g \in \bbB_{\alpha}$ 
such that 
\begin{align*}
 \cF(T,\gp) 
 \leq
 - \lA \varphi, D g \rA + \la \varphi(T),  g(T) \ra -  \cI (T, g) + \gd/2\,.
\end{align*}
This leads to the upper bound
\begin{align*}
\bbP_\eps \left( \pi^\gep \in {\bf O}_{\gd,g} ( \gp) \right) 
& \leq 
\exp \Big(   \mu_\eps \frac{\gd}{2} + \mu_\eps \lA  \gp , D g \rA - \mu_\eps \la \varphi(T),  g(T) \ra   \Big) \\
& \qquad \qquad \times  \bbE_\eps \left( \exp \Big(  - \mu_\eps \lA \pi^\gep , D g \rA + \mu_\eps \la \pi^\gep_{T} ,  g(T) \ra  \Big)  \right)\\
& \leq 
\exp \Big(   \mu_\eps \frac{\gd}{2} + \mu_\eps  \lA  \gp , D g \rA - \mu_\eps \la \varphi(T),  g(T) \ra   + \mu_\eps \; \cI^\eps (T, g) \Big) \,,
\end{align*}
with
$$
 \cI^\eps (t, g) :=  \gL^\eps_{[0,t]} \big( e^{g-\int_0^t Dg}  \big) \, .
$$
Passing to the limit thanks to Theorem \ref{cumulant-thm2}, 
this completes \eqref{eq: large deviations upper bound local}
\begin{align*}
\limsup_{\mu_\eps \to \infty} \frac{1}{\mu_\eps} 
\log \bbP_\eps \Big( \pi^\gep \in{\bf O}_{\gd,g} ( \gp) \Big) 
\leq  
\cI (T, g) +  \lA  \gp , D g \rA - \la \varphi(T),  g(T) \ra  + \gd /2
\leq -   \cF  (T ,\gp) + \gd\,.
\end{align*}

\begin{Rmk}
 Note that the proof of the upper bound holds actually up to time $T_\alpha =  c e^{-\alpha } \beta^{\frac{d+1}2}_0/C_0$, if the supremum in~{\rm(\ref{eq: legendre transform GD})} is taken over the functions~$g$ satisfying the assumptions 
$$    \sup_{t \in [0,T_{\alpha} ]}   | g(t,z)| \leq  \frac12( \alpha  +\frac{\beta_0}8 |v|^2) ,\quad
 \sup_{t \in [0,T_{\alpha} ]}   | D_t g (t,z) |  \leq {1\over 2T_{\alpha}} (\alpha   +\frac{\beta_0}8 |v|^2)
 \, .  $$
 The restriction to~$T$   will appear in the proof of the lower bound when using the fact that the supremum in \eqref{eq: legendre transform GD} is reached for some $g \in \bbB_{ \alpha}$.
 \end{Rmk}

\subsection{Lower bound}
\label{sec:  Lower bound LD}

We are going to prove the large deviation lower bound \eqref{eq: large deviations lower bound}
for any open set ${\bf O}$ in the weak topology
\begin{align}
%\label{eq: large deviations lower bound}
\liminf_{\mu_\eps \to \infty} \frac{1}{\mu_\eps} 
\log \bbP_\eps \left( \pi^\gep \in {\bf O} \right) \geq - \inf_{\gp \in{ \bf O} \cap \cR_{r,T}} \cF  (T,\gp)\,,
\end{align}
where the restricted set $\cR_{r,T}$ of trajectories was defined in \eqref{eq: set R 2} (see also Theorem \ref{thmLD}).

Contrary to the proof of the upper bound which was a direct consequence  of the convergence to $\cI$
of the cumulant generating function (Theorem \ref{cumulant-thm2}), the
derivation of the lower bound follows from the G\"artner-Ellis method \cite{dembozeitouni} and it requires an additional regularity assumption on $\cF$.
For this,  we consider observables $\gp$ such that the supremum in \eqref{eq: legendre transform GD} is reached for some $g \in \bbB_{\hat \alpha}$   
\begin{align}
\label{eq: minimum principe variationnel}
\cF (T,\gp) = \la \varphi(T) ,  g(T) \ra  -  \lA  \gp , D g \rA -  \cI (T, g)\, .
\end{align}
It was shown in \eqref{eq: minimum atteint F} that 
identity \eqref{eq: minimum principe variationnel}  is valid  for any $\gp$ in $\cR_{r,T}$.
Even though \eqref{eq: minimum principe variationnel} should be valid for a larger class of functions, 
we restrict to functions $\gp$  in ${\bf O} \cap \cR_{r,T}$ for simplicity.

\medskip

Let us fix $\gp \in {\bf O} \cap \cR_{r,T}$ and denote by $g$ the associated test function as in \eqref{eq: minimum principe variationnel}.
There exists a collection of test functions 
$g^{(1)}, \dots, g^{(\ell)}$ in $\bbB_{\hat \alpha}$ such that the following  open neighborhood  of  $\gp$ 
\begin{equation}
\label{eq: petit voisinage 2}
\begin{aligned}
{\bf O}_{\gd,\{ g^{(i)}\} } ( \gp) &:= 
\Big\{ \nu \in D([0,T], \cM(\bbD))\, :  \ 
\forall i \leq \ell\, , \\
&\qquad\qquad \Big| 
 \lA \nu , D  g^{(i)} \rA - \la \nu(T) , g^{(i)}(T) \ra  
- 
\big( \lA  \gp , D  g^{(i)} \rA - \la \varphi(T),   g^{(i)}(T) \ra  \big) \Big|  < \gd
\Big \}
\end{aligned}
\end{equation}
is included in $\bf O$ for any $\gd>0$ small enough. We impose also that $g$ is one of the test functions~$g^{(1)}, \dots, g^{(\ell)}$.
To complete the lower bound 
$$ \liminf_{\mu_\eps \to \infty} \frac{1}{\mu_\eps} 
\log \bbP_\eps \left( \pi^\gep \in {\bf O} \right)  \geq -  \cF(T,\gp)\,,$$
it is enough to show that \begin{align}
\label{eq: large deviations lower bound local}
\liminf_{\gd \to 0} \liminf_{\mu_\eps \to \infty} \frac{1}{\mu_\eps} 
\log \bbP_\eps \left( \pi^\gep \in {\bf O}_{\gd,\{ g^{(i)}\} } ( \gp) \right) \geq -  \cF(T,\gp)\,.
\end{align}

We start by tilting the measure
\begin{align*}
\bbP_\eps \left(  {\bf O}_{\gd,\{ g^{(i)}\} }( \gp) \right) & \geq 
\exp \Big( - \gd \mu_\eps + \mu_\eps  \lA  \gp , D g \rA - \mu_\eps \la \varphi(T),  g(T) \ra  \Big) \\
& \qquad \qquad \times \bbE_\eps \left( 
\exp \Big(  - \mu_\eps \lA \pi^\gep , D g \rA + \mu_\eps \la \pi^\gep_T ,  g(T) \ra  \Big)  \; \indc_{ {\bf O}_{\gd,\{ g^{(i)}\} } ( \gp) } 
\right)\\
& \geq \exp \big( - \gd \mu_\eps + \mu_\eps \cI^\eps (T,g) 
+\mu_\eps  \lA  \gp , D g \rA - \mu_\eps \la \varphi(T),  g(T) \ra  \big) \; 
\bbE_{\eps,g} \left(  \indc_{ {\bf O}_{\gd,\{ g^{(i)}\} } ( \gp) } \right),
\end{align*}
where we defined the tilted measure for any function $\Psi$ on the particle trajectories as
\begin{align*}
\bbE_{\eps,g} \left(  \Psi( \pi^\gep ) \right)
:= \exp \left( - \mu_\eps \cI^\eps (T,g) \right)
\bbE_\eps \left( \exp \Big(  - \mu_\eps \lA \pi^\gep , D g \rA + \mu_\eps \la \pi^\gep_T ,  g(T) \ra  \Big)  \; \Psi( \pi^\gep ) \right).
\end{align*}
If we can  show that the trajectory $\gp$ is typical under the tilted measure
\begin{align}
\label{eq: large deviations lower bound concentration}
\forall \gd >0\, , \qquad 
\lim_{\mu_\eps \to \infty} 
 \bbP_{\eps,g} \left( \pi^\gep \in {\bf O}_{\gd,\{ g^{(i)}\} } ( \gp) \right) =1 \, ,
\end{align}
this will complete the proof of \eqref{eq: large deviations lower bound local}.

\bigskip

Let $\tilde g$ be one of the functions $g^{(1)}, \dots, g^{(\ell)}$ used to define the weak neighborhood 
${\bf O}_{\gd,\{ g^{(i)}\} } ( \gp)$.
Choose~$u \in \bbC$ in a neigborhood of 0 so that the function below is analytic 
\begin{align*}
u \in \bbC \mapsto \cI  (T, u \tilde g + g) 
= 
\lim_{\mu_\eps \to \infty}   \cI^\eps  (T, u \tilde g + g)\, . 
%= 
%\lim_{\mu_\eps \to \infty} \frac{1}{\mu_\eps} \log 
%\bbE_\eps \left(  \exp \big( \mu_\eps \lA \pi^\gep ,(u \psi + h)  \rA  \big) \right).
\end{align*} 
As a consequence the derivative and the  limit  as $\mu_\eps \to \infty$ commute, so that 
taking the derivative at~$u=0$, we get
\begin{align*}
- \LA \frac{\d \cI}{\d D g} (T, g), D \tilde g \RA
+ \La \frac{\d \cI}{\d g(T)} (T, g), \tilde g(T)   \Ra   = 
\lim_{\mu_\eps \to \infty} 
\bbE_{\eps,g} \Big(   -   \lA \pi^\gep , D \tilde  g \rA +   \la \pi^\gep_T ,  \tilde g(T) \ra    \Big)\, .
\end{align*}

Note that 
in the above equation, the functional derivative is taken over both coordinates $Dg, g(T)$
of the functional $\cI (T, g)$.
%\begin{equation*}
%\frac{\d \cI_{[0,T_{H}]}}{\d g} (g)  :=
%{\d \cI_{[0,T_{H}]} (g,G) \over \d g} \Big|_{G =g_t} + {\d \cI_{[0,T_{H}]} (g,G) \over \d G} \Big|_{G =g_t}
%\end{equation*}
%and differs from the one used in (\ref{eq: derivative I}).
As the supremum in \eqref{eq: legendre transform GD} is reached at $g$, we deduce from 
\eqref{eq: minimum principe variationnel} that  
\begin{align}
\label{eq: differentielle Dg, g}
- \LA \frac{\d \cI }{\d D g} (T, g), D \tilde g \RA
+ \La \frac{\d \cI}{\d g(T)} (T, g), \tilde g(T)   \Ra 
%\lA \frac{\d \cI (T_{H})}{\d g} ( g), \tilde g \rA 
= 
 \la \varphi(T) ,  \tilde g(T) \ra -   \lA \varphi , D \tilde  g \rA \,    .
 %=  \lA D \varphi ,  \tilde  g \rA  -    \la \varphi_0 ,  \tilde g_0 \ra .
\end{align} 
This allows us to characterize the mean under the tilted measure
\begin{align}
\label{eq: control mean}
\lim_{\mu_\eps \to \infty} 
\bbE_{\eps,g} \left(    \la \pi^\gep_T  ,  \tilde g(T) \ra  -   \lA \pi^\gep , D \tilde  g \rA \right) 
=   \la \varphi(T) ,  \tilde g(T) \ra -   \lA \varphi , D \tilde  g \rA \,    .
\end{align} 
Taking twice the derivative, we obtain
\begin{align*}
\lim_{\mu_\eps \to \infty} \mu_\eps  
\bbE_{\eps,g} \left(  \left[ 
\Big(   \la \pi^\gep_T ,  \tilde g(T) \ra -   \lA \pi^\gep , D \tilde  g \rA   \Big)
- \bbE_{\eps,g} \Big(  \la \pi^\gep(T) ,  \tilde g(T) \ra   -   \lA \pi^\gep , D \tilde  g \rA  \Big) \right]^2 \right) < \infty\, .
\end{align*} 
Combined with \eqref{eq: control mean}, this implies that the empirical measure concentrates to 
$\gp$ in a weak sense
\begin{align*}
\lim_{\mu_\eps \to \infty}  \bbE_{\eps,g} \left(  \left[  
\Big(  \la \pi^\gep_T ,  \tilde g(T)  \ra -   \lA \pi^\gep , D \tilde  g \rA   \Big)
- \Big(  \la \varphi(T) ,  \tilde g(T) \ra  -   \lA \varphi , D \tilde  g \rA   \Big)\right]^2 \right) = 0\, .
\end{align*} 
In particular, this holds for any test functions 
$g^{(1)}, \dots, g^{(\ell)}$ defining the neighborhood  ${\bf O}_{\gd,\{ g^{(i)}\} }( \gp)$ in~\eqref{eq: petit voisinage 2}.
This completes \eqref{eq: large deviations lower bound concentration}.

\subsection{Tightness}
\label{sec: Tightness}

In this section, we are going to prove a tightness property in the Skorokhod topology which will enhance the large deviations 
proven  so far in a coarser topology (see Corollary~4.2.6 of~\cite{dembozeitouni}).

Let $(h_j)_{j \geq 0 } $ denote the basis of Fourier-Hermite functions (as in \eqref{eq: sobolev norm hypotheses}).
%, which is dense in  $W^{1, \infty} (\bbD ,[-1,1])$,  and
% such that  $\| v \cdot \nabla_x h_j \|_\infty \leq  j_1(1+ j_2)^\frac12$ for all $j = (j_1, j_2)$.
%\begin{equation*}
%B =  \{ g \in C^0(\R^{2d}) \,/ \  \| g\|_\infty \leq  r\}\,.
%\end{equation*}
We define a distance on the set of measures $\cM(\bbD)$ by
\begin{equation}
\label{eq: distance}
d(\mu,\nu) := \sum_j 2^{-j}  \left| \int dz  \;  h_j (z) \big( d \mu(z) -  d \nu(z) \big) \right| \;.
\end{equation}
\begin{Prop}
\label{prop: Tightness}
The norm of the empirical measure is concentrated in compact sets  
\begin{align}
\label{eq: tighness norm GD}
\lim_{A \to \infty} \lim_{\mu_\eps \to \infty} 
 \frac{1}{\mu_\eps} \log 
\bbP_\varepsilon \Big( \sup_{ t \in [0, T_0 ]} d(\pi_t^\eps ,0) \geq A \Big) = - \infty 
\end{align}
and the modulus of continuity is controlled by 
\begin{align}
\label{eq: tension GD}
\forall \gd'>0, \qquad \lim_{\gd \to 0} \lim_{\mu_\eps \to \infty} \frac{1}{\mu_\eps} \log
\bbP_\eps \left( \sup_{\substack{|t -s| \leq \gd \\ t,s \in  [0, T_0 ]}} d(\pi_t^\eps ,\pi_s^\eps) > \gd' \right) = - \infty\,  . 
\end{align}
Thus the sequence of measures  $(\pi^\eps_t)$ is exponentially tight.
\end{Prop}
Before proving Proposition \ref{prop: Tightness}, let us first show that it implies 
 large deviation estimates in the Skorokhod space of trajectories  $D([0,T ], \cM(\bbD))$
(for a definition see Section 12 in \cite{Billingsley}).
First of all notice that the upper bound \eqref{eq: large deviations upper bound} holds for closed sets $\bf F$ and not only compact sets as the sequence of measures $(\bbP_\eps)$ is tight and  the closed sets for the Skorokhod topology are also closed for the weak topology.

We consider now an open set ${\bf O}$ for the strong topology and $\gp$  a trajectory in ${\bf O} \cap \cR_{r,T}$, recalling that~$\cR_{r,T}$ is defined in~\eqref{eq: set R 2}.
We would like to apply the same proof as 
in Section \ref{sec:  Lower bound LD} and to  reduce  the estimates to sample paths in a weak open set of the form \eqref{eq: petit voisinage 2}.
We proceed in several steps. First note that there exists $\gd>0$ such that 
$$
\left \{ \nu\,  : \quad   \sup_{t \leq T } d(\nu_t,\gp_t) <  2 \gd \right \} \subset {\bf O}\, .
$$ 
Since $\gp$ belongs to $\cR_{r,T}$, the density $\gp$ is continuous  in time. 
Choosing a time step $\gga>0$ small enough, we can restrict to computing the distance at discrete times 
$$
\left\{ \nu \,  : \quad   \sup_{i \in \bbN \atop 
i \gamma \leq T  } d(\nu_{i \gga},\gp_{i \gga}) <   \gd  \right \} 
\bigcap 
\left \{\nu \,  : \quad  \sup_{|t -s| \leq \gga} d(\nu_t ,\nu_s) <\gd  \right \}
\subset {\bf O}\, .
$$ 
Since $\gp$ is continuous in time and we consider only $T /\gga$ times, the first set above can be approximated by a set of 
the form ${\bf O}_\gd( \gp)$ as in \eqref{eq: petit voisinage 2}.
As a consequence we have shown that there is an open set ${\bf O}_\gd( \gp)$ such that 
\begin{align*}
%\label{eq: large deviations lower bound local}
\bbP_\eps \left( \pi^\gep \in {\bf O} \right) 
&\geq 
\bbP_\eps \left( \pi^\gep \in {\bf O}_\gd( \gp)  \bigcap 
\left \{ \sup_{|t -s| \leq \gga} d(\pi_t^\eps ,\pi_s^\eps) < \gd  \right \} \right) \\
&\geq 
\bbP_\eps \left( \pi^\gep \in {\bf O}_\gd( \gp)   \right) 
-
\bbP_\eps \left( \left \{ \sup_{|t -s| \leq \gga} d(\pi_t^\eps ,\pi_s^\eps) > \gd  \right \} \right) \, .
\end{align*}
By Proposition \ref{prop: Tightness} the last term can be made arbitrarily small 
for $\gga$ small. Thus the proof of the lower bound reduces now to the one of
weak open sets as in Section \ref{sec:  Lower bound LD}.

\begin{proof}[Proof of Proposition {\rm\ref{prop: Tightness}}]
To prove \eqref{eq: tighness norm GD}, let us first note that the test functions used for defining the distance in \eqref{eq: distance} are uniformly bounded, thus the distance is bounded in terms of the  total number $\cN$ of particles 
$$
d(\pi_t^\eps ,0) \leq C \; \frac{\cN}{\mu_\eps}\, \cdotp$$
As the number of particles is fixed only by the initial distribution, it is simple to obtain 
the exponential decay claimed in \eqref{eq: tighness norm GD}
\begin{align}
\label{eq: tighness norm GD bis}
\bbP_\varepsilon \Big( \sup_{ t \in [0,  T_0]} d(\pi_t^\eps ,0) \geq A \Big) 
\leq \bbP_\varepsilon \Big( \cN \geq A \frac{\mu_\eps}{C} \Big) 
\leq  c_1 \, \exp \Big( - c_2 \mu_\eps A \Big) \, .
\end{align}

%\begin{align}
%
%\lim_{A \to \infty} \lim_{\mu_\eps \to \infty} 
% \frac{1}{\mu_\eps} \log 
%\bbP_\varepsilon \Big( \sup_{ t \in [0,T_{H}]} d(\pi_t^\eps ,0) \geq A \Big) = - \infty 
%\end{align}

By the inequality \eqref{eq: tighness norm GD bis} and the boundedness of the  test functions used  in \eqref{eq: distance}, it is enough  to consider a finite number of test functions. 
Indeed, for any $\gd'$ there is $K= K(\gd')$ such that 
\begin{equation*}
d(\mu,\nu) > \gd' \quad \Rightarrow \quad  
\sum_{|j| \leq K} 2^{-j}  \left| \int dz  \;  h_j (z) \big( d \mu(z) -  d \nu(z) \big) \right| > \frac{\gd'}{2} \, \cdotp
\end{equation*}
By the union bound, we can then reduce \eqref{eq: tension GD} to controlling a single test function $h$
\begin{align}
\label{eq: tension GD h}
\forall \gd'>0\, , \qquad \lim_{\gd \to 0} \lim_{\eps \to 0} \frac{1}{\mu_\eps} \log
\bbP_\eps \left( \sup_{|t -s| \leq \gd} \big| \la \pi_t^\eps ,h \ra - \la \pi_s^\eps ,h \ra \big| > \gd' \right) = - \infty\, ,
\end{align}
where $t,s$ are restricted to $[0,  T]$.
Next, we localize 
the constraint on the time interval $[0,T]$ to smaller time intervals
\begin{align}
\label{eq: petits intervalles}
\bbP_\eps \left( \sup_{|t -s| \leq \gd} \big| \la \pi_t^\eps ,h \ra - \la \pi_s^\eps ,h \ra \big| > \gd' \right) 
& \leq \sum_{i = 2}^{T / \gd} 
\bbP_\eps \left( \sup_{t,s \in [ (i-2) \gd, i \gd]} \big| \la \pi_t^\eps ,h \ra - \la \pi_s^\eps ,h \ra \big| > \gd' \right)\, .
\end{align}

By assumption \eqref{lipschitz}, the initial density $f^0$ is bounded, up to a multiplicative constant $C_0(2\pi /\beta_0) ^{d/2}$  by the Maxwellian $M_{\beta_0}$ (uniformly distributed in $x$).
By modifying the weights $W^{\eps 0}_{N}$ in \eqref{eq: initial measure}, 
we deduce that the probability of any event ${\mathcal A}$ under $\bbP_\eps$ can be bounded from above in terms of the probability $\tilde \bbP_\eps$ with initial density $M_{\beta_0}$
(its expectation is denoted by $\tilde \bbE_\eps$)
$$
\bbP_\eps ( {\mathcal A} ) 
\leq \frac{\tilde \cZ^ \eps}{\cZ^ \eps} \tilde \bbE_\eps ( C^\cN \; 1_{\mathcal A} )
\leq \frac{\tilde \cZ^ \eps}{\cZ^ \eps} \tilde \bbE_\eps ( C^{2 \cN} )^\frac{1}{2}
\;  \tilde \bbE_\eps ( 1_{\mathcal A} )^\frac{1}{2}
\leq \exp ( C \mu_\eps) \; \tilde \bbP_\eps ( {\mathcal A} )^\frac{1}{2}
\,,
$$
for some constant $C$ and $\tilde \cZ^ \eps$ stands for the partition function of this new density.
Using the fact  that the probability $\tilde \bbP_\eps$ is time invariant, we 
can reduce the estimate of the events in \eqref{eq: petits intervalles} to a single time interval.
Thus \eqref{eq: tension GD h} will follow if one can show that 
\begin{align}
\label{eq: tension GD tilde h}
\forall \gd'>0\, , \qquad \lim_{\gd \to 0} \lim_{\eps \to 0} \frac{1}{\mu_\eps} \log
\tilde \bbP_\eps \left( \sup_{t,s \in [ 0, 2 \gd]} \big| \la \pi_t^\eps ,h \ra - \la \pi_s^\eps ,h \ra \big| > \gd' \right) = - \infty\,.
\end{align}
 
By the Markov inequality and using the notation $L_\gd = \log | \log \gd|$, we get  
\begin{align}
\label{eq: tension GD delta'}
\tilde \bbP_\eps \left( \sup_{t,s \in [ 0, 2 \gd]} \big| \la \pi_t^\eps ,h \ra - \la \pi_s^\eps ,h \ra \big| > \gd' \right)
%\bbP_\eps \Big( \sup_{t,s \in [0,2\gd]} \big| \la \pi_t^\eps ,h \ra - \la \pi_s^\eps ,h \ra \big| > \gd' \Big) 
\leq e^{  -  \gd' \, L_\gd \, \mu_\eps}
\tilde  \bbE_\eps \Big(  \exp \Big( \sup_{t,s \in [ 0, 2 \gd]} \; L_\gd \,
\Big| \sum_{i =1}^\cN  h \big( {\bf z}^\e_i(t) \big) - h \big( {\bf z}^\e_i(s) \big)  \Big|  \Big) \Big)\\
\leq e^{  - \gd' \, L_\gd \, \mu_\eps}
\tilde  \bbE_\eps \Big(  
\exp \Big(  \sum_{i =1}^\cN \sup_{t,s \in [ 0, 2 \gd]} \; L_\gd \,
\big| h \big( {\bf z}^\e_i(t) \big) - h \big( {\bf z}^\e_i(s) \big)  \big|  
\Big) \Big)\, .
\nonumber
\end{align}
The last inequality is very crude, but  it is enough for the large deviation asymptotics and it allows us to reduce to a sum of functions
depending only on the trajectory of each particle via
$$ \tilde h \big( z( [0,2 \delta] ) \big)  := \sup_{t,s \in [ 0, 2 \gd]} \; L_\gd \, \big| h \big( z(t) \big) - h \big( z(s) \big)  \big|\, .$$
Thanks to Proposition \ref{prop: exponential cumulants}, the last expectation 
in \eqref{eq: tension GD delta'} can be rewritten in terms of the 
cumulants
\begin{equation}
\begin{aligned}
\label{eq: exponential cumulants trajectory bis}
\frac{1}{\mu_\eps}  \log \tilde  \bbE_\eps \left(  
\exp \Big(  \sum_{i =1}^\cN  \tilde h \big( {\bf z}^\e_i( [0,2 \delta] ) \big)   
\Big) \right)
 =  \sum_{ n=1}^\infty   \frac{1}{n !}
\Big |  \tilde f_{n,[0, 2 \delta]}^\eps \Big( \big(  \exp ( \tilde h  ) - 1 \big)^{\otimes n} \Big)\Big | \, ,
\end{aligned}
\end{equation}
where $\tilde f_n^\eps$ stands for the dynamical cumulant under the new distribution.

For $n \geq 2$, the statement 1  of Theorem \ref{cumulant-thm1*} page~\pageref{cumulant-thm1*} can be applied 
% (see page \pageref{cumulant-thm1*} below):
\begin{equation*}\Big |  \tilde f_{n,[0, 2 \delta]}^\eps \Big( \big(  \exp ( \tilde h  ) - 1 \big)^{\otimes n} \Big)\Big | \leq n!\big(C(2 \delta+\eps) \big ) ^{n-1}  \,
| \log \delta | ^{ 2n \| h\|_\infty} \,,
\end{equation*}
with $L_\gd = \log | \log \gd|$.
The term $n = 1$  is controlled thanks to the statement 3 of Theorem \ref{cumulant-thm1*} 
% (see page \pageref{cumulant-thm1*} below):
\begin{equation*}
\Big |  \tilde f_{1,[0, 2 \delta]}^\eps  \big(  \exp ( \tilde h  ) - 1 \big) \Big | 
\leq \delta 
  \left(\| \nabla h\|_\infty L_\gd + 1 \right) e^{ L_\gd \| h\|_\infty} 
\leq \delta 
  \left(\| v\cdot \nabla_x  h\|_\infty L_\gd + 1 \right)  |\log \delta|^{  \| h\|_\infty}\,.
\end{equation*}
Thus \eqref{eq: exponential cumulants trajectory bis} converges to 0 as $\eps \to 0$, then $\gd$ tends to 0.
Furthermore $L_\gd$ diverges to $\infty$ as $\gd$ vanishes,  one deduces from \eqref{eq: tension GD delta'} that 
\eqref{eq: tension GD tilde h} holds  for any $\gd' >0$. 
This completes the proof of \eqref{eq: tension GD h} and therefore of 
Proposition~\ref{prop: Tightness}.
\end{proof}

\section{Proof of the large deviation theorem}
\label{sec: proof of thm 3}

 Theorem \ref{thmLD} is derived by combining Theorems \ref{thmLDbis}  
and \ref{thm: F = ha F}. Indeed given $\gp \in \cR_{r,T}$, the upper bound  is obtained by  considering in~(\ref{eq: large deviations upper bound}) the closed sets $\{ d_{[0,T]} (\pi^\eps, \varphi) \leq \delta \}$, where~$d_{[0,T]}$ stands for the distance metrizing the Skorokhod topology. Since  $\cF$ is lower semi-continuous (by property of the Legendre transform) there holds
\begin{align*}
\lim_{\delta  \to 0} \inf_{ \psi, \atop d_{[0,T]} (\psi, \varphi) \leq \delta} \cF(T,\psi)
\geq   \cF(T,\gp)\, ,
\end{align*}
which gives the result since  $\cF(T,\gp)=  \widehat \cF(T,\gp)$ thanks to Theorem~\ref{thm: F = ha F}. The lower bound is obtained directly thanks to~(\ref{eq: large deviations lower bound})
and Theorem~\ref{thm: F = ha F}. \qed

\part{Uniform a priori bounds and  convergence of the cumulants }\label{partbounds-cv}

\chapter{Clustering constraints and cumulant estimates}
\label{estimate-chap} 
\setcounter{equation}{0}

In this chapter we  consider the  cumulants $\cum$, whose definition (Eq.\,\eqref{eq: decomposition cumulant}) we recall: 
\begin{equation}\label{recalldefcum}
\begin{aligned}
 \cum =   \int  dZ_n^* \mu_\eps^{n-1} \sum_{\ell =1}^n \sum_{\gl \in \cP_n^\ell}
\sum_{r =1}^\ell   \sum_{\gr \in \cP_\ell^r}  
\int  
 \Big( \prod_{i=1}^\ell d\mu\big( \Psi^\eps_{\gl_i} \big)   \cH \big( \Psi^\eps_{\gl_i} \big)  \mb_{\gl_i} \Big)   \gp_{ \gr} \;  f^{\eps 0}_{\{1,\dots,r\}} \;.
   \end{aligned}
\end{equation}
We  prove the upper bound stated in Theorem \ref{cumulant-thm1} page~\pageref{cumulant-thm1}
which is a consequence of the  following  more general statement~:

\begin{Thm}
\label{cumulant-thm1*}
Consider the system of hard spheres under the  initial measure~{\rm(\ref{eq: initial measure})},  with~$f^0 $ satisfying~{\rm(\ref{lipschitz})}.
Let  $H_n : D([0,\infty[) \mapsto \bbR$  be a continuous factorized function:
$$
H_n \big( Z_n([0,\infty[ \big) = \prod_{i=1}^n H^{(i)} \big( z_i([0,\infty[) \big)
$$ 
and define the scaled cumulant  $f_{n,[0,t]}^\eps (H_n)$ by polarization %as in \eqref{eq:formpolfn0t}.
of the $n$ linear form \eqref{eq: decomposition cumulant}.
 Then  there exists a  positive constant $C$ and a time $T_0$  such that  the following  uniform a priori bounds hold:
\begin{enumerate}
\item If $H_n$ is bounded,
then   on  $[0,T_0]$ 
  $$ 
  | f_{n,[0,t]}^\eps (H_n) |  \leq  n! \left( {CC_0  \over \beta_0^{(d+1)/2}}\right)^{n }  (t+\eps)^{n-1}  \prod_{i=1} ^n \| H^{(i)}\|_\infty  \, .
  $$
\item If $H_n$ has a controlled  growth %there exist $\bar \alpha $ and $\bar \beta \in (0,  {\beta_0} /2) $ such that 
\begin{equation}
\label{controlHnexp bis}
 \big| H_n( Z_n([0,t])) \big| \leq \exp \Big(\alpha \; n +\frac{\beta_0} 4 \sup_{s\in [0,t]} |V_n(s)|^2 \Big) \,,
 \end{equation}
then   on  $[0,T_0]$ 
$$ 
| f_{n,[0,t]}^\eps (H_n) |  \leq  \left( {CC_0 e^\alpha  \over \beta_0^{(d+1)/2}}\right)^{n }  (t+\eps)  ^{n-1}   n!\, .
$$
  
\item Fix $\gd>0$. If $H_n$  measures in addition of \eqref{controlHnexp bis}, the time regularity in the time interval $[t-\gd,t]$,  i.e.\;if for some $i\in \{1,\dots, n\}$
\begin{equation}
\label{eq: time-cont + growth exp}
 \big| H_n( Z_n([0,t])) \big| \leq C_{Lip}  \min \Big(  \sup_{t' \atop |t-t'| \leq \delta }  |z_i (t) - z_i (t')| , 1\Big)   \exp   \Big(\alpha  n+\frac{\beta_0} 4 \sup_{s\in [0,t]} |V_n(s)|^2 \Big) 
\,,
\end{equation}
  then   on  $[0,T_0]$ 
\begin{equation}  
\label{timecontinuity}
| f_{n,[0,t]}^\eps (H_n) |  \leq   C_{Lip} \delta  
\left( {C C_0 \;  e^\alpha \over \beta_0^{(d+1)/2}} \right)^n   (t+\eps)  ^{n-1}   n! \,.
  \end{equation}
\end{enumerate}
   \end{Thm}

The key idea behind this result is that the clustering structure of $\cum$ imposes   strong geometric constraints on the integration parameters $(Z_n^* ,T_m, V_m, \Omega_m)$ (where we recall that~$m$ is the size of the collision tree), which imply that the integral defining~$\cum$ involves actually only a  set of parameters with small measure
of size  $O(1/\mu_\eps^{n-1} )$.
More precisely, what we   prove  is that:
\begin{itemize}
\item there are~$n-1$ ``independent"  geometric constraints (clustering conditions)  and each of them  provides a small factor $O(1/\mu_\eps)$;
\item the integration measure (which is unbounded because of possibly large velocities in the collision cross-sections) does not induce any divergence.
\end{itemize}

\medskip

Section \ref{subsec:estDC} is devoted to characterizing the small measure set. Actually we   only provide necessary conditions for the parameters $(Z_n^* ,T_m, V_m, \Omega_m)$  to belong to such a set (which is enough to get an upper bound). This characterization can be expressed as a  succession of geometric conditions on the 
relative positions $x_1^*,\dots, x_n^*$ of the $n$  particles at time $t$. 

Section~\ref{subsec:proofThm} then explains how to control  the integral defining $ \cum$. Recall that, by \eqref{eq:boundHn} and by conservation of the energy,
\begin{equation*}\label{controlHnexp}
| \cH(\Psi^\eps _n)| =  |H_n\big( Z_n^*([0,t])\big)|\leq e^{\alpha n  + \frac{\beta_0}4|V^*_{n}(0)|^2 +\frac{\beta_0 }4|V_{m}(0)|^2}\;.
\end{equation*}
 Since the initial data satisfy a Gaussian bound
$$
(f^0)^{\otimes {n+m}} (\Psi^{\eps 0} _n) \leq C_0^{n+m} e^{- \frac{\beta_0}2|V^*_{n}(0)|^2  - \frac{\beta_0}2|V_m(0)|^2} \, , 
$$
the growth of~$| \cH(\Psi^\eps _n)|$ is easily controlled, so
the main difficulty is   to control the cross-sections
\begin{equation}\label{eq: cross section}
\cC \big( \Psi^{\eps}_{n} \big) := \prod_{k=1}^{m}    s_k\Big( \big( v_k -v_{a_k} (t_k)\big) \cdot \omega_k  \Big)_+
\end{equation}
  in the measure~$ d\mu\big( \Psi^\eps_{n} \big)  $.
  In order for this term  not to create any divergence for large $m$, we need a symmetry argument as in the classical proof of Lanford, but intertwined here with the estimates on the size of the small measure set.
A similar procedure is  used in Section \ref{subsec:estDC} to cure high energy singularities arising from the geometric constraints themselves.

\section{Dynamical constraints}
\label{subsec:estDC}
\setcounter{equation}{0}

Let $\gl \hookrightarrow \gr$ be a nested partition of $\{1^*, \dots , n^*\}$.
We fix the velocities $V_n^*$ at time $t$, as well as the collision parameters~$(m,a,T_m, V_m, \Omega_m)$ of the pseudo-trajectories. We recall that $V_m = (v_1,\dots,v_m)$ where $v_i$ is the velocity of particle $i$ at the moment of its creation.

We   denote by $$\bbV^2 :=(V_n^*)^2 + V_{m}^2 = \sum_{i=1}^{n} \left(v_i^*\right)^2 + \sum_{i=1}^{m} v_i^2$$ (twice) the total energy of the whole pseudo-trajectory $\Psi^{\e}_{n}$ appearing in \eqref{recalldefcum}, and by $K = n+m$ its total number of particles. We also indicate by $\bbV_i^2$ (resp.~$\bbV_{\lambda}^2$ for any $\lambda \subset \{1^*, \dots , n^*\}$) and $K_i$ (resp.~$K_\lambda$) the corresponding energy and number of particles of the collision tree with root at~$z_i^*$~(resp.~$Z^*_{\lambda}$), that is:
 \begin{equation}
\begin{aligned}
\label{eq:energia1}
&& \bbV_i^2 = \left(v_i^*\right)^2 + \sum_{\tiny\mbox{$j$ created in $\Psi^\e_{\{i\}}$}} v_j^2\;,\\
&&  K_i = 1 + \#\left(\mbox{particles\ created\ in\ }\Psi^\e_{\{i\}}\right)\;
\end{aligned}
 \end{equation}
and
 \begin{equation}
\begin{aligned}
 \label{eq:energia2}
&& \bbV_{\lambda}^2 = \sum_{i \tiny \mbox{ tree\ in\ }  \gl} \bbV_i^2\;, \\
&& K_\gl =\sum_{i\tiny \mbox{ tree\ in\ }   \gl} K_i\;.
\end{aligned}
 \end{equation}
Note that $\bbV^2 = \sum_{i =1}^n\bbV_i^2$ and~$K = \sum_{i =1}^n K_i= n+m$.

In what follows, it will be important to remember the notations and definitions introduced in Chapter~\ref{cumulant-chap}, as well as the rules of construction of pseudo-trajectories explained in Section \ref{sec:grct}.
 In particular we recall that, because of these rules, $\bbV^2/2$ is the energy at time zero of the configuration $\Psi^{\e 0}_{n}$, while $\bbV_i^2/2$ is not, in general, the energy of $\Psi^{\e 0}_{\{i\}}$ (because of   external recollisions which can perturb the velocities of the particles inside the tree), unless $\Psi^{\e}_{\{i\}}$ does not recollide with the other $\Psi^{\e}_{\{j\}}$, $j \neq i$.

\medskip

\noindent
-- {\it Clustering recollisions.}
We first study the constraints associated with   clustering  recollisions in the pseudo-trajectory of the generic forest $\Psi^\e_{\gl_1}$. 
Up to renaming  the integration variables, we can assume that $$\lambda_1 = \{1,\dots , \ell_1\}\;.$$
We call $x^*_{\lambda_1} := x^*_{\ell_1}$ the {\it root} of the forest.

\begin{Prop}
The set of configurations $Z_{\ell_1} ^*$ at time $t$ compatible with the forest $\lambda_1 = \{1,\dots , \ell_1\}$  on $[0,t]$ satisfies the following estimate~:
\begin{equation}\begin{aligned}
\label{eq: external recollision cost trees}
\int \!  \!  dX^*_{\ell_{1}-1}   \mb_{\gl_1}\, \indc_{\cG^{\e}} \big (  \Psi^\e_{\gl_1} \big)
  \leq \left(\frac{Ct} {\beta_0 ^{1/2} \mu_\eps}\right)^{\ell_1-1} 
\sum_{T \in \cT_{\lambda_1}}\,
\prod_{ j \in \lambda_1}\,\left(\beta_0\bbV^2_{j} + K_{j}\right)^{d_j(T)}\,,
\end{aligned}
\end{equation}
where $d_j(T)$ is the degree of the vertex $j$ in the graph $T$.
\end{Prop}

By definition of $\mb_{\lambda_1}$ and by Definition \ref{def:ClustRec} of clustering  recollisions,  there exist $\ell_1-1$ clustering recollisions occurring at times
$\tau_{\rm{rec},1}\geq \tau_{\rm{rec},2} \geq\dots \geq \tau_{\rm{rec},\ell_1-1}$. Moreover, 
the corresponding chain of recolliding trees~$\{ j_1,j'_1 \},\dots, \{ j_{\ell_1-1},j'_{\ell_1-1} \}$ is a minimally connected
graph $T \in \cT_{\lambda_1}$, equipped with an ordering of the edges. 
We shall denote by $T^{\prec}$\label{Tprec}  a minimally connected graph equipped with an ordering of edges, 
and by $\cT_{\lambda_1}^{\prec}$ the set of all such graphs on $\lambda_1$. Hence
we have  
\begin{equation*}
\mb_{\gl_1} = \sum_{T^\prec \in \cT^\prec_{\gl_1}} \mb_{\gl_1, T^\prec} 
\end{equation*}
almost surely, where $\mb_{\gl_1, T^\prec} $ is the indicator function that the clustering recollisions
for the forest $\lambda_1$ %(see Definition \ref{def:ClustRec}) 
are given by $T^\prec$. We also recall that, by definition, $\mb_{\lambda_1}$ is equal to zero whenever two particles find themselves at mutual distance strictly smaller than $\e$.

It will be convenient to represent the set of graphs $\cT_{\lambda_1}^{\prec}$ in terms of sequences of merged subforests. The subforests are obtained following the dynamics of the pseudo-trajectory $\Psi^\e_{\gl_1}$ backward in time, and putting together the groups of trees that recollide. An example is provided by Figure \ref{fig:packs}.
\begin{figure}[h] %  figure placement: here, top, bottom, or page
\centering
\includegraphics[width=5in]{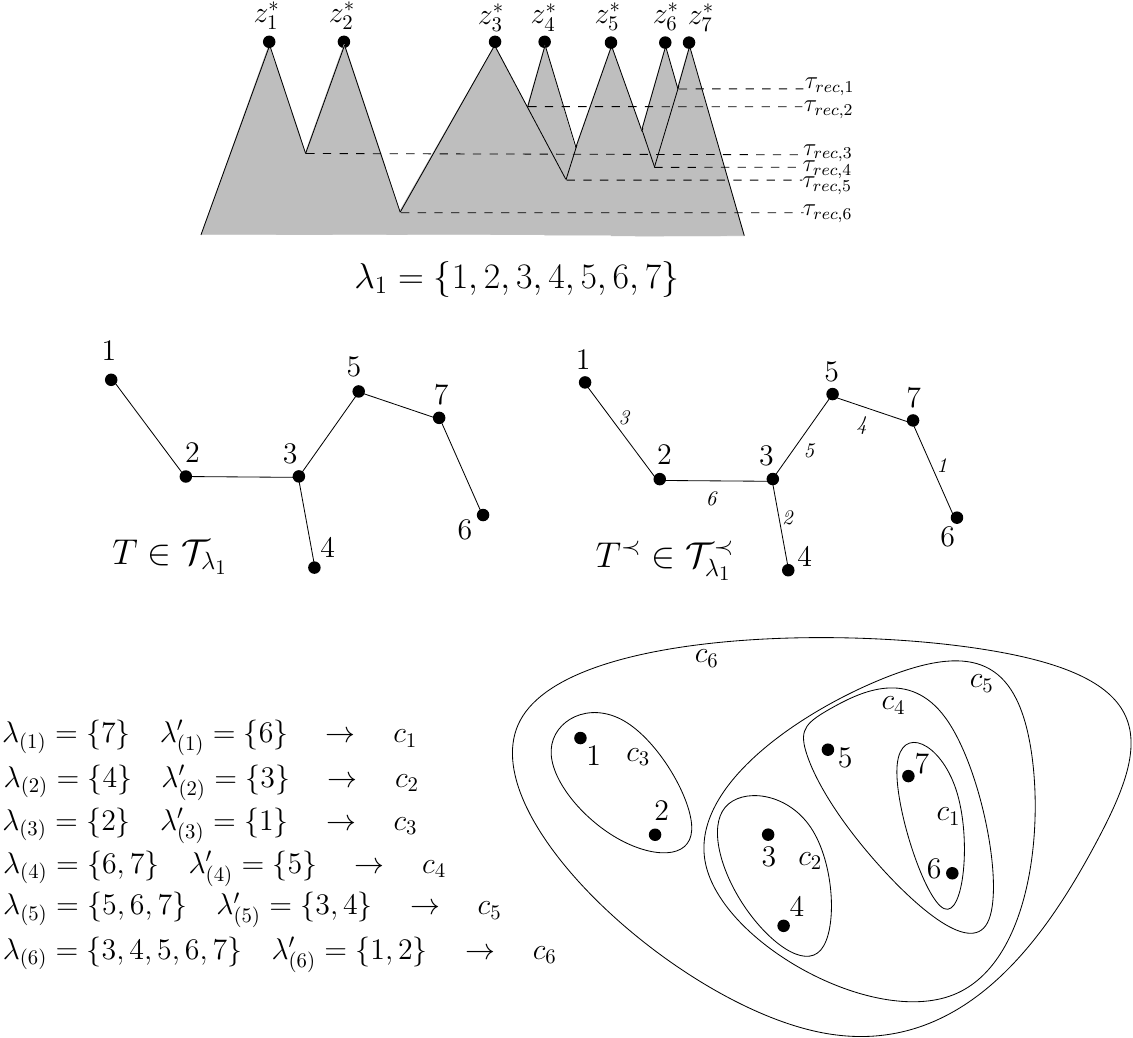} 
\caption{An example of pseudo-trajectory $\Psi^\e_{\gl_1}$ ($\ell_1 = 7$) satisfying the constraint $\mb_{\gl_1, T^\prec} $, together with its minimally connected graph $T$, ordered graph $T^\prec$, and sequence of {\it merged subforests} $\left(\gl_{(k)}, \gl'_{(k)}\right)_{k}$. The roots of the trees $z_i^* = (x_i^*,v_i^*)$ and the clustering recollision times appear in the picture on the top.}
\label{fig:packs}
\end{figure}

More precisely, we define the map which associates to any ordered tree the sequence of merging clusters
$$
\cT^\prec_{\gl_1} \ni T^\prec \mapsto \left(\gl_{(k)}, \gl'_{(k)}\right)_k
$$
 by the following iteration :
\begin{itemize}
\item start from $\gl_1 = \{1,\dots,\ell_1\}$;
%= \cup_{i=1}^{\ell_1}\gl^i_{(0)}$, where $\gl^i_{(0)} := \{i\}$, $i=1,\cdots,\ell_1$;
\item take the first edge $\{j_1,j_1'\}$ of $T^\prec$,  and set $\left(\gl_{(1)}, \gl'_{(1)}\right) =\left(\{j_1\},\{j_1'\} \right)$; these two elements are merged into a single cluster $c_1$; set $L_1 := c_1 \cup \left(\gl_1 \setminus \{j_1,j_1'\}\right)$;
\item at step $k > 1$, take $\left(\gl_{(k)}, \gl'_{(k)}\right)$ of $L_{k-1}$ in such a way that $j_k \in \gl_{(k)}, j'_k \in \gl'_{(k)}$ where $\{j_k, j'_k\}$ is the $k$-th edge of $T^\prec$, and merge them into a single cluster $c_k$; set $L_k :=  c_k \cup \left(L_{k-1} \setminus \{\gl_{(k)}, \gl'_{(k)}\}\right)$.
We can assume without loss of generality that $\max \gl'_{(k)} <\max \gl_{(k)}$.
\end{itemize}
The last step is given by $\left(\gl_{(\ell_1-1)}, \gl'_{(\ell_1-1)}\right)$, which merges the two remaining clusters. 

However this map
 is not a bijection, because the merged subforests do not specify which vertices of~$j_k\in\gl_{(k)}$ and $j'_k \in \gl'_{(k)}$ are connected by the edge. A bijection is therefore given by
\begin{equation} \label{eq:errbij}
\cT^\prec_{\gl_1} \ni T^\prec \to \left(\gl_{(k)}, \gl'_{(k)},j_{k}\in\lambda_{(k)}, j'_{k}\in\lambda'_{(k)} \right)_k\;.
\end{equation}

We define the {\it root} of the subforest $\lambda_{(k)}$ by
$$x^*_{\lambda_{(k)}} := x_{\max \gl_{(k)}}^*\,,$$
and same definition for the root of $\lambda'_{(k)}$. We can then define 
$$\hat x_k := x^*_{\gl'_{(k)}} - x^*_{\gl_{(k)}}\,,\qquad k=1,\dots,\ell_1-1$$
as the relative position between the two recolliding subforests at time $t$.
%This provides $\ell_1-1$ linearly independent variables. In fact, by construction,
%starting from $k=\ell_1-1$ and moving up to $k=1$, at each step there is necessarily
%a new available root (meaning that either $z^*_{\lambda_{(k)}}$, or $z^*_{\lambda'_{(k)}}$, 
%appear for the first time). 
%Consider the map sending the smallest available root position at step $k$, to the relative position $\hat x_k$.
%Applying this map for all $k$ (from $\ell_1-1$ to $1$), we engage all the roots, except one.
%The remaining root available is  called  {\it root} of the forest and denoted by $z^*_{\lambda_1}$ (this root will be used later on to treat the clustering overlaps). The other root positions are mapped to the relative positions:
It is easy to see that, for any given root position $x^*_{\lambda_1} = x^*_{\ell_1} \in \T^d$, the map of translations
\begin{equation} \label{eq:bijection}
X^*_{\ell_1-1} = \left(x^*_1,\dots, x^*_{\ell_1-1}\right)  \mapsto \hat X_{\ell_1-1} := 
\left(\hat x_1,\dots,\hat x_{\ell_1-1}\right) 
\end{equation}
is one-to-one on $\T^{d(\ell_1-1)}$ and such that
$$dX^*_{\ell_1-1}  = d\hat X_{\ell_1-1}\;.$$
Thus \eqref{eq:bijection} is a legitimate change of variables
in \eqref{recalldefcum}.

Our purpose is to prove iteratively that, for~$k = \ell_1-1, \dots , 1$, the variable $\hat x_k$ associated with the $k$-th clustering recollision has to be in a small set, the measure of which is uniformly small of size $O(1/\mu_\eps)$.

We define $\Psi^\e_{\lambda_{(k)}}$ (respectively $\Psi^\e_{\lambda'_{(k)}}$) the pseudo-trajectory with starting particles $\lambda_{(k)}$
$(\lambda'_{(k)})$.
%and independent from the other particles of the original $\Psi^\e_{\lambda_1}$. 
Since~$\tau_{\rm{rec},k}\geq \left(\tau_{\rm{rec},s}\right)_{s > k}$, the collision trees in $\lambda_1 \setminus \left(
\lambda_{(k)} \cup\lambda'_{(k)}\right)$ do not affect the subforests $\lambda_{(k)}, \lambda'_{(k)}$
in the time interval $(\tau_{\rm{rec},k}, t)$. 
%At step $k$, we can therefore treat $\Psi^\e_{\lambda'_{(k)}}$ and $\Psi^\e_{\lambda_{(k)}}$ as completely independent trajectories. 
The clustering structure prescribed by $T^\prec$ implies that  $\Psi^\e_{\lambda'_{(k)}}$ and $\Psi^\e_{\lambda_{(k)}}$, regarded as independent trajectories, reach mutual distance $\e$ at some time  $\tau_{\rm{rec},k} \in (0,\tau_{\rm{rec},k-1})$.

Given $\left(\hat x_{s}\right)_{s < k}$ fixed by the previous recollisions, we are going to vary $\hat x_k$ so that an external recollision between the subforests occurs.  This corresponds to moving rigidly $\Psi^\e_{\lambda'_{(k)}}$ and $\Psi^\e_{\lambda_{(k)}}$ by acting on their relative distance $\hat x_k$. In fact, the recollision condition depends only on this distance.

Given a sequence of merged subforests $\left(\gl_{(k)}, \gl'_{(k)}\right)_{k}$
and a set of variables $\left(\hat x_s\right)_{s < k}$
(with $|\hat x_s| > \e$), the~$k-$th clustering recollision condition is defined by
$$\hat x_k \in \cB_{k}:= \bigcup_{\tiny\substack{ \mbox{$q$ in the subforest $\gl_{(k)}$} \\ \mbox{$q'$ in the subforest $\gl'_{(k)}$}}} B_{qq'}\,,$$ 
with
\begin{equation}
\label{Bq-def}
B_{qq'} := \Big\{\hat x_k \in \T^d\;\;:\;\; | x_{q'}  (\tau_{{\rm{rec}},k}) - x_q(\tau_{{\rm{rec}},k})  | = \eps\ \ \ \mbox{for some $\tau_{\rm{rec},k} \in (0,\tau_{{\rm{rec}},k-1})$} \Big\}\;.
\end{equation}
Here $x_q (\tau), x_{q'} (\tau)$ are the particle trajectories in the flows $\Psi^\e_{\lambda_{(k)}},\Psi^\e_{\lambda'_{(k)}}$ (and $\tau$ is of course restricted to their existence times).  In other words there exists a time $\tau_{\rm{rec},k} \in (0,\tau_{\rm{rec},k-1})$ and a   vector~$\omega_{\rm{rec},k}  \in {\mathbb S}^{d-1}$    such that
\begin{equation}\label{recoll-changevar}
    x_{q'}  (\tau_{{\rm{rec}},k}) -   x_{q} (\tau_{{\rm{rec}},k})  = \eps\, \omega_{{\rm{rec}},k} \, .
  \end{equation}
 The particle trajectories $   x_q (\tau),   x_{q'} (\tau)$ are piecewise affine 
  (because there are almost surely a finite number of collisions and recollisions within the trees $\Psi^\e_{\lambda_{(k)}}, \Psi^\e_{\lambda'_{(k)}}$). We will denote by $v_q^{(\gd\tau_j)} , v_{ q'}^{(\gd\tau_j)}$ the velocities of $q$ and $q'$ on the interval $\delta \tau_j$.
Moreover, $(   x_q(\tau)-  x_{q'}(\tau)) - (x^*_{\gl_{(k)}}-x^*_{\gl'_{(k)}})$ does not depend on~$\hat x_k := x^*_{\gl'_{(k)}} - x^*_{\gl_{(k)}}$,
  because all positions in the collision tree are translated rigidly. This means that~$\hat x_k  $ has to be in a tube of radius~$\eps$ around the parametric curve $(x^*_{\gl_{(k)}}-x^*_{\gl'_{(k)}})- (   x_q(\tau)-  x_{q'}(\tau))$.
This tube is a union of cylinders, with two spherical caps at both ends (see Figure \ref{fig:tube}). 
Note however that we have to remove from this tube the ball corresponding to the exclusion at the  creation time (or at time $t$ if $q$ and $q'$ exist up to time $t$).
  
\begin{figure}[h] %  figure placement: here, top, bottom, or page
\includegraphics[width=3.5in,  angle =180]{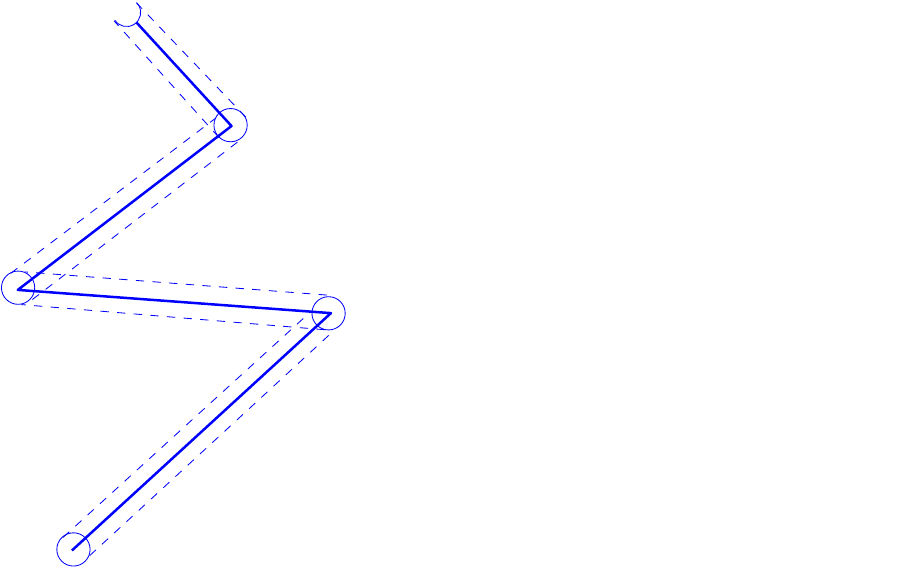} 
\caption{The tube $B_{qq'}$ leading to a recollision between particles $q$ and $q'$. The tube has section $\mu_\e^{-1}$. 
%The coloured ball represents a forbidden region, due to the exclusion condition at time $t$ ($|\hat x_k| > \e$) or at creation times (recall that in a forest pseudo-particles cannot overlap).
}
\label{fig:tube}
\end{figure}

Therefore
 $$
 B_{qq'} = \bigcup_{ j}B_{qq'}(\gd\tau_j)
 $$
 for a suitable finite decomposition of $(0,\tau_{\rm{rec},k-1})$ (depending on all the history).
 We therefore end up with the estimate (see Figure \ref{fig:tube})
  $$
  |B_{qq'}| \leq \frac C{\mu_\eps}\sum_{ j}  |v_q^{(\gd\tau_j)} - v_{ q'}^{(\gd\tau_j)}| \,|\gd\tau_j|$$
  for some pure constant $C>0$ depending only on the dimension $d$.
 
 We sum now over all $q,q'$ to obtain an estimate of the set $\cB_{k}$. To exploit the conservation of energy, we exchange the sums over $\gd\tau_j$ and over $q,q'$. 
%  Consider the set
%  $$ \cB_{k}(\gd\tau) := \bigcup _{p\in \Psi_{j_k}, q\in \Psi_{j'_k}} B_{qq'}(\gd\tau)\;.$$
  We get
  $$
  | \cB_{k}| \leq  \frac{C}{\mu_\eps} \sum_{ j}|\gd\tau_j| \sum_{q,q'} |v_q^{(\gd\tau_j)} - v_{ q'}^{(\gd\tau_j)}|\,. % + C \e^d K_{j_k}K_{j'_k}
  $$
 % where we recall that $K_{j}$ indicates the total number of particles in the collision tree $j$.
  Applying the Cauchy-Schwarz inequality, the sum over $q,q'$  is bounded by 
  $$
\sqrt{\sum_q \left(v_q^{(\gd\tau_j)}  \right)^2}\sqrt {K_{\gl_{(k)}}}\, K_{\gl'_{(k)}}
+ \sqrt{\sum_{q'} \left(v_{q'}^{(\gd\tau_j)}  \right)^2}\sqrt {K_{\gl'_{(k)}}}\, K_{\gl_{(k)}}
\leq \bbV_{\gl_{(k)}}\, \sqrt{ K_{\gl_{(k)}}}\, K_{\gl'_{(k)}}+\bbV_{\gl'_{(k)}}\, \sqrt {K_{\gl'_{(k)}}}\, K_{\gl_{(k)}}
  $$
  where  we use the notations for energy and mass of subforests introduced at the beginning of this section.
%  $\bbV^2_{j}/2$ and $K_{j}$ are respectively the total energy and the total number of particles of the collision tree $j$. %and that $K_{j}$ indicates the total number of particles in the collision tree $j$
In the above inequality, we have used the independence of $\Psi^\e_{\gl_{(k)}}$ and $\Psi^\e_{\gl'_{(k)}}$ on $[\tau_{{\rm{rec}},k}, t]$, and bounded their energies in $\gd\tau_j$ with $\bbV_{\gl_{(k)}}$ and $\bbV_{\gl'_{(k)}}$ respectively (see Eq.s \eqref{eq:energia1}-\eqref{eq:energia2}).
Therefore we infer that
 \begin{equation}
\begin{aligned}
\label{recoll B1jbj'}
  | \cB_{k}| & \leq \frac{C}{\beta_0^{1/2}\mu_\eps} \int d\tau_{{\rm{rec}},k} \indc_{ \tau_{{\rm{rec}},k} \leq \tau_{{\rm{rec}},k-1}}\,
 \left(\beta_0\bbV^2_{\gl_{(k)}} + K_{\gl_{(k)}}\right)  \left(\beta_0 \bbV^2_{\gl'_{(k)}} + K_{\gl'_{(k)}}\right)\\ & 
= \frac{C}{\beta_0^{1/2} \mu_\eps} \int d\tau_{{\rm{rec}},k} \indc_{ \tau_{{\rm{rec}},k} \leq \tau_{{\rm{rec}},k-1}}\,
\sum_{\substack{j_k \in \gl_{(k)}\\ j'_k \in \gl'_{(k)}}}
 \Big(\beta_0\bbV^2_{j_k} + K_{j_k}\Big)  \left(\beta_0\bbV^2_{j'_k} + K_{j'_k}\right)\;.
\end{aligned}
\end{equation}
 In this way we have obtained an estimate which depends only on the energy and the number of particles 
 enclosed in the trees $\Psi^\e_{\gl_{(k)}},\Psi^\e_{\gl'_{(k)}}$.
 
Coming back to Equation \eqref{recalldefcum} we observe that,
if $\mb_{\gl_1} = 1$, then there exist merged subforests such that~$\hat x_k \in \cB_{k}$ for $k=\ell_1-1,\dots,1$.
Hence, iterating the procedure leading to \eqref{recoll B1jbj'} for $k=\ell_1-1,\dots,1$, leads to
an upper bound on the cost of the clustering recollisions in  $\gl_1$:
%To  this end, we are going to  all the merged clusters one by one, according to the following iteration:
%\begin{itemize}
%\item starting from $c_{\ell_1-1}$ and ``dissolving'' $\left(\gl_{(\ell_1-1)},\gl'_{(\ell_1-1)}\right)$;
%\item choosing $c_k$ at step $k$ (when we still have $\ell_1-k$ clusters) and dissolving $\left(\gl_{(k)},\gl'_{(k)}\right)$.
%\end{itemize}
%This translates into the following inequalities:
\begin{equation} 
\begin{aligned} 
\label{eq: external recollision cost trees 0}
&\int \!  \!  dX^*_{\ell_1-1} \,  \mb_{\gl_1}\, \indc_{\cG^{\e}} \big (  \Psi^\e_{\gl_1} \big)
%&= \sum_{T^\prec \in \cT^\prec_{\gl_1}} \int \!  \! dX^*_{\ell_1}\, \mb_{\gl_1, T^\prec} \, \indc_{\cG^{\e}} \big (  \Psi_{\gl_1} \big)\\
\leq \sum_{\left(\gl_{(k)}, \gl'_{(k)}\right)} \int \!  \!  d\hat x_{1} \indc_{\cB_{1}} \! %\sum_{\gl_{(2)}, \gl'_{(2)}} 
\int   \! \! d\hat x_{2} \dots \! 
% \sum_{\gl_{(\ell_1-1)}, \gl'_{(\ell_1-1)}}
\int \!  \!  d\hat x_{\ell_1-1} 
 \indc_{\cB_{\ell_1-1}} \\
 &\qquad \leq \left(\frac{C} {\beta_0^{1/2}\mu_\eps}\right)^{\ell_1-1}
 \int_0^t d\tau_{\rm{rec},1} \cdots \int_{0}^{ \tau_{\rm{rec},\ell_1-2}} d\tau_{\rm{rec},\ell_1-1}
 \sum_{\left(\gl_{(k)}, \gl'_{(k)}\right)}
\,\sum_{\substack{j_k \in \gl_{(k)}\\ j'_k \in \gl'_{(k)}}}\,
\prod_{k = 1}^{\ell_1-1}\,  \left(\beta_0\bbV^2_{j_k} + K_{j_k}\right)  \left(\beta_0\bbV^2_{j'_k} + K_{j'_k}\right) \\
& \qquad  =   \left(\frac{Ct} {\beta_0^{1/2}\mu_\eps}\right)^{\ell_1-1}\frac{1}{(\ell_1-1)!} \sum_{\left(\gl_{(k)}, \gl'_{(k)}\right)}
\,\sum_{\substack{j_k \in \gl_{(k)}\\ j'_k \in \gl'_{(k)}}}\,
\prod_{k = 1}^{\ell_1-1}\,  \left(\beta_0\bbV^2_{j_k} + K_{j_k}\right)  \left(\beta_0\bbV^2_{j'_k} + K_{j'_k}\right)\;.
\end{aligned}
\end{equation}
% Assume that $\tau_{\rm{rec},k} \in  (\e, t)$ for all $k$. Then,
%and using \eqref{recoll B1jbj'} we deduce
%$$
%\int \!  \!  dX^*_{\ell_{1}} \,  \mb_{\gl_1}\, \indc_{\cG^{\e}} \big (  \Psi_{\gl_1} \big)
%\leq   \left(\frac{Ct} {\mu_\eps}\right)^{\ell_1-1}\frac{1}{(\ell_1-1)!} \sum_{\left(\gl_{(k)}, \gl'_{(k)}\right)_{k}}
%\,\sum_{\substack{j_k \in \gl_{(k)}\\ j'_k \in \gl'_{(k)}}}\,
%\prod_{k = 1}^{\ell_1-1}\,  \left(\bbV^2_{j_k} + K_{j_k}\right)  \left(\bbV^2_{j'_k} + K_{j'_k}\right)\,,
%$$
%where the $1/(\ell_1-1)!$ comes from the simplex in times~$\tau_{\rm{rec},1}\geq \tau_{\rm{rec},2} \dots \geq \tau_{\rm{rec},\ell_1-1}$.
Using the bijection \eqref{eq:errbij} and compensating the $1/(\ell_1-1)!$ with the ordering of the edges in $T^\prec$, we rewrite this result as 
$$
\int \!  \!  dX^*_{\ell_{1}-1}   \mb_{\gl_1}\, \indc_{\cG^{\e}} \big (  \Psi^\e_{\gl_1} \big)
  \leq \left(\frac{Ct} {\beta_0^{1/2}\mu_\eps}\right)^{\ell_1-1} 
\sum_{T \in \cT_{\lambda_1}}\,
 \prod_{\{j,j'\}  \in E(T)}\, \left(\beta_0\bbV^2_{j} + K_{j}\right)  \left(\beta_0\bbV^2_{j'} + K_{j'}\right)\;,
$$
where $E(T)$ is the set of edges of $T$. Equivalently,  we obtain (\ref{eq: external recollision cost trees}).

\bigskip

\noindent
-- {\it Clustering overlaps.} We are now going to estimate  the constraints associated with   clustering overlaps in the pseudo-trajectory of the generic jungle ${\gr_1}$. Up to a renaming of the summation variables, we can assume that $$\gr_1 = \{\lambda_1,\dots , \lambda_{r_1}\}\;.$$
The number of particles in the jungle at time $t$ is $|\rho_1|$, and at time 0 is $K_{\rho_1} = |\rho_1| + m_{\rho_1}$. We recall that each forest $\lambda_i$ has a root $x^*_{\lambda_i}$, which did not play any role in the previous estimate of clustering recollisions.
We call $x^*_{\gr_1}:=x^*_{\lambda_{r_1}}$ the {\it root} of the jungle.

\begin{Prop}
Consider some forests $\lambda_1,\dots , \lambda_{r_1}$ whose internal dynamics is fixed (prescribed by the velocities and relative positions at time $t$, as well as the creation parameters).
The set of configurations $Z_{|\rho_1| } ^*$ at time $t$ compatible with the jungle $\rho_1 = \{\lambda_1,\dots , \lambda_{r_1} \}$  on $[0,t]$ satisfies the following estimate~:
\begin{equation}\begin{aligned}  \label{eq:est jungles final}
\int \!  \!  dx^*_{\lambda_1}\cdots dx^*_{\lambda_{r_{1}-1}}   |\gp_{\gr_1}|
  \leq \left(\frac{C}{\beta_0^{1/2} \mu_\eps}\right)^{r_1-1} \,\left(t + \e\right)^{r_1-1}
\sum_{T \in \cT_{\gr_1}}\,\prod_{\lambda_j \in \gr_1}\,\left(\beta_0\bbV^2_{\lambda_j} + K_{\lambda_j}\right)^{d_{\lambda_j}(T)}\;.
\end{aligned}
\end{equation}
\end{Prop}

The argument is similar, but not identical, to the one just seen for clustering recollisions. Below we shall indicate the differences, without repeating the identical parts.

By definition of $\gp_{\gr_1}$, and by Definition \ref{def:ClustOv}, the clustering overlaps are extracted from the graph of all overlaps between the forests $ \{\lambda_1,\dots , \lambda_{r_1} \}$ via the Penrose algorithm~:
we denote by  $(\lambda_{j_1},\lambda_{j'_1}),\cdots,(\lambda_{j_{r_1-1}}, \lambda_{j'_{r_1-1}})$ the (ordered) edges of the resulting minimally connected graph $T \in \cT_{\gr_1}$. Then,  thanks to the tree inequality stated in Proposition~\ref{prop: tree inequality},
\begin{equation} \label{eq:ov cond dec trees}
|\gp_{\gr_1}|  \leq 
\sum_{T \in \cT_{\gr_1}}\, \prod_{ \{\lambda_{j},\lambda_{j'}\}  \in E(T )}\,   \indc_{ \lambda_j \sim_o \lambda_{j'} } \,.
\end{equation}

%Given a $T$, we equip the graph with an arbitrary ordering of the edges, i.e.\, we fix $T^\prec\in \cT^{\prec}_{\gr_1}$. For any
%such $T^\prec$ we can proceed exactly as we did for clustering recollisions (cf.\,\eqref{eq:rec cond dec trees}).

Note  that, as mentioned in Section \ref{sec:dyncum}, we have  more flexibility when dealing with overlaps than with recollisions,  as~$\left(\Psi^\e_{\lambda_j}\right)_{1 \leq j \leq r_1}$ are completely independent trajectories, whatever the ordering of the overlap times.  We therefore have more freedom in choosing the integration variables.

%To define the change of variables, we assign an ordering of the edges $E(T)$ in the following way. Consider $T \in \cT_{\gr_1}$ as a rooted graph, with root $\lambda_{\rho_1}$. We start from the vertices of $T$ which have the maximal depth, say $\bar k$ (the depth is defined as the number of edges connecting the vertex to the root). These vertices have degree 1, hence each one of the vertices identifies exactly one edge. We label these edges in such a way that they keep the same mutual order of the vertices, starting from the biggest one. We rename the ordered edges as $e_{r_1-1}, e_{r_1-2}, \dots$. Next we prune the edges, obtaining a smaller minimally connected tree graph, on which we can repeat the labelling operation. We iterate this procedure $\bar k$ times, producing a complete ordering of edges $e_{r_1-1},  \dots, e_1$. 
%
%Let us write $e_k = \{\gl_{[k]},\gl'_{[k]}\}$ where $\gl'_{[k]}$ has
%depth larger than $\gl_{[k]}$. 
%

We can then define 
$$\hat x_k := x^*_{\gl'_{[k]}} - x^*_{\gl_{[k]}}\,,\qquad k=1,\dots,r_1-1$$
as the relative position between the two overlapping forests at time $t$.
As in the case of clustering recollisions, for any given root position
$x^*_{\gr_1}:=x^*_{\lambda_{r_1}} \in \T^d$, the map of translations
\begin{equation} \label{eq:bijectionOV}
\left(x^*_{\gl_1},\dots, x^*_{\gl_{r_1-1}}\right)  \longmapsto  \hat X_{r_1-1} := 
\left(\hat x_1,\dots,\hat x_{r_1-1}\right) 
\end{equation}
is one-to-one on $\T^{d(r_1-1)}$ and it has unit Jacobian  determinant. 
Thus \eqref{eq:bijectionOV} is a legitimate change of variables
in \eqref{recalldefcum}.

Given a graph $T \in \cT_{\gr_1}$ and the corresponding sequence $\left(\gl_{[k]}, \gl'_{[k]}\right)_{k}$,
the $k-$th clustering overlap condition is defined by
$$\hat x_k \in \tilde\cB_{k} := \bigcup_{\tiny\substack{ \mbox{$q$ in the forest $\gl_{[k]}$} \\ \mbox{$q'$ in the forest $\gl'_{[k]}$}}} \tilde B_{qq'}\,,$$ 
with
$$\tilde B_{qq'} = \Big\{\hat x_k \in \T^d \ : \ \exists \tau \in [0,t] \quad  \text{such that}  \quad 
|x_q (\tau) - x_{q'} (\tau)  | \leq \eps \Big\}
$$
where we used \eqref{eq:deftauOV}, and $x_q (\tau), x_{q'} (\tau)$ are the particle trajectories in the flows $\Psi^\e_{\gl_{[k]}},\Psi^\e_{\gl'_{[k]}}$.
This set has small measure
\begin{equation}
\label{tildeBk}
  | \tilde\cB_{k}| \leq \frac{C}{\beta_0^{1/2} \mu_\eps} \left(t+ \e\right)
%  \sum_{
%  \substack{
%  \lambda_{j_k} \in \gr_{(k)} \\ \lambda_{j'_k} \in \gr'_{(k)}
%  }
%  }
 \left(\beta_0\bbV^2_{\lambda_{[k]}} + K_{\lambda_{[k]}}\right)  \left(\beta_0\bbV^2_{\lambda'_{[k]}} + K_{\lambda'_{[k]}}\right)
   \end{equation}
  for some  constant $C>0$. Notice that the correction of $O(\e)$ comes from the extremal spherical caps of the tubes in Figure \ref{fig:tube} (since $\indc_{ \lambda_{[k]} \sim_o \lambda'_{[k]} } =1$ inside those regions).

  \begin{Rmk}\label{overlap-rmk}
  Note that overlaps can be classified in two types
  \begin{itemize}
  \item those arising at time $t$ or involving a particle $q$ at its creation time $t_q$~: in this case, the distance between the overlapping particles at $\tau_{\rm{ov}}$ satisfies only  the inequality
  $$ |x_q(\tau_{\rm{ov}}) - x_{q'} (\tau_{\rm{ov}})| \leq \eps\, .$$
  This corresponds to one spherical end of the tube in Figure {\rm\ref{fig:tube}};
  \item and the {\rm regular} ones, for which the two overlapping particles are exactly at distance $\eps$ at $\tau_{\rm{ov}}$. We then have the same parametrization as for recollisions
  \begin{equation}
  \label{eq:omov}
   x_q(\tau_{\rm{ov}}) - x_{q'} (\tau_{\rm{ov}})= \eps \omega_{\rm{ov}}\, .
   \end{equation}
  This corresponds to the tube in Figure {\rm\ref{fig:tube}} minus the spherical end.
  \end{itemize}
  \end{Rmk}

We finally obtain (\ref{eq:est jungles final}).

%Notice that the argument leading to \eqref{eq: external recollision cost trees 0} and \eqref{eq:est jungles final} is equivalent to the change of measure
%\begin{equation}
%d\hat X_n = \mu_\e^{n-1}
%\end{equation}

\noindent
\noindent
-- {\it Initial clustering.} 
Finally, we are going to estimate the non-overlap constraints in the initial data, which are encoded in \eqref{eq : cumulants time 0}.

Recall that $f^{\e 0}_{\{1,\dots,r\}}(\Psi^{\e 0}_{\gr_1},\dots,\Psi^{\e 0}_{\gr_r})$ is a measure of the correlations between all the different clusters of particles $\Psi^{\e 0}_{\gr_1},\dots,\Psi^{\e 0}_{\gr_r}$ at time zero, and its definition has been adapted to reconstruct the dynamical cumulants.  An estimate of this correlation is obtained  by integrating over the root coordinates of the jungles $x^*_{\gr_1}, \dots, x^*_{\gr_{r-1}}$, as stated in the following proposition.

We recall that~$K_{\gr_i}: = m_{\gr_i} + |\gr_i|$  denotes the number of particles in  the configuration~$\Psi^{\eps,0}_{\gr_i}$ at time~0, and that~$\displaystyle  K:=\sum_{i=1}^r K_{\gr_i} = m+n$.  
\begin{Prop}
\label{prop:ID} 
Under Assumption~{\rm(\ref{lipschitz})}, there exists $C>0$ (depending only on the dimension $d$) such that, for $\e$ small enough, 
$$\int_{\T^{d (r-1)}} |f^{\e 0}_{\{1,\dots,r\}}(\Psi^{\e 0}_{\gr_1},\dots,\Psi^{\e 0}_{\gr_r})| \, dx^*_{\gr_{1}}\dots dx^*_{\gr_{r-1}}
\leq (r-2)!\, (CC_0)^K\,\exp\Big(-\frac{\b_0}{2} {\mathbb{V}}^2\Big)\, \e^{d({r - 1)}}
$$
for all $\Psi^{\e 0}_{\gr_i} \in {\mathcal D}^{\eps}_{K_{\gr_i}}$ at time~0. We have used   the convention $0! = (-1)! = 1$.
\end{Prop}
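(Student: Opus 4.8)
The statement is a clustering estimate for the \emph{initial} cumulant $f^{\e 0}_{\{1,\dots,r\}}$, which by definition~\eqref{eq : cumulants time 0} is the cumulant (in the sense of Definition~\ref{def:cumulants}) of the family of correlation functions $F^{\e 0}$ evaluated on the rigid clusters $\Psi^{\e 0}_{\gr_1},\dots,\Psi^{\e 0}_{\gr_r}$. The key structural input is that, since the initial measure~\eqref{eq: initial measure} is a quasi-product measure (only the hard-sphere exclusion creating correlations), $F^{\e 0}_n$ factorizes up to the exclusion indicator; writing $F^{\e 0}_n(Z_n) = (f^0)^{\otimes n}(Z_n)\,\indc_{\mathcal D^\e_n}(Z_n)/\mathcal Z^\e_n$-type expressions, the cumulant $f^{\e 0}_{\{1,\dots,r\}}$ becomes exactly of the form analyzed abstractly in Section~\ref{sct:support}: it is the cumulant of the product of a disconnection indicator. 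More precisely, $f^{\e 0}_{\{1,\dots,r\}}$ will be expressed as $(f^0)^{\otimes K}(\Psi^0)$ times the cumulant $\gp_r$ of the indicator $\gP_r$ that the $r$ clusters are pairwise non-overlapping at time~$0$, where ``overlap'' here means: some particle of one cluster lies within distance $\e$ of some particle of another. So the plan is: first reduce, via the product structure of the initial data, to estimating $\int dx^*_{\gr_1}\dots dx^*_{\gr_{r-1}}\,|\gp_r(\Psi^{\e 0}_{\gr_1},\dots,\Psi^{\e 0}_{\gr_r})|$ against $(f^0)^{\otimes K}$.

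Next, I would invoke the tree inequality~\eqref{tree-ineq} of Proposition~\ref{prop: tree inequality}: $|\gp_r|\leq \sum_{T\in\cT_r}\prod_{\{i,j\}\in E(T)}\indc_{\gr_i\sim\gr_j}$, where $\gr_i\sim\gr_j$ is the overlap connection for the initial clusters. This replaces the cumulant by a sum over $(r-1)$-edge tree graphs, and $|\cT_r| = r^{r-2}$ by Cayley's formula (Lemma~\ref{lem: Cayley+}); the claimed $(r-2)!$ can be obtained either directly from Cayley (noting $r^{r-2}$ is controlled by $(r-2)!\,C^r$ up to an exponential factor absorbed in $C^K$) or, more sharply, by the degree-refined count of Lemma~\ref{lem: Cayley+} combined with a Cauchy–Schwarz bookkeeping of the Gaussian weights, exactly as in the recollision/overlap estimates of Section~\ref{subsec:estDC}. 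Then, for a fixed tree $T$, I would integrate out the root variables $x^*_{\gr_1},\dots,x^*_{\gr_{r-1}}$ one edge at a time, following the edge-pruning order (leaves first), just as in the derivation of~\eqref{eq:est jungles final}: each edge $\{i,j\}\in E(T)$ forces the relative position $x^*_{\gr_j}-x^*_{\gr_i}$ into the union, over pairs of particles $(q\text{ in cluster }i,\ q'\text{ in cluster }j)$, of balls of radius $\e$ around fixed points (the time-zero positions being frozen once the other roots are fixed). This gives a factor $C\,\e^d\,K_{\gr_i}K_{\gr_j}$ per edge, hence $C^{r-1}\e^{d(r-1)}\prod_{j}K_{\gr_j}^{d_j(T)}$ after summing over which particles realize each edge.

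Finally, I would collect the Gaussian weights: $(f^0)^{\otimes K}(\Psi^{\e 0})\leq C_0^K\exp(-\tfrac{\b_0}{2}\mathbb V^2)$ from~\eqref{lipschitz} together with conservation of energy along the pseudo-trajectory (so $\mathbb V^2$ is twice the energy of $\Psi^{\e 0}$), and absorb the polynomial factors $\prod_j K_{\gr_j}^{d_j(T)}$ together with $\sum_T$ into the prefactor. Here the combinatorics must be done carefully: $\sum_{T\in\cT_r}\prod_j K_{\gr_j}^{d_j(T)}$ is, by the degree-fixed count $\frac{(r-2)!}{\prod(d_j-1)!}$ of Lemma~\ref{lem: Cayley+}, equal to $(r-2)!\big(\sum_j K_{\gr_j}\big)^{\,\cdots}$-type expression which is bounded by $(r-2)!\,C^K$ since $\sum_j K_{\gr_j}=K$ and each power of $K$ is dominated by $C^K$ (a standard $n!\leq C^n e^n$-style bound, or absorbing $K^{K}$ into $e^{\b_0\mathbb V^2/4}\cdot C^K$ exploiting the Gaussian tail). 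This yields precisely $(r-2)!\,C^K\exp(-\tfrac{\b_0}{2}\mathbb V^2)\,\e^{d(r-1)}$.

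\textbf{Main obstacle.} The delicate point is \emph{not} the geometry of a single edge — that is a verbatim repeat of the overlap tube estimate~\eqref{tildeBk} — but the bookkeeping that produces the factorial $(r-2)!$ rather than $(r-1)!$ or worse, and that keeps the velocity-dependent factors $K_{\gr_j}$, $\mathbb V^2_{\gr_j}$ from accumulating uncontrollably when summed against $\sum_{T\in\cT_r}$. One has to interleave the sum over trees with the Cauchy–Schwarz splitting of $\prod_j K_{\gr_j}^{d_j(T)}$ and the Gaussian weight, using the degree-resolved Cayley formula of Lemma~\ref{lem: Cayley+} to perform the sum over $T$ exactly; the loss of one factorial (the convention $0!=(-1)!=1$ and the $(r-2)!$ rather than $(r-1)!$) comes from the fact that one root coordinate stays free. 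A secondary technical nuisance is justifying rigorously that the quasi-product structure of~\eqref{eq: initial measure} lets one write $f^{\e 0}_{\{1,\dots,r\}}$ as $(f^0)^{\otimes K}$ times a pure overlap cumulant — one must check that the normalization constants $\mathcal Z^\e$ and the exclusion indicators among particles \emph{within} a single cluster do not contribute to the \emph{inter}-cluster cumulant, which follows from the definition~\eqref{eq : cumulants time 0} being built precisely so that intra-cluster structure cancels, but deserves a careful line.
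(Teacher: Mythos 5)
Your overall strategy (tree inequality, one factor $\e^d$ per edge from the root integrations, degree-refined Cayley count, Gaussian weights) is exactly the paper's, but your reduction step contains a genuine gap. You assert that $f^{\e 0}_{\{1,\dots,r\}}$ equals $(f^0)^{\otimes K}$ times the cumulant $\gp_r$ of the indicator that the $r$ clusters are pairwise non-overlapping. This is false as stated: each correlation function $F^{\e 0}_{\omega_i}$ entering Definition~\eqref{eq : cumulants time 0} is obtained by integrating out the remaining particles of the grand canonical reservoir, so it carries the exclusion of the clusters with an arbitrary number $p$ of \emph{background} particles, normalized by $\cZ^\eps$. The cumulant combination cancels only the \emph{disconnected} background components against $\cZ^\eps$; what survives is the full connected (truncated) function
$$f^{\e 0}_{\{1,\dots,r\}} = (f^0)^{\otimes K}\sum_{p\geq 0}\frac{\mu_\e^p}{p!}\int (f^0)^{\otimes p}(\bar Z_p)\,\gp\big(\Psi^{\e}_{\gr_1},\dots,\Psi^{\e}_{\gr_r},\bar Z_p\big)\,d\bar Z_p\,,$$
i.e.\ formula~\eqref{f01r}, in which two clusters may be correlated through a chain of background particles. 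Your claimed identity is only the $p=0$ term. Deriving~\eqref{f01r} — the decomposition~\eqref{eq:decID} of the exclusion indicator into a free background reconstructing $\cZ^\eps$ plus connected components attached to the jungles — is the main content of the paper's proof, and you dismiss it as "a careful line" while writing down the wrong output of that computation.

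This omission also changes the combinatorics downstream. The tree inequality must be applied to graphs on $r+p$ vertices, producing a factor $(r+p-2)!/p!$ and $r+p-1$ edges; only $r-1$ of the resulting $\e^d$ factors are paid for by the root integrations $dx^*_{\gr_1}\cdots dx^*_{\gr_{r-1}}$, while the remaining $p$ are paid for by the background integrations $d\bar Z_p$ and must beat the grand canonical weight $\mu_\e^p$, which works only because $C\e^d\mu_\e = C\e\to 0$ — this is where the hypothesis "for $\e$ small enough" enters, and it is invisible in your argument. Your handling of the degree-resolved Cayley formula and the bound $\prod_i\sum_{d_i\geq1}K_{\gr_i}^{d_i}/(d_i-1)!\leq e^{2K}$ is otherwise in line with the paper (no Cauchy--Schwarz or Gaussian-tail absorption of $K^K$ is actually needed, and no dynamics or energy conservation is involved since everything is evaluated at time $0$). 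To repair the proof you must carry out the cluster expansion with the background reservoir and verify the cancellation of $\cZ^\eps$, then run your edge-by-edge integration on trees over $r+p$ vertices and sum the convergent series in $p$.
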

Recall that $f^{\e 0}_{\{1,\dots,r\}}$ is extended to 
$\D^{K} \setminus {\mathcal D}^{\eps}_{K}$ by setting $F^{\e 0}_{\omega_i} =0$ in \eqref{eq : cumulants time 0} wherever it is not defined.

The following proof is an application of known cluster expansion techniques, see e.g. \cite{PU09} and references therein.

\begin{proof} 
Set $Z_K := (\Psi^{\e 0}_{\gr_1},\dots,\Psi^{\e 0}_{\gr_r})$ with $\Psi^{\e 0}_{\gr_i}\in {\mathcal D}^{\eps}_{K_{\gr_i}}$ at time~0. To make notation lighter we  shall omit the superscripts $^{\eps 0}$ and also omit to specify the exclusion constraints inside each $\Psi^\e_{\gr_i}$ in the sequel. We define~$\Phi_{r+p}$   the indicator function of the mutual exclusion between the elements of the set~$\{\Psi^\e_{\gr_1},\dots,\Psi^\e_{\gr_r},\bar z_1,\dots,\bar z_p\}$ (where~$\Psi^\e_{\gr_1},\dots,\Psi^\e_{\gr_r}$ form~$r$ clusters and~$\bar z_1,\dots,\bar z_p$ are the configurations of~$p$ single particles):
\begin{equation*}
\label{eq:defPhi0}
\Phi_{r+p} = \prod_{h\neq h'}\indc_{\eta_h \not \sim \eta_{h'} }\,,
\end{equation*}
with $( \eta_1, \dots, \eta_{r+p} ) = (\Psi^\e_{\gr_1},\dots,\Psi^\e_{\gr_r},\bar z_1,\dots,\bar z_p)$ and $``\eta_h \not \sim \eta_{h'} "$ meaning that the minimum distance between elements of $\eta_h$ and $\eta_{h'}$ is   larger than $\eps$.  So we    start from 
\begin{equation}
\label{eq:F0mdef}
F^{\e 0}_K(Z_K) = \frac{(f^0)^{\otimes K}(Z_K)}{\cZ^\eps} \sum_{p \geq 0} \frac{\mu_\e^{p}}{p!} \int_{\D^p} (f^{0})^{\otimes p}(\bar Z_p) \,\Phi_{r+p}(\Psi^\e_{\gr_1},\dots,\Psi^\e_{\gr_r},\bar Z_p)\, d\bar Z_p\,.
\end{equation}
We want to expand~$\Phi_{r+p}$ in order to compensate the factor~$\cZ^\eps$ whose definition we recall 
\begin{equation}\label{defZeps}
\cZ^\eps :=  \sum_{p\geq 0}\frac{\mu_\eps^p}{p!}  
\int_{\D^p} (f^0)^{\otimes p}(\bar Z_p) \,\Phi_{p}(\bar Z_p)\, d\bar Z_p\,,
\end{equation}
and to identify the elements in the decomposition
$$
F^{\e 0}_K(\Psi^\e_{\gr_1},\dots,\Psi^\e_{\gr_r}) = \sum_{s = 1}^{r} \sum_{\gs \in \cP^s_r}  \prod_{i=1}^s f^{\eps 0}_{|\gs_i|}(\Psi^\e_{\gs_i})\,.
$$
This will enable us to compute, and estimate,~$f^{\e 0}_{\{1,\dots,r\}}(\Psi^\e_{\gr_1},\dots,\Psi^\e_{\gr_r})$.
To do so, we naturally develop~$\gP_{r+p}$ into $s$  clusters (each of them corresponding to one connected graph
containing at least one element of $\{\Psi^\e_{\gr_1},\dots,\Psi^\e_{\gr_r}\}$), plus a background $\bar\gs_0$ of mutually excluding particles (for which we do not expand the exclusion condition).
Such a partition can be reconstructed isolating first the background component, and then splitting $\{\Psi^\e_{\gr_1},\dots,\Psi^\e_{\gr_r}\}$ in $s$ parts, to which we adjoin the remaining  single particles  (see Figure~\ref{fig: initialclusters}).

\begin{figure}[h] %  figure placement: here, top, bottom, or page
\centering
\includegraphics[width=5in]{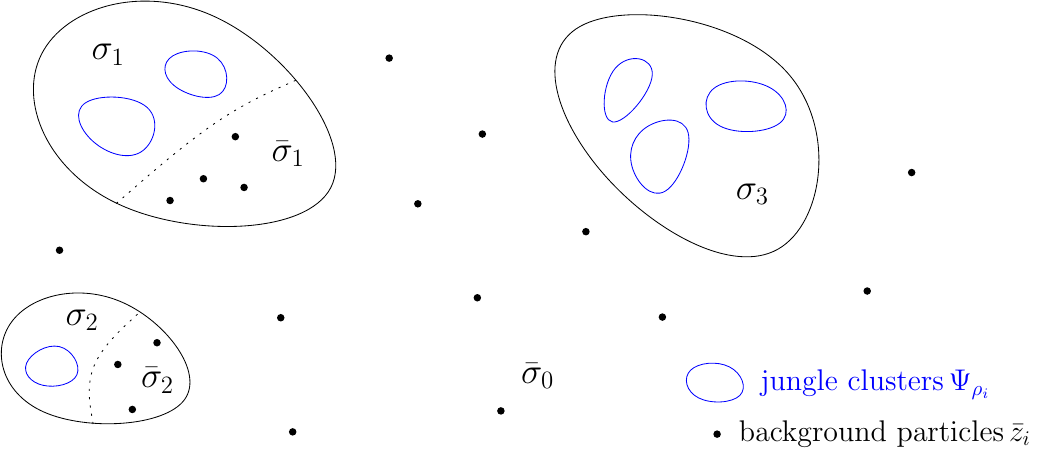} 
\caption{\small 
Initial configurations are decomposed in $s$ clusters containing at least one jungle~$\Psi^\e_{\gr_1},\dots,\Psi^\e_{\gr_r}$, plus a background of mutually excluding particles (for which we do not expand the exclusion condition).
}
\label{fig: initialclusters}
\end{figure}

This amounts to introducing truncated functions $\gp$ via the following formula:
\begin{equation}
\label{eq:decID}
\Phi_{r+p}  (\Psi^\e_{\gr_1},\dots,\Psi^\e_{\gr_r},\bar Z_p) \!=\!\!
\sum_{  \bar\gs_0\subset\{1,\dots,p\} }\gP_{|\bar\gs_0|} (\bar Z_{\bar\gs_0})
\sum_{s =1}^{r} \sum_{\gs \in \cP^s_r} \,
\sum_{
\substack{
 \bar\gs_1,\dots,\bar\gs_s \subset\{1,\cdots,p\} \\
\cup_{i=0}^s \bar\gs_i = \{1,\dots,p\} \\
\bar\gs_k \cap \bar\gs_h = \emptyset, k \neq h
}
%\bar\gs \in \cP^s_{ \{1,\dots,n\} \setminus \bar\gs_0}
}\,
%\sum_{j_1,\dots,j_s }\,
\prod_{i=1}^s\gp(\Psi^\e_{\gs_i}, \bar Z_{\bar\gs_{i}})\;.
\end{equation}
Note that the~$\bar \sigma_i$ may be empty (in particular all $\bar \sigma_i$ are empty if~$| \bar\gs_0| = p$).
%By writing $\indc_{\eta_h \not \sim \eta_{h'} }= 1-\indc_{\eta_h \sim \eta_{h'} }$ in \eqref{eq:defPhi0} and expanding the product, 
By \eqref{graph-representation}, we see that 
%$\gp$ has the explicit expression
\begin{equation*}
\label{eq:decIDgraphs}
\gp(\Psi^\e_{\gr_1},\dots,\Psi^\e_{\gr_r}, \bar Z_{p}) =
\sum_{G \in \cC_{r+p}} \prod_{(h,h')\in E(G)}(- \indc_{\eta_h\sim \eta_{h'} })\,,
\end{equation*}
where the sum runs over the set of connected graphs with $r+p$ vertices; more generally,
\begin{equation*}
\gp(\Psi^\e_{\gs_i},\bar Z_{\bar\gs_{i}}) =
\sum_{G \in \cC_{|\gs_i|+ |\bar\gs_{i}|}} \prod_{(h,h')\in E(G)}(- \indc_{\eta_h\sim \eta_{h'} })\,.
\end{equation*}
 Using the symmetry in the exchange of particle labels, we get, denoting~$\bar s_i :=|\bar\gs_i|$,
$$ 
 \binom{p}{ \bar s_1}
 \binom{p-\bar s_1}{ \bar s_2}   \dots  \binom{p- \bar s_1- \dots-\bar s_{s-1}}{ \bar s_s}    =  {p! \over \bar s_0 ! \; \bar s_1 ! \dots \bar s_s !}
$$
choices for the repartition of the background particles, so that
$$
\sum_{p \geq 0} \frac{1}{p!}  \int_{\D^p} \,\Phi_{r+p}(\Psi^\e_{\gr_1},\dots,\Psi^\e_{\gr_r},\bar Z_p)\, d\bar Z_p
 = \sum_{s=1}^r \sum_{\gs \in \cP^s_r}
\sum_{p \geq 0} 
\sum_{\substack{\bar s_0,\dots,\bar s_s \geq 0\\ \sum \bar s_i = p}}
\int_{\D^p} \frac{\Phi_{\bar s_0} (\bar Z_{ \bar s_0})}{\bar s_0!}
\prod_{i=1}^s \frac{ \gp(\Psi^\e_{\gs_i}, \bar Z_{\bar s_i})}{\bar s_i !}
d \bar Z_{p}\;.
$$
Therefore, plugging \eqref{eq:decID} into \eqref{eq:F0mdef} first and then  using~(\ref{defZeps}), we obtain
$$
\begin{aligned}
F^{\e0}_K (Z_K)&= \frac{(f^0)^{\otimes K} (Z_K)}{\cZ^\eps} 
\sum_{s=1}^r \sum_{\gs \in \cP^s_r}
\sum_{p \geq 0} 
\sum_{\substack{\bar s_0,\dots,\bar s_s \geq 0\\ \sum \bar s_i = p}}
\left( \frac{\mu_\e^{\bar s_0}}{\bar s_0 !} \int (f^0)^{\otimes \bar s_0} (\bar Z_{ \bar s_0})\Phi_{\bar s_0} (\bar Z_{ \bar s_0})d\bar Z_{\bar s_0}\right)\\
& \nonumber
\ \ \ \ \ \ \ \ \ \ \ \ \ \ \ \ \ \ \ \ \ \ \ \ \ \ \ \times\prod_{i=1}^s \frac{\mu_\e^{\bar s_i}}{\bar s_i !}\int (f^0)^{\otimes \bar s_i}   (\bar Z_{ \bar s_i}) \gp(\Psi^\e_{\gs_i}, \bar Z_{\bar s_i})d \bar Z_{\bar s_i}\\
& =(f^0)^{\otimes K}(Z_K)
\sum_{s=1}^r \sum_{\gs \in \cP^s_r}
\prod_{i=1}^s
\sum_{\bar s_i \geq 0} \frac{\mu_\e^{\bar s_i}}{\bar s_i !}\int (f^0)^{\otimes \bar s_i}   (\bar Z_{ \bar s_i}) \gp(\Psi^\e_{\gs_i}, \bar Z_{\bar s_i})d \bar Z_{\bar s_i}\,, \nonumber
\end{aligned}
$$
hence finally
\begin{equation}\label{f01r}
f^{\e 0}_{\{1,\dots,r\}}(\Psi^\e_{\gr_1},\dots,\Psi^\e_{\gr_r}) = (f^0)^{\otimes K}(Z_K)
\sum_{p \geq 0} \frac{\mu_\e^{p}}{p !}\int (f^0)^{\otimes p} ( \bar Z_{p})  \gp(\Psi^\e_{\gr_1},\dots,\Psi^\e_{\gr_r}, \bar Z_{p})d \bar Z_{p}\,.
\end{equation}
%Notice that \eqref{eq:decID} is different from the cumulant decomposition described in Section \ref{sec:Cumulants}.
%On the other hand, to control the growth of $\gp$ and the sum over $q$ in the above formulas, we shall use a more refined procedure than Proposition \ref{prop: cumulants support}. 

%The number of terms in \eqref{eq:decIDgraphs} grows badly with $q$. However, there are many cancellations between these terms. 
Applying again Proposition~\ref{prop: tree inequality} implies   that~$\gp$ is bounded by 
\begin{equation}
\label{eq:treeineq}
|\gp(\Psi^\e_{\gr_1},\dots,\Psi^\e_{\gr_r}, \bar Z_{p})| \leq \sum_{T \in \cT_{r+p}} \prod_{(h,h')\in E(T)}\indc_{\eta_h\sim \eta_{h'} }
\end{equation}
where $\cT_{r+p}$ is the set of minimally connected graphs with $r+p$ vertices labelled by $\Psi^\e_{\gr_1},\dots,\Psi^\e_{\gr_r}, \bar z_{1}, \dots, \bar z_p$.
%Such a type of estimate, going back to Penrose~\cite{Pe67}, exploits the nontrivial cancellations between graphs in \eqref{eq:decIDgraphs} to eliminate all the cycles (see~\cite{PY17} for a recent improvement).
% O. Penrose (1967): Convergence of fugacity expansions for classical systems. In Statistical mechanics: foundations and applications, A. Bak (ed.), Benjamin, New York.
% Aldo Procacci, Sergio A. Yuhjtman. Letters in Mathematical Physics January 2017, Volume 107, Issue 1, pp 31ÃÂ46 | CiteConvergence of Mayer and Virial expansions and the Penrose tree-graph identity

By Lemma \ref{lem: Cayley+}, the number of minimally connected graphs with 
specified vertex degrees~$d_1,\dots,d_{r+p}$ is given by 
$$ {(r+p-2)!} / {\displaystyle \prod_{i=1}^{r+p} (d_i-1)!} \,. $$
On the other hand, the product of indicator functions in \eqref{eq:treeineq} is a sequence of~$r+p-1$ constraints, confining the space coordinates to  balls of size $\eps$ centered at the positions of 
the clusters $\Psi^\e_{\gr_1},\dots,\Psi^\e_{\gr_r},\bar z_1,\dots,\bar z_p$. 
Such clusters have cardinality $K_{\gr_1},\dots, K_{\gr_r} \geq 1$ with the constraint~$$\sum_i K_{\gr_i} = K\;.$$
We deduce that for some $C>0$ depending only on the dimension $d$
$$
\begin{aligned}
&\int_{\T^{d (r-1)}} |f^{\e0}_{\{1,\dots,r\}}(\Psi^\e_{\gr_1},\dots,\Psi^\e_{\gr_r})| dx^*_{\gr_{1}}\dots dx^*_{\gr_{r-1}}\\
&\nonumber
\qquad \leq (CC_0)^K  \e^{d(r-1)}e^{-\frac{\b_0}{2} {\mathbb{V}}^2} \sum_{p \geq 0}\frac{(r+p-2)!}{p!}(CC_0 \eps^d \mu_\eps )^p \sum_{d_1,\dots,d_{r+p} \geq 1} \frac{\prod_{i=1}^r K_{\gr_i}^{d_i}}{\prod_{i=1}^{r+p}(d_i-1)!} \\
&\nonumber
\qquad \leq (CC_0)^K  \e^{d(r-1)}e^{-\frac{\b_0}{2} {\mathbb{V}}^2} \sum_{p \geq 0}\frac{(r+p-2)!}{p!}(C_0  \eps^d \mu_\eps )^p 
\,e^{2K + p}\\
&\nonumber
\qquad \leq (CC_0) ^K  \e^{d(r-1)}e^{-\frac{\b_0}{2} {\mathbb{V}}^2} 2^{r-2}(r-2)!\sum_{p \geq 0}(C C_0 \eps^d \mu_\eps )^p 
\,e^{2K + p}\;.
\end{aligned}
$$
In the second inequality we used that 
$$\prod_{i=1}^r\sum_{d_i \geq 1} \frac{ K_{\gr_i}^{d_i}}{(d_i-1)!} \leq \prod_{i=1}^r K_{\gr_i} e^{K_{\gr_i}}
\leq \prod_{i=1}^re^{2 K_{\gr_i}} = e^{2K}\;.
$$
Since $C \eps^d \mu_\eps $ is arbitrarily small with $\eps$, 
this proves Proposition~\ref{prop:ID}.
\end{proof}

\section{Decay estimate for the  cumulants}$ $
\label{subsec:proofThm}
\setcounter{equation}{0}

We shall now prove the bound provided in Theorem~\ref{cumulant-thm1*}.
In the previous section, we   considered  a nested partition $\gl \hookrightarrow \gr\hookrightarrow \gs$ (with $|\gs|=1$) of the set $\{1^*,\dots, n^*\}$.
We   fixed the velocities $V_n^*$  as well as the collision parameters of the pseudo-trajectories~$(m,a,T_m, V_m, \Omega_m)$. We   then exhibited $n-1$ ``independent" conditions 
on the  positions~$X_n^*$  for the pseudo-trajectories to be compatible with the partitions~$\lambda, \rho$.
Now we shall conclude the proof of Theorem~\ref{cumulant-thm1*}, by integrating successively on all the available parameters. The order of integration is pictured in Figure~\ref{fig: orderofintegration}.

 \begin{figure}[h] % figure placement: here, top, bottom, or page
\centering
\includegraphics[width=6in]{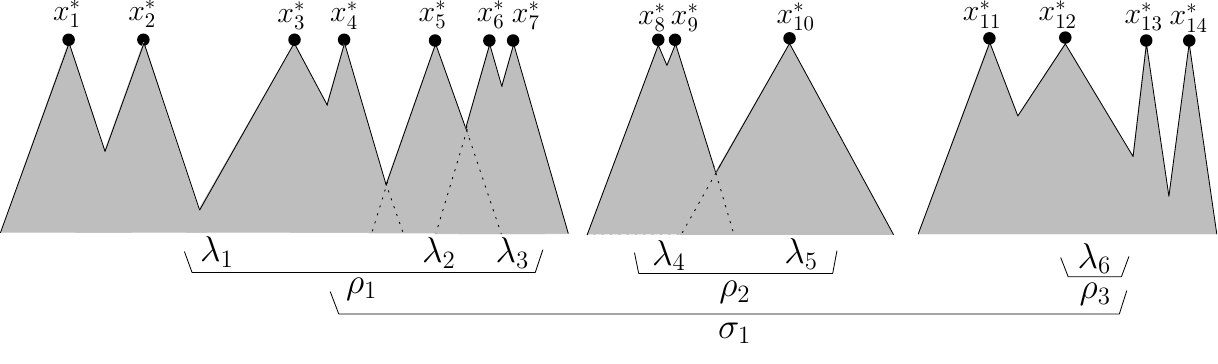}
\caption{ In this contribution to the cumulant of order $n=14$, we integrate over the positions of the roots in the following order: 
(i)  first we integrate over the initial
clustering~$\hat x _{\rho_2} = x^*_{10} - x^*_{14}$ and $\hat x_{\rho_1} = x^*_{7}- x^*_{14}$;
(ii)  secondly over the clustering overlaps $\hat x _{\lambda_4} =  x^*_{9} - x^*_{10}$ and $\hat x_{\lambda_{1}}= x^*_4- x^*_5\,, \hat x_{\lambda_{2}} = x^*_{5}-x^*_7$; 
(iii) finally over the clustering recollisions~:
$\hat x^{(\lambda_1)}_3 = x_2^* - x_3^*, \quad  \hat x^{(\lambda_1)}_2 = x_1^* - x_2^*, \quad \hat x^{(\lambda_1)}_1 = x_3^* - x_4^*,\quad \hat x^{(\lambda_3)}_1 = x_6^* - x_7^*, \quad \hat x^{(\lambda_4)}_1 = x_8^* - x_9^*, \quad \hat x^{(\lambda_6)}_3 = x^*_{13} - x^*_{14} ,\quad \hat x^{(\lambda_6)}_2 =  x_{12}^*- x_{13}^*, \quad \hat x^{(\lambda_6)}_1 =  x_{11}^*- x_{12}^*$.
Notice that the variable~$ x^*_{14}$ remains free. }
\label{fig: orderofintegration}
\end{figure}

For the proof of the first two statements in Theorem \ref{cumulant-thm1*}, we start by controlling the weight, simply using the bounds
\begin{equation}\label{controlHnexpbis}
| \cH(\Psi^\eps_n)| \leq \prod_{i=1}^n \| H^{(i)}\|_\infty\quad 
\hbox{ or }\quad | \cH(\Psi^\eps_n)| \leq  e^{ {\alpha  n} + \frac{\beta_0}4 \bbV^2} \, .
\end{equation}

Then we use that
nothing depends on the root coordinates of the jungles $x^*_{\gr_1},\dots, x^*_{\gr_{r-1}}$ inside the integrand in \eqref{recalldefcum},
except the initial datum $f^{\e 0}_{\{1,\dots,r\}}$. Therefore by Fubini and according to  Proposition~\ref{prop:ID}, 
\begin{equation}
\label{eq:ind'}
\int_{\T^{d (r-1)}} |f^{\e 0}_{\{1,\dots,r\}}(\Psi^{\e 0}_{\gr_1},\dots,\Psi^{\e 0}_{\gr_r})| dx^*_{\gr_{1}}\dots dx^*_{\gr_{r-1}}
\leq (r-2)!\, (CC_0)^K\,\exp\Big(-\frac{\b_0}{2} {\mathbb{V}}^2\Big)\, \e^{d({r - 1)}}
\end{equation}
for some~$C>0$, uniformly with respect to all other parameters.

Next, the clustering condition on the jungles gives an extra smallness when integrating over the roots of the forests
(see \eqref{eq:est jungles final})
\begin{equation}
\label{eq:ov'}
 \prod_{i =1}^r\int |\gp_{ \gr_i}| \;  \prod_{j= 1}^{r_i-1} dx^*_{\lambda_{j}}
 \leq   \left(\frac {C} {\beta_0^{1/2} \mu_\eps}\right)^{\ell - r} \,\left(t + \e\right)^{\ell-r}\,
\prod_{i=1}^r\,\sum_{T \in \cT_{\gr_i}}\, \,\prod_{\lambda_j \in \gr_i}\,\left(\beta_0 \bbV^2_{\lambda_j} + K_{\lambda_j}\right)^{d_{\lambda_j}(T)}\;,
 \end{equation}
uniformly with respect to all other parameters, for some possibly larger constant $C$.

The clustering condition on the forests gives finally an extra smallness when integrating over the remaining variables $\hat x_k$, according to \eqref{eq: external recollision cost trees}. 
Notice however that the latter inequality cannot be directly applied to \eqref{eq: decomposition cumulant}, due to the presence of the 
cross section factors \eqref{eq: cross section} in the measure \eqref{eq: measure nu}. 

It is then useful to combine the estimate with the sum over trees $  a_{|\lambda_i}$.  The argument is depicted in Figure~\ref{fig: timeslices}.
We will present the arguments for~$\lambda_1$,  assuming without loss of generality that $\lambda_1 = \{1,\dots , \ell_1\}$. %and that up to a permutation, the order of clustering recollisions is given  by $\lambda_{1,j} = j$.
We will denote by $\tilde a$ the restriction of the tree $a$ to $\lambda_1$ with fixed total numbers of particles $K_1,\cdots,K_{\ell_1}$, and by $\tilde a_{k}$, $\cC_k$  the tree variables and the cross section factors associated with the~$s_k $ creations occurring in the time interval $(\tau_{\rm{rec},k},\tau_{\rm{rec},k-1})$ for $ 1\leq k \leq \ell_1$. 

\begin{figure}[h] %  figure placement: here, top, bottom, or page
\centering
\includegraphics[width=5in]{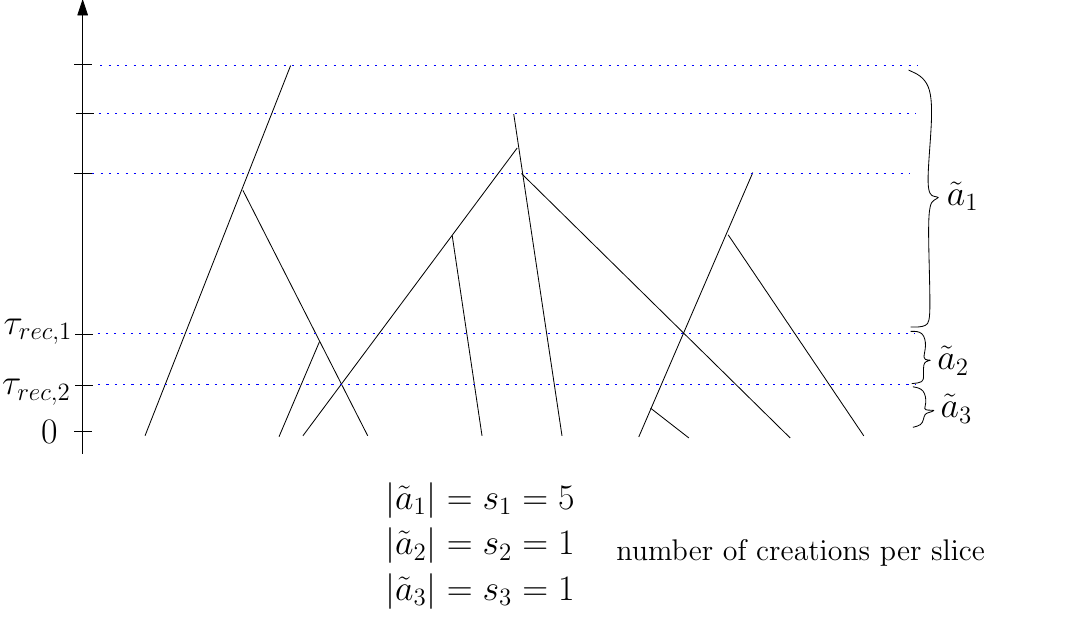} 
\caption{\small 
Integration over time slices. 
}
\label{fig: timeslices}
\end{figure}

As in the first line of \eqref{eq: external recollision cost trees 0}, we have that
\begin{equation}
\begin{aligned}
\label{eq:iterCR}
& \sum_{\tilde a }\int \!  \!  dX^*_{\ell_{1}-1} \, \mb_{\gl_1}\, \indc_{\cG^{\e}} \big (  \Psi^\eps _{\gl_1} \big) |\cC \big( \Psi^\eps_{\gl_1} \big)| \\
& \qquad
\leq \sum_{\left(\gl_{(k)}, \gl'_{(k)}\right)}\sum_{\tilde a_{1}} |\cC^\eps_{1}\, \big( \Psi_{\gl_1} \big)|\int \!  \!  d\hat x_{1} \indc_{\cB_{1}} \!  \sum_{\tilde a_{2}} |\cC_{2} \big( \Psi^\eps_{\gl_1} \big)| \int   \! \! d\hat x_{2} \dots \!  \int \!  \!  d\hat x_{\ell_1-1} 
 \indc_{\cB_{\ell_1-1}}\, \sum_{\tilde a_{\ell_1}} |\cC_{\ell_1} \big( \Psi^\eps_{\gl_1} \big)|\,.
\end{aligned}
\end{equation}

 We can therefore apply iteratively the inequality \eqref{recoll B1jbj'} and the classical Cauchy-Schwarz argument used in Lanford's proof. Denote by
 $$
 S_k := \sum_{i=1}^k s_i
 $$
the number of particles added before time $\tau_{\rm{rec},k}$, so that $$S_{\ell_1} = m_{\lambda_1}$$
(denoting abusively $\tau_{\rm{rec},\ell_1} = 0$). We get \begin{equation}
\label{eq:STE}
\begin{aligned}
\sum_{ \tilde a_{k}} \big| \cC_k \big( \Psi_{\lambda_1}\big)\big| &\leq   \prod_{s=S_{k-1}+1}^{S_k} \left( \sum_{u=1}^{s -1} | v_{ s } - v_u(t_{ s })
| +  \sum_{u=1}^{\ell_1 } | v_{ s } - v_u^*(t_{ s })
|
\right)\\
&\leq   \prod_{s=S_{k-1}+1}^{S_k} \left((\ell_1+s -1) |v_{ s }|  + \sum_{u=1}^{  s -1} |  v_u(t_{ s })| + \sum_{u=1}^{  \ell_1} |  v_u^*(t_{ s })| \right)\\
&\leq  {1\over \beta_0^{s_k/2}}  \prod_{s=S_{k-1}+1}^{S_k} \left((\ell_1+m_{\lambda_1} )(1+ \beta_0^{1/2} |v_{ s }| )  +  \beta_0 |\bbV_{\lambda_1}|^2 \right)
\end{aligned}
\end{equation}
and
\begin{equation}
\label{eq:iterCR'}
\begin{aligned}
\sum_{\tilde a }\int \!  \!  dX^*_{\ell_{1}-1} \, \mb_{\gl_1}\, \indc_{\cG^{\e}} & \big (  \Psi_{\gl_1} \big) |\cC \big( \Psi_{\gl_1} \big)|
 \leq \left(  \frac {C } {\beta_0^{1/2} \mu_\eps}   \right)^{\ell_1-1}
{\color{black} \left( \frac{1}{\beta_0} \right)^{m_{\lambda_1} /2} }
\,\left(t + \e\right)^{\ell_1-1}\,\\ 
&
\times \sum_{T \in \cT_{\lambda_1}} \,\prod_{ j \in \lambda_1}\,\left(\beta_0\bbV^2_{j} + K_{j}\right)^{d_j(T)}
 \prod_{s=1}^{m_{\lambda_1}} \left((\ell_1+m_{\lambda_1} )(1+ \beta_0^{1/2} |v_{ s }| )  + \beta_0  |\bbV_{\lambda_1}|^2 \right), 
\end{aligned}
\end{equation}
for some positive $C$.

Recall that 
$$
\exp \left(-\frac {\beta_0} {16m} |V | ^2 \right)\beta_0  | V |^2  \leq  Cm.
$$
Combining  (\ref{eq:iterCR'}) with the bound~(\ref{controlHnexpbis}) on $\cH$, \eqref{eq:ind'} and \eqref{eq:ov'} 
leads therefore to  \begin{equation}
\label{est1}
\begin{aligned}
&\int\, \Big|\,\sum_a \,
  \prod_{i=1}^{\ell}\,\mb_{\gl_i} \; \cC \big( \Psi^\eps_{\gl_i} \big) \; \indc_{\cG^{\e}} \big (  \Psi^\eps_{\gl_i} \big)\cH \big( \Psi^\eps_{\gl_i} \big)    
 \gp_{ \gr} \;    f^{\eps 0}_{\{1,\dots,r\}}(\Psi^{\eps 0}_{\gr_1},\dots,\Psi^{\eps 0}_{\gr_r})
 \Big| \,dX^*_n\, \\
 &\qquad \leq \, 
 (r-2)! \,(CC_0) ^K\,\exp \big(\alpha  n -\frac{ \beta_0} 8 \bbV ^2\big)\, \eps^{d(r-1) }\, \left(\frac{C}{\beta_0^{1/2} \mu_\e}\right)^{n-r}
 \, \left( t + \eps \right)^{n-r}  \\
 & \ \ \ \ \qquad\qquad\times \left(
 \prod_{i=1}^r\sum_{T \in \cT_{\gr_i}}\, \,\prod_{\lambda_j \in \gr_i}\,\left(\beta_0 \bbV^2_{\lambda_j} + K_{\lambda_j}\right)^{d_{\lambda_j}(T)}\right)\left(\prod_{i=1}^\ell\sum_{T \in \cT_{\lambda_i}}\, \,\prod_{ j \in \lambda_i}\,\left(\beta_0 \bbV^2_{j} + K_{j}\right)^{d_j(T)}\right)
\\ 
& \ \ \ \ \qquad\qquad\times (m+n)^m \;
{\color{black} \left( \frac{1}{\beta_0} \right)^{m/2} } \; \prod_{s = 1} ^m (1+\beta_0^{1/2}|v_s|)  \,,
\end{aligned}
\end{equation}
valid uniformly with respect to all other parameters. Here and below, we indicate by $C$ a large enough constant, depending only on the dimension $d$ and changing from line to line.

The following step then consists in integrating (\ref{est1})  with respect to the remaining parameters $(T_m  ,\Omega_m , V_m)$ and $V^*_n$ (with~$m$ fixed for the time being).
Recalling the condition that~$ t_{ 1} \geq t_{ 2}\geq  \dots \geq t_{ m}$, we get
% recalling also that~$K=N+M$
$$
\begin{aligned}
& \int\, \Big|\,\sum_a \,
  \prod_{i=1}^{\ell}\,\mb_{\gl_i} \; \cC \big( \Psi^\eps_{\gl_i} \big) \; \indc_{\cG^{\e}} \big (  \Psi^\eps_{\gl_i} \big)\cH \big( \Psi^\eps_{\gl_i} \big)    
 \gp_{ \gr} \;    f^{\eps 0}_{\{1,\dots,r\}}(\Psi^{\eps 0}_{\gr_1},\dots,\Psi^{\eps 0}_{\gr_r})
 \,dT_{ m}  d\Omega_{ m}  dV_{ m} \Big| dZ^*_n  \\
&\leq  \,
 (r-2)! \,(CC_0) ^K \eps^{d(r-1) }\left(\frac{C}{\beta_0^{1/2} \mu_\e}\right)^{n-r} \, \left( t + \eps \right)^{n-r}\,
 \frac{(CC_0  t)^m}{m!} (m+n)^m {\color{black} \left( \frac{1}{\beta_0} \right)^{m/2} }
  \\
&  \times\,\sum_{T_1 \in \cT_{\gr_1}}\!  \! \dots\! \! \sum_{T_r \in \cT_{\gr_r}} \sum_{\tilde T_1 \in \cT_{\lambda_1}}
\! \! \dots\! \! \sum_{\tilde T_\ell \in \cT_{\lambda_\ell}}
  \int \exp \left(\alpha n -\frac {\beta_0 }{16} \bbV ^2 \right)  \prod_{s = 1} ^m (1+\beta_0^{1/2} |v_s|)  dV^*_n dV_{m}\\
&\qquad \times \sup \left(  \exp  \big(-\frac {\beta_0 }{16}  \bbV ^2\big) \,\left(\prod_{i=1}^r
\,\prod_{\lambda_j \in \gr_i}\,\left(\beta_0 \bbV^2_{\lambda_j} + K_{\lambda_j}\right)^{d_{\lambda_j}(T_i)} \right) \left(\prod_{i=1}^\ell \,\prod_{ j \in \lambda_i}\,\left(\beta_0 \bbV^2_{j} + K_{j}\right)^{d_j(\tilde T_i)}\right)\right)
\,.\\
%&\leq   | \cH_p |_\infty
% r! C^{k} k^{p-N} \mu_\e^{-(N-1)} \left(C t\right)^{k-r}
% \prod_{i=1}^r\sum_{T_i \in \cT_{\gr_i}}\, \prod_{j=1}^\ell\sum_{T_j \in \cT_{\ell_j}}\,
% \prod_{\{s_i,s'_i\} } 
% \prod_{\{s_j,s'_j\} } k_{\lambda_{s_i}}k_{\lambda_{s'_i}} k_{{s_j}}k_{{s'_j}} 
% \\
\end{aligned}
$$
%Denoting $d_v(T_i)$
%the degree of the vertex $v$ in the minimally connected graph $T_i$, we have obtained
%$$
%\begin{aligned}
%& \Big| \sum_a \int  
%\;  \left( \prod_{i=1}^\ell \mb_{\gl_i} \; \cC \big( \Psi_{\gl_i} \big) \; \indc_{\cG^{\e}} \big (  \Psi_{\gl_i} \big)\cH_\a \big( \Psi_{\gl_i} \big) \right) \; 
%  \gp_{ \gr } \;  f^{\e0}_{\{1,\dots,r\}} dT_{ M}  d\Omega_{ M}  dV_{ M} dZ^*_\a    \Big|\\
%&\leq   | \cH_p |_\infty
% r! C^{k} k^{p-N} \eps^{d(r-1) } \left(\frac{C t}{\mu_\e}\right)^{N-r} \frac{(C t)^M}{M!}
% \prod_{i=1}^r\sum_{T_i \in \cT_{\gr_i}}\, \prod_{j=1}^\ell\sum_{T_j \in \cT_{\lambda_j}}
% \\
%&\times\,\int \exp \big(-\frac \beta 2 \bbV ^2\big)    \prod_{v \in \gr_i} \gga_v^{d_v(T_i)}  \prod_{u \in \lambda_j} \gga_u^{d_u(T_j)}
%%\\
%%&\qquad \times 
%\prod _{h=1}^M \Big((N+h-1) ( |v_{h}| +1) + | V_{ h-1} |^2 + |V^*_{\a}|^2 \Big) dV^*_\a dV_{M}\;.\\
%%&\leq   | \cH_p |_\infty
%% r! C^{k} k^{p-N} \mu_\e^{-(N-1)} \left(C t\right)^{k-r}
%% \prod_{i=1}^r\sum_{T_i \in \cT_{\gr_i}}\, \prod_{j=1}^\ell\sum_{T_j \in \cT_{\ell_j}}\,
%% \prod_{\{s_i,s'_i\} } 
%% \prod_{\{s_j,s'_j\} } k_{\lambda_{s_i}}k_{\lambda_{s'_i}} k_{{s_j}}k_{{s'_j}} 
%% \\
%\end{aligned}
%$$
Using the facts that 
\begin{align*}
\int \exp \left(-\frac  {\beta_0}  {16} | w |^2 \right) \beta_0^{1/2} | w | dw&\leq  C \beta_0^{-d/2} \,,\\
\exp \left(-\frac  {\beta_0}  {16} |V|^2 \right) \left(\beta_0  |V|^2 + K\right)^D & \leq  C^{K} \left( 16 D \right)^D,
\end{align*}
for positive $K,D$, we arrive at
\begin{equation}
\label{r=1isgood}
\begin{aligned}
&  \int\, \Big|\,\sum_a \,
  \prod_{i=1}^{\ell}\,\mb_{\gl_i} \; \cC \big( \Psi^\eps_{\gl_i} \big) \; \indc_{\cG^{\e}} \big (  \Psi^\eps_{\gl_i} \big)\cH \big( \Psi^\eps_{\gl_i} \big)    
 \gp_{ \gr} \;    f^{\eps 0}_{\{1,\dots,r\}}(\Psi^{\eps 0}_{\gr_1},\dots,\Psi^{\eps 0}_{\gr_r})
 \,dT_{ m}  d\Omega_{ m}  dV_{ m} \Big| dZ^*_n\\
&\leq    
 (r-2)!  \left(\frac{C \beta_0^{-1/2}\left(t + \e\right)  }{\mu_\e}\right)^{n-r}  
 \eps^{d(r-1)} (C C_0 \,  {\color{black} \beta_0^{-\frac{d+1}{2}}}  t)^m 
 (C_0 e^\alpha \beta_0^{-d/2} )^n   \\
 & \qquad \times 
 \left(\prod_{i=1}^r \,\sum_{T \in \cT_{\gr_i}}\,\prod_{\lambda_j \in \gr_i} %C^{K_{\lambda_j}/\ell}
  \left( d_{\lambda_j}(T) \right)^{d_{\lambda_j}(T)} \right)
  \left(\prod_{i=1}^\ell \,\sum_{\tilde T \in \cT_{\lambda_i}}\,\prod_{j \in \lambda_i} %C^{K_{j}/N}
  \left( d_{j}(\tilde T)\right)^{d_{j}(\tilde T)} \right)
 \,.
 \\
\end{aligned}
\end{equation}
For each forest (jungle) we ended up with a factor 
$\sum_{T \in \cT_{k}} \prod_{i =1}^{k} \left(d_{i}(T)\right)^{d_i(T)}$
where $k$ is the cardinality of the forest (jungle).
Applying again Lemma \ref{lem: Cayley+}, 
{\color{black} and using that for any integer $i$
$$
\frac{i^i}{(i-1)!} \leq i \exp (i-1) \leq \exp ( 2 i),
$$}
this number is bounded above by
\begin{equation*}
\begin{aligned}
(k-2)!  \sum_{ \substack{d_1,\cdots,d_k \\ 1 \leq d_i \leq k-1 \\ \sum_{i}d_i = 2(k-1)} }\,\prod_{i=1}^{k} \frac{d_{i}^{d_i}}{(d_i-1)!} 
%& \leq (k-2)! \,e^{k-2}\,\sum_{ \substack{d_1,\cdots,d_k\\ 1 \leq d_i \leq k-1 \\ \sum_{i}d_i = 2(k-1)} }
%\,\prod_{i=1}^{k} \,d_i
%\, \\
 & \leq (k-2)!   {\color{black}   \,e^{4(k-1)} }
 \sum_{ \substack{d_1,\cdots,d_k\\ 1 \leq d_i \leq k-1 \\ \sum_{i}d_i = 2(k-1)} }
\,  1\;.
\end{aligned}
\end{equation*}
The last sum is also bounded by $C^k$.
% 2^{5k} - 24.6.2020
Taking the sum over the number of created particles $m$,
we arrive at
\begin{equation}
\label{est2}
\begin{aligned}
& \int \left|\int  \prod_{i=1}^{\ell} \Big[ \mu (d \Psi^\eps_{\gl_i})  \mb_{\gl_i} \; \cC \big( \Psi^\eps_{\gl_i} \big) \; \indc_{\cG^{\e}} \big (  \Psi^\eps_{\gl_i} \big)\cH \big( \Psi^\eps_{\gl_i} \big) \Big]  
{} \times   \gp_{ \gr} \;  f^{\eps 0}_{\{1,\dots,r\}} (\Psi^{\eps 0}_{\rho_1}, \dots , \Psi^{\eps 0}_{\rho_r}) \right| dZ^*_{n} \\
& \leq  {\color{black} \frac{(r-2)!}{\mu_\e^{n-1}} \Big( CC_0e^\alpha \,  \beta_0^{-\frac{d+1}{2}} (t+\eps) \Big)^n}
\left( { \eps ^{r-1}   \beta_0^{r /2} \over (t+\eps)^r  }\right)
 \prod_{i=1}^r (r_i-2)!\, \prod_{j=1}^\ell (\ell_j-2)!\, 
\sum_{m} (CC_0 \,  {\color{black} \beta_0^{-\frac{d+1}{2}}} t)^m  \\
\end{aligned}
\end{equation}
valid uniformly with respect to all partitions $\gl \hookrightarrow \gr$, and for $t$ small enough.
Finally, summing \eqref{est2} over the partitions $\gl \hookrightarrow \gr$ we find (recalling the convention $0! = (-1)! = 1$)
$$
\begin{aligned}
\sum_{\ell =1}^n \sum_{\gl \in \cP_n^\ell}
\sum_{r =1}^\ell  & \sum_{\gr \in \cP_\ell^r}  \,\left(r-2\right)!\, \prod_{i=1}^r (r_i-2)!\, \prod_{j=1}^\ell (\ell_j-2)!\, \\
& = \sum_{\ell =1}^n\, \sum_{\substack{\ell_1,\cdots,\ell_{\ell} \geq 1\\ \sum_i \ell_i = n}}\,\sum_{r =1}^\ell
\, \sum_{\substack{r_1,\cdots,r_{r} \geq 1\\ \sum_i r_i = \ell}}\,
\frac{n!}{\ell! \ell_1! \dots \ell_\ell !}\, \frac{\ell!}{r! r_1!\dots r_r!}\,\left(r-2\right)!\, \prod_{i=1}^r (r_i-2)!\, \prod_{j=1}^\ell (\ell_j-2)!\\
& \leq n!  \, \left( 1+\sum_{r \geq 2}\frac{1}{r(r-1)}\right)^{2n}\;.
\end{aligned}
$$
This concludes the proof of the first two estimates in Theorem~\ref{cumulant-thm1*}.

\bigskip

The third statement (\ref{timecontinuity}) is obtained in a very similar way.
If  the pseudo-particle $i$  has no collision nor recollision during 
$[t-\delta, t]$ then
%\begin{equation*}
%\label{eq: h lisse}
%|H^{(i)} (z_i([0,t]))| \leq C_{Lip} \sup_{|t-s| \leq \delta }  |z_i (t) - z_i (s)|  \leq C_{Lip} \delta |v_i(t)| \leq C_{Lip} \delta |V_n(t)|\;.
%\end{equation*}
\begin{equation*}
\label{eq: h lisse}
 \sup_{|t- t'| \leq \delta }  |z_i (t) - z_i (t')|  \leq   \delta |v_i(t)| \leq  \delta |V_n(t)|\;.
\end{equation*}
This is enough to gain a factor $\delta$ from the assumption on $H_n$.

If a collision occurs  during $[t-\delta,t]$, then by localizing the time integral of this collision in  Duhamel formula, one gets the additional factor $\delta$ (with a factor $m$ corresponding to the symmetry breaking
 in the time integration $dT_m$).

Finally, it may happen that a  recollision  occurs during $[t-\delta,t]$.
This imposes an additional geometric constraint and the recollision time has to be integrated 
now in $[t-\delta,t]$.
Thus an additional  factor $\delta$ is also obtained (together with a factor $n$ corresponding to the symmetry breaking in the time integration $d\Theta^{\rm clust}_{n-1} $). This completes the proof of (\ref{timecontinuity}).
\qed

\begin{Rmk}
\label{m-sum}
Note that the sum over $m$ in \eqref{est2} is converging uniformly in $\eps$, which means that the contribution of pseudo-trajectories involving a large number $m$ of created particles can be made as small as needed.
In particular, to study the convergence as $\eps \to 0$, it will be enough to look at pseudo-trajectories with a controlled number $m\leq m_0$ of added particles.
\end{Rmk}

%%%%%%%%%%%%%%%%%%%%%%%%%%%%%%%%%%%%%%%%%%%%%%%%%%%%%%%
\chapter{Minimal trees and convergence of the cumulants}
\label{convergence-chap} 
\setcounter{equation}{0}
 The goal of this chapter is to prove  Theorem~\ref{cumulant-thm2} p. \pageref{cumulant-thm2}, which can be restated as follows.
\begin{Thm}
\label{cumulant-thm2*}
Let  $H_n : (D([0,+\infty[ ))^n  \mapsto \bbR$  be a  continuous factorized function $ H_n (Z_n([0,t])) = \prod_{i=1}^n H^{(i)} (z_i([0,t])) $ such that
\begin{equation}
\label{controlHnexp*-chap10}
 \big| H_n( Z_n([0,t])) \big| \leq \exp \Big(\alpha n +\frac{\beta_0} 4 \sup_{s\in [0,t]} |V_n(s)|^2 \Big) \,,
 \end{equation}
with $\beta_0$ defined in \eqref{lipschitz}.

Then  the scaled cumulant  $f_{n,[0,t]}^\eps (H_n)$ converges for any $t \leq T_0$ to 
the limiting cumulant introduced in~\eqref{eq: decomposition lcumulant}
$$
  f_{n,[0,t]} (H_n) = \sum_{T \in \cT^\pm_n} \sum_{m} \sum_{a \in \cA^\pm_{n,m} } \int d \mu_{{\rm sing }, T,a} (\Psi_{n,m})   \cH(\Psi_{n,m}) f^{0\otimes(n+m)} (\Psi_{n,m}^0)\,.
$$ 
%where  the singular measure  is defined as in \eqref{defdmusing} by
%\begin{equation}
%\label{eq: defdmusing contracte}
%d \mu_{\rm sing} (\Psi_n) = \sum_{T \in \cT^\pm_n} \sum_{m} \sum_{a \in \cA^\pm_{n,m} }  d \mu_{{\rm sing }, T,a} (\Psi_{n,m}) .
%\end{equation}
\end{Thm}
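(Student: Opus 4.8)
The plan is to prove Theorem~\ref{cumulant-thm2*} by splitting the passage to the limit $\mu_\eps\to\infty$ into two independent tasks that the analysis of Chapter~\ref{estimate-chap} has already set up: first controlling the \emph{tail} in the number of created particles $m$ uniformly in $\eps$, so that only finitely many terms of the $m$-sum matter; and second, for each \emph{fixed} truncated data $(n,m,a)$, identifying the limit of the corresponding finite-dimensional integral. By Remark~\ref{m-sum} the uniform bound in \eqref{est2} (equivalently the third assertion of Theorem~\ref{cumulant-thm1*}) shows that for any $\delta>0$ there is $m_0=m_0(\delta)$ such that the contribution of pseudo-trajectories with $m>m_0$ to $\cum$ is bounded by $\delta$, uniformly in $\eps\le\eps_0$; the same truncation applies to the limiting object, whose $m$-sum converges by the bound $|\lcum|\le C^nt^{n-1}n!$. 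Hence it suffices to prove convergence of $\cumt$ to $\lcumt$ for each fixed $m_0$, i.e.\ term by term in $(T,a,m)$ with $m\le m_0$ and $T\in\cT^\pm_n$, $a\in\cA^\pm_{n,m}$.

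\textbf{Pointwise convergence of a single decorated term.} For fixed $(n,m,a)$ and a fixed decorated minimal graph $T\in\cT_n^\pm$, the dynamical cumulant term is an integral over $(Z_n^*,T_m,\Omega_m,V_m)$ of the product of: cross-section weights, the clustering indicators $\mb_{\gl_i}$ and $\gp_\gr$ (expanded, via Propositions~\ref{prop: tree inequality}, into sums over graphs whose minimal skeleton is $T$), the weight $\cH(\Psi^\eps_n)$, and $f^{\eps0}(\Psi^{\eps0}_{n,m})$. The first step is the dichotomy already announced in Section~\ref{limiting pseudo-traj}: either the parameters force an \emph{additional} non-clustering recollision in $\Psi^\eps_{n,m}$, or they do not. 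The additional-recollision set is handled by the standard Lanford-type geometric estimate (a recollision confines one relative-position variable to a tube of volume $O(\mu_\eps^{-1})$), exactly as in \eqref{recoll B1jbj'}--\eqref{eq: external recollision cost trees}; combined with the Gaussian control of velocities and the cross-sections as in Section~\ref{subsec:proofThm}, this set contributes $o(1)$ and is discarded. On the complement, the pseudo-trajectory $\Psi^\eps_{n,m}$ converges, particle by particle and uniformly on $[0,t]$ off a null set of parameters, to the limiting cumulant pseudo-trajectory $\Psi_{n,m}$ of Definition~\ref{barPsi}: collisions, recollisions and overlaps become pointwise, $\eps\omega_i\to 0$ in the creation positions, and each clustering recollision/overlap contributes through the change of variables \eqref{infinitesimalij}--\eqref{identificationmeasures}, in which $\mu_\eps\,dx^*_idx^*_j$ is traded for $dx^*_i\,d\tau^{\rm clust}_e\,d\omega^{\rm clust}_e$ times the cross-section Jacobian. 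This is precisely how the measure $d\mu_{{\rm sing},T,a}$ of \eqref{defdmusing} arises. The overlap clustering vectors $\omega^{\rm ov}_e$ are promoted to genuine unit vectors after discarding the spherical-cap part of the tube (Remark~\ref{overlap-rmk}), whose measure is $O(\eps)$ and hence negligible; this is the point flagged as Section~\ref{subsec:estloop}.

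\textbf{Handling the initial data and the collision-tree combinatorics.} Next I would treat the initial datum. By Proposition~\ref{prop:ID}, the initial cluster $f^{\eps0}_{\{1,\dots,r\}}$ carries a factor $\eps^{d(r-1)}$, so in the scaled cumulant $\cum$ (with its prefactor $\mu_\eps^{n-1}=\mu_\eps^{(n-\ell)+(\ell-r)+(r-1)}$) the contribution of configurations with $r\ge 2$ initial clusters is $O(\eps^{d(r-1)}\mu_\eps^{r-1}\cdot\mu_\eps^{-(r-1)})=O(\eps^{d(r-1)})\to 0$: only $r=1$ survives. Moreover, using the quasi-product structure of \eqref{eq: initial measure} and the explicit formula \eqref{f01r} for $f^{\eps0}_{\{1,\dots,r\}}$, one checks that for $r=1$ the function $f^{\eps0}_{\{1\}}(\Psi^{\eps0}_{n,m})$ converges to the pure tensor product $(f^0)^{\otimes(n+m)}(\Psi^0_{n,m})$ as $\eps\to0$ (the correction series $\sum_p \mu_\eps^p/p!\cdots$ is $O(\eps^d\mu_\eps)=o(1)$ by the cluster-expansion estimate of Proposition~\ref{prop:ID}). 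Finally, the $\ell\to r$ and $\lambda\to\gr$ bookkeeping: in the limit all external recollisions and overlaps are, by the above, represented by a single minimally connected decorated graph (positive edges within a forest, negative edges between forests), so the triple sum $\sum_{\lambda\hookrightarrow\gr}$ over nested partitions collapses onto $\sum_{T\in\cT_n^\pm}$, with the representative choices $\{q_e,q'_e\}\approx e$ absorbed into the definition of $d\mu_{{\rm sing},T,a}$. Assembling: for each $m\le m_0$, $\cumt\to\lcumt$, and by the uniform truncation of the first paragraph, $\cum\to\lcum$. Dominated convergence (with dominating function supplied by Theorem~\ref{cumulant-thm1*}) justifies interchanging the limit with the integrals and the $m$-sum, and gives the stated bound $|\lcum|\le C^nt^{n-1}n!$.

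\textbf{Main obstacle.} The delicate point is not the pointwise limit of a given $\Psi^\eps_{n,m}$ but proving that the ``bad'' sets — extra non-clustering recollisions, and the degenerate configurations where the iterated change of variables \eqref{infinitesimalij} is singular (grazing clustering collisions, near-simultaneous clustering times, clustering vectors orthogonal to relative velocities) — have integral contribution $o(1)$ \emph{uniformly in $m\le m_0$ and in the decorated graph $T$}. This requires the same Cauchy--Schwarz-on-velocities plus time-slicing argument as in \eqref{eq:iterCR}--\eqref{eq:STE}, but now intertwined with the $n-1$ clustering change-of-variables rather than run in parallel with them, so that the $O(\mu_\eps^{-1})$ smallness of each clustering constraint is not consumed by the divergence of the cross-sections at high energy. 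The care needed here is exactly the content of Sections~\ref{subsec:estDC} and \ref{subsec:estloop}, and reusing those estimates verbatim (they were proven for the scaled cumulant, hence already $\eps$-uniform) is what makes the term-by-term limit legitimate.
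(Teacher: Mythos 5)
Your proposal follows the same overall route as the paper: truncation in $m$ via the uniform bounds, elimination of the $r\ge 2$ initial clusters, removal of pathological pseudo-trajectories (extra recollisions and non-regular overlaps), and finally a coupling of $\Psi^\eps_{n,m}$ with $\Psi_{n,m}$ through identical collision and clustering parameters, with the initial datum tensorizing in the limit. Most of these steps are described correctly and in the right order.

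There is, however, a genuine gap in the way you dispose of the extra non-clustering recollisions. You claim this set is handled ``exactly as in \eqref{recoll B1jbj'}--\eqref{eq: external recollision cost trees}'', i.e.\ by confining one more relative-position variable to a tube of volume $O(1/\mu_\eps)$, and that the Chapter~\ref{estimate-chap} estimates can be reused ``verbatim''. This cannot work as stated: after the $n-1$ changes of variables $\hat x_1,\dots,\hat x_{n-1}$ have been spent on the clustering constraints, the only remaining position variable is $x_n^*$, on which an additional recollision imposes no constraint by translation invariance. The extra recollision must therefore be converted into a constraint either on the creation parameters $(\bar t,\bar\omega,\bar v)$ of some added particle (moving up the tree to the first deflection of one of the recolliding particles), or into a \emph{reinforcement} of one of the already-used sets $\cB_k$; the resulting gain is not $O(1/\mu_\eps)$ but only $O\big(\eps^{1/2}/|\bar v_q-\bar v_{q'}|\big)$ (plus a separate treatment of the periodic self-recollision case), and the velocity singularity must then be integrated out using a \emph{second} deflection, yielding the final $O(\eps^{1/8})$. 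This is the content of Propositions~\ref{periodicrecollisionprop}, \ref{recollisionprop} and \ref{vsingularityprop} and of the iteration scheme of Section~\ref{subsec:estloop}, which interleaves these gains with the Cauchy--Schwarz control of the cross-sections time slice by time slice; it is new geometric work beyond Chapter~\ref{estimate-chap}, not a verbatim reuse. You do flag this interleaving as the main obstacle, but the mechanism you propose for the gain itself (a fresh position-space tube) is not available, so this step of your argument would fail without being replaced by the above analysis.
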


%   We recall that the limiting cumulants~$\lcum$ correspond to   minimally connected graphs, and are defined by pseudo-trajectories  which are described in detail in Chapter~\ref{Limiting cumulants}.
  
 After some preparation in Section~\ref{truncatedcumulants}, we present in Section~\ref{sec-leading order}
 the leading order asymptotics of~$ \cum $  by eliminating all pseudo-trajectories involving non clustering recollisions and overlaps. Section~\ref{sec:convergence cum} is devoted to the conclusion of the proof, by estimating the discrepancy between the remaining pseudo-trajectories~$\Psi^\eps_n$ and their limits~$ \Psi_n$.

\section{Truncation of  cumulants}\label{truncatedcumulants}
\setcounter{equation}{0}

An inspection of the arguments in the previous chapter shows   that initial clusterings are  negligible compared to dynamical clusterings. Indeed  Estimate~(\ref{est2})  shows  that the leading order term in the cumulant decomposition {\rm(\ref{eq: decomposition cumulant})} corresponds
to choosing~$r=1$: this  term is indeed  of order
$$C^n n! (t+\eps)^{n-1}   $$
while the error is smaller by one order of $\eps$.
We are therefore reduced to studying
$$   \mu_\eps^{n-1} \sum_{\ell =1}^n \sum_{\gl \in \cP_n^\ell}
\int   \Big( \prod_{i=1}^\ell d\mu\big( \Psi^\eps_{\gl_i} \big)   \cH \big( \Psi^\eps_{\gl_i} \big)  \mb_{\gl_i} \Big)  \,     \gp_{\{1,\dots,\ell\}} \;  f^{\eps 0}_{\{1\}} (\Psi^{\e 0} _{\rho_1})  \,    .
$$

We shall furthermore consider only  trees of controlled size: we define, for any integer~$m_0$,
\begin{equation}
\label{truncatedcumulantm0}   
\cumt :=   \mu_\eps^{n-1} \sum_{\ell =1}^n \sum_{\gl \in \cP_n^\ell}
\int  dZ_n^*  \int  \prod_{i=1}^{\ell} 
\Big[ d\mu_{m_0}(\Psi^\eps _{\gl_i})  \mb_{\gl_i} \cH \big( \Psi^\eps _{\gl_i} \big) \Big]     \gp_{\{1,\dots,\ell\}} \;  f^{\eps 0}_{\{1\}} (\Psi^{\e 0} _{\rho_1})  \,   ,
\end{equation}
where the measure on the pseudo-trajectories is defined as in \eqref{eq: measure nu} by 
$$
d \mu_{m_0}(\Psi^\eps _{\gl_i}) := \sum_{m_i\leq m_0} \sum_{a \in \cA^\pm_{\gl_i,m_i } }dT_{m_i}  d\Omega_{m_i}  dV_{m_i} \; \indc_{\cG^{\e} } (\Psi^\eps _{\gl_i})
\; \prod_{k=1}^{m_i}   \Big (s_k\left(\big( v_k -v_{a_k} (t_k)\big) \cdot \omega_k\right)_+\Big ) .
$$
Then by Remark \ref{m-sum}, we   have  
\begin{equation}
\label{truncation1}
\lim_{m_0\to \infty}\big |\cum-\cumt \big|  =0 \hbox{ uniformly in } \eps \,.
\end{equation}

\medskip

Next let us define
$$ \tcum := \mu_\eps^{n-1} \sum_{\ell =1}^n \sum_{\gl \in \cP_n^\ell} 
\int  dZ_n^*  \int  \prod_{i=1}^{\ell} \Big[ d\mu ( \Psi^\eps _{\gl_i}) \tilde  \mb_{\gl_i} 
%\; \cC \big( \Psi^\eps _{\gl_i} \big) 
%\; \indc_{\cG^{\e}} \big (  \Psi^\eps _{\gl_i} \big)
 \cH \big( \Psi^\eps _{\gl_i} \big) \Big]      \tilde  \gp_{ \{1, \dots, \ell\}} \;  f^{\eps 0}_{\{1\}} (\Psi^{\eps 0} _{\rho_1}) 
$$
where $\tilde \mb_{\gl_i}$ is the characteristic function supported on the forests $\lambda_i$ having exactly $|\gl_i| - 1$ recollisions, and $\tilde  \gp_{ \{1, \dots, \ell\}}$ is supported on jungles having exactly $\ell - 1$ regular overlaps, so that 
\begin{itemize}
\item all recollisions and overlaps are clustering;
\item all overlaps are regular in the sense of Remark \ref{overlap-rmk}.
\end{itemize}
Since~$\tcum $ is defined simply as the restriction of~$\cum$ to some pseudo-trajectories (with a special choice of initial data), the same estimates as in the previous chapter show that
$$
| \tcum| \leq C^n n!  ( t+\eps)^{n-1} \, .
$$
Furthermore,  defining its truncated counterpart
$$ 
\tcumt  := \mu_\eps^{n-1} \sum_{\ell =1}^n \sum_{\gl \in \cP_n^\ell} \int  dZ_n^*
\int  \prod_{i=1}^{\ell} \Big[ d \mu _{m_0} ( \Psi^\eps _{\gl_i}) \tilde  \mb_{\gl_i} 
%\; \cC \big( \Psi^\eps _{\gl_i} \big) 
%\; \indc_{\cG^{\e}} \big (  \Psi^\eps _{\gl_i} \big)
\cH \big( \Psi^\eps _{\gl_i} \big)  \Big]     \tilde  \gp_{  \{1, \dots, \ell\}} \;  f^{\eps 0}_{\{1\}} (\Psi^{\eps 0} _{\rho_1}) $$
there holds
\begin{equation}
\label{truncation2}
 \lim_{m_0\to \infty} \big|\tcum-\tcumt\big|  =0 \hbox{ uniformly in } \eps \,. 
 \end{equation}

The limits~(\ref{truncation1}) and~(\ref{truncation2}) imply that it is enough to prove that  the truncated decompositions~$\cumt$  and~$\tcumt$ are close:  we shall indeed see in the next section that   non clustering recollisions or overlaps  as well as non regular overlaps induce  some  extra smallness. 

 Note finally that the estimates provided in Theorem~\ref{cumulant-thm1*} show  that the series~$\cum / n!$ converges uniformly in~$\eps$ for $t\leq T_\alpha$, so a termwise (in $n$) convergence as $\eps \to 0$  is sufficient for our purposes.   We therefore shall make no attempt at optimality in the dependence of the constants in~$n, \alpha, C_0, \beta_0$ in this chapter.

\section{Removing non clustering recollisions/overlaps and non regular overlaps}\label{sec-leading order}\label{leadingorder}
\setcounter{equation}{0}

Let us now estimate $|\cumt- \tcumt|$. 
We   first show how to express   non clustering recollisions/overlaps  as   additional constraints on the set of integration parameters $(Z_n^* ,T_m, V_m, \Omega_m)$.
% The constraints may be either ``independent" of the constraints described in the previous chapter, or can reinforce one of them   in an explicit way: in particular, we shall see that the size of the  sets ${\mathcal B}_k $ of parameters, introduced in Section~\ref{subsec:estDC}, becomes smaller by a factor~$O(\eps^{\frac18}  )$ in the presence of a non clustering recollision (and similarly for   clustering overlaps).
% 
This argument  is actually very similar to the  argument used to control (internal) recollisions in Lanford's proof (which focuses primarily on  the expansion of the first cumulant).

\begin{Prop}
\label{prop: minimally connected}
Denote by~$\cB^\e$ the set of integration parameters leading to  pathological cumulant pseudo-trajectories~:
\begin{equation}\label{defBepsilon}
\begin{aligned}
 \cB^\eps &:=\Big\{ (Z_n^*, m, T_m, \Omega_m, V_m) \,:\, m\leq m_0  \, \\
&\qquad\qquad  \hbox{ and } \, \, \Psi^\eps \hbox{ has a non  clustering recollision/overlap or  a non regular overlap}\Big\}\, . 
 \end{aligned}
 \end{equation}
Then,  there exists a constant $C$ (depending on $\alpha, C_0, \beta_0$) such that 
$$\big | \cumt - \tcumt\big| \leq  C ^n  (t+1) ^{n+d-1} n!   \eps^{1/8} \,.$$
\end{Prop}

In the coming section we discuss one elementary step, which is the estimate of a given non clustering   event, by treating separately different geometrical cases -- we shall actually only deal with non clustering recollisions, the case of overlaps being simpler.  Then in Section~\ref{subsec:estloop} we   apply the argument to provide a global estimate.% on  dynamical cycles. 

 \subsection{Additional constraint due to  non clustering recollisions and overlaps}
\label{subsec:esterror}
\setcounter{equation}{0}

We consider a   partition $\gl $ of $\{1^*,\dots,n^*\}$ in $\ell$ forests $\lambda_1,\dots, \lambda_\ell$.
We fix the velocities~$V_n^*$,  as well as the collision parameters~$(T_{m}, V_{m}, \Omega_{m})$, with~$m\leq m_0\ell $.
 As in Section~\ref{subsec:estDC}
we   denote by~$\bbV^2 :=(V_n^*)^2 + V_{m}^2$ (twice) the total energy and by $K = n+m$
the total number of particles, and by~$\bbV_i^2$ and $K_i$  the   energy and number of particles of the collision tree~$\Psi^\eps_{\{i\}}$ with root at~$z_i^*$.

%The following notion  of {\it pseudo-particle}   will be useful. 
% \begin{Def}[Pseudo-particle]
% \label{pseudoparticle} 
% Let~$q\in\{1^*,\dots,n^*\}\cup\{1,\dots,m\}$ be the label of a particle in a tree.
% We define recursively, moving up the tree towards the root, the {\rm pseudo-particle} $\bar q$  associated with the particle $q$  to be
%\begin{itemize}
% \item $q$ as long as it exists,
% \item $ \bar q= a_q$ when $q$ disappears due to a collision, and as long $a_q$ exists,
% \item $\bar q = a_{a_q} $ when $a_q$ disappears, and as long as this latter   exists,
%  \item...
%\end{itemize}
%When there is no  possible confusion, we shall denote abusively $q$ the pseudo-particle.  
%\end{Def} 

Let us consider a pseudo-trajectory (compatible with~$\lambda$) involving a non clustering recollision.
%(recall that we shall not treat the case of overlaps as it is identical). 
We denote by~$t_{\rm rec}$ the time of occurrence of the first non clustering recollision (going backwards in time)  and we denote by~$q,q'\in\{1^*,\dots,n^*\}\cup\{1,\dots,m\}$ the labels of the two particles involved in that recollision. By definition, they  belong to the same forest, say $\lambda_1$, and we denote by~$\Psi^\eps_{\{i\}} $ and~$\Psi^\eps_{\{i'\}}$ their respective trees  (note that it may happen that~$i=i'$ in the case of an internal recollision). 

 The recollision between $q $ and $q'$  imposes  strong constraints  on the history of these particles, especially on the  first deflection of the couple~$q,q'$, moving up the forest  (thus forward in time) towards the root. These constraints can be expressed by different equations depending on the recollision scenario.

 \noindent
\underbar{Self-recollision.}\label{selfrecollpage} $ $ 
Let us assume that moving up the tree starting at  the recollision time, the first deflection of~$q $ and~$q'$ is between~$q $ and~$q'$ themselves at time $\bar t$: this means that the recollision occurs due to   periodicity in space.  
\begin{figure} [h]%  figure placement: here, top, bottom, or page
   \centering
   \includegraphics[width=7cm]{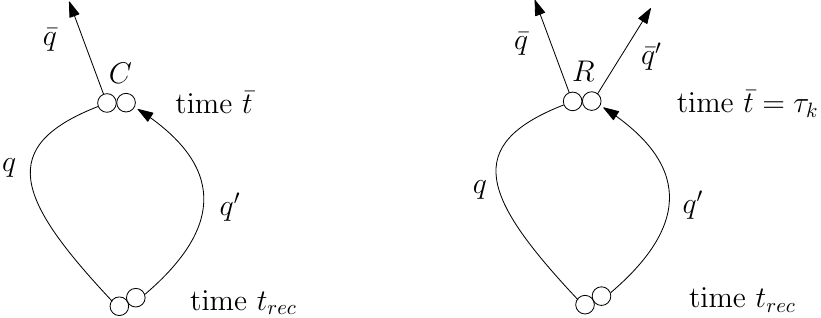} 
   \caption{The first deflection of $q$ and $q'$ can be either the creation of one of them (say $q$), or a clustering recollision.}\label{selfrecollfigure} 
\end{figure}

%Three different situations can occur (up to exchanging~$q$ and~$q'$):
%\begin{itemize}
%\item[(1a)] $q$ is created  at  time~$\bar t$, without scattering;
%%(Figure~\ref{periodicrecollfigure}(a));
%
%\item[(1b)] $q$ is created  at  time~$\bar t$,  with scattering;
%% (Figure~\ref{periodicrecollfigure}(b));
%
%\item[(1c)]   the deflection at  time~$\bar t$ corresponds to the clustering recollision between two trees, say  $\Psi_{i}$ and~$\Psi_{b(i)}$. %(Figure~\ref{periodicrecollfigure}(c)).
%\end{itemize}
This has a very small cost, 
as described in the following proposition (with the notation of Section~\ref{subsec:estDC}).   
\begin{Prop}\label{periodicrecollisionprop} Let~$q$ and~$q'$ be the labels of the two particles recolliding due to space periodicity, and denote by~$\bar t$ the first time of  deflection of~$q$ and~$q'$, moving up their respective trees from the recollision time. The following holds:
\begin{itemize}
\item If~$q$ is created next to~$q'$ at  time~$\bar t$ with collision parameters~$\bar \omega $ and~$ \bar v$, and if~$\bar v_{q}$ is the velocity of~$q$ at time~$\bar t^{+}$, then denoting by~$\Psi^\eps_{\{i\}}$ their collision tree there holds
$$
\int  \indc _{\mbox{\tiny {\rm Self-recollision with   creation of~$q$ at time~$\bar t$}}}  \, \,  \big | \big(\bar v -\bar v_{q}  \big)\cdot \bar  \omega \big |  d  \bar t d \bar \omega d\bar v  \leq {C\over \mu_\eps }  \bbV^{2d+1}(  1+t)^{d+1}\, .
$$
\item If~$\bar t$ corresponds to the~$k$-th clustering recollision in~$\Psi^\eps_{\lambda_1}$, between the trees~$\Psi^\eps_{\{j_k\}}$ and~$\Psi^\eps_{\{j'_k\}} $, then 
$$
 \int \indc _{\mbox{\tiny {\rm Self-recollision  with a clustering recollision at time~$\bar t$}}} \, \,   d \hat x_k  \leq  \frac C {\mu_\eps^2} \left( \bbV (  1+t)\right)^{d+1}  \, .
$$
\end{itemize}
\end{Prop}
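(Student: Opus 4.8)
The statement concerns a \emph{self-recollision}, i.e. two particles $q,q'$ in the same forest $\lambda_1$ that collide twice with each other (once at $t_{\rm rec}$, once higher up in the tree at $\bar t$), with the two collisions separated by a full torus period because nothing else deflected them in between. The idea is standard (it is the ``recollision lemma'' of Lanford's proof adapted to our pseudo-trajectories): between $\bar t$ and $t_{\rm rec}$ the pair moves freely relative to each other, so if $\bar v_q - \bar v_{q'}$ is their relative velocity immediately after $\bar t$, the relative position at time $\tau<\bar t$ is an affine function $\bar x(\tau) = \bar x(\bar t) - (\tau - \bar t)(\bar v_q - \bar v_{q'})$, and the recollision condition forces $\bar x(t_{\rm rec})$ to lie in a ball of radius $\e$ around a nonzero lattice vector of $\T^d$. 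This is a codimension-$d$ constraint on one scalar time parameter and one angle (or on the relative root position $\hat x_k$), hence a gain of $\e^{d}\mu_\eps = \e$ in the appropriate variable; but because the trajectory is piecewise affine with finitely many pieces, one has to sum over the pieces, and the bound comes out in terms of the total energy $\bbV_i$ (or $\bbV_{j_k}+\bbV_{j'_k}$) to an explicit power.

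\textbf{First case: creation of $q$ at time $\bar t$.} Here $\bar t$ is the moment particle $q$ is adjoined next to $q'$, with creation angle $\bar\omega$ and incoming velocity $\bar v$, so that $\bar x(\bar t) = \e\bar\omega$ (or $-\e\bar\omega$ with the sign convention) and $\bar v_q$ is the post-scattering velocity of $q$. First I would fix $q'$ and all parameters except $(\bar t, \bar\omega, \bar v)$; then the relative trajectory of $q$ with respect to $q'$ on $(0,\bar t)$ is piecewise affine, with breakpoints at the collision and recollision times of the sub-pseudo-trajectory, and on each affine segment of duration $\delta\tau_j$ the relative velocity is some $w_j$ with $|w_j|\le \bbV_i$ (up to relabelling, since energy is conserved within the tree once we are past the last external recollision). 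For the relative position to hit an $\e$-ball around a fixed nonzero element of $\Z^d$ (there are at most $O((t\bbV_i)^d)$ such lattice points reachable in time $t$ at speed $\bbV_i$) during segment $j$, the pair $(\bar t$-piece$,\bar\omega,\bar v)$ is confined to a tube of cross-section $\mu_\eps^{-1}$ times a factor $|w_j|$, exactly as in the estimate of $|B_{qq'}|$ in Section~\ref{subsec:estDC}. Carrying the cross-section weight $|(\bar v - \bar v_q)\cdot\bar\omega|$ through the $\bar v$-integration and then integrating $\bar\omega$ over the sphere and $\bar t$ over $(0,t)$, summing over the $O((t\bbV_i)^d)$ lattice points and over the $O(1)$ affine segments, collecting all powers of $\bbV_i$ and of $(1+t)$, gives the claimed $\dfrac{C}{\mu_\eps}\bbV_i^{2d+1}(1+t)^{d+1}$.

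\textbf{Second case: the first deflection is the $k$-th clustering recollision.} Now $\bar t = \tau_{\rm rec,k}$ and the deflection at $\bar t$ is between the subforests $\gl_{(k)}\ni j_k$ and $\gl'_{(k)}\ni j'_k$; we use the translation change of variables $\hat x_k = x^*_{\gl'_{(k)}} - x^*_{\gl_{(k)}}$ introduced in Section~\ref{subsec:estDC} and integrate over $\hat x_k$ with everything else frozen. The self-recollision between $q$ and $q'$ at time $t_{\rm rec}<\bar t$ imposes: (i) that $q,q'$ were exactly at distance $\e$ at $\bar t$ (this is \emph{already} the $k$-th clustering recollision constraint, confining $\hat x_k$ to the tube $\cB_k$ of measure $O(\mu_\eps^{-1})$), and moreover (ii) that after $\bar t$ (going backward) the pair $q,q'$ travels relative to each other and re-meets — but modulo a nonzero period — which pins $\hat x_k$ to a sub-tube of cross-section $\e^{d}$ \emph{inside} $\cB_k$, giving the extra $\mu_\eps^{-1}$, hence the total $\mu_\eps^{-2}$. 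The remaining work is to quantify this: the relative motion of $q$ w.r.t. $q'$ on $(0,\bar t)$ is again piecewise affine with relative speeds bounded by $\bbV_{j_k}+\bbV_{j'_k}$ (since both belong to subforests which, before their first external recollision, have energies $\bbV_{j_k}^2$, $\bbV_{j'_k}^2$), and the number of target lattice points is $O\big((t(\bbV_{j_k}+\bbV_{j'_k}))^d\big)$; summing over affine pieces and lattice points gives $\dfrac{C}{\mu_\eps^2}\big((\bbV_{j_k}+\bbV_{j'_k})(1+t)\big)^{d+1}$.

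\textbf{Main obstacle.} The routine part is the tube estimate; the delicate bookkeeping — and where I expect the real care is needed — is the claim that within a given forest, \emph{after} passing below all the clustering external recollisions, the relative velocity of any pair is controlled by the tree energies $\bbV_i$ (resp. $\bbV_{j_k}, \bbV_{j'_k}$) rather than the full $\bbV$, so that the estimate remains summable when later plugged into the product over forests. This requires arguing that external recollisions only \emph{redistribute} energy between the two colliding subtrees and that one is looking at the pseudo-trajectory on a time window on which the two relevant particles have not yet suffered such a redistribution, which is exactly why the \emph{ordering} of clustering recollisions (Definition~\ref{def:ClustRec}) and the fact that $\bar t > t_{\rm rec}$ matter. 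A secondary subtlety is handling the periodicity: one must verify the recollision genuinely requires a nonzero winding (otherwise it would be an ordinary recollision already counted), so the forbidden lattice vector is bounded below, which is what makes the $\e$-tube around it disjoint from the trivial $\{\bar x=0\}$ exclusion sphere; this is where the geometry of $\T^d$ enters and is used only through the crude bound ``number of relevant lattice points $\le C(t\bbV)^d$''.
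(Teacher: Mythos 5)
Your high-level picture (recollision forced by a nonzero winding $\zeta\in\Z^d\setminus\{0\}$, count the $O((\bbV t)^d)$ admissible lattice vectors, gain one power of $\mu_\eps^{-1}$ per constraint) is the right one, but the mechanism you propose for extracting the smallness is not the one that works, and in one place it is quantitatively wrong. The structural fact you miss is that, by the very definition of $\bar t$ as the \emph{first} deflection of the pair $\{q,q'\}$ moving up from $t_{\rm rec}$, the relative motion of $q$ and $q'$ on $(t_{\rm rec},\bar t)$ is a \emph{single} free flight starting at distance $\eps$ (they are adjacent at the deflection); there are no breakpoints to sum over, and the recollision condition reads $\eps\bar\omega+(v_q-v_{q'})(t_{\rm rec}-\bar t)\in\eps\,\mathbb{S}^{d-1}+\zeta$. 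This pins the \emph{direction} of the relative velocity $v_q-v_{q'}$ to a cone of opening $O(\eps)$ around $-\zeta$, i.e.\ it is a constraint of measure $\eps^{d-1}\bbV_i^d=\mu_\eps^{-1}\bbV_i^d$ on the velocity variable $\bar v$ (after a reflection in $\bar\omega$ in the scattering case). Your plan to treat $(\bar t,\bar\omega,\bar v)$ as confined to a position-space tube ``exactly as in the estimate of $|B_{qq'}|$'' cannot be implemented in the first case: there is no free root position to translate, and for fixed $(\bar\omega,\bar v)$ the self-recollision condition is monotone in $\bar t$ (it holds for all sufficiently large $\bar t$ or not at all), so no smallness can be extracted from the $d\bar t$ integral. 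The gain must come from the $d\bar v$ (or $d\bar\omega$) integral via the cone, which is what the paper does.

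In the second case the same issue recurs and your exponent is off. The paper first uses the change of variables $\hat x_k\mapsto(\tau_{{\rm rec},k},\eps\,\omega_{{\rm rec},k})$ with Jacobian $\mu_\eps^{-1}\big((v_q-v_c)\cdot\omega\big)_+$ (one factor $\mu_\eps^{-1}$, the ordinary clustering gain), and then observes that the cone condition on $v_q-v_{q'}$, transported through the scattering law, confines $\omega_{{\rm rec},k}$ to a spherical cap of solid angle $\eps^{d-1}=\mu_\eps^{-1}$ (the second factor). Your claim that the self-recollision ``pins $\hat x_k$ to a sub-tube of cross-section $\eps^d$ inside $\cB_k$'' gives the wrong power for $d\geq 3$: the admissible sub-region of the cross-sectional disk of $\cB_k$ has measure $\eps^{d-1}\cdot\eps^{d-1}=\eps^{2(d-1)}$, not $\eps^d$, and in any case $\eps^d/\eps^{d-1}=\eps$ is not the extra $\mu_\eps^{-1}=\eps^{d-1}$ you assert (the two coincide only at $d=2$). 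Finally, the point you flag as the ``main obstacle'' (bounding relative speeds by the per-tree energies $\bbV_i$, resp.\ $\bbV_{j_k}+\bbV_{j'_k}$) is real but routine once one restricts to the window above $\bar t$; the actual content of the proof is the cone geometry in velocity/angle space described above, which your write-up does not supply.
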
 
Note that in the first case the admissible collision parameters $(\bar t, \bar \omega, \bar v)$ belong to a small set of size~$O(1/\mu_\eps)$. In the second case, the condition is expressed in terms of the root $ \hat x_k$  with the notation of Section~\ref{subsec:estDC}: it is not independent of the condition~(\ref{Bq-def}) defining $  B _{qq'}$, but it reinforces it as the estimate provides a factor~$1/\mu_\eps^2$ instead of $1/\mu_\eps$.

\bigskip
 \noindent
\underbar{Generic non clustering recollision.}
Without loss of generality,  we  may  assume  that  the first deflection moving up the tree from time~$t_{\rm rec}$ involves $q$.   We denote by~$\bar t$ the time of that first deflection and by~$c\neq q,q'$ the particle involved in the collision with~$q$  (see Figure \ref{selfrecollfigure'}). The parent $\bar q$ of $q$ is the particle $q$ or $c$ existing at time $\bar t^+$, and     we denote  by~$\bar v _{ q}$  the velocity of $\bar q$ at time $\bar t^+$ . Similarly we denote by~$\bar v _{ q'}$ the velocity of particle~$q'$ at time~$\bar t$.
\begin{figure} [h]%  figure placement: here, top, bottom, or page
   \centering
   \includegraphics[width=7cm]{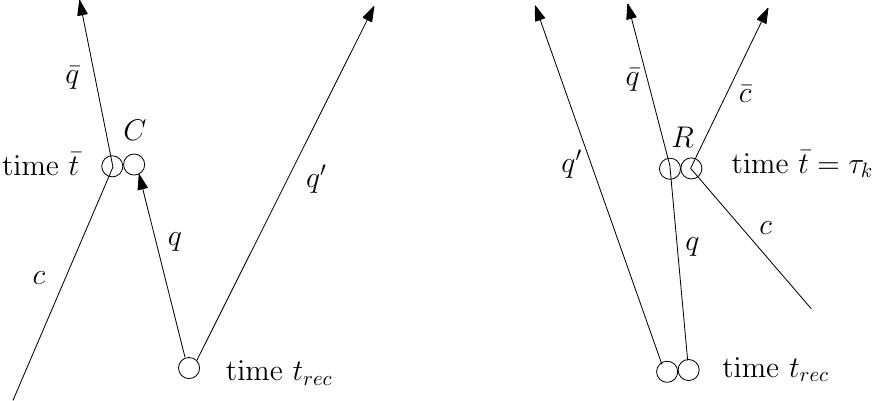} 
   \caption{The first deflection of $q$  can be either a collision, or a clustering recollision.}\label{selfrecollfigure'} 
\end{figure}
% (see Figure~\ref{recollfigure}). 

The result is the following.
\begin{Prop}\label{recollisionprop} Let~$q$ and~$q'$ be the labels of the two   particles involved in the first non clustering recollision. Assume  that  the first deflection moving up their trees from time~$t_{\rm rec}$ involves $q$ and a particle~$c \neq  q'$, at some time~$\bar t$. Then  with the above notation

\begin{itemize}

\item If  $\bar t$ is the creation time of $q$ (or~$c$), denoting by~$\bar \omega$ and~$\bar v$ the corresponding collision parameters, by~$\Psi^\eps_{\{i\}}$ their collision tree and by~$\Psi^\eps_{\{i'\}}$ the collision tree of~$q'$, there holds
$$
\int \indc _{\mbox{\tiny {\rm  Recollision with a creation at time~$\bar t$}}}\,  \big |  (\bar v-\bar v_{ q}(\bar t ) ) \cdot \bar \omega \big|   d\bar t  d\bar \omega  d\bar v \leq  C \bbV^{2d+\frac32}  (1+t)^{d+\frac12}  \min \left(1, \frac{ \eps^{1/2}  }{ |\bar v_{ q}-\bar v_{ q'}| }\right) \, .
$$

\item If  $\bar t   $ corresponds to the~$k$-th clustering recollision in~$\Psi^\eps_{\lambda_1}$, between~$\Psi^\eps_{\{j_k\}}$ and~$\Psi^\eps_{\{j'_k\}} $,  and if~$\Psi^\eps_{\{i'\}}$ is the collision tree of~$q'$, then
$$
 \int \indc _{\mbox{\tiny {\rm Recollision with a clustering recollision at time~$\bar t$}}} \,  \, d \hat x_k  \leq   {C\over \mu_\eps}  \bbV ^{\frac32} (1+t)^{\frac12} \min \left(1,{\eps^{1/2} \over  | \bar v_q - \bar v_{ q'}| } \right) \, .
$$
\end{itemize}

\end{Prop}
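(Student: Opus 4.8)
The goal is to prove Proposition~\ref{recollisionprop}, which quantifies the cost of a non-clustering recollision between particles~$q$ and~$q'$ whose first deflection (moving up the trees from the recollision time~$t_{\rm rec}$) involves~$q$ and a third particle~$c\neq q'$. The strategy is the classical Lanford geometry-of-recollisions argument, adapted to the cumulant setting: we follow the relative trajectory of~$q$ and~$q'$ from time~$\bar t$ down to~$t_{\rm rec}$, observe that it is piecewise affine, and use that a recollision forces this relative position into a thin tube of cross-section~$\eps^{d-1}$, whose measure we bound. The new feature compared to Lanford is that we must express the resulting constraint either as an \emph{independent} small-measure condition (when~$\bar t$ is a creation time) or as a \emph{reinforcement} of one of the clustering constraints~$\mathbf 1_{\cB_k}$ from Section~\ref{subsec:estDC} (when~$\bar t$ is a clustering recollision time), and in both cases we must extract the extra $\min(1,\eps^{1/2}/|\bar v_q-\bar v_{q'}|)$ factor by splitting on the size of the transverse relative velocity.

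\textbf{First case: $\bar t$ is a creation time.} At time~$\bar t^{+}$ the two particles~$q$ and~$q'$ are at a known relative position (either at distance $O(\eps)$, if~$q$ or~$c$ is created next to the other, or at whatever position the pseudo-trajectory dictates) with relative velocity~$\bar v_q-\bar v_{q'}$. Between~$\bar t$ and~$t_{\rm rec}$ the relative trajectory is affine on each of the finitely many sub-intervals delimited by intermediate collisions; on each sub-interval the recollision condition $|x_q(t_{\rm rec})-x_{q'}(t_{\rm rec})|\le\eps$ forces $x_q(\bar t)-x_{q'}(\bar t)$ to lie within distance~$\eps$ of an explicit line determined by the relative velocity. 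First I would freeze all parameters except the creation parameters $(\bar t,\bar\omega,\bar v)$ of the newly adjoined particle, and integrate the collision cross-section $|(\bar v-\bar v_q(\bar t))\cdot\bar\omega|$ against the indicator that a recollision occurs. The $\bar\omega$-integration over the sphere, combined with the tube constraint, produces a factor $\eps^{d-1}\cdot(\text{something})$ which when divided by nothing (these are the creation variables, not root variables) and multiplied back by the Jacobian of the pseudo-dynamics gives the stated bound. The key geometric lemma is the standard ``recollision cylinder'' estimate: if two points start at distance~$r$ and drift apart with relative velocity~$u$, the set of directions of~$u$ (or of starting offsets) leading them back within~$\eps$ before time~$t$ has measure $\lesssim \min(1,\eps/(r|u|))\cdot(1+t)$-type; here one gains only~$\eps^{1/2}$ rather than~$\eps$ because one of the two degrees of freedom is used to localize the recollision \emph{time} (the other gives the transverse gain), so the clean split is $\{|\bar v_q-\bar v_{q'}|\le\eps^{1/2}\}$, handled by brute force, versus $\{|\bar v_q-\bar v_{q'}|>\eps^{1/2}\}$, where the tube is genuinely thin. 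The polynomial energy factors $(\bbV_i+\bbV_{i'})^{2d+3/2}$ and the time factors $(1+t)^{d+1/2}$ accumulate from: the number of affine pieces (bounded by a power of the number of particles, hence a power of energy after the usual Cauchy--Schwarz trick), the integration of the cross-section over a ball of velocities of radius~$\bbV$, and the iteration over the at most $O(t)$ collision sub-intervals.

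\textbf{Second case: $\bar t$ is a clustering recollision time.} Now $\bar t=\tau_{\rm rec,k}$ is the $k$-th clustering recollision in~$\Psi^\eps_{\lambda_1}$, between subtrees~$j_k$ and~$j'_k$, so we reuse the change of variables $X^*_{\ell_1-1}\to\hat X_{\ell_1-1}$ from Section~\ref{subsec:estDC}: the variable $\hat x_k = x^*_{\gl'_{(k)}}-x^*_{\gl_{(k)}}$ governs both the clustering recollision at~$\tau_{\rm rec,k}$ \emph{and}, now, the downstream non-clustering recollision of~$q$ and~$q'$. The point is that once~$\hat x_k$ is chosen so that the clustering recollision happens (which already confines~$\hat x_k$ to a tube ${\mathcal B}_k$ of measure $\lesssim\mu_\eps^{-1}$), the additional requirement that~$q$ and~$q'$ recollide further confines~$\hat x_k$ to a sub-tube; because~$q$ and~$q'$ were deflected apart precisely at~$\tau_{\rm rec,k}$, their subsequent relative motion is affine with relative velocity~$\bar v_q-\bar v_{q'}$, and the recollision tube within ${\mathcal B}_k$ has transverse size $\lesssim\eps\cdot\min(1,\eps^{1/2}/|\bar v_q-\bar v_{q'}|)$ relative to the $\mu_\eps^{-1}$-tube — net measure $\lesssim\mu_\eps^{-1}\cdot\min(1,\eps^{1/2}/|\bar v_q-\bar v_{q'}|)$, with the usual energy and time prefactors $(\bbV_{j_k}+\bbV_{j'_k}+\bbV_{i'})^{3/2}(1+t)^{1/2}$ coming from bounding the relative velocity and the number of affine segments. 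I would carry this out by first integrating in~$\hat x_k$ with everything else frozen, invoking the piecewise-affine structure, then the split on $|\bar v_q-\bar v_{q'}|\gtrless\eps^{1/2}$.

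\textbf{Main obstacle.} The delicate point — and where most of the work lies — is the bookkeeping that makes the change of variables legitimate: one must check that the particle~$q$ whose first deflection is being used has not already been ``spent'' as a degree of freedom for an earlier clustering constraint, and that the tube confining~$\hat x_k$ for the clustering recollision and the tube confining it for the non-clustering recollision are controlled \emph{simultaneously} (not just each on its own). Concretely, one needs the non-clustering recollision to occur at a time strictly below~$\tau_{\rm rec,k}$ so that the subtrees~$j_k,j'_k$ move rigidly as a block during the relevant interval, which is exactly why~$\bar t$ being the \emph{first} deflection (hence the clustering recollision, which is also the first external recollision by Definition~\ref{def:ClustRec}) is assumed. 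The second subtlety is the high-velocity singularity: the cross-section $|(\bar v-\bar v_q)\cdot\bar\omega|$ is unbounded, so the clean $\eps^{1/2}$-gain must survive the velocity integration; this is handled exactly as in Lanford's proof (and as in the estimates of Chapter~\ref{estimate-chap}) by absorbing polynomial velocity growth into the Gaussian weight $e^{-\beta_0\bbV^2/16}$ at the very end, which is why the bounds are stated with bare polynomial factors in~$\bbV$ rather than after integration. Once these two points are secured, the estimate is a routine iteration over the finitely many affine segments of the relative trajectory, the number of which is itself controlled by a power of the total number of particles in the relevant subtrees.
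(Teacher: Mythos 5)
Your overall architecture (split on whether $\bar t$ is a creation or a clustering recollision; in the second case reuse the root variable $\hat x_k$ and show the non-clustering recollision reinforces the constraint $\cB_k$; extract the factor $\min(1,\eps^{1/2}/|\bar v_q-\bar v_{q'}|)$ and postpone the velocity singularity) matches the paper. But the central geometric mechanism is missing, and in its place you have put estimates that do not produce the stated bounds.

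First, in the creation case the only free variables are $(\bar t,\bar\omega,\bar v)$ — a time, a direction, and a velocity. There is no position to confine to a ``thin tube of cross-section $\eps^{d-1}$,'' and indeed no factor $\eps^{d-1}$ or $1/\mu_\eps$ appears in the first bullet. The paper's argument rewrites the recollision equation, after rescaling time by $\eps$, as the condition that the post-deflection relative velocity $v_q-\bar v_{q'}$ lie in a \emph{cylinder in velocity space} whose width is $C\bbV\min\big(1,1/(|\tau|\,|\bar v_q-\bar v_{q'}|)\big)$, $\tau$ being the rescaled elapsed time since the last deflection of the pair. When the creation is without scattering, $\bar v$ itself is confined to this cylinder and one gains essentially a full power of $\eps$ (up to a logarithm). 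When there is scattering, $v_q$ is constrained to the sphere of diameter $[\bar v,\bar v_q]$, and the gain comes from the solid angle of the intersection of that sphere with the cylinder: a union of spherical caps of measure $\lesssim(\text{width}/\text{radius})^{1/2}$. It is this square root — not a ``degree of freedom spent on localizing the recollision time'' — that produces $\eps^{1/2}$ after integrating $\int\min(|u|^{-1/2},1)\,\eps\,du$ over $|u|\lesssim\bbV t/\eps$. Your proposal never introduces the velocity cylinder, never distinguishes the scattering from the non-scattering creation, and never performs the sphere–cylinder intersection, so the exponent $1/2$ is not actually derived. The same mechanism is needed in the second bullet: after the change of variables $\hat x_k\mapsto(\tau_{{\rm rec},k},\eps\,\omega_{{\rm rec},k})$ (Jacobian $\mu_\eps^{-1}$ times the cross-section), the constraint is that $\omega_{{\rm rec},k}$ lie in spherical caps of solid angle $\lesssim(\ldots)^{1/2}$; your claim that the admissible sub-tube of $\cB_k$ has relative transverse size $\eps\cdot\min(1,\eps^{1/2}/|\bar v_q-\bar v_{q'}|)$ is asserted, not proved, and is exactly the point at issue.

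Second, two bookkeeping errors. The factor $(1+t)^{d}$ in the first bullet does not come from ``iteration over $O(t)$ collision sub-intervals'': by hypothesis $\bar t$ is the \emph{first} deflection above $t_{\rm rec}$, so the relative motion of $q$ and $q'$ on $(t_{\rm rec},\bar t)$ is a single free flight; the $(1+t)^d$ counts the periodic images $\zeta\in\Z^d\cap B_{\bbV t}$ on the torus, which you never sum over. And your ``main obstacle'' paragraph worries about issues (degrees of freedom already spent, absorption of velocity growth into the Gaussian) that belong to the surrounding iteration of Section \ref{subsec:estloop}, not to this proposition, while leaving unaddressed the one genuinely delicate point here, namely why the scattering constraint degrades the gain from $\eps$ to $\eps^{1/2}$.
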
 

%\begin{figure} [h]%  figure placement: here, top, bottom, or page
%   \centering
%   \includegraphics[width=5cm]{recollision}  
%   \caption{We distinguish two scenarios of recollision between $q$ and $q'$ depending on the first deflection moving up their trees. Case (i)~: addition of a new particle. Case (ii)~: clustering recollision.}\label{recollfigure} 
%\end{figure}
%

%
%As in the case of a periodic recollision, three different situations  may occur:
%\begin{itemize}
%
%\item[(2a)] $\bar t$ corresponds to  the  creation of    $q$  (so $\bar t = t_p$)  without scattering
%%  as in Figure~\ref{recollfigure}(a) ;
%
%\item[(2b)]  $\bar t$ corresponds to  the creation of    $q$ or $\sigma$ (so $\bar t = t_p$ or~$\bar t = t_\sigma$)  with scattering
%%  as in Figure~\ref{recollfigure}(b);
%
%
%\item[(2c)]  $\bar t =\tau_i $ corresponds to the clustering recollision between~ÃÂ$\Psi_{\{i\}}$ and~$\Psi_{b(i)} $
%%   as in Figure~\ref{recollfigure}(c).
%
%\end{itemize}

%\begin{figure} [h]%  figure placement: here, top, bottom, or page
%   \centering
%   \includegraphics[width=5cm]{recoll0}  \qquad\qquad  \includegraphics[width=5cm]{recoll1}  \qquad\qquad  \includegraphics[width=5cm]{recoll3} 
%   \caption{CHANGER LES NOTATIONS}\label{recollfigure} 
%\end{figure}

Note that as in the periodic situation, the recollision condition in the first case provides some smallness  on the set of admissible parameters $(\bar t, \bar \omega, \bar v)$, while the recollision condition in the second case is expressed in terms of the root~$ \hat x_k$, and reinforces the condition (\ref{Bq-def}) defining $  B _{qq'}$ by a factor~$\eps^{1/2}$. However in both cases the estimate involves a   singularity in velocities that  has to be eliminated.

The geometric analysis of these scenarios and the proof of Propositions~\ref{periodicrecollisionprop} and~\ref{recollisionprop} are  postponed to  Section~\ref{recollision-appendix}.
The  estimates in the first case  were actually   already proved in \cite{BGSR2}, while the second one (the case of a clustering recollision) requires a slight adaptation.

%
%
%\begin{figure} [h]%  figure placement: here, top, bottom, or page
%   \centering
%   \includegraphics[width=8cm]{recoll'}  \qquad\qquad \qquad\qquad  \includegraphics[width=8cm]{recoll''}  \qquad\qquad  %\includegraphics[width=5cm]{recoll''} 
%   \caption{CHANGER LES NOTATIONS}
%   \label{recollfigure} 
%\end{figure}
%
%

 \noindent
\underbar{Elimination of the singularity.} 
It finally remains to  eliminate the singularity~$1/ | \bar v_{ q} - \bar v_{q'}|  $, using the next deflection moving up the tree.   Note that this singularity arises only  if  the first non clustering recollision is not a self-recollision, which ensures that the recolliding particles have at least two deflections before the non clustering recollision.
The result is the following.
 
\begin{Prop}\label{vsingularityprop}
 Let~$q$ and~$q'$ be the labels of two   particles  with velocities~$v_{q}$ and~$v_{q'}$, and denote by~$\bar t$ the time of the first deflection of~$q$ or~$q'$ moving up their trees.   
 \begin{itemize}

\item If the deflection at $\bar t$  corresponds to a collision in  a tree~$\Psi^\eps_{\{i\}}$ with parameters~$ \bar \omega, \bar v$, then 
$$
\int  \indc _{\mbox{\tiny {\rm  Recollision with a creation at time~$\bar t$}}}\,    \min \left(1, \frac{ \eps^{1/2}  }{ |v_{ q}-v_{ q'}| }\right) 
   \,   \big |  (\bar v-   \bar  v_{  q } ) \cdot  \bar \omega \big|  d\bar t  d\bar v  d\bar\omega \leq C  t
 \bbV^{d+1}  \e^{\frac18} \,.
$$
\item if $\bar t  $ corresponds to the~$k$-th clustering recollision in the tree~$\Psi^\eps_{\lambda_1}$, between~$\Psi^\eps_{\{j_k\}}$ and~$\Psi^\eps_{\{j'_k\}} $, then
$$
 \int  \min \left(1, \frac{ \eps^{1/2}  }{ |v_{ q}-v_{ q'}|}\right)  \,  d\hat x_k \leq   {C  \e^{\frac18} \bbV  t\over \mu_\eps}   \, \cdotp
$$

\end{itemize}

\end{Prop}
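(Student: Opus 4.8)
\textbf{Proof plan for Proposition \ref{vsingularityprop}.}
The statement is a purely geometric estimate on hard-sphere (pseudo-)trajectories, of exactly the same nature as Propositions \ref{periodicrecollisionprop} and \ref{recollisionprop}, and the plan is to reduce it to those by combining the volume gained from the deflection at time $\bar t$ with the extra smallness coming from the factor $\min\big(1,\eps^{1/2}/|v_q-v_{q'}|\big)$. The key point is that $|v_q-v_{q'}|$ is the \emph{relative velocity} of $q$ and $q'$ at the time we are trying to re-collide them, and the deflection at $\bar t$ is precisely what modifies this relative velocity; hence the dangerous small-relative-velocity region in the $\min$ factor can be transferred onto the integration variables of the deflection at $\bar t$.

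In the collision case (first bullet), I would argue as follows. Fix all parameters except $(\bar t,\bar v,\bar\omega)$, and note that $v_q - v_{q'}$ after the collision at $\bar t$ is an affine function of the scattering data, of the form $v_q - v_{q'} = w_0 + \big((\bar v - \bar v_q)\cdot\bar\omega\big)\bar\omega$ (or a translate thereof, according to which of $q,c$ is created), where $w_0$ depends only on the already-fixed parameters. Split the integral over $|v_q-v_{q'}| \le \eps^{1/2}$ and $|v_q-v_{q'}| > \eps^{1/2}$. On the first region the $\min$ factor equals $1$ but one has a constraint confining a one-dimensional projection of the scattering variables to a window of size $O(\eps^{1/2})$; on the second region the $\min$ factor is $\eps^{1/2}/|v_q-v_{q'}|$, and one performs the $\bar\omega$ (or $\bar v$) integral against the cross section $|(\bar v - \bar v_q)\cdot\bar\omega|$, picking up a $\log(1/\eps)$ at worst from the $1/|v_q-v_{q'}|$ singularity (as in the classical Lanford estimate), or one simply bounds $\int \eps^{1/2}|v_q - v_{q'}|^{-1}\,d(\cdot)$ directly. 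In both regions the remaining integration over the genuinely free parameters (the time $\bar t\in[0,t]$ and the complementary components of $\bar v$, $\bar\omega$) produces the factors $t$ and $\bbV_i^{d+1}$, exactly as in the proof of Proposition \ref{recollisionprop} in Section \ref{recollision-appendix}; the net gain is $\eps^{1/2}\cdot(\log(1/\eps))$ or $\eps^{1/2}\cdot\eps^{-3/8}$ which is bounded by $\eps^{1/8}$ for $\eps$ small (the choice of exponents $1/2$, $1/8$ has exactly this slack built in).

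In the clustering-recollision case (second bullet), the argument is identical in spirit but phrased in terms of the change of variables $\hat x_k \mapsto (\tau^{\rm clust}_k,\omega^{\rm clust}_k)$ of Section \ref{subsec:estDC}: integrating over $\hat x_k$ over the tube $\cB_k$ already gives $O(1/\mu_\eps)$ times $(\bbV_{j_k}+\bbV_{j'_k})t$, and the $\min$ factor — which again constrains the relative velocity, hence the direction $\omega^{\rm clust}_k$ or the slice in $\tau^{\rm clust}_k$ — shaves off the additional $\eps^{1/8}$. Concretely one decomposes the tube into its constituent cylinders, on each of which $v_q - v_{q'}$ is piecewise constant (the trajectories being piecewise affine), and estimates the contribution of each cylinder by $\mu_\eps^{-1}\,|v_q^{(\cdot)} - v_{q'}^{(\cdot)}|\,|\delta\tau|\cdot\min\big(1,\eps^{1/2}|v_q^{(\cdot)}-v_{q'}^{(\cdot)}|^{-1}\big) \le \mu_\eps^{-1}\eps^{1/2}|\delta\tau|$, then sums over cylinders and uses conservation of energy (Cauchy--Schwarz over the particle labels, as in Section \ref{subsec:estDC}) to recover the $(\bbV_{j_k}+\bbV_{j'_k})$ dependence.

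The main obstacle I anticipate is bookkeeping rather than conceptual: one must be careful that the deflection at $\bar t$ is genuinely ``downstream'' of the non-clustering recollision so that its parameters have not already been used (or used-up against another constraint) in the earlier estimates of Section \ref{subsec:esterror}, and one must handle separately the subcase where $q$ and $q'$ share their first deflection — but that is precisely the self-recollision case of Proposition \ref{periodicrecollisionprop}, which is excluded here since a self-recollision yields the much stronger $1/\mu_\eps^2$ gain and needs no desingularization. Once the correct parameter is identified, the estimates are routine adaptations of the cross-section bounds already used in Chapter \ref{estimate-chap} and in \cite{BGSR2}.
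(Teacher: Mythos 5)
Your treatment of the first bullet is essentially sound and close in spirit to the paper's: the paper uses the elementary split $\min\big(1,\eps^{1/2}/|v_q-v_{q'}|\big)\leq \eps^{1/4}+\indc_{|v_q-v_{q'}|\leq\eps^{1/4}}$, which avoids integrating the $1/|v_q-v_{q'}|$ singularity altogether, while your split at $\eps^{1/2}$ plus a direct integration of the truncated singularity over the scattering sphere also closes (at the cost of a $\log(1/\eps)$ in $d=2$). Two small corrections there. First, on the small-relative-velocity region the constraint in the scattering subcases is that the post-collisional $v_q$, which lives on the sphere of diameter $[\bar v,\bar v_q]$, falls into a small ball; this confines $\bar\omega$ to spherical caps of solid angle bounded by $C\,\eps^{1/8}/|\bar v-\bar v_q|^{1/2}$ — a square root, not a one-dimensional ``window of size $O(\eps^{1/2})$'' — and this is precisely where the final exponent $1/8$ comes from. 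Second, the subcase where $q$ and $q'$ share the deflection at $\bar t$ is \emph{not} excluded: it is not the self-recollision scenario of Proposition~\ref{periodicrecollisionprop} (which concerns the deflection one level down, immediately preceding the non-clustering recollision), but simply the easy cases where $|v_q-v_{q'}|$ equals a pre-deflection relative speed, so the constraint becomes a ball of radius $\eps^{1/4}$ in $\bar v$ and yields $\eps^{d/4}$ directly. You would still have to treat it, but it costs nothing.

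The genuine gap is in the second bullet. Your ``concrete'' cylinder-by-cylinder bound $\mu_\eps^{-1}|v_q^{(\cdot)}-v_{q'}^{(\cdot)}|\,|\delta\tau|\cdot\min\big(1,\eps^{1/2}|v_q^{(\cdot)}-v_{q'}^{(\cdot)}|^{-1}\big)\leq\mu_\eps^{-1}\eps^{1/2}|\delta\tau|$ tacitly assumes that the relative velocity generating the tube's cross-section is that of the pair $(q,q')$ appearing in the singularity. But the $k$-th clustering recollision is between $q$ and some particle $c$ of the other subtree, and generically $c\neq q'$: the tube volume then carries the pre-recollision factor $|\bar v_q-\bar v_c|$ while the singularity involves $|v_q-v_{q'}|$, and the product does not telescope. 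Moreover, when $c\neq q'$ the post-collisional $v_q$ depends on the impact vector $\omega_{{\rm rec},k}$ and therefore varies across the cross-section of each cylinder, so $|v_q-v_{q'}|$ is not even constant there, contrary to your claim of piecewise constancy. The correct argument (the paper's case (f)) converts the constraint $|v_q-v_{q'}|\leq\eps^{1/4}$, via the scattering law $v_q=\bar v_q-\big((\bar v_q-\bar v_c)\cdot\omega_{{\rm rec},k}\big)\,\omega_{{\rm rec},k}$, into the condition that $\omega_{{\rm rec},k}$ lies in a spherical cap of measure at most $C\,\eps^{1/8}/|\bar v_q-\bar v_c|^{1/2}$, and then applies the change of variables $d\hat x_k\mapsto\mu_\eps^{-1}\big|(\bar v_q-\bar v_c)\cdot\omega_{{\rm rec},k}\big|\,d\omega_{{\rm rec},k}\,d\tau_{{\rm rec},k}$, the cross-section factor absorbing the negative power of $|\bar v_q-\bar v_c|$ from the cap and producing the stated $(\bbV_{j_k}+\bbV_{j'_k})\,t\,\eps^{1/8}/\mu_\eps$. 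Your cancellation is valid only when $c=q'$, where elastic scattering conserves $|v_q-v_{q'}|$ — that is the easy case (and it even gives $\eps^{1/4}$).
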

The proposition is also proved in Section~\ref{recollision-appendix} of this chapter.

\subsection{Removing pathological  cumulant pseudo-trajectories}
\label{subsec:estloop}
\setcounter{equation}{0}

\begin{proof}[Proof of Proposition \ref{prop: minimally connected}]
We   first consider the case of pathological pseudo-trajectories involving a non regular clustering overlap. By definition  (see Remark \ref{overlap-rmk}), this means that the corresponding~$\tau_{\rm{ov}}$ has to be equal either to $t$ or to  the creation time of one of the overlapping particles. In other words, instead of being a union of tubes of volume $O( (t+\eps)/\mu_\eps)$, the set $\tilde \cB_k$ describing the $k$-th clustering overlap (see (\ref{tildeBk})) reduces to a union of balls of volume $O(\eps^d)$, so that
$$
| \tilde\cB_{k}| \leq C \eps^d   K_{\lambda_{[k]}} K_{\lambda'_{[k]}}.
$$
The non clustering condition is therefore reinforced and we gain   additional smallness.

\medskip
Let us now consider the case of pathological  pseudo-trajectories involving some non clustering recollision/overlap.
We can  assume without loss of generality that the first non clustering recollision (recall that we leave the case of regular overlaps to the reader) occurs in the forest~$\lambda_1 = \{1,\dots , \ell_1\}$.
The compatibility condition on the jungles gives  smallness when integrating over the  roots of the jungles (see \eqref{eq:ov'}). The compatibility condition on the forests $\lambda_2, \dots, \lambda_\ell$ is obtained by integrating~(\ref{eq:iterCR}) as in Section~\ref{subsec:proofThm}.
We now have  to combine the recollision condition with the compatibility conditions on $\lambda_1 $  to obtain the desired estimate.
As in the previous chapter, we denote by $\tilde a$ the restriction of the tree $a$ to $\lambda_1$, and by $\tilde a_{k}$, $\cC_k$  the tree variables and the cross section factors associated with the~$s_k $ creations occurring in the time interval $(\tau_{\rm{rec},k},\tau_{\rm{rec},k-1})$. 

We start from (\ref{eq:iterCR}), adding the recollision condition: we get
$$\begin{aligned}
& \sum_{\tilde a }\int dx^*_{\lambda_1,1}\dots dx^*_{{\lambda_1,\ell_1-1}} \, \mb_{\gl_1}\, \indc_{\mathcal G} \big (  \Psi^\eps _{\gl_1} \big) |\cC \big( \Psi^\eps _{\gl_1} \big)| \, \indc_{\Psi^\eps  _{\gl_1}  \hbox{\tiny has a non clustering recollision}} \\
& \qquad
\leq \sum_{\tilde a_{1} } |\cC_{1} \big( \Psi^\eps _{\gl_1} \big)| 
\int d\hat x_{1}  \indc_{\cB_1 } 
\sum_{\tilde a_{2}} |\cC_{2} \big( \Psi^\eps _{\gl_1} \big)|\int d\hat x_2 \dots  \\
&   \qquad \qquad \times \int d\hat x_{\ell_1- 1 } 
 \indc_{\cB_{\ell_1-1}} \sum_{\tilde a_{\ell_1}} |\cC_{{\ell_1}} \big( \Psi^\eps _{\gl_1} \big)| \, \indc_{\Psi^\eps  _{\gl_1}  \hbox{\tiny has a non clustering recollision}} \,.
\end{aligned}
$$
As shown in the previous section, the set of parameters  leading to the additional recollision can be described 
in terms of a first deflection at a time~$\bar t$. 
We then have to improve    the iteration scheme of Section \ref{subsec:proofThm}, on the time interval~$[\tau_{{\rm rec},k} , \tau_{{\rm rec},k+1}]$ containing the  time~$\bar t$.
There are   two different situations depending on whether the time~$\bar t$   corresponds to a creation, or to a clustering recollision.

\bigskip

If $\bar t$ corresponds to a creation of a particle, say~$c$, the condition on the recollision can be expressed in terms of the collision parameters $(\bar t,\bar v, \bar \omega)= (t_c, v_c, \omega_c) $. We therefore have to
\begin{itemize} 
\item use (\ref{eq:STE}) to control the collision cross sections $\big| \cC_j \big( \Psi^\eps _{\lambda_1}\big)\big|$ for   integration variables indexed by~$s\in \{c+1, \dots, S_j\}$;
\item use the integral with respect to $\bar t,   \bar \omega, \bar v $ to gain  a factor $$C (1+\bbV)^{2d+ 3/2} (1+t) ^{d+1/2}  \min \left(1, \frac{ \eps^{1/2}  }{ |\bar v_{ q}-v_{ q'}| }\right) $$
by Proposition~\ref{recollisionprop}. Note that  the geometric condition for the recollision between~$q$ and $q'$ does not depend on the  parameters which have been  integrated  already at this stage, and to simplify from now on all velocities are   bounded by~$\bbV$;
\item use (\ref{eq:STE}) to control the collision cross sections $\big| \cC_j \big( \Psi^\eps _{\lambda_1}\big)\big|$ for $s\in \{S_{j-1}+1, \dots,c- 1\} $;
\item use the integral with respect to $ \hat x_j $ to gain   smallness due to the clustering recollision.
\end{itemize}
Note that, since $\bar t$ is dealt with separately, we shall lose a power of $t$  as well as a factor $m\leq \ell m_0$ in the time integral. We shall also lose another factor $K^2$ corresponding to all possible choices of recollision pairs~$(q,q')$: at this stage we shall not be too precise in the  control of the constants in terms of~$n$, and~$m_0$, contrary to the previous chapter.

If $\bar t = \tau_{{\rm rec},k} $ corresponds to a clustering recollision, we use the same iteration as in Section~\ref{subsec:proofThm}:
\begin{itemize} 
\item use (\ref{eq:STE}) to control the collision cross sections $\big| \cC_k\big( \Psi^\eps _{\lambda_1}\big)\big|$; 
\item use the integral with respect to $ \hat x_k$ to gain some smallness due to the clustering recollision, multiplied by the additional smallness due to the non clustering recollision.  \end{itemize} 
As in the first case, we shall lose a  factor $K^2$ corresponding to all possible choices of recollision pairs.
 
After this first stage, we still need to integrate the singularity with respect to velocity variables, which requires   introducing the next deflection (moving up the root). 

We therefore perform the same steps as above, but  integrate the singularity  $$  \min \left(1, \frac{ \eps^{1/2}  }{ |v_{ q}-v_{ q'}| }\right) $$ by using Proposition~\ref{vsingularityprop}.

%With the notation of Proposition~\ref{vsingularityprop} , if~$\bar t \in [\tau_j, \tau_{j+1} ] $ corresponds to a creation of a particle~$\sigma$, the condition on the recollision can be expressed in terms of  $\bar t,\bar v, \bar \omega$. We therefore again have to
%\begin{itemize} 
%\item use (\ref{eq:STE}) to control the collision cross sections $\big| \cC_j \big( \Psi_{\lambda_1}\big)\big|$ for $s\in \{\sigma+1 ,\dots, S_j\}$;  
%\item use the integral with respect to $\bar t,\bar v, \bar \omega$ to integrate the singularity  $$  \min \left(1, \frac{ \eps^{1/2}  }{ |v_{ p}-v_{ q'}|^{1/2} }\right) $$
%by Proposition~\ref{vsingularityprop};
%\item use (\ref{eq:STE}) to control the collision cross sections $\big| \cC_j \big( \Psi_{\lambda_1}\big)\big|$ for $s\in \{S_{j-1}+1, \dots , \sigma- 1\}$;
%\item use the integral with respect to $ \hat x_{j} $ to gain some smallness due to the clustering recollision.
%\end{itemize}
%Note that again, since $\bar t$ is dealt with separately, we will loose a power of $t$  as well as a factor $m$ in the time integral. 
%
%
%If $\bar t= \tau_{j}$ corresponds to a clustering recollision, we use the same iteration as in Section~\ref{subsec:proofThm}
%\begin{itemize} 
%\item use (\ref{eq:STE}) to control the collision cross sections $\big| \cC_j \big( \Psi_{\lambda_1}\big)\big|$; 
%\item use the integral with respect to $ \hat x_{j} $ to gain some smallness due to the clustering recollision, plus some additional smallness due to the non clustering recollision. 
%\end{itemize}

\begin{Rmk}
Note that it may happen that the    two deflection times used in the process are in the same time interval $[\tau_{{\rm rec},k} , \tau_{{\rm rec},k+1}]$, which does not bring any additional difficulty. We just set apart the two corresponding integrals  in the collision parameters if both correspond to the creation of new particles.
\end{Rmk}
 
Integrating    with respect to~the remaining variables in  $(T_{ m}  ,\Omega_{ m} , V_ {m})$
and following the strategy described above leads to the  bound  
\begin{equation}
\label{NC-recollision-bound}
\begin{aligned}
& \left| \int  
\;  \left( \prod_{i=1}^\ell \mb_{\gl_i} \; \cC \big( \Psi^\eps _{\gl_i} \big) \; \indc_{\mathcal G} \big (  \Psi^\eps _{\gl_i} \big)\cH \big( \Psi_{\gl_i} \big) \right)   \,
 \gp_{\{1,\dots,\ell\}}   f^{\eps 0}_{\{1 \}} \indc_{\cB^\eps }\ dT_{ m}  d\Omega_{ m}  dV_{ m} dZ^*_n  \right|\\
&\leq    \ell !     \eps^{\frac18}  (\ell m_0)^4C^n \left(  {  (t+\eps ) \over \mu_\eps }\right) ^{n-1} (C t)^{m}  (1+t)^{d}  \, .
\end{aligned}
\end{equation}

%We therefore end up with
%$$\begin{aligned}
%& \left| \int 
%\;  \left( \prod_{i=1}^\ell   \mu (d \Psi_{\gl_i})  \mb_{\gl_i} \; \cC \big( \Psi_{\gl_i} \big) \; \indc_{\mathcal G} \big (  \Psi_{\gl_i} \big)\cH \big( \Psi_{\gl_i} \big) \right) 
% \gp_{\{1,\dots,\ell\}}   f^{0}_{\{1 \}} dZ^*_\alpha \right|\\
%&\qquad \leq  | \cH |_\infty (p-N)!N!  \eps^{(d-1)(N-1)  } \eps^{\frac14}  t^{2d+n} \sum_M  (N+M)^{2 (p-N)}  (C  t)^p \\
%&\qquad\leq   | \cH |_\infty (p-N)!N!  N^{2(N-1)}  \eps^{(d-1)(N-1)  +\frac14}  t^{2d+N}  C ^p
%\end{aligned}
%$$
Finally summing over $m\leq \ell m_0$ and over all possible partitions, we find
$$\forall n \geq 1, \qquad\big | \cumt - \tcumt\big| \leq  C ^n  (t+1) ^{n+d-1} n!   \eps^{1/8} 
\,,$$
where~$C$ depends on~$C_0,\alpha, \beta_0$ and~$m_0$. This concludes the proof of  Proposition  \ref{prop: minimally connected}.
\end{proof}

\section{Convergence of the cumulants} 
\label{sec:convergence cum}

 In order to conclude the proof of  Theorem~\ref{cumulant-thm2*}, we now have to compare $\tcumt$ and $\lcum$ defined in  \eqref{eq: decomposition lcumulant} as
\begin{equation*}
 \lcum =   \sum_{T \in \cT^\pm_n} \sum_{m} \sum_{a \in \cA^\pm_{n,m} }\int  d\mu_{{\rm sing}, T, a} ( \Psi_{n,m})   \cH(\Psi_{n,m}) \left(f^0\right)^{\otimes (n+m) } ( \Psi^0_{n,m} )  \,  .
\end{equation*}
The comparison will be achieved  by coupling the pseudo-trajectories and this requires   discarding the pathological trajectories leading to non clustering recollisions/overlaps and non regular overlaps. Thus we  define the modified limiting cumulants by 
restricting the integration parameters 
 to the set $\mathcal G^\e$, which avoids  internal overlaps in collision trees of the same forest at the creation times, and  
by removing the set $\cB^\e$ introduced in~(\ref{defBepsilon})
\begin{equation*}
\lcumt :=  \sum_{T \in \cT^\pm_n} \sum_{m} \sum_{a \in \cA^\pm_{n,m} }\int  d\mu^{m_0} _{{\rm sing}, T, a} ( \Psi_{n,m}) 
\cH(\Psi_{n,m} )\indc_{\cG^\e \setminus \cB^\e}  \left(f^0\right)^{\otimes (n+m)} ( \Psi^0_{n,m} ) \, ,
\end{equation*}
where $d\mu^{m_0} _{{\rm sing}, T, a}$ stands for the   measure with at most $m_0$ collisions in each forest. 
We stress the fact that~$\lcumt $ depends on $\e$ only through the sets $\cB^\e$ and $\cG^\e$.
We are going to check that 
\begin{equation}
\label{truncation4}
\lim_{m_0 \to \infty} \lim_{\eps \to 0}  | \lcum - \lcumt | = 0 \, .
\end{equation}
The  analysis of the two previous sections may be performed for the limiting cumulants so that 
restricting  the number of collisions to be less than~$m_0$ in each forest
and the integration parameters outside the set~$\cB^\e$ leads to a small error.
The control of internal overlaps, associated with $\cG^\eps$, relies   on the  same   geometric arguments as discussed in Section~\ref{subsec:esterror}: indeed, in order for an overlap to arise when adding particle~$k$ at time $t_k$, one should already have a particle which is at distance less than~$2\eps$ from particle $a_k$, which is a generalized recollision situation (replacing~$\eps$ by $2\eps$). This completes \eqref{truncation4}.

In order to compare  $  \lcumt $  and~$ \tcumt $, we first compare the initial measures, namely~$ f^{\eps 0}_{\{1 \}}   $ with~$ ( f^{0})^{\otimes (n+m)} $. This is actually an easy matter as returning to~(\ref{f01r}) we see that the leading order term in the decomposition of~$ f^{\eps 0}_{\{1 \}} $ is~$F^0_{n+{m }}$, which is well known to tensorize  asymptotically as~$\mu_\eps$ goes to infinity (for fixed~$n+m $), as stated by the following proposition. 
 \begin{Prop}[\cite{GSRT}]\label{exclusion-prop2}
 If $f^0$ satisfies~{\rm(\ref{lipschitz})}, there exists~$C>0$
such that  
$$
\forall m\, , \qquad 
\Big| \left(F^0_m  - \left(f^{0}\right)^{\otimes m}  \right) 
\indc_{{\mathcal D}_{\eps}^m}(Z_m)
\Big|  \leq C ^m
\eps \,    e^{-\frac{3\beta_0}8 |V_m|^2}   \,.
$$
 \end{Prop}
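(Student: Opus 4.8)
\textbf{Proof proposal for Proposition~\ref{exclusion-prop2}.}

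The plan is to use the cluster-expansion representation of $F^0_m$ already set up in the proof of Proposition~\ref{prop:ID}. Recall from \eqref{eq:F0mdef} (applied with a single cluster of $m$ particles, $r=1$, so there is no internal structure to expand) that
$$
F^0_m(Z_m) = \frac{(f^0)^{\otimes m}(Z_m)}{\cZ^\eps} \sum_{p \geq 0} \frac{\mu_\e^{p}}{p!} \int_{\D^p} (f^{0})^{\otimes p}(\bar Z_p)\, \Phi_{1+p}(Z_m,\bar Z_p)\, d\bar Z_p\, \indc_{{\mathcal D}_{\eps}^m}(Z_m)\, ,
$$
where $\Phi_{1+p}$ is the indicator of mutual exclusion between the cluster $Z_m$ (treated as one block, with internal exclusions already imposed by $\indc_{{\mathcal D}_{\eps}^m}$) and the $p$ background particles $\bar Z_p$. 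First I would expand $\Phi_{1+p}$ as a sum over graphs and separate the connected component attached to the block $Z_m$ from the rest, exactly as in \eqref{eq:decID}--\eqref{f01r} but now with $r=1$; this yields
$$
F^0_m(Z_m) = (f^0)^{\otimes m}(Z_m)\,\indc_{{\mathcal D}_{\eps}^m}(Z_m)\left( 1 + \sum_{p\geq 1}\frac{\mu_\e^{p}}{p!}\int_{\D^p}(f^0)^{\otimes p}(\bar Z_p)\, \gp(Z_m,\bar Z_p)\, d\bar Z_p\right)\, ,
$$
the $p=0$ term giving exactly the tensorized product and cancelling $\cZ^\eps$ against the background component. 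Hence
$$
\Big(F^0_m - (f^0)^{\otimes m}\Big)\indc_{{\mathcal D}_{\eps}^m}(Z_m) = (f^0)^{\otimes m}(Z_m)\,\indc_{{\mathcal D}_{\eps}^m}(Z_m) \sum_{p\geq 1}\frac{\mu_\e^{p}}{p!}\int_{\D^p}(f^0)^{\otimes p}(\bar Z_p)\, \gp(Z_m,\bar Z_p)\, d\bar Z_p\, .
$$

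The second step is to estimate the truncated function $\gp(Z_m,\bar Z_p)$ by the tree inequality of Proposition~\ref{prop: tree inequality}, giving $|\gp| \leq \sum_{T\in\cT_{1+p}}\prod_{(h,h')\in E(T)}\indc_{\eta_h\sim\eta_{h'}}$, where vertex $0$ is the block $Z_m$ (of cardinality $m$) and vertices $1,\dots,p$ are the single background particles. Each indicator $\indc_{\eta_h\sim\eta_{h'}}$ confines one background particle to a union of balls of radius $\eps$ around the positions of the particles in the neighbouring cluster, so integration in $\bar Z_p$ (using the Gaussian decay of $(f^0)^{\otimes p}$ to absorb the velocity integrals, as in \eqref{lipschitz}) produces a factor $C\eps^d\mu_\e = C\eps^{d}\eps^{-(d-1)} = C\eps$ per background particle, times a combinatorial factor from the vertex degrees and from Cayley's formula (Lemma~\ref{lem: Cayley+}), controlled exactly as in the last display of the proof of Proposition~\ref{prop:ID}. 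The crucial point is that $p\geq 1$, so there is \emph{at least one} background particle and hence at least one factor $\eps$; the remaining $p-1$ factors of $C\eps^d\mu_\e$ sum to a convergent geometric-type series uniformly in $\eps$. The degree-dependent factors attached to vertex $0$ produce at most $C^m$ (using $\sum_{d\geq 1} m^{d}/(d-1)!\leq m e^m \leq e^{2m}$ as in the earlier proof), and those attached to the background vertices produce the $e^{2p+\dots}$ growth already absorbed by the smallness of $C\eps^d\mu_\e$. Collecting everything yields the bound $C^m\,\eps$, and the prefactor $(f^0)^{\otimes m}(Z_m)\leq C_0^m e^{-\beta_0|V_m|^2/2}$ gives the stated $e^{-3\beta_0|V_m|^2/8}$ after leaving a small fraction of the Gaussian to absorb the velocity integrals in the background (this is where the loss from $\beta_0/2$ to $3\beta_0/8$ occurs).

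The main obstacle is bookkeeping rather than conceptual: one must check that the combinatorial factors coming from the sum over minimally connected graphs on $1+p$ vertices, with vertex $0$ of arbitrary degree up to $m$ (not just $p$, since the block contains $m$ particles each of which a background particle may exclude), are still summable against the $(C\eps^d\mu_\e)^{p}$ smallness; the resolution is the standard one used already in Proposition~\ref{prop:ID}, splitting the degree sum as $\prod_i\sum_{d_i\geq 1}(\cdot)$ and using $K_{\gr_i}=m$ for the single block. A secondary point to verify is that the cancellation of $\cZ^\eps$ against the $p=0$ background term is exact; this follows verbatim from the algebra in \eqref{eq:decID}--\eqref{f01r} specialised to $r=1$, so no new argument is needed. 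Once these are in place the estimate is immediate. $\qed$
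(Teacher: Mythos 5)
Your proposal is correct, and it follows essentially the approach the paper itself relies on: the statement is only cited from \cite{GSRT}, but your argument is exactly the $r=1$ specialization of the cluster expansion carried out in the proof of Proposition~\ref{prop:ID} (formula (\ref{f01r}) with a single block, the $p=0$ term reproducing the tensor product and cancelling $\cZ^\eps$, and the tree inequality plus Lemma~\ref{lem: Cayley+} turning each of the $p\geq 1$ background particles into a factor $C\eps^d\mu_\eps=C\eps$). The only minor inaccuracy is your parenthetical on the exponent: since the exclusion indicators involve only positions, the background velocity integrals factorize and the bound actually holds with $e^{-\beta_0|V_m|^2/2}$, the stated $3\beta_0/8$ being simply a weaker (hence harmless) choice.
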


At this stage, we are left with a final discrepancy between $  \lcumt $  and~$ \tcumt $ which is due to the initial data and  $  \cH$ being evaluated at different configurations (namely~$\Psi _n$ and~$ \Psi^\eps_n$). We then need  to introduce a suitable coupling.

In Chapter~\ref{HJ-chapter}, we  used  the change of variables~(\ref{infinitesimalij})
 to reparametrize the limiting pseudo-trajectories in terms of~$x_n^*,V_n^*$ and~$n-1$ recollision parameters (times and angles).
 In the same way, for fixed $\eps$, we can use  the parametrization of clustering recollisions (\ref{eq:omrec}) and of regular clustering overlaps (\ref{eq:omov})  to reparametrize the non pathological  pseudo-trajectories in terms of~$x_n^*,V_n^*$ and~$n-1$ recollision parameters (times and angles). 
 The  cumulant pseudo-trajectories~$\Psi^\eps_{n,m}$ associated with the minimally connected graph~$T \in \cT_n^\pm$ and tree $a \in \cA_{n,m} ^\pm $ are obtained by fixing~$x_n^*$ and $V_n^*$,
 \begin{itemize}
 \item %a partition in forests $\gl \in \cP_n^\ell$, 
 for each~$e  \in E(T)$, a representative~$\{q_e,q'_e\}\approx e$,
  \item a collection of~$m$  ordered   creation times $T_m$,   and parameters~$( \Omega_m, V_m)$;
\item    a collection of  clustering times~$(\tau^{\rm clust}_e)_{e \in E(T)}$\label{notationclusteringtime} and  clustering angles~$( \omega^{\rm clust} _{e})_{e \in E(T)}$.
\end{itemize}
 At each creation time~$t_k$, a new particle, labeled~$k$, is adjoined at 
 position~$x_{a_k}(t_k)+\eps \omega_k $ and with velocity~$v_k$:
 \begin{itemize}
  \item   if~$s_k = +$,  then the velocities~$v_k$ and~$v_{a_k}$ are changed to~$v_k(t_k^-)$ and~$v_{a_k}(t_k^-)$ according to the laws~\eqref{V-def}, 
\item then all particles follow the backward free flow until the next creation or clustering  time.
\end{itemize}
For $\Psi_{n,m}$ to be admissible, at each  time $\tau^{\rm clust} _{e}$ the particles $q_e$ and~$q_e'$  have to collide with the following rules  $ x_{q_e}(\tau^{\rm clust} _{e}) - x_{q'_e}(\tau^{\rm clust} _{e}) = \eps \omega^{\rm clust}_{e}$ :
   \begin{itemize}
  \item  if~$s_{e} = +$, then the velocities~$v_{q_e}$ and~$v_{q'_e}$ are changed  according to the scattering rule, with scattering vector $ \omega^{\rm clust}_{e }$.
 % if~$ (v_{k_i} -v_{k_j})\cdot\omega^{\rm clust} _{\{i,j\}} >0 $ then the velocities~$v_{k_i}$ and~$v_{k_j}$ are changed  according to \eqref{V-def},
\item then  all particles follow the backward free flow until the next creation or clustering time. 
\end{itemize}

 As in \eqref{defdmusing}, we define the   measure for each tree $a\in \cA^\pm_{n,m} $  and each minimally connected graph~$T\in \cT_n^\pm$
\begin{equation}\label{defdmusingeps}
 \begin{aligned}
d\mu^\e_{{\rm sing}, T,a}  &:=  dT_{m}  d\Omega_{m}  dV_{m} dx_n^*  dV_{n}^*  d \Theta^{\rm clust}_{n-1} d\omega^{\rm clust}_{n-1} \prod_{i=1} ^m  s_i  \big( (v_{i}-v_{a_j}(t_i)\cdot\omega_i\big)_+\\
&\qquad \times\prod_{e\in E(T)} \sum_{\{q_e,q_e'\}  \approx e}  s^{\rm {clust}}_{e  }  \big( (v_{q_e}(\tau^{\rm clust}_{e })-v_{q_e'}(\tau^{\rm clust}_{e }))\cdot\omega^{\rm clust}_{e }\big)_+ \indc_{\cG^\e \setminus \cB^\e}
\end{aligned}\end{equation}
denoting  by~$ \Theta^{\rm clust}_{n-1} $ and~$\Omega^{\rm clust}_{n-1}$ the~$n-1$ clustering times~$\tau^{\rm clust}_{e }$ and angles~$\omega^{\rm clust}_{e }$ for~$e \in E(T)$.

\medskip
We can therefore couple the pseudo-trajectories $\Psi _n$ and~$ \Psi^\eps_n$ by their (identical) collision and clustering   parameters. The error between
 the two configurations~$\Psi^\eps _n$ and~$\Psi_n$ is due to the fact that collisions, recollisions and overlaps become pointwise in the limit but generate a shift of size $O(\eps)$ for fixed $\eps$. We then have
 $$
 | \Psi^\eps _n (\tau) - \Psi_n(\tau) | \leq C (n+m)\,\eps 
 \quad \hbox{ for all } \tau \in [0,t] \,.
 $$
  Such discrepancies concern only the  positions, as  the velocities remain equal in both flows.

 It follows that
$$
\Big|\left(f^0\right)^{\otimes  (n+m)} (\Psi^{\eps0} _n  )
- 
\left(f^0\right)^{\otimes (n+m)} ( \Psi^{0}_n  )  \Big|  \leq C_{n, m_0} \eps   e^{-\frac{3\beta}8 |V_{m+n}|^2}\, , 
$$
having used the Lipschitz continuity (\ref{lipschitz}) of $f^0$.  Using the same reasoning for~$  \cH$ (assumed to be continuous), 
 we find finally that for all~$n,m_0$
$$
 \lim_{\eps \to 0} \, | \tcumt - \lcumt |= 0 \,.
$$
This result, along with Proposition~\ref{prop: minimally connected}, Estimates~(\ref{truncation1}), (\ref{truncation2}) and (\ref{truncation4})    proves  Theorem~\ref{cumulant-thm2*}. \qed

%%%%%%%%%%%%%%%%%%%%%%%%%%%%%%%%%%%%%%%%%%%%%%%%%%%%%%%%%%

\section{Analysis of the geometric conditions}
\label{recollision-appendix}
\setcounter{equation}{0}

In this section we prove Propositions~\ref{periodicrecollisionprop} to~\ref{vsingularityprop}. Without loss of generality, we will assume that the velocities $\bbV_j$ are all  larger than 1.

\noindent
{\bf Self-recollision: proof of Proposition \ref{periodicrecollisionprop}.}\label{selfrecoll}
$ $ 
  Denote by $q,q'$ the recolliding particles. By definition of a self-recollision, their first deflection (going forward in time) involves both particles $q$ and $q'$. It can be either a creation (say of $q$ without loss of generality, in the tree~$\Psi^\eps_{\{i\}}$ of~$q'$), or a clustering recollision between two trees (say $\Psi^\eps_{\{j_k\}}$ and $\Psi^\eps_{\{j_k'\}}$ in~$\Psi^\eps_{\lambda_1}$) (see Figure \ref{selfrecollfigure}).

$ \bullet $ $ $   \underbar{If the first deflection corresponds to the creation of~$q$}, we denote by $(\bar t, \bar \omega, \bar v)$ the parameters encoding this creation. We also denote by~$\bar v_{q}$ the velocity of the parent~$\bar q$ just before the creation in the backward dynamics, and by~$\Psi^\eps_{\{i\}}$ the collision tree of~$q'$ (and~$q$). Denoting by~$v_q$ and~$v_{ q'}$ the velocities of~$q$ and~$q'$ after adjunction of~$q$ (in the backward dynamics) there holds
\begin{equation}
\label{self-recoll}
\eps \bar \omega + (v_q  -  v_{ q'})  (t_{\rm rec} - \bar t) = \eps \omega_{\rm rec} +\zeta \hbox{ with } \zeta\in \Z^d \setminus \{0\}
\end{equation}
which  implies that~$v_q-v_{ q'}$ has to belong to the intersection $K_\zeta$ of a cone of opening $\eps$ with a ball of radius $2\bbV$.
 
 Note that the  number of $\zeta$'s for which the sets 
are not empty is at most~$O\big(\bbV^dt^d \big)$.
\begin{itemize}
\item 
If the creation of~$q$ is without scattering, then $  v_q - v_{ q'}  = \bar v - \bar v_{q}$ has to belong to the union of the~$K_\zeta$'s, and 
$$
\begin{aligned}&\int \indc _{\mbox{\tiny Self-recollision with creation at time $\bar t$   without scattering}} \big| \big(\bar v -\bar v_{q} )\cdot \bar \omega   \big |  d  \bar t d \bar \omega d\bar v \\
& \qquad\qquad  \leq C\bbV^dt^d
\sup_{\zeta}
\int \indc _{\bar v -\bar v_{q} \in K_\zeta }
\big| \big(\bar v -\bar v_{q}  )\cdot \bar \omega  \big |  d  \bar t d \bar \omega d\bar v  \leq C \eps^{d-1} \bbV^d (\bbV  t)^{d+1} \,.
\end{aligned}
$$

\item 
 If the creation  of~$q$  is with scattering, then $v_q - v_{ q'} = \bar v - \bar v_{q} - 2(\bar v - \bar v_{q}) \cdot \bar \omega \,  \bar \omega$ has to belong to the union of the  $K_\zeta$'s. Equivalently $\bar v - \bar v_{q}$ lies in the  union of the  $S_{\bar \omega }K_\zeta$'s (obtained from~$K_\zeta$ by symmetry with respect to~$\bar \omega$), and there holds
$$
\begin{aligned}&\int \indc _{\mbox{\tiny Self-recollision with creation at time $\bar t$   with scattering}} \big| \big(\bar v -\bar v_{q}  )\cdot \bar \omega \big |  d  \bar t d \bar \omega d\bar v \\
& \qquad\qquad  \leq C\bbV^d t^d
\sup_{\zeta}
\int \indc _{\bar v -\bar v_{ q} \in S_{\bar \omega }K_\zeta }
\big| \big(\bar v -\bar v_{q}  )\cdot \bar \omega \big |  d  \bar t d \bar \omega d\bar v  \leq C \eps^{d-1} \bbV^d (\bbV  t)^{d+1} \,.
\end{aligned}
$$
\end{itemize}

$ \bullet $ $ $ \underbar{If the first deflection corresponds to the $k$-th clustering recollision}  between $\Psi^\eps_{\{j_k\}} $ and $\Psi^\eps_{\{j'_k\} }$ in the forest~$\Psi^\eps_{\lambda_1} $ for instance, in addition to the condition 
$ \hat x_k\in B_{qq'}$ which encodes the clustering recollision (see Section~\ref{subsec:estDC}), we obtain the condition
\begin{equation}
\label{self-recollc}
\begin{aligned}
\eps \omega_{{\rm rec},k}  + (v_q  -  v_{ q'})  (t_{\rm rec} - \tau_{{\rm rec},k} ) = \eps \omega_{\rm rec} +\zeta \,  \hbox{ with } \,    \zeta\in \Z^d\, \\
\hbox{ and } v_q - v_{ q'} = \bar v_q - \bar v_{q'} - 2(\bar v_q - \bar v_{q'}) \cdot \omega_{{\rm rec},k}   \, \omega _{{\rm rec},k}
\end{aligned}
\end{equation}
denoting by $\bar v_q, \bar v_{q'}$ the velocities before the clustering recollision in the backwards dynamics, and by~$\omega_{{\rm rec},k}$ the impact parameter at the clustering recollision.
We deduce from the first relation that~$v_q- v_{ q'}$ has to be in a small cone $K_\zeta$ of opening $\eps$, 
which implies by the second relation  that  $\omega_{{\rm rec},k} $
has to be in a small cone $S_\zeta$ of opening $\eps$. 

 Using the   change of variables~(\ref{infinitesimalij}), it follows that
$$
\begin{aligned}
\int \indc _{\mbox{\tiny Self-recollision with clustering at time $\bar t$}}  \, d \hat x_k & \leq C
 \eps^{d-1} t \sum_{\zeta}
\int \indc _{\omega_{{\rm rec},k} \in  S_\zeta} \big( (
 \bar v_q - \bar v_{q'}) \cdot \omega_{{\rm rec},k}
 \big)
d\omega_{{\rm rec},k}
   \\&\leq C \eps^{2( d-1) } \left(t \bbV \right)^{d+1}  \,.
\end{aligned}
$$
This concludes the proof of Proposition~\ref{periodicrecollisionprop}. \qed  

\medskip
  
   {\bf Non clustering recollision: proof of Proposition~\ref{recollisionprop}}\label{nonclustering}

 Denote by $q,q'$ the recolliding particles. Without loss of generality,  we can assume that the first deflection (when going up the tree) involves only particle $q$, at some time~$\bar t$. It can be either a creation (with or without scattering), or a clustering recollision.
\smallskip

$\bullet$ \underbar{If the first deflection of $q$  corresponds to a creation}, we denote by $(\bar t, \bar \omega, \bar v)$ the parameters encoding this creation, and by $(\bar x_q, \bar v_q)$ the position and velocity of the  parent  $\bar q$ before the creation in the backward dynamics. %(see Figure \ref{recollfigure}).
%As explained before the statement of Proposition~\ref{recollisionprop}, we use that notation even if~$q$ is created at time~$\bar t$ and~$c$ is its parent.
 Note that locally  in time (up to the next deflection) $\bar v_q$ is constant, and $\bar x_q$ is an affine function. In the same way, denoting  by $(\bar x_{q'}, \bar v_{ q'})$ the position and velocity of the particle~$q'$, we have that $\bar v_{ q'}$ is locally constant while $\bar x_{q'}$ is affine.

There are actually  three subcases~:
\begin{itemize}
\item[(a)] particle $q$ is created without scattering~: 
$ v_q = \bar v$ ; 
\item[(b)] particle $q$ is created with scattering~:  $ v_q = \bar v + (\bar v - \bar v_q ) \cdot \bar \omega \, \bar \omega
$ ; 
\item[(c)] another particle is created next to $q$, and $q$ is scattered~: $ v_q = \bar v_q  + (\bar v - \bar v_q ) \cdot \bar \omega \, \bar \omega
$.
\end{itemize}

The equation for the recollision states
\begin{equation}
\label{recoll}
\begin{aligned}
 \bar x_q (\bar t ) + \eps \bar \omega -  \bar x_{ q'} (\bar t)  + (v_q - \bar v_{ q'})  (t_{\rm rec} - \bar t) = \eps \omega_{\rm rec} +\zeta \, \hbox{ in cases (a)-(b)}  ,\\
 \bar x_q (\bar t )  -  \bar x_{ q'}  (\bar t)    + (v_q - \bar v_{ q'})  (t_{\rm rec} - \bar t) = \eps \omega_{\rm rec}+\zeta  \hbox{ in case (c)} \, . 
\end{aligned}
\end{equation}
We fix from now on the parameter~$\zeta\in \Z^d \cap B_{ \bbV  t}$ encoding the periodicity, and the estimates will be multiplied by~$\bbV^d t^d$ at the very end.
 Define
$$
\begin{aligned}
\delta x &:=  \frac1\eps ( \bar x_{ q'}(\bar t)   -\eps \bar \omega- \bar x_q (\bar t )+\zeta ) =: \delta x_\perp +   \frac1\eps (\bar v_{ q'} - \bar v_q ) (\bar t - t_0)  \hbox{  in cases (a)-(b)  }\, , \\
\delta x &:=  \frac1\eps ( \bar x_{ q'}(\bar t) - \bar x_q (\bar t )+\zeta  ) =: \delta x_\perp +   \frac1\eps (\bar v_{ q'} - \bar v_q ) (\bar t - t_0)  \hbox{  in case (c)} \, , \\
 & \qquad\qquad  \tau_{\rm rec}:= (t_{\rm rec} - \bar t)/\eps \quad \hbox{ and } \tau := (\bar t - t_0)/\eps\,,
\end{aligned}
$$
where~$\delta x_\perp $ is orthogonal to~$\bar v_{ q'} - \bar v_q $ (this constraint defines the parameter~$t_0$).
Then (\ref{recoll}) can be rewritten
\begin{equation}
\label{cylinder}
v_q - \bar v_{ q'}  = {1\over \tau_{\rm rec}} \Big( \omega_{\rm rec}  + \delta x_\perp + \tau ( \bar v_{ q'} - \bar v_q) \Big).
\end{equation}
We know that $v_q - \bar v_{ q'}$ belongs to a ball of radius $\bbV$. In the case when~$|\tau (\bar  v_{ q'} - \bar v_q) | \geq 2$,  the triangular inequality gives
$$ {1\over 2\tau_{\rm rec}}\big | \tau ( \bar v_{ q'} - \bar v_q)\big| \leq  {1\over \tau_{\rm rec}} \Big| \omega_{\rm rec}  + \delta x_\perp + \tau ( \bar v_{ q'} - \bar v_q) \Big| = |v_q - \bar v_{ q'}| \leq\bbV_{i,i'} $$
and we deduce that 
$$
\frac1{  \tau_{\rm{rec}}} \leq \frac{2\bbV}{ |\tau | | \bar v_{ q'} - \bar v_q|}
$$
hence, by (\ref{cylinder}), $v_q -  \bar v_{q'}$  belongs to a cylinder of main axis $\delta x_\perp + \tau ( \bar v_{q'}- \bar v_{ q} )$ and  of width $2\bbV/|\tau | | \bar v_{ q} - \bar v_{q'}|$.
In any case,  (\ref{cylinder}) forces $v_q - \bar v_{q'}$ to belong to a cylinder $\cR_\zeta$ of main axis $\delta x_\perp + \tau ( \bar v_{q'}- \bar v_{ q} )$ and  of width~$C\bbV\min \left( \frac{1}{|\tau| | \bar v_q - \bar v_{q'}|}, 1 \right)$.
In any dimension $d\geq 2$, the volume of this cylinder is less than~$C\bbV^d\min \left( \frac{1}{|\tau| | \bar v_q - \bar v_{q'}|}, 1 \right)$.

\medskip
\noindent
  \underbar{Case (a)}. Since $  v_q = \bar v$, Equation~(\ref{cylinder}) forces $\bar v -  \bar v_{q'}$ to belong to the cylinder~$\cR _\zeta$. Recall that $\tau$ is a rescaled time, with
  $$| ( \bar v_q -  \bar v_{q'}) \tau| \leq \frac {t}\eps | \bar v_q -  \bar v_{q'} |+ |\delta x _\parallel | \leq \frac C\eps (\bbV t +1)\,.$$
  Then 
\begin{equation*}
\begin{aligned}
\int_{|\bar v | \leq \bbV}  \indc_{\bar v  - \bar v_{q'} \in \cR_\zeta  } \;  \big |  (\bar v - \bar v_q  ) \cdot  \bar \omega \big|  d\bar t d\bar  \omega d\bar v 
& \leq  C \bbV^{d+1}  \int _{- C(\bbV t+1)  /\eps} ^{C(\bbV t+1) /\eps }   \min \left( \frac{1}{|u|}, 1 \right) \eps \frac{du}{ | \bar v_q - \bar v_{q'}| }\\
& \leq    C\bbV^{d+1} {\eps\big( |\log(\bbV t +1) |+|\log \eps|\big)\over |\bar v_q -  \bar v_{q'}|}  \,\cdotp
\end{aligned}
\end{equation*}

\medskip
\noindent
  \underbar{Cases (b) and (c)}.
By definition,~$v_q$ belongs to the sphere of diameter $[\bar v, \bar v_q ]$. The intersection~$I$ of this sphere and of the cylinder $\bar v_{q'} +\cR$ is a union of spherical caps, and we can estimate the solid angles of these caps. 

\begin{figure} [h] %  figure placement: here, top, bottom, or page
   \centering
  \includegraphics[width=11cm]{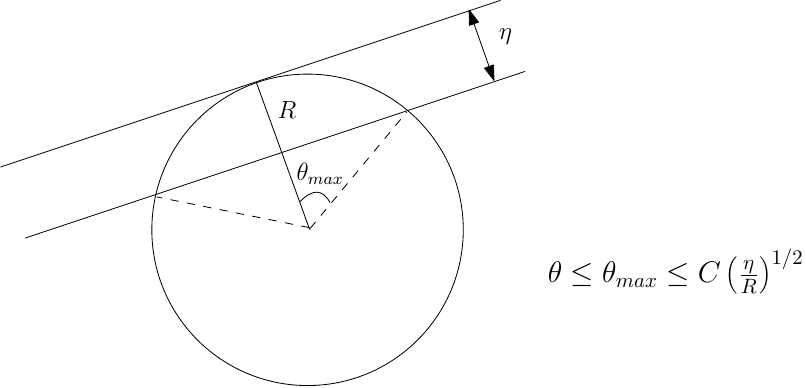} 
  \caption{Intersection of a cylinder and a sphere. The solid angle of the spherical caps is less than $C_d\min (1, (\eta/R)^{1/2})$.}
\label{fig:intersection}
\end{figure}

A basic geometrical argument shows that $\bar \omega$ has therefore  to be in a union of solid angles of measure less than $ C \min \Big( \big( \frac{ \bbV }{|\tau| | \bar v_q - \bar v_{q'}| |\bar v_q - \bar v |}\big)^{1/2} , 1 \Big) $. Integrating first with respect to $\bar \omega$ and $\bar v$, then with respect to  $\bar t$, we obtain
\begin{equation*}
\begin{aligned}
\int_{|\bar v | \leq \bbV}  \indc_{v_q \in I   } \;  \big |  (\bar v - \bar v_q  ) \cdot  \bar \omega \big|  d\bar t d\bar  \omega d\bar v 
& \leq  C \bbV^{d+1}  \int _{- C(\bbV t +1) /\eps}^{C(\bbV t+1) /\eps }  \min \Big( \frac{1  }{|u| ^{1/2}} , 1 \Big)\eps { du \over | \bar v_q - \bar v_{q'}|} \\
& \leq    C \bbV^{d+\frac32}  { \eps^{1/2} t^{\frac12 } \over |\bar v_q -\bar  v_{ q'}| }  \,\cdotp
\end{aligned}
\end{equation*}

\medskip
We obtain finally that 
$$
\int \indc _{\mbox{\tiny Recollision  of type (a)(b)(c)}}  \big |  (\bar v - \bar v_q  ) \cdot  \bar \omega \big|  d\bar t d\bar  \omega d\bar v \leq C   \bbV^{2d+\frac32} (1+t)^{d+\frac12} { \eps^{\frac12}  \over |\bar v_q - \bar v_{ q'}| }\,\cdotp$$

$ \bullet $ $ $ \underbar{If the first deflection of $q$ corresponds to a clustering recollision}.
With the notation of Section~\ref{subsec:estDC} we assume the clustering recollision is the~$k$-th recollision in~$\Psi^\eps_{\lambda_1}$  between the trees~$\Psi^\eps_{j_k}$ and~$\Psi^\eps_{j'_k}$, involving particles~$q\in \Psi^\eps_{\{j_k\}}$ and~$c\in \Psi^\eps_{\{j'_k\}}$  (with~$c \neq q'$) at time $\bar t = \tau_{{\rm rec},k}$. Then  in addition to the condition
$$ \hat x_k    \in B_{qc}$$ which encodes the clustering recollision (see Section~\ref{subsec:estDC}), we obtain the condition
\begin{equation}
\label{recolld}
\begin{aligned}
\big(\bar x_q ( \tau_{{\rm rec},k}  )  -  x_{q'} ( \tau_{{\rm rec},k} ) \big)  + (v_q - \bar v_{ q'})  (t_{\rm rec} -  \tau_{{\rm rec},k}) = \eps \omega_{\rm rec} +\zeta \, ,\\
\hbox{ and } v_q  = \bar v_q - (\bar v_q - \bar v_c) \cdot \omega_{{\rm rec},k}  \,  \omega _{{\rm rec},k} 
\end{aligned}
\end{equation}
denoting by $(\bar x_q, \bar v_q)$ and $(\bar x_c,  \bar v_c)$ the positions and velocities of $q$ and $c$ before the clustering recollision (in the backward dynamics).
Note that, as previously, $\bar v_q$ and $\bar v_c$ are locally constant.
Defining as above
$$\delta x :=  \frac1\eps ( \bar x_q ( \tau_{{\rm rec},k}  )  -  x_q ( \tau_{{\rm rec},k}) +\zeta ) =: \delta x_\perp +  (\bar v_{ q'} - \bar v_q ) ( \tau_{{\rm rec},k} - t_0)/\eps \hbox{ with } \delta x_\perp \perp (\bar v_{ q'} - \bar v_q ) \, ,
$$
and the rescaled times $$
  \tau_{\rm rec}:= (t_{\rm rec} - \tau_{{\rm rec},k})/\eps \quad \hbox{ and } \tau =: ( \tau_{{\rm rec},k} - t_0)/\eps\,,
$$
we end up with the  equation (\ref{cylinder}), which  forces $v_q - \bar v_{ q'}$ to belong to a cylinder~$\cR $ of main axis~$\delta x_\perp - \tau (\bar v_q  -  \bar v_{ q'})$ and 
 of width~$C\bbV \min \left( \frac{1}{|\tau( \bar v_q  -  \bar v_{ q'})|}, 1\right)$,
 where~$\Psi^\eps_{\{i'\}}$ is the collision tree of~$q'$.
Then $v_q$ has to be in the intersection of the sphere of diameter~$[\bar v_q, \bar v_c]$ and of the cylinder~$\bar v_{q'} + \cR$. This implies that $\omega_{{\rm rec},k}   $ has to belong to a union of spherical caps $S$, of solid angle  less than 
$ C \min \Big( \big( \frac{ \bbV}{|\tau| | \bar v_q -  \bar v_{ q'}| |\bar v_q - \bar v_c |}\big)^{1/2} , 1 \Big) $.
 Using the (local) change of variables $\hat x_k \mapsto (  \tau_{{\rm rec},k}, \eps \omega_{{\rm rec},k}  )$, it follows that
$$
\begin{aligned}\int \indc _{\mbox{\tiny Recollision  of type (d)}} d \hat x_k  & \leq {C\over \mu_\eps} \int \indc _{\omega_{{\rm rec},k}  \in  S}
 |( \bar v_q - \bar v_c)\cdot \omega_{{\rm rec},k}  | d\omega_{{\rm rec},k}  d\tau _{{\rm rec},k} \\
 &    \leq {C\over \mu_\eps}  \bbV  ^{\frac32} (1+t)^{\frac12} {\eps^{1/2} \over  | \bar v_q - \bar v_{ q'}| } \; \cdotp
\end{aligned}
$$
This concludes the proof of Proposition~\ref{recollisionprop}.
 \qed

{\bf Integration of the singularity in relative velocities: proof of Proposition \ref{vsingularityprop}}\label{integrationsingularities}

  We start with the obvious estimate
\begin{equation}
\label{obvious}
 \min \Big( 1, {\eps^{1/2} \over |v_q - v_{ q'} | } \Big)  \leq  \eps^{\frac14} + \indc_{|v_q - v_{ q'}| \leq \eps^{1/4}} \, .
 \end{equation}
 Thus we only  need  to control the set of parameters leading to small relative velocities.
 
 Without loss of generality, we shall assume that the first deflection (when going up the tree) involves  particle $q$. It can be either a creation (with or without scattering), or a clustering recollision, say  between $q \in \Psi^\eps_{\{j_k\}}$ and $c \in \Psi^\eps_{\{j_k'\}}$.

 $\bullet$ \underbar{If the first deflection of $q$  corresponds to a creation}, we denote by $(\bar t, \bar \omega, \bar v)$ the parameters encoding this creation, and by $(\bar x_q, \bar v_q)$ and $(\bar x_{q'}, \bar v_{q'})$ the positions and velocities of the pseudo-particles $q$ and $q'$ before the creation.

There are actually  four subcases~:
\begin{itemize}
\item[(a)] particle $q'$ is created  next to particle $q $ in the tree~$\Psi^\eps_{\{i\}}$: 
$ |v_q - v_{ q'} | = | \bar v - \bar v_q| $ ;
\item[(b)]  particle $q'$ is not deflected and particle $q$ is created  without scattering next to $\bar q$   in the tree~$\Psi^\eps_{\{i\}}$: 
$ |v_q - v_{ q'} | = | \bar v- \bar v_{q'}|$ ;
\item[(c)] particle $q '$ is not deflected and particle $q$ is created with scattering next to $\bar q$  in the tree~$\Psi^\eps_{\{i\}}$:  $ v_q   = \bar v - (\bar v - \bar v_q ) \cdot \bar \omega\,  \bar \omega 
$ ;
\item[(d)] particle $q'$ is not deflected,  another particle is created next to $q$ in the tree~$\Psi^\eps_{\{i\}}$, and $q$ is scattered so~$ v_q = \bar v_q+ (\bar v - \bar v_q ) \cdot \bar \omega\, \bar \omega
$ . 
\end{itemize}
 In cases (a) and (b), we obtain that $\bar v$ has to be in a small ball of radius $\eps^{1/4}$. Then,
 \begin{equation*}
\int \indc _{\mbox{\tiny Small relative velocity of type (a)(b)}}  \big |  (\bar v - \bar v_q  ) \cdot  \bar \omega \big|  d\bar t d\bar  \omega d\bar v \leq  C\bbV t\eps^{d/4} \, .
\end{equation*}
  In cases (c) and (d), we obtain that $v_q$ has to be in the intersection of a small ball of radius~$\eps^{1/4}$ and of  the sphere of diameter $[\bar v, \bar v_q]$.
  This condition imposes that $\bar \omega$ has to be in a  spherical cap of solid angle less than $ \eps^{\frac18} / | \bar v - \bar v_q|^{1/2} $ (see Figure \ref{fig:intersection}). We find that
  \begin{equation*}
\int \indc _{\mbox{\tiny Small relative velocity of type (c)(d)}}  \big |  (\bar v - \bar v_q  ) \cdot  \bar \omega \big|  d\bar t d\bar  \omega d\bar v \leq  C\bbV ^{d+\frac12} t \eps^{\frac18} \, .
\end{equation*}
  Combining these two estimates with (\ref{obvious}), we get
  \begin{equation*}
\int  \min \Big( 1, {\eps^{1/2} \over |v_q - v_{ q'} | } \Big) \big |  (\bar v - \bar v_q  ) \cdot  \bar \omega \big|  d\bar t d\bar  \omega d\bar v \leq  C\bbV ^{d+1} t \eps^{\frac18}  \, .
\end{equation*}

$ \bullet $ $ $ \underbar{If the first deflection of $q$ corresponds to the $k$-th  clustering recollision} in~$\Psi^\eps_{\lambda_1}$ between $q\in \Psi^\eps_{\{j_k\}}$ and $c \in \Psi^\eps_{\{j'_k\}}$ at time $\bar t = \tau_{{\rm rec},k} $, in addition to the condition 
$ \hat x_k \in B_{qc}$ which encodes the clustering recollision (see Section~\ref{subsec:estDC}), we obtain a condition on the velocity.

There are actually  two subcases~:
\begin{itemize}
\item[(e)] $q' = c$ and $|v_q - v_{ q'} | = |\bar v_q - \bar v_{q'} |$ ; 
\item[(f)] $q' $ is not deflected,  and $ v_q = \bar v_q - (\bar v_q - \bar v_c) \cdot \omega_{{\rm rec},k}  \, \omega_{{\rm rec},k} $ .
 \end{itemize}

In case (e),  there holds
 \begin{equation*}
\int \indc _{\mbox{\tiny Small relative velocity of type (e)}} d\hat x_k \leq {C\over \mu_\eps}  \int \indc _{|\bar v_q - \bar v_{q'} | \leq \eps^{1/4} } \big |  (\bar v_q - \bar v_{q'}  ) \cdot  \omega  \big|  d\omega  d\tau_{{\rm rec},k}  \leq  {C t  \eps^{\frac14} \over \mu_\eps} \, \cdotp
\end{equation*}
In case (f), we obtain that $v_q$ has to be in the intersection of a small ball of radius $\eps^{1/4}$ and of  the sphere of diameter $[\bar v_q, \bar v_c]$.
 This condition imposes that $\omega_{{\rm rec},k} $ has to be in a  spherical cap of solid angle less than $ \eps^{\frac18} / | \bar v_q - \bar v_c|^{1/2} $ (see Figure \ref{fig:intersection}). We find
 \begin{equation*}
\int \indc _{\mbox{\tiny Small relative velocity of type (f)}} d\hat x_k  \leq  {C\over \mu_\eps} \eps^{\frac18}  \int  \big |  \bar v_q - \bar v_c  \big|^{1/2}    d\tau_{{\rm rec},k} \leq  {Ct \bbV^{\frac12}   \eps^{\frac18}\over \mu_\eps}  \, \cdotp
\end{equation*}

 Combining these two estimates with (\ref{obvious}), we get
  \begin{equation*}
\int  \min \Big( 1, {\eps^{1/2} \over |v_q - v_{ q'} |^{1/2} } \Big) d\hat x_k \leq  {C\bbV t\eps^{\frac18} \over \mu_\eps}  \,\cdotp
\end{equation*}
This concludes the proof of Proposition~\ref{vsingularityprop}.
\qed

\appendix
\chapter{The abstract Cauchy-Kovalevskaya  theorem}
\label{CauchyKol}
In this appendix we recall the well-known Cauchy-Kovalevskaya theorem, in a generalized Banach framework as devised namely by F. Treves~\cite{Treves2}, L. Nirenberg~\cite{nirenberg}, T. Nishida~\cite{nishida}.  This result is used to prove the existence and uniqueness of a solution for short times for the Boltzmann equation (Section \ref{boltz-Cauchy}), for the linearized Boltzmann equation (proof of Proposition \ref{prop: borne U*} in Section \ref{U*-Cauchy}), for the covariance equation~(\ref{eq: systeme covariance}) (Proposition \ref{prop: well posedness covariance mild} in Section \ref{cov-Cauchy}), and for the modified Hamiltonian equations (\ref{EL-mild}) (proof of Proposition \ref{Lem: definition hat I} in Section \ref{EL-Cauchy}).  

We state the result as proved in \cite{nishida2} (Th\'{e}or\`{e}me A\footnote{The   (suboptimal) estimate on the existence time, as well as the estimates as stated in Theorem \ref{nishidatheorem}, follow from a simple adaptation of the argument in \cite{nishida2}, pages 367-368.}).

 \begin{Thmapp}[\cite{nishida2}]
 \label{nishidatheorem}  
 Let $(X_\rho)_{\rho>0}$ be a decreasing sequence of Banach spaces with increasing norms $\|\cdot\|_\rho$. Consider the equation 
\begin{equation}
\label{model-eq}
  u (t) = u_0(t) + \int_0^t F\big(t,s, u(s)\big) ds \, , \quad t \geq 0
\end{equation}
where 
\begin{itemize}
\item there are~$A_0>0, \rho_0>0$ such that~$t\mapsto u_0(t)$ is continuous for $t \in [0,A_0(\rho_0-\rho)[$ with values in~$X_{\rho}$ for all~$\rho<\rho_0$, and there is~$R_0>0$ such that  
$$\forall t \in[0,A_0(\rho_0-\rho)[\, , \quad \|u_0(t) \|_{\rho } \leq R_0  \,;
$$

\item $F(\cdot,\cdot, 0) = 0$, and  there are~$R>R_0>0, T>0$ such that~$F$ is continuous from $[0,T] \times[0,T] \times B_R(X_{\rho'}) $ to $X_\rho$ for all~$ \rho<\rho' \leq \rho_0$, with $B_R$ the open ball of radius $R$. Moreover there is a constant~$C_R  $  such that for all~$u,v\in  B_R(X_{\rho'})$, for all~$(t,s) \in [0,T] $,
\begin{equation}
\label{L*-est original}
\| F(t,s,u) - F(t,s,v) \|_\rho \leq C_R  {\rho_0 \over \rho'- \rho} \|u - v \|_{\rho'}\, , \qquad 
 \rho_0/2 \leq \rho<\rho'\leq \rho_0 \,.
\end{equation}
\end{itemize}
Then there exists a constant $c$ (not depending on any of the previous parameters) such that {\rm(\ref{model-eq})}  has a unique solution on the time interval~$[0,  T  ]$ with $T = c/C_{4R_0}$, which is continuous in time and satisfies
$$\sup_{\rho_0/2 \leq \rho < \rho_0 \atop 
0\leq  t <  4T  (1-\rho/\rho_0)   }  \|u (t)\|_\rho \Big(1- {t \over 4T (1 - \rho/\rho_0)}\Big) \leq 2R_0$$
and in particular
$$  \|u (t)\|_{\rho_0/2 } \leq 4R_0\;,\qquad t \in [0,T]\;.$$
%Moreover,
%$$ \sup_{0 \leq t \leq  \rho_0 \eta/2} \|u (t)\|_{\rho_0 - 2t/\eta} < 2R_0 \,.$$
%Furthermore there is a universal constant~$c$ such that  
%$$
%  \sup_{0 \leq t \leq  c\rho_0/C} \|u (t)\|_{\rho_0 /2} \leq  2R_0\, .
%$$
\theoremstyle{plain}
\renewcommand{\Thmapp}{\Alph{theorem}} 
\end{Thmapp}

\section{Local well-posedness for the biased Boltzmann equation}
\label{boltz-Cauchy}

The local well-posedness of the Boltzmann equation (\ref{boltzmann-eq}) can be deduced directly from the previous theorem (as pointed out first in \cite{ukai-CK}). 
In this section, 
we are going to consider the well-posedness of the biased Boltzmann equation \eqref{biased-Boltz} recalled below
\begin{equation}
\label{biased-Boltz appendix}
\begin{aligned}
 D_t\varphi = \int  \Big( \varphi  (t,z') \varphi (t,z_2') e^{-\Delta p}   
  - \varphi  (t,z) \varphi (t,z_2) e^{\Delta p}   \Big)  \, d\mu_{z}  ( z_2 ,  \omega) 
  \quad  \text{with} \quad    
 \gp (0)   =  f^0  e^{\bar p(0)} ,
   \end{aligned}
\end{equation}
with $\|p\|_{W^{1,\infty}([0,T] \times {\mathbb D})} \leq r$.

%check the assumptions of Theorem \ref{nishidatheorem}  

We first define the weighted $L^\infty$ spaces\label{defLinftybeta}
$$L^\infty_\beta := \left \{ \gp = \gp(x,v) \, : \, 
\|\gp\|_{ L^\infty_\beta} :=\sup_\D  \left( \exp \big( -\frac\beta2  |v|^2\big) |\gp (x,v) | \right)  <+\infty \right\}\,.$$
Note that, by assumption (\ref{lipschitz}), the initial data $f^0$ belongs to $L^\infty_{-\beta_0}$
so that 
$$ 
\|  \gp (0)   \|_{ L^\infty_{-\beta_0}} \leq C_0 e^r \,.
$$
Note also that these functional spaces are invariant by the free transport operator $S_t$ over $\D$.

The mild formulation of \eqref{biased-Boltz appendix} states
\begin{equation}
\label{mild-boltz}
\gp (t) = S_t \gp(0) +\int_0^t S_{t-s} Q_p (\gp(s), \gp(s)) ds
 \end{equation}
where the collision term
$$ Q_p (\gp,\gp) (z) := \int \Big( \varphi  (t,z') \varphi (t,z_2') e^{-\Delta p}   
  - \varphi  (t,z) \varphi (t,z_2) e^{\Delta p}   \Big)  \, d\mu_{z}  ( z_2 ,  \omega) 
%{\D}\int_{{\mathbb S}^{d-1}}   \! \Big(  f(x,w') f(x,v') - f(x,w) f(x,v)\Big) ((v-w)\cdot \omega)_+ d\omega dw
$$
satisfies the following loss continuity estimate for $ \beta_0/2 \leq \beta<\beta' \leq \beta_0$
\begin{equation}
\label{eq: estimation Q}
\begin{aligned}
\| Q_p (\gp,\gp)\|_{L^\infty_{-\beta}}& \leq 2 \| \gp\|_{L^\infty_{-\beta'}} ^2 \, e^{4r} \, \sup_v \left(\int  \exp \left( - {\beta'-\beta\over 2} |v|^2\right)  \exp \left( - {\beta' \over 2} |w|^2)\right) |v-w| dw d\omega \right) \\
& \leq c_d \| \gp\|_{L^\infty_{-\beta'}}^2  \, e^{4r} \, 
{\beta_0 \over \beta' - \beta} \beta_0 ^{-(d+1)/2} \,,
\end{aligned}
\end{equation}
where the constant $c_d$ depends only on the dimension $d$.

Then choosing $T_0 = c_d C_0^{-1}\beta_0^{(d+1)/2}$, we obtain by 
Theorem \ref{nishidatheorem} that the mild formulation of the Boltzmann equation (\ref{mild-boltz}) has a unique solution $\gp$  which is continuous on $[0,T_0 e^{- 5r}]$ and satisfies
$$
\sup_{\beta_0/2 < \beta < \beta_0 \atop 
0\leq  t <  4T_0 e^{- 5 r} (1-\beta/\beta_0 )   }  \| \gp (t)\|_{L^{\infty}_{-\beta}} 
\Big(1- {t \over 4T_0 e^{-5r} (1 - \beta/\beta_0)}\Big) \leq 2C_0 \,,
$$
and
\begin{equation}
\label{Mmax-appendix}  
\| \gp (t)\|_{L^{\infty}_{-\beta_0/2}} \leq 4C_0 e^r \;,\qquad t \in [0,T_0 e^{- 5r}]\;.
\end{equation}

\section{Well-posedness of the linearized Boltzmann (adjoint) equation.} 
\label{U*-Cauchy}

We prove now Proposition \ref{prop: borne U*}.
Let us recall the definition~(\ref{eq: espaces L2beta}) of the function spaces
$$L^2_\beta := \left \{ \gp = \gp(x,v) \, : \, 
\|\gp\|_{L^2_\beta}^2:=\int _\D \exp \big(- \frac\beta2  |v|^2\big) \, \gp^2 (x,v) dxdv  <+\infty \right\}\,.
$$

We need to prove that if~$\gp$ is in~$ L^2_{\beta_0/4}$,  then   $\mathcal{U}^* (t,s) \gp $ belongs to~$ L^2_{3\beta_0/8}$ for any $s \leq t \leq T $ for~$T$ small enough. We get from \eqref{eq: linearized backward}-\eqref{eq: L*} the backward Duhamel formula
\begin{equation}
\label{eq: linearise psi}
\mathcal{U}^* (t,s) \gp = S_{s- t }\varphi + \int_s^t S_{s-\sigma}   {\bf L}_\sigma^* \,\mathcal{U}^* (t,\sigma) \gp \, d\sigma \,.
\end{equation}

Using the uniform bound (\ref{Mmax-appendix}), we first establish a loss continuity estimate for the operator ${\bf L}_s^*$ defined by \eqref{eq: L*}.
By the Cauchy-Schwarz inequality, for any function~$\gp$ and 
any~$\frac{\beta_0}{4}\leq \beta'<\beta\leq  \frac{3\beta_0}8$,
\begin{equation}
\label{L*-est}
\begin{aligned}
\| {\bf L}_s^*\,\gp \|^2_{L^2_\beta} & \leq  \int dxdv \exp ( - \frac\beta2 |v|^2)  \left( \int |v-w|^2 f^2(s,x,w) \exp ( \frac{\beta'} 2 |w|^2)  dw d\omega \right) \\ 
&\ \ \ \ \ \times \left(\int (\Delta \gp)^2(s,x,w) \exp ( - \frac{\beta'} 2 |w|^2)dwd\omega \right) \\
& \leq c_d C_0^2 \| \gp\|_{L^2_{\beta'}}^2 \beta_0^{-d/2} 
\ \sup_v   \left(   \exp ( - \frac{\beta-\beta'}2 |v|^2) \int |v-w|^2 \exp (- \frac{5\beta_0}{16} |w|^2) dw \right) \\
& \leq   c_d C_0^2 \beta_0^{-(d+1)} {\beta_0 \over \beta- \beta'  } \| \gp \|_{L^2_{\beta'}}^2\,,
\end{aligned}
\end{equation}
where $c_d$ denotes a constant depending only on the dimension $d$ which may change from line to line.

Since the transport~$S_s$ preserves the spaces $L^2_\beta$,  we are   in position to apply Theorem \ref{nishidatheorem}.
The only difference is that~\eqref{eq: linearise psi} defines a backward evolution, rather than a forward one, and that the $L^2_\beta$ spaces   are increasing rather than decreasing. Up to these slight adaptations, Theorem \ref{nishidatheorem}  provides  the existence of $T\leq T_0$, also  of the form $T= c_d \beta_0^{(d+1)/2} /C_0$, such that for any $\gp\in L^2_{\beta_0/4}$,  \eqref{eq: linearise psi} has a unique solution satisfying
 $\mathcal{U}^* (t,s) \gp \in L^2_{3\beta_0/8}$ for any $s \leq t \leq T $.
  Proposition~\ref{prop: borne U*} is proved. \qed
  
Notice that, for the linear equation \eqref{eq: linearise psi}, the fixed point argument leading to the Cauchy-Kovalevskaya  theorem provides in particular a convergent series representation for the solution, of the form
\begin{equation}
\mathcal{U}^* (t,s) \gp = S_{s- t }\varphi + \sum_{n \geq 1}
\int_s^t d\sigma_1 \cdots \int_{\sigma_{n-1}}^t d\sigma_n S_{s-\sigma_1}{\bf L}_{\sigma_1}^* \cdots
{\bf L}_{\sigma_n}^* S_{\sigma_n - t} \gp\;.
\end{equation}
In particular, the following properties are easily verified.
\begin{Cor} 
\label{cor: mild formula lin properties} 
For $T \leq T_0$ as in Proposition~{\rm\ref{prop: borne U*}} and for any $s \leq t \leq T $, $\mathcal{U}^* (t,s)$ is a  semigroup  satisfying 
$$ \mathcal{U}^* (t,s) = \mathcal{U}^* (\sigma,s)\,\mathcal{U}^* (t,\sigma)\;,\qquad \sigma \in [s,t]$$
and
$$
\mathcal{U}^*(t,s) \gp =  S_{s - t}\gp  + \int_s^t d\sigma\, \mathcal{U}^*(\sigma,s) {\bf L}_\sigma^* S_{\sigma-t}\gp\, .
$$
\end{Cor}

\section{Well-posedness of the covariance equation}
\label{cov-Cauchy}

\begin{Prop} 
\label{prop: well posedness covariance mild}
%Let~$(\psi, \gp)$ belong to~$
% L^2_{\beta_0/4}$ and consider~$(\phi_s)_{s \in [0,T_0]}$   a family of bounded functions with values in~$ L^2_{\beta_0/4}$. 
There exists a time~$T>0$ of the form $T= c_d \beta_0^{(d+1)/2} /C_0$ such that the system~{\rm(\ref{eq: systeme covariance})} has a unique solution~$\mathcal C$ on $[0,T]^2$, which is defined as a bilinear form on $ L^2_{\beta_0/4}$.
\end{Prop}

\begin{proof}
System~(\ref{eq: systeme covariance}) consists in two equations. Let us start by solving the first one, namely
\begin{equation}\label{eq: systeme covariance1}
\begin{aligned} 
& \mathcal C(t,t,\psi, \gp)  = \mathcal C(0,0,S_{-t}\psi,S_{-t}\gp ) + \int_0^t ds\, {\bf  Cov}_s( S_{s-t}\psi, S_{s-t}\gp)   \\
  &  \qquad\qquad + \int_0^t ds\, \mathcal C(s,s,S_{s-t}\psi, {\mathbf L}_s^*S_{s-t}\gp)  
  + \int_0^t ds\, \mathcal C(s,s, {\mathbf L}_s^*S_{s-t}\psi, S_{s-t}\gp)\, .
  \end{aligned}
\end{equation}
  We are going to apply Theorem  \ref{nishidatheorem}, with the family of spaces~${\mathcal X}_\beta$ of bilinear forms defined by
  $$
  {\mathcal X}_\beta:= \Big\{
   \mathcal C:= \mathcal C(\psi, \gp) \, / \, 
    \| \mathcal C\|_{  {\mathcal X}_\beta}< \infty   
  \Big
  \} \, , \quad  \| \mathcal C\|_{  {\mathcal X}_\beta} := \sup_{\|\psi\|_{L^2_\beta} \leq 1 ,\|\varphi\|_{L^2_\beta} \leq 1 } \big|\mathcal C(\psi, \gp) \big|\, .
  $$
    Notice that, since the spaces $L^2_\beta$ are increasing,  the spaces~$ {\mathcal X}_\beta$ are decreasing.
 Given~$\beta \leq \beta_0$ and~$\psi, \gp$ in~$ L^2_{\beta}$ of norm smaller than~1, there holds
    $$
    \begin{aligned}
 \big | \mathcal C(0,0,S_{-t}\psi,S_{-t}\gp )\big| 
& \leq  \int f^0(z) |S_{-t}\psi(z)|  |S_{-t}\gp(z)| \, dz \\
& \leq  C_0  \int  e^{(\frac{\beta }2-\frac{\beta_0}2) |v|^2}  
e^{-\frac{\beta }{4}  |v|^2} |S_{-t}\psi(z)|e^{-\frac{\beta }{4}  |v|^2} |S_{-t}\gp(z)| \, dxdv
  \end{aligned}$$
  so by the Cauchy-Schwarz inequality we infer
  $$
  \big  \| \mathcal C(t=0,t=0)  \big\|_{ {\mathcal X}_{\beta/2}}   \leq C_0  \, .
  $$
  Similarly, as in the proof of Proposition~\ref{prop:uniq-cov} page~\pageref{prop:uniq-cov}, we find that
  $$
  \begin{aligned}
 & \big |{\bf  Cov}_s( S_{s-t}\psi, S_{s-t}\gp) \big | \leq \frac{1}{2}  \int d\mu(z_1,z_2,\omega) f(s,z_1) f(s,z_2) |\Delta S_{s-t}\psi|  |\Delta S_{s-t}\gp| \, \\
&\ \ \ \ \   \leq C \, C_0^2  \int d\mu (z_1, z_2, \omega)  e^{(\frac{\beta }2-\frac{\beta_0}4) (|v_1|^2+|v_2|^2) }   \Big(e^{-\frac{\beta }{2}  |v_1|^2} \psi^2(s,z_1) +e^{-\frac{\beta }{2}  |v_1|^2} \gp^2(s,z_1)\Big)e^{-\frac{\beta }{2}  |v_2|^2} \\
&\ \ \ \ \   \leq c_d C_0^2 \beta_0^{-(d+1)/2}
  \end{aligned}$$
  if~$\psi, \gp$ belong to~$ L^2_{\beta}$  for $\beta \leq 3\beta_0/8$, and norm bounded by 1.
  
  Finally setting
  $$
  F(t,s,\mathcal C(s,s,\cdot, \cdot)):=\mathcal C(s,s,S_{s-t}\cdot, {\mathbf L}_s^* S_{s-t}\cdot)+ \mathcal C(s,s,{\mathbf L}_s^* S_{s-t}\cdot, S_{s-t}\cdot)
  $$
  let us prove the loss estimate~(\ref{L*-est original}).  
   There holds, for $\beta_0/4 \leq \beta'<\beta \leq 3\beta_0/8$,
     $$
       \begin{aligned}
\big| F(t,s,\mathcal C(s,s,\psi,\gp)) \big|  & \leq 2 \|\mathcal C(s,s)\|_{{\mathcal X}_{\beta}} \|S_{s-t}\psi\|_{L^2_{ \beta}} \| {\mathbf L}_s^* S_{s-t}\gp\|_{ L^2_{\beta}}\\
& \leq c_d C_0 \beta_0^{-(d+1)/2} {\beta_0 \over \beta- \beta'  }  \|\mathcal C(s,s)\|_{{\mathcal X}_{\beta}} \|\psi\|_{ L^2_{\beta'}}  \|\gp\|_{L^2_ {\beta'} }
     \end{aligned}$$
  where we have used
  the fact that the spaces~$L^2_\beta$ are increasing, along with the loss estimate~(\ref{L*-est}). 
  Thanks to  Theorem  \ref{nishidatheorem}, we find that there exists a time~$T>0$, proportional to $\beta_0^{(d+1)/2} /C_0$,  such that~(\ref{eq: systeme covariance1}) has a unique solution which is continuous on~$[0,T] $, with values in~${\mathcal X}_{\beta_0/4}$.

The argument is the same for the second equation of~(\ref{eq: systeme covariance}), namely
\begin{equation}\label{eq: systeme covariance2}
\begin{aligned} 
 \int_0^t   \mathcal C(t,\sigma ,\psi,\phi_\sigma) \,d \sigma
 =   \int_0^t d\sigma  \, 
\left( 
\mathcal C(\sigma ,\sigma , S_{\sigma-t}\psi,\phi_\sigma) 
+  \int_ \sigma^t ds   \;   
\mathcal C \Big( s, \sigma,   {\mathbf L}_s^* S_{s-t}\psi , \phi_\sigma \Big)  \right) \, , 
  \end{aligned}
\end{equation}
 applying  Theorem  \ref{nishidatheorem} to
  $$
  {\mathcal K}(t,\psi,\Phi):=   \int_0^t   \mathcal C(t,\sigma ,\psi,\phi_\sigma) \,d \sigma
      $$
      which satisfies, thanks to the Fubini theorem,
      $$
        {\mathcal K}(t,\psi,\Phi) =  \int_0^t d\sigma\, \mathcal C(\sigma ,\sigma , S_{\sigma-t}\psi, \phi_\sigma) 
+   \int_0^t ds  \,  {\mathcal K}( s, {\mathbf L}_s^* S_{s-t}\psi ,\Phi)\, .
      $$
Note that $  {\mathcal K}(t)$ is now a bilinear form on $ L^2_\beta \times L^\infty ( (0,t);L^2_\beta)$.
      The same estimates as above allow to conclude.
        \end{proof}

\section{Well-posedness of the modified Hamiltonian equations}
\label{EL-Cauchy}

We are now going to check the well-posedness of the modified Hamiltonian equations \eqref{EL-mild}
which are recalled below
\begin{equation}
\label{EL-mild bis}
\forall s \leq t, \qquad
\begin{aligned}
& \psi_t (s)  = S_s   f^0+\int_0^s S_{s-\sigma}  F_1 \big( \phi_t (\sigma), \eta_t (\sigma), \psi_t (\sigma) \big) d\sigma \,, \\
&  \eta_t (s)  =   S_{s- t } \gamma_t - \int_s^t S_{s-\sigma }  F_2 \big( \phi_t (\sigma), \eta_t (\sigma),\psi_t (\sigma) \big) d\sigma 
\,,
\end{aligned}
\end{equation}
with $\psi_t(0) = f^0, \eta_t (t) =  \gamma$ and 
$$ 
\begin{aligned}
& F_1 ( \phi, \eta, \psi)  =  - \psi \, \phi+ \int   d\mu_{z_1}(z_2,\omega)\, 
\eta (z_2) 
\Big( \psi(z_1') \psi(z_2')  -  \psi(z_1) \psi(z_2) \Big)\,,\\
& F_2( \phi, \eta, \psi) =\eta  \, \phi -  \int  d\mu_{z_1}(z_2,\omega)\, 
 \psi (z_2)  \Big( \eta(z_1') \eta(z_2') -\eta(z_1) \eta(z_2) \Big)\,.
\end{aligned}
$$ 
This is a coupled system and~$\eta_t$ satisfies a backward equation, so this is not exactly the standard formulation to apply Theorem \ref{nishidatheorem}.

Let us fix~$\alpha >0$ and a time $t  \leq T_\alpha$.
Using the fact that~$(\phi,\gamma) $ belongs to~$ \cB_{\alpha, \beta_0,T_\alpha}$, we   have in particular that
$$
\sup_{s \in [0, t]} \big| \phi(s,x,v) \big| \leq  C (1+|v|^2) \quad \mbox{and} \quad  \gamma(t) \in L^\infty_{\beta_0/4} \, ,
$$
 where the constant~$C$ depends on~$\alpha, \beta_0, C_0$.
Recall moreover that~$f^0$ belongs to~$L^\infty_{-\beta_0}$, so let us define
$$
\bar C: = 4 \left( \|\gamma\|_{ L^\infty_{\beta_0 /4}} + \|f^0\|_{L^\infty_{-\beta_0} }\right) .
$$
By a computation as in \eqref{eq: estimation Q}, one can check that 
for any~$3 \beta_0/4 <\beta _1<  \beta'_1 \leq \beta_0 $ and $\beta_0/4\leq\beta'_2<\beta_2 <   \beta_0 /2$ there are constants~$C_1$  and~$C_2$   such that 
\begin{eqnarray}
\| F_1 ( \phi, \eta, \psi) \|_{L^\infty_{- \beta_1}}& \leq \displaystyle {C_1  \beta_0\over \beta'_1- \beta _1} \| \psi\|_{ L^\infty_{-\beta'_1}} \Big( 1 +  \| \psi\|  _{ L^\infty_{- \beta'_1}} \| \eta\|_{L^\infty_{\beta_0/2}} \Big)
\,,\label{estimateF1}
\\
\| F_2( \phi , \eta, \psi) \|_{L^\infty_{\beta_2}}& \leq\displaystyle  {C_2 \beta_0\over \beta_2- \beta'_2}  \| \eta\|_{ L^\infty_{\beta'_2}} \Big( 1 + \| \psi\| _{ L^\infty_{-3\beta_0/4} }\| \eta\| _{L^\infty_{\beta'_2}} \Big)
 \,.\label{estimateF2}
\end{eqnarray}
 The second equation in \eqref{EL-mild bis} evolves backward so that as in Section~\ref{U*-Cauchy},  the regularity in \eqref{estimateF2} is coded in the opposite direction of the forward flow.

By the method of Theorem~\ref{nishidatheorem},  a fixed point argument can 
be implemented (by solving at each iteration both the forward and backward equations). 
In this way, we find a time $T_\alpha^{\rm\tiny H'}>0$ such that there exists a unique solution}   $(\psi_t,\eta_t)$ to~\eqref{EL-mild bis} on $[0,t]$ for any~$t \leq T_\alpha^{\rm\tiny H'}$, satisfying
$$
\sup_{s \in [0,t] } \| \eta_t (s)\|_{L^\infty_{\beta_0/2}} \leq \bar C \, , 
\quad  
\sup_{s \in [0,t] } \| \psi_t (s)\|_{L^\infty_{-3\beta_0/4}} \leq \bar C\,.
$$
  Step 1 of the proof of Proposition~\ref{Lem: definition hat I} is now complete.

\backmatter

\end{document}